\documentclass[12pt]{amsart}
\usepackage{pgfplots}
\usepackage[utf8]{inputenc}
\usepackage[colorlinks=true]{hyperref}
\usepackage{amsmath,amscd,amsbsy,amssymb,amsthm,amsfonts}
\usepackage{latexsym,url,bm}
\usepackage{cite,enumitem}
\usepackage{cancel}
\usepackage{xcolor}
\usepackage{tikz}

\newcommand{\doth}[1]{\Dot{\mathcal{H}}^{#1}}

\newcommand{\W}{{\mathbf W}}
\newcommand{\nP}{{\mathbf P}}
\newcommand{\uA}{{\underline A}}
\newcommand{\Astar}{{\underline{A}_{\frac{1}{4}}}}
\newcommand{\uAS}{{\underline{A}^\sharp}}
\newcommand{\uAStar}{{\underline{A}^{\sharp}_{\frac{1}{4}}}}
\newcommand{\ua}{{\underline a}}
\newcommand{\uB}{{\underline B}}
\newcommand{\ub}{{\underline b}}

\newcommand{\hw}{{\hat{w}}}
\newcommand{\hr}{{\hat{r}}}

\usepackage{fullpage}

\newtheorem{theorem}{Theorem}[section]
\newtheorem{lemma}[theorem]{Lemma}
\newtheorem{proposition}[theorem]{Proposition}

\theoremstyle{remark}
\newtheorem{remark}{Remark}
\numberwithin{equation}{section}
\numberwithin{figure}{section}

\setcounter{tocdepth}{1}
\allowdisplaybreaks

\title{Low regularity well-posedness for two-dimensional deep gravity water waves with constant vorticity}

\author{Lizhe Wan}
\address{Department of Mathematics, University of Wisconsin - Madison}
\curraddr{}
\email{lwan33@wisc.edu}

\keywords{constant vorticity, holomorphic coordinates, balanced energy estimate.}
\subjclass[2020]{76B15, 35Q31}
\pagestyle{plain}

%\doublespacing

\pgfplotsset{compat=1.17}

\begin{document}

\begin{abstract}
  We consider the two-dimensional gravity water waves with nonzero constant vorticity in infinite depth.
  We show that for $s\geq \frac{3}{4}$, the water waves system is locally well-posed in $\mathcal{H}^{s}$, which is the nonzero constant vorticity counterpart of the breakthrough work of Ai-Ifrim-Tataru in \cite{ai2023dimensional}.
  It is also a  $\frac{1}{4}$ improvement in Sobolev regularity compared to the previous result of Ifrim-Tataru in \cite{MR3869381}.
\end{abstract}

\maketitle
\tableofcontents

\section{Introduction} \label{s:Introduction}

We consider gravity water waves with nonzero constant vorticity  but without surface tension in two space dimensions.
As shown in Figure \ref{f:Figure}, the fluid occupies a time-dependent domain $\Omega (t) \subset \mathbb{R}^2$ with infinite depth and a free upper boundary $\Gamma(t)$ which is asymptotically approaching $y=0$.
Denoting the fluid velocity by $\mathbf{u}(t,x,y) = (u(t,x,y), v(t,x,y))$, the pressure by $p(t,x,y)$, and the constant vorticity by $\gamma$, the equations inside $\Omega(t)$ are given by the incompressible Euler equations:
\begin{equation*}
\left\{
             \begin{array}{lr}
            u_t +uu_x +vu_y = -p_x &  \\
            v_t + uv_x +vv_y = -p_y -g& \\
            u_x +v_y =0 & \\
            \omega = u_y -v_x = -\gamma.
             \end{array}
\right.
\end{equation*}
On the boundary $\Gamma(t)$ we have the dynamic boundary condition
\begin{equation*}
    p =0,
\end{equation*}
and the kinematic boundary condition
\begin{equation*}
    \partial_t +\mathbf{u}\cdot \nabla \text{ is tangent to }\Gamma(t).
\end{equation*}
Here $g>0$ is the gravitational constant.

\begin{figure} 
  \centering
  \begin{tikzpicture}
    \begin{axis}[ xmin=-12, xmax=12.5, ymin=-2.5, ymax=1.5, axis x
      line = none, axis y line = none, samples=100 ]

      \addplot+[mark=none,domain=-10:10,stack plots=y]
      {0.2*sin(deg(2*x))*exp(-x^2/20)};
      \addplot+[mark=none,fill=gray!20!white,draw=gray!30,thick,domain=-10:10,stack
      plots=y] {-1.5-0.2*sin(deg(2*x))*exp(-x^2/20)} \closedcycle;
      \addplot+[black, thick,mark=none,domain=-10:10,stack plots=y]
      {1.5+0.2*sin(deg(2*x))*exp(-x^2/20)};
      %\addplot+[black,mark=none,domain=-10:10,stack plots=y]
      %{-1.5-0.2*sin(deg(2*x))*exp(-x^2/20)};

      \draw[->] (axis cs:-3,-2) -- (axis cs:-3,1) node[left] {\(y\)};
      \draw[->] (axis cs:-11,0) -- (axis cs:12,0) node[below] {\(x\)};
      \filldraw (axis cs:-3,-1.5)  node[above left]
      {\(\)}; \node at (axis cs:0,-0.75) {\(\Omega(t)\)}; \node at
      (axis cs:6,0.3) {\(\Gamma(t)\)};

    \end{axis}
  \end{tikzpicture}
  \caption{The fluid domain.} \label{f:Figure}
\end{figure}
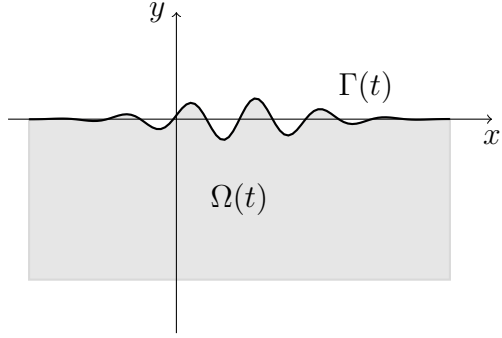

\subsection{Water waves in holomorphic coordinates}
Let $\mathbf{P}$ be the projection onto negative frequencies, with the definition
\begin{equation*}
    \mathbf{P} := \frac{1}{2}(\mathbf{I} - iH),
\end{equation*}
where $H$ denotes the Hilbert transform.
We define holomorphic functions on $\mathbb{R}$ to be the functions whose Fourier transforms are supported on $(-\infty,0]$;
equivalently in the language of complex analysis, they admit a bounded holomorphic extension onto the lower half-space.
This can be further described by the relation $\nP f = f$.
Similarly, we define $\bar{\nP}$ to be the projection onto positive frequencies:
\begin{equation*}
    \bar{\nP} := \frac{1}{2}(\mathbf{I} + iH) = \mathbf{I} -\nP.
\end{equation*}
Functions such that $\bar{\nP} f = f$ are called anti-holomorphic functions.
Anti-holomorphic functions are complex conjugates of holomorphic functions.

As done in the work of Ifrim-Tataru \cite{MR3869381} and Ifrim-Rowan-Tataru-Wan \cite{MR4462478},  we rely here on the holomorphic coordinates and use   holomorphic position/velocity potential variables  $(W(t,\alpha),Q(t,\alpha))$ to express the water waves system.
It is formulated in the following system of equations:
\begin{equation}
\left\{
             \begin{array}{lr}
             W_t + (W_\alpha +1)\underline{F} +i\dfrac{\gamma}{2}W = 0 &  \\
             Q_t - igW +\underline{F}Q_\alpha +i\gamma Q +\mathbf{P}\left[\dfrac{|Q_\alpha|^2}{J}\right]- i\dfrac{\gamma}{2}T_1 =0,&
             \end{array}
\right.\label{e:CVWW}
\end{equation}
where $J := |1+ W_\alpha|^2$ is the Jacobian, and
\begin{equation*}
\begin{aligned}
&F: = \mathbf{P}\left[\frac{Q_\alpha - \Bar{Q}_\alpha}{J}\right], \quad &F_1 = \mathbf{P}\left[\frac{W}{1+\Bar{W}_\alpha}+\frac{\Bar{W}}{1+W_\alpha}\right],\\
&\underline{F}: =F- i \frac{\gamma}{2}F_1,  \quad &T_1: = \mathbf{P}\left[\frac{W\Bar{Q}_\alpha}{1+\Bar{W}_\alpha}-\frac{\Bar{W}Q_\alpha}{1+W_\alpha}\right].
\end{aligned}
\end{equation*}

The system \eqref{e:CVWW} has a conserved energy
\begin{equation}
  \mathcal{E}(W,Q) =  \Re \int g|W|^2(1+W_\alpha)-iQ\bar{Q}_\alpha + \gamma Q_\alpha (\Im W)^2 - \frac{\gamma^3}{2i}|W|^2(1+W_\alpha) \,d\alpha, \label{HamiltonianEnergy}
\end{equation}
and also a conserved horizontal momentum
\begin{equation*}
    \mathcal{P}(W,Q) = \int  \left\{ \frac{1}{i}(\bar{Q}W_\alpha - Q\bar{W}_\alpha) - \gamma |W|^2 + \frac{\gamma}{2}(W^2\bar{W}_\alpha - \bar{W}^2 W_\alpha)\right\} d\alpha.
\end{equation*}
In the case of zero vorticity, the system \eqref{e:CVWW} is reduced to the irrotational gravity water waves system which was previously studied in the same formulation as \cite{hunter2016two, MR3499085, ai2023dimensional,  MR4483135,   MR3625189}.
Here for simplicity, we assume the constant vorticity $\gamma>0$.
In the case of the negative vorticity, one can always switch the sign of the variable $\alpha \rightarrow -\alpha$, so that the vorticity can be made positive after the change.

The system \eqref{e:CVWW} does not have a complete scaling.
The space-time scaling
\begin{equation}
    (W(t,\alpha),Q(t,\alpha)) \rightarrow (\lambda^{-2}W(\lambda t, \lambda^2 \alpha), \lambda^{-3}Q(\lambda t, \lambda^2 \alpha)), \label{SpacetimeScaling}
\end{equation}
leaves the gravity $g$ unchanged, but it changes the voricity $\gamma$ to $\lambda \gamma$.
One the other hand, the pure space scaling
\begin{equation*}
    (W(t,\alpha),Q(t,\alpha)) \rightarrow (\lambda^{-2}W( t, \lambda^2 \alpha), \lambda^{-3}Q( t, \lambda^2 \alpha)),
\end{equation*}
does not change the constant vorticity $\gamma$, but the  gravitational constant $g$ becomes $\lambda^{-1}g$.

A simplified model of \eqref{e:CVWW} is its linearization around the zero solution
\begin{equation}
\left\{
             \begin{array}{lr}
             w_t + q_\alpha = 0 &  \\
             q_t +i\gamma q - igw =0,&
             \end{array}
\right.\label{e:ZeroLinear}
\end{equation}
restricted to holomorphic functions.
\eqref{e:ZeroLinear} is a linear dispersive equation that can be written as
\begin{equation*}
    w_{tt}+ i \gamma w_t + igw_\alpha =0.
\end{equation*}
Its dispersion relation is given by
\begin{equation*}
    \tau^2 + \gamma\tau +g\xi =0, \quad \xi\leq 0,
\end{equation*}
whose graph is the intersection of a lateral parabola with the left half plane.
The two intersection points are $(0,0)$ and $(0, -\gamma)$.

The conserved energy of \eqref{e:ZeroLinear} is given by
\begin{equation}
    \mathcal{E}_0(w,q) = \int g|w|^2 - iq\bar{q}_\alpha\, d\alpha = g\|w\|_{L^2}^2 + \|q\|_{\dot{H}^{\frac{1}{2}}}^2. \label{ConservedEnergy}
\end{equation}
This conserved energy suggests the functional framework to study \eqref{e:CVWW}.
The system \eqref{e:ZeroLinear} is well-posed in $\mathcal{H}: = L^2 \times \dot{H}^{\frac{1}{2}}$ space.
To measure the higher regularity of the solution we will use the spaces $\mathcal{H}^{s}$ endowed with the norm
\begin{equation*}
    \| (w,q)\|_{\mathcal{H}^{s}} := \|\langle D\rangle^s(w,q) \|^2_{L^2 \times \dot{H}^{\frac{1}{2}}}, \quad s\in \mathbb{R}.
\end{equation*}
We also define the corresponding homogeneous spaces $\doth{s}$ given by
\begin{equation*}
    \|(w,q)\|_{\doth{s}} := \| |D|^s(w,q) \|^2_{L^2 \times \dot{H}^{\frac{1}{2}}}, \quad s\in \mathbb{R}.
\end{equation*}
 The system \eqref{e:CVWW} is  fully nonlinear.
 By differentiation, it can be converted into a quasilinear system.
 As in the Hunter-Ifrim-Tataru \cite{hunter2016two}, we set
 \begin{equation*}
     \W := W_\alpha, \quad R := \frac{Q_\alpha}{1+W_\alpha}, \quad Y := \frac{W_\alpha}{1+W_\alpha}.
 \end{equation*}
 The function $R$ has an intrinsic meaning as it represents the complex velocity on the water surface.
 We also need two other auxiliary functions.
 The first one $\ua$ is the \textit{frequency-shift}, and it is given by
 \begin{equation*}
     \ua := a + \frac{\gamma}{2}a_1, \quad a := i (\bar{\nP}[\bar{R}R_\alpha] - \nP[R\bar{R}_\alpha]), \quad a_1 := R+ \bar{R} -N,
 \end{equation*}
 where the leading part $a$ is also called \textit{Taylor coefficient}, and
 \begin{equation*}
     N: = \nP[W\bar{R}_\alpha - \bar{\W}R]+ \bar{\nP}[\bar{W}R_\alpha - \W\bar{R}].
 \end{equation*}
The other auxiliary function we need is the \textit{advection velocity} $\ub$.
Its leading part $b$ is also called the \textit{transport coefficient}.
\begin{equation*}
    \ub := b-i\frac{\gamma}{2} b_1, \quad b: = \nP \left[\frac{Q_\alpha}{J}\right] + \bar{\nP} \left[\frac{\bar{Q}_\alpha}{J} \right], \quad b_1: = \nP \left[ \frac{W}{1+\bar{\W}}\right]- \bar{\nP} \left[ \frac{\bar{W}}{1+\W}\right].
\end{equation*}
The expression $g +\ua$ represents the normal derivative of the pressure on the free boundary, and plays an essential role in the proof.

Using the notation of $\ua$ and $\ub$,  one can rewrite \eqref{e:CVWW} as
\begin{equation}
 \left\{
             \begin{array}{lr}
            W_t +\underline{b}(W_\alpha+1) +i\frac{\gamma}{2}W = \bar{R} + i\frac{\gamma}{2}\bar{W} &\\
             Q_t + \underline{b}Q_\alpha - igW +i\gamma Q - i\frac{\gamma}{2}\bar{R}W = \bar{\nP}[|R|^2]-i\frac{\gamma}{2}\bar{\nP}[W\bar{R}-\bar{W}R].&
             \end{array}
\right.\label{e:CVWW1}
\end{equation}

Instead of studying \eqref{e:CVWW} directly, in this article, we will mostly consider the following
differentiated system.
The pair $(\W,R)$ is a good variable because it diagonalizes the differentiated system:
 \begin{equation}
\left\{
             \begin{array}{lr}
            \mathbf{W}_t +\underline{b}\mathbf{W}_\alpha + \dfrac{(1+\mathbf{W})R_\alpha}{1+\Bar{\mathbf{W}}} = (1+\mathbf{W})\underline{M}+i\dfrac{\gamma}{2}\mathbf{W}(\mathbf{W}-\Bar{\mathbf{W}})  &\\
             R_t + \underline{b}R_\alpha +i\gamma R - i\dfrac{g\mathbf{W}-a}{1+\mathbf{W}} = i\dfrac{\gamma}{2}\dfrac{R\mathbf{W}+\Bar{R}\mathbf{W}
             +N}{1+\mathbf{W}}.&
             \end{array}
\right.\label{differentiatedEqn}
\end{equation}
Here the auxiliary functions are given by
\begin{align*}
    & M := \frac{R_\alpha}{1+\bar{\W}}+
    \frac{\bar{R}_\alpha}{1+\W} - b_\alpha = \bar{\nP}[\bar{R}Y_\alpha - R_\alpha \bar{Y}] + \nP[R\bar{Y}_\alpha -\bar{R}_\alpha Y], \\
  & M_1 := \W -\bar{\W} - b_{1,\alpha} = \nP[W\bar{Y}]_\alpha - \bar{\nP}[\bar{W}Y]_\alpha, \qquad \underline{M} := M- i \frac{\gamma}{2}M_1.
\end{align*}
Note that $\ub_\alpha$ satisfies the relation
\begin{equation}
    \ub_\alpha = \frac{R_\alpha}{1+\bar{\W}}+ \frac{\bar{R}_\alpha}{1+\W} -i\frac{\gamma}{2}(\W -\bar{\W}) -\underline{M}. \label{ubalpha}
\end{equation}
Using the variable $(Y,R)$, we can rewrite the equation \eqref{differentiatedEqn} using the material derivative $D_t:= \partial_t + \ub \partial_\alpha$ as
 \begin{equation}
\left\{
             \begin{array}{lr}
            D_t Y + |1-Y|^2R_\alpha = (1-Y)\underline{M}+ i\frac{\gamma}{2}[Y^2+ \frac{\bar{Y}}{1-\bar{Y}}Y(1-Y)]  &\\
             D_t R -i(g+\ua)Y = -i\ua-i\frac{\gamma}{2}(R-\bar{R}).&
             \end{array}
\right.\label{EqnYR}
\end{equation}
Again the system \eqref{differentiatedEqn} does not have a complete scaling.
However, we can  still introduce the \textit{order} of  multilinear expressions for \eqref{differentiatedEqn} following the work in \cite{MR3869381}.
For single terms, we assign the following orders
\begin{itemize}
    \item The order of $|D|^s W$ is $s-1$.
    \item The order of $|D|^s R$ is $s-\frac{1}{2}$.
    \item The order of $\gamma$ is $\frac{1}{2}$.
\end{itemize}
For a multilinear form involving products of such terms, the total order is defined as the sum of the orders of all factors.

Before stating the main low regularity result in this article, we first briefly recall some recent results about this problem and  give a short overview of the local well-posedness result in \cite{MR3869381}.
To state them  we need to recall and  define a series of pointwise and $BMO$   control norms
\begin{equation*}
\begin{aligned}
&A : = \|\mathbf{W} \|_{L^\infty}+\|Y\|_{L^\infty} +\||D|^{\frac{1}{2}}R \|_{L^\infty \cap B^0_{\infty, 2}},\\
&B : = \||D|^{\frac{1}{2}}\mathbf{W} \|_{BMO}+\|R_\alpha \|_{BMO},\\
&A_{-\frac{1}{2}} : = \| |D|^{\frac{1}{2}}W\|_{L^\infty} + \|R\|_{L^\infty},\quad A_{-1}:= \|W\|_{L^\infty} + \||D|^{\frac{1}{2}} Q\|_{BMO},\\
&A_{-\frac{3}{2}}:= \||D|^{-\frac{1}{2}}W\|_{L^\infty}, \quad A_{-2}: = \| |D|^{-1}W \|_{L^\infty},  \\
&\uA_{\frac{1}{2}}=\underline{B}: =B+\gamma A +\gamma^2 A_{-\frac{1}{2}}, \quad \underline{A}: = A+\gamma A_{-\frac{1}{2}}+\gamma^2 A_{-1}+ \gamma^3 A_{-\frac{3}{2}} + \gamma^4 A_{-2}.
\end{aligned}
\end{equation*}
These control norms are defined and used in the energy estimate in \cite{MR3869381} as well as in later works; see  for instance Theorem \ref{t:oldcubicenergy} in \cite{MR4462478}.
Here the $A_{-1}$ norm is slightly different from the one defined in \cite{MR3869381}, where  we add an additional control norm $\||D|^\frac{1}{2} Q\|_{BMO}$.  
This new norm helps us get a more refined cubic energy estimate, and does not produce a long energy estimate as Theorem \ref{t:oldcubicenergy}.

We also add here  additional comments describing these norms.
 The control norms $A$ and $\uA$ are invariant with respect to the scaling \eqref{SpacetimeScaling}.
 The norm $A_s$ has $s$ more derivatives compared to $A$.
One should remark that although each term of the underline control norms $\uA_s$ does not have the same amount of derivatives, they have the same order.
The difference of orders between $\uA_s$ and  $\uA$ is also $s$.
The underline control norms $\uA_s$ become $A_s$ in the zero vorticity case.

To complete the description of these norms one should recall that $B$ allows for the propagation of regularity of the solutions; the same is true about   $\uB$ which ultimately determines the well-posedness of the system in \cite{MR3869381}.
In this article, we also define the following control norms
\begin{align*}
    &A_{\frac{1}{4}} : = \| \W\|_{\dot{B}^{\frac{1}{4}}_{\infty, 2}}+ \| R\|_{\dot{B}^{\frac{3}{4}}_{\infty, 2}}, \qquad A_{-\frac{1}{4}} : = \| \W\|_{\dot{B}^{-\frac{1}{4}}_{\infty, 2}}+ \| R\|_{\dot{B}^{\frac{1}{4}}_{\infty, 2}},  \\
    & A_{-\frac{3}{4}} : = \| \W\|_{\dot{B}^{-\frac{3}{4}}_{\infty, 2}}+ \| R\|_{\dot{B}_{\infty, 2}^{-\frac{1}{4}}}, \quad
    \Astar : = A_{\frac{1}{4}}+ \gamma^{\frac{1}{2}}A+ \gamma A_{-\frac{1}{4}}  + \gamma^{\frac{3}{2}} A_{-\frac{1}{2}}+ \gamma^2 A_{-\frac{3}{4}}.
\end{align*}
Here $\Astar$ can be seen as  an intermediate control norm interpolating between $\uA$ and $\uB$.
In our energy estimate below, we no longer use the control norm $\uB$, the product $\uA \uB$ is replaced by $\uA^2_{\frac{1}{4}}$.
For the $L^2$ and $L^4$ based control norms, we define
\begin{align*}
    &\mathbf{N}_s : = \| (\W, R)\|_{\dot{H}^s\times \dot{H}^{s+\frac{1}{2}}}, \quad A^\sharp : = \|\W\|_{\dot{W}^{\frac{1}{4},4}}+ \|R\|_{\dot{W}^{\frac{3}{4},4}}, \quad A^\sharp_{\frac{1}{4}}: = \|\W\|_{\dot{W}^{\frac{1}{2},4}}+ \|R\|_{\dot{W}^{1,4}},\\
    &\uAS : = A^\sharp +\gamma^{\frac{1}{2}}\|R\|_{\dot{W}^{\frac{1}{2},4}} +\gamma \|W \|_{\dot{W}^{\frac{3}{4},4}}+\gamma\|R\|_{\dot{W}^{\frac{1}{4},4}} +\gamma^{\frac{3}{2}}\|R\|_{L^4} + \gamma^2 \|W \|_{\dot{W}^{\frac{1}{4},4}}+ \gamma^2 \|R \|_{\dot{W}^{-\frac{1}{4},4}},\\
    &\uAStar : = A^\sharp_{\frac{1}{4}} +\gamma^{\frac{1}{2}}\|R\|_{\dot{W}^{\frac{3}{4},4}} +\gamma \|W \|_{\dot{W}^{1,4}}+\gamma\|R\|_{\dot{W}^{\frac{1}{2},4}} +\gamma^{\frac{3}{2}}\|R\|_{\dot{W}^{\frac{1}{4},4}} + \gamma^2 \|W \|_{\dot{W}^{\frac{1}{2},4}}+ \gamma^2 \|R \|_{L^4}.
\end{align*}
By the Sobolev embedding $\dot{W}^{\frac{1}{4},4}(\mathbb{R})\hookrightarrow BMO(\mathbb{R})$, the control norm $\uAS$ can be seen roughly as a slightly larger control norm than $\uA$.
We also have the following observation for the control norm $\uAStar$: $\Astar \lesssim \uAStar$.

We remark that in Ai-Ifrim-Tataru \cite{ai2023dimensional} for the zero vorticity case, the control norms $A, A_{\frac{1}{4}}$ and $A^\sharp$ are used for the energy estimate.
In the case of nonzero vorticity, our control norms need to include the vorticity terms with the same order.
This leads to the above choice of control norms.

\subsection{Some previous results}
The constant vorticity water waves system is a special case of free boundary incompressible Euler equations with general vorticity.
Historically a lot of work has been done for the non-constant vorticity case. 
For completeness reasons we include some references though the list is by no means exhaustive.
We refer the reader to the following more contemporary references and the references within: Christodoulou-Lindblad \cite{MR1780703}, Coutand-Shkoller\cite{MR2138139}, Lannes \cite{MR2178961}, Lindblad \cite{MR2291920}, Shatah-Zeng\cite{MR2388661}, Zhang-Zhang \cite{MR2410409}.
It is worth mentioning that most of these works rely on a Lagrangian formulation rather than an Eulerian approach in a compact domain.
This is also the case with  the most recent work pursued for these model problems, for example, the article of Wang-Zhang-Zhao-Zheng \cite{MR4263411}.
However, the work of  Ifrim-Pineau-Tataru-Taylor \cite{ifrim2023sharp} departs from this Lagrangian setting and introduces a full Eulerian approach which allows them to obtain  sharp local well-posedness for the free boundary Euler equations.

If one imposes conditions on the vorticity, then the free boundary Euler equations become the water wave equations.
There are a lot of existence and well-posedness results on the zero vorticity case.
These results go back to the early works on Nalimov \cite{n}, Dyachenko \emph{et.al} \cite{zakharov, zakharov2} and so on.
More relevant to our presentation are the references within the last 20 years and so. 
The first local well-posedness result for 2D gravity water wave equations is due to Wu.
Her result on the well-posedness is established in high regularity Sobolev spaces; see \cite{MR1471885}.
 Alazard, Burq and Zuily use the tools of paradifferential operators and Strichartz estimates in \cite{MR3260858, MR3852259} and effectively lower the regularity required for the initial data.
 Substantial regularity improvements of these results were obtained by Ai in \cite{MR4161284, MR4098033}.
 In getting these results, Ai relied on a very careful analysis which leads to a novel parametrix construction that gives lossless Strichartz estimates.
 Recently, Ai-Ifrim-Tataru devised a new method, called \textit{the balanced energy estimate}  in \cite{ai2023dimensional}.
 As a direct consequence of this new method, they have the state-of-art result in this realm of problems proving the local well-posedness of 2D gravity water waves with only $\frac{1}{4}$ more derivatives above the scaling.
 As a side note, it should be mentioned that this method and consequently the regularity improvement on the initial data obtained in \cite{ai2023dimensional} do not rely on the dispersive character of the problem.

The study of constant vorticity water waves is less common. Ifrim-Tataru in \cite{MR3869381} proved the first local well-posedness for large data, as well as cubic lifespan bounds
for small data solutions in a low regularity setting.
Around the same time
Bieri-Miao-Shahshahani-Wu in \cite{bieri2015motion} proved the local existence of 2D free boundary self-gravitating incompressible fluids with constant vorticity for smooth initial data in bounded domains.
Recently,  it was shown in \cite{MR4658635} by Berti-Maspero-Murgante the almost global in time existence result of small amplitude solutions of the 2D gravity-capillary water wave equations with constant vorticity in $\mathbb{T}$.

The use of conformal formulation for two-dimensional water waves  originates in early work on traveling waves of Levi-Civita \cite{MR1512238}.
It has been widely used since then in order to study a variety of water wave problems, especially the irrotational pure gravity model such as in \cite{hunter2016two, ai2023dimensional, MR3499085,  MR4483135,  MR3625189}.
In this article, we follow the formulation and keep  mostly the same notations as in the work \cite{MR3869381} of  Ifrim and Tataru, though we will introduce new elements/notations.
The authors of \cite{MR3869381} not only establish the cubic energy estimate but also prove that \eqref{differentiatedEqn} is locally well-posed in $\mathcal{H}^1$.
The author, together with Ifrim, Rowan and Tataru worked further with this formulation in \cite{MR4462478} to show that the constant vorticity gravity water waves can be approximated by the Benjamin-Ono equation and established a better cubic energy estimate.
The author and Rowan also use the holomorphic coordinates in \cite{rowan2023dimensional,rowan2024} to prove the existence of 2D deep gravity and gravity-capillary solitary waves with constant vorticity.

Before going to the main results in this article, we recall some important results  in the work of Ifrim-Tataru \cite{MR3869381} and Ifrim-Rowan-Tataru-Wan \cite{MR4462478}, which are considered the basis of our results in this article.
The first result is the main local well-posedness result in \cite{MR3869381}.
\begin{theorem}[\hspace{1sp}\cite{MR3869381}]
 Let $n\geq 1$. The system $\eqref{e:CVWW}$ is locally well-posed in $\mathcal{H}^n$ for initial data $(W_0, Q_0)$ with the following regularity:
 \begin{equation*}
     (W_0, Q_0)\in \mathcal H, \quad (\mathbf{W}_0, R_0) \in \doth{1},
 \end{equation*}
 and satisfying the pointwise constraints
 \begin{align*}
   |\mathbf{W}(\alpha)+1|>\delta>0&  \quad\text{(no interface singularities)}, \\
    g+ \underline{a}(\alpha) >\delta>0& \quad\text{(Taylor sign condition)}.
 \end{align*}
 Furthermore, the solution can be continued for as long as $\uA$, $\uB$ remain bounded and the pointwise conditions above hold uniformly. 
 The same result holds in the periodic setting.
 \label{t:AiResult}
\end{theorem}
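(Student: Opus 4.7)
My plan is to work with the diagonalized differentiated system \eqref{differentiatedEqn} for $(\W, R)$, since in those variables the quasilinear structure is transparent and the material derivative $D_t = \partial_t + \ub \partial_\alpha$ makes the principal symbol visible. The central task is to build, at each regularity level $\mathcal{H}^n$, a coercive quasi-conserved energy functional that propagates along \eqref{differentiatedEqn} with a good dependence on the scale-invariant $\uA$ and on the higher-order control norm $\uB$. Modelled on the linearized conserved energy \eqref{ConservedEnergy} and weighted by the Taylor sign $g+\ua$ and by the Jacobian $J = |1+\W|^2$, a natural ansatz is
\begin{equation*}
 E^n(\W,R) \; \approx \; \int (g+\ua)\,\bigl||D|^{n-1}\W\bigr|^2 \, d\alpha \; + \; \int J\,\bigl||D|^{n-\frac{1}{2}}R\bigr|^2 \, d\alpha \; + \; (\text{lower order corrections in } \gamma),
\end{equation*}
so that top-order coercivity $E^n \gtrsim \|(W,Q)\|_{\mathcal{H}^n}^2$ reduces precisely to the two pointwise hypotheses of the theorem.

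With the energy in place, the next step is to compute $\tfrac{d}{dt}E^n$ along solutions of \eqref{differentiatedEqn}. The principal-order symmetry comes from the pairing between $(1+\W)R_\alpha/(1+\bar{\W})$ in the $\W$-equation and $-i(g\W-a)/(1+\W)$ in the $R$-equation, which after integration by parts against the Taylor weight cancels at the top order. What survives falls into two classes: (i) commutators $[\ub\partial_\alpha, |D|^{n-1}]$ and $[\ub\partial_\alpha, |D|^{n-\frac{1}{2}}]$, which are bounded by $\uB \, E^n$ via paradifferential and Coifman--Meyer estimates, and (ii) multilinear forms generated by $|D|^{n-1}, |D|^{n-\frac{1}{2}}$ hitting the auxiliary quantities $\ua, \ub, \underline{M}, N$, each of strictly lower order and bounded by $C(\uA)\uB\,E^n$, where a factor of $\gamma$ counts as half an order so that the vorticity corrections slot into the same scheme as the irrotational problem. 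The outcome is an a priori estimate $\tfrac{d}{dt}E^n \lesssim C(\uA)\,\uB\,E^n$, which by Gronwall both propagates $\mathcal{H}^n$ regularity and yields the continuation criterion stated in the theorem.

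Given the a priori bound, existence follows by a standard regularization and compactness argument. I would either insert a frequency truncation of the nonlinearity of \eqref{differentiatedEqn} or add a small parabolic viscosity $\varepsilon|D|^2$, solve the resulting Banach-space ODE in $\mathcal{H}^n$ by Picard iteration on a short time interval, and then use the uniform energy bound to extend all approximate solutions to a common interval $[0,T]$ whose length depends only on the initial values of $\uA$, $\uB$, and $\delta^{-1}$. Weak-$*$ compactness yields a limit which solves \eqref{e:CVWW}. Uniqueness and continuous dependence on the data then follow from an analogous energy estimate for the difference of two solutions, performed one derivative below and combined with a Bona--Smith argument to upgrade weak to strong continuity in $\mathcal{H}^n$.

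The hardest part, and where most of the technical work lies, is the multilinear bookkeeping in the $\tfrac{d}{dt}E^n$ computation. The expressions $\ua, \ub, \underline{M}, N$ appearing in \eqref{differentiatedEqn} are themselves bilinear or higher-order holomorphic projections in $(\W, R, \bar{\W}, \bar R)$, and every commutator produced when $|D|^{n-1}, |D|^{n-\frac{1}{2}}$ hit these expressions must be catalogued as a multilinear form and estimated by $\uA, \uB$ to the appropriate powers. This is exactly what the order convention of the introduction, with $\gamma$ treated as having order $\tfrac{1}{2}$, is designed to systematize: each $\gamma$-correction replaces the half order that would otherwise be lost, so that the scheme tracks the irrotational one carried out in \cite{hunter2016two}.
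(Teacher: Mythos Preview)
This theorem is not proved in the present paper; it is quoted from \cite{MR3869381} as background, so there is no in-paper proof to compare against. Your outline is a reasonable summary of the strategy in that reference: work with the diagonalized $(\W,R)$ system, build Taylor-weighted energies at each level, and propagate them via a Gronwall estimate with coefficient $C(\uA)\uB$.

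Two points where your sketch diverges from the actual argument in \cite{MR3869381}. First, the energy you write, with a Jacobian weight $J$ on the $R$-half, is not quite the one used there; the natural quadratic functional (recalled here as Proposition~\ref{t:QuadraticEnergy}) is $\int (g+\ua)|w|^2 + \Im(r\bar r_\alpha)\,d\alpha$ applied to $(\partial^{n}\W,\partial^{n}R)$, with no $J$ weight, and the coercivity on the $R$ side comes simply from the positivity of $-i\partial_\alpha$ on holomorphic functions. Second, uniqueness and continuous dependence are not obtained by estimating the difference of two solutions at one derivative below, but by proving well-posedness for the full linearized system in $\mathcal H$ (again Proposition~\ref{t:QuadraticEnergy}) and then running a frequency-envelope/Bona--Smith argument through a one-parameter family of regular solutions. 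A direct difference estimate is awkward here because the differentiated system \eqref{differentiatedEqn} involves $W$ (not just $\W$) through $b_1, M_1, N$, so differences of solutions do not sit cleanly in the same scale as the solutions themselves; the linearization route avoids this.
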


Next, it was proved in \cite{MR4462478} the following energy estimates for small initial data.
\begin{theorem}[\hspace{1sp}\cite{MR4462478}]\label{t:oldcubicenergy}
 For any $n\geq 0$ there exists an energy functional $E^{n,(3)}$ which has the following properties as long as $\underline{A}\ll 1$:
 \begin{enumerate} [label=(\roman*)]
     \item Norm equivalence:
       \begin{equation*}
           E^{n,(3)}(\mathbf{W},R) = (1+O(\underline{A}))\mathcal{E}_0 (\partial^n \mathbf{W}, \partial^n R) + O(\gamma^4 \uA^2)\mathcal{E}_0 (\partial^{n-1}\mathbf{W}, \partial^{n-1}R).
       \end{equation*}
     \item Cubic energy estimates:
      \begin{align*}
     \frac{d}{dt}E^{n,(3)}\lesssim_{\uA}& \left(\gamma^2AB+\gamma^3A^2+\gamma^3BA_{-1/2}+\gamma^4AA_{-1/2}+\gamma^4A_{-1}B+\gamma^5 AA_{-1}\right)\nonumber\\
     & \cdot(E^{n,(3)}E^{n-1,(3)})^{\frac 1 2} +\uA\uB E^{n,(3)}.
 \end{align*}
      Here if $n=0$ then $\mathcal{E}_0 (\partial^{-1}\mathbf{W}, \partial^{-1}R)$ is naturally replaced by $\mathcal{E}(W,Q)$.
      $\mathcal{E}_0$ is defined in \eqref{ConservedEnergy}, and $\mathcal{E}$ is the conserved energy \eqref{HamiltonianEnergy}.
 \end{enumerate}
\end{theorem}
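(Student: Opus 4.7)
The plan is to start from the natural quadratic energy $E^{n,(2)} := \mathcal{E}_0(\partial^n \W, \partial^n R)$ and add carefully chosen cubic corrections whose time derivatives cancel the non-perturbative cubic parts of $\frac{d}{dt}E^{n,(2)}$. First I would differentiate the system \eqref{differentiatedEqn} $n$ times, use $D_t = \partial_t + \ub\partial_\alpha$, and compute $\frac{d}{dt}E^{n,(2)}$. The transport by $\ub$ integrates by parts against $\ub_\alpha$; via \eqref{ubalpha} and the definitions of $\uA$ and $\uB$, a large chunk of these contributions reduces to cubic expressions already controlled by $\uA\uB E^{n,(2)}$. The remaining cubic remainders then split into three classes: (a) forms of order zero, absorbed directly in the $\uA\uB E^{n,(3)}$ bound; (b) forms of positive order, to be removed by a quasilinear normal form correction; and (c) forms sourced by the constant vorticity interactions between the two branches of the linear dispersion, which do not diagonalize on the main energy and force the lower order piece $\gamma^4\uA^2 \mathcal{E}_0(\partial^{n-1}\W, \partial^{n-1}R)$ in the norm equivalence.

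For the group (b) terms, the strategy is to work at the level of the energy rather than of the unknowns, which is the standard quasilinear normal form device: for each bad trilinear form $\Lambda(w_1,w_2,w_3)$ one constructs a symmetric trilinear functional $E^{(3)}_\Lambda$ whose time derivative along the linearization \eqref{e:ZeroLinear} produces $\Lambda$ modulo quartic remainders. The corresponding symbol is the quotient of the symbol of $\Lambda$ by the sum of dispersion relations of the three inputs. The dispersion relation of \eqref{e:ZeroLinear} has two roots $\tau = 0$ and $\tau = -\gamma$, so the relevant divisors are bounded away from zero except near the branch crossing, where the denominator degenerates at rate $\gamma$. These degenerate divisors are precisely the ones that, after the order bookkeeping introduced above Theorem \ref{t:AiResult}, contribute the $\gamma^4\uA^2$-weighted lower order term in the equivalence. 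The residual group (c) cubic terms, which cannot be eliminated by a normal form without loss, are then bounded by Cauchy--Schwarz against $(E^{n,(3)}E^{n-1,(3)})^{1/2}$, assigning one $L^\infty$-type control norm per frequency bracket; the prefactors $\gamma^2 AB$, $\gamma^3 A^2$, $\gamma^3 B A_{-1/2}$, $\gamma^4 A A_{-1/2}$, $\gamma^4 A_{-1} B$ and $\gamma^5 A A_{-1}$ then read off directly from which combinations of pointwise and BMO norms arise when one matches derivative counts and powers of $\gamma$ to order zero.

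The main obstacle is carrying out the normal form analysis in the presence of the two-branch dispersion. Unlike the irrotational case of \cite{ai2023dimensional}, where the dispersion is a single smooth curve and every nonresonant interaction inverts cleanly, here the crossing of the two branches produces divisors that vanish at rate $\gamma$, so the naive correction is singular as $\gamma \to 0$. The resolution is to let the correction carry this $\gamma^{-1}$ singularity but to \emph{pair it against the lower order energy} $\mathcal{E}_0(\partial^{n-1}\W,\partial^{n-1}R)$, which loses one derivative and therefore gains a compensating gradient factor; after squaring in the equivalence, this is exactly the origin of the $\gamma^4\uA^2$ coefficient. A secondary technical point is to verify that the cubic correctors themselves are $O(\uA)$ relative to $E^{n,(2)}$ in the pointwise sense, which follows from unpacking the definition of $\uA$ and invoking the hypothesis $\uA \ll 1$ to close the perturbation argument. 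Combining the cancellations from the normal form with the residual trilinear bounds then yields both the norm equivalence in (i) and the cubic estimate in (ii).
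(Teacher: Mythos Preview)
This theorem is not proved in the present paper; it is quoted verbatim from \cite{MR4462478} as background, so there is no proof here to compare against. Your high-level strategy (start from the quadratic energy $\mathcal{E}_0(\partial^n\W,\partial^n R)$, compute its evolution under \eqref{differentiatedEqn}, and add cubic energy corrections built from the normal form transformation to kill the unbalanced trilinear remainders) is indeed the approach taken in \cite{MR3869381} and refined in \cite{MR4462478}.

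However, your explanation of the mechanism behind the $O(\gamma^4\uA^2)\mathcal{E}_0(\partial^{n-1}\W,\partial^{n-1}R)$ correction is not right. You attribute it to small divisors of size $\gamma$ near the branch crossing, producing a $\gamma^{-1}$ singularity in the correction that must then be tamed by pairing against the lower-order energy. But the actual normal form \eqref{e:NormalForm} has no $\gamma^{-1}$ factors at all: every coefficient is a nonnegative power of $\gamma$ (over a power of $g$). What really happens is that the normal form contains terms such as $\frac{\gamma^4}{4g^2}(\partial^{-1}W-\partial^{-1}\bar W)W$ and $\frac{\gamma^3}{4g}(\partial^{-1}W-\partial^{-1}\bar W)W$, i.e.\ contributions with \emph{high} powers of $\gamma$ paired with an \emph{antiderivative} of the unknown. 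When these are differentiated $n$ times and inserted into the cubic energy, the $\partial^{-1}$ forces one factor down to the $(n-1)$-level, and the $\gamma^4$ (together with the two $L^\infty$-type factors, each worth $\uA$) produces exactly the $\gamma^4\uA^2\mathcal{E}_0(\partial^{n-1}\W,\partial^{n-1}R)$ piece. The cross terms in the energy estimate that cannot be removed by this correction are likewise those involving one antiderivative factor, and bounding them by Cauchy--Schwarz against $(E^{n,(3)}E^{n-1,(3)})^{1/2}$ is what generates the list $\gamma^2AB,\dots,\gamma^5AA_{-1}$. So the lower-order energy enters because the normal form genuinely contains low-frequency corrections weighted by $\gamma^k$, not because one is inverting a nearly-resonant denominator.
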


We also recall that in \cite{MR3869381} the normal form corrections which eliminate the quadratic parts of  source terms in \eqref{e:CVWW}:

\begin{equation} \label{e:NormalForm}
\begin{aligned}
W^{[2]} = &-(W +\bar{W})W_\alpha - \frac{\gamma}{2g}[(Q+\bar{Q})W_\alpha +(W+\bar{W})Q_\alpha ]\\
& + i\frac{\gamma^2}{g}\left[(\partial^{-1}W - \partial^{-1}\bar{W})W_\alpha +W^2 + \frac{1}{2}|W|^2\right] - \frac{\gamma^2}{4g^2}(Q+\bar{Q})Q_\alpha \\
& + i\frac{i
\gamma^3}{4g^2}[(Q+\bar{Q})W+(\partial^{-1}W - \partial^{-1}\bar{W})Q_\alpha]+ \frac{\gamma^4}{4g^2}(\partial^{-1}W - \partial^{-1}\bar{W})W,\\
Q^{[2]} = & -(W+\bar{W})Q_\alpha - \frac{\gamma}{2g}(Q+\bar{Q})Q_\alpha + i\frac{\gamma}{4}(W^2+2|W|^2)\\
& + i\frac{\gamma^2}{2g}\left[(\partial^{-1}W - \partial^{-1}\bar{W})Q_\alpha + \frac{1}{2}(Q+\bar{Q})W\right]+\frac{\gamma^3}{4g}(\partial^{-1}W - \partial^{-1}\bar{W})W. 
\end{aligned}
\end{equation}
In other words, the new variables $(\tilde W, \tilde Q) : = (W+ \nP W^{[2]}, Q+ \nP Q^{[2]})$ solve the system
\begin{equation*}
\left\{
             \begin{array}{lr}
             \tilde{W}_t + \tilde{Q}_\alpha = G^{\geq 2}(W, Q) &  \\
             \tilde{Q}_t - ig\tilde{W}  +i\gamma \tilde{Q}  = K^{\geq 2}(W, Q),&
             \end{array}
\right.
\end{equation*}
where the right-hand side only contains the cubic and higher order nonlinearity of \eqref{e:CVWW}.

We remark that \eqref{e:NormalForm} is the normal form transformation for the original water wave system \eqref{e:CVWW}, but not for the differentiated system \eqref{differentiatedEqn}.
In order to obtain a low regularity energy estimate for the system \eqref{differentiatedEqn}, we need to build  new paradifferential normal forms  that are similar but different from \eqref{e:NormalForm}.
Inspired by the above normal forms, the authors in  \cite{MR3869381} construct the modified cubic energy that enables them to establish the local well-posedness result.
Later in Section \ref{s:LinearEstimate}, we will also make use of the above normal form transformation, but rather at the paradifferential level, to construct the paradifferential corrections for the linearized system.

\subsection{The main results}
Our balanced energy estimate is the following:
\begin{theorem}
 For any $s\geq 0$ there exists an energy functional $E_s$ which has the following properties as long as $\max \{\uAS, \uA\} \ll 1$:
 \begin{enumerate} [label=(\roman*)]
     \item Norm equivalence:
       \begin{equation}
           E_s(\mathbf{W},R) = (1+O(\underline{A}))\|(\mathbf{W},  R)\|^2_{\doth{s}} + O(\gamma^4 \underline{A}^2)\|(\mathbf{W}, R)\|^2_{\doth{s-1}}.\label{normEquivalence}
       \end{equation}
     \item Balanced energy estimates:
      \begin{equation}
     \frac{d}{dt}E_s\lesssim_{\uAS} \Astar(\gamma^{\frac{1}{4}}+\Astar) E_s.\label{strongcubicest}
 \end{equation}
 \end{enumerate} \label{t:MainEnergyEstimate}
\end{theorem}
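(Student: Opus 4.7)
My plan is to work at the level of the diagonalized differentiated system \eqref{differentiatedEqn} in the good variables $(\mathbf{W}, R)$ and to construct $E_s$ as a weighted quadratic functional whose leading part is $\|(\mathbf{W}, R)\|_{\doth{s}}^2$, augmented by cubic corrections of two different origins. The first type of correction accounts for the paradifferentiation of the system: once one replaces the full coefficients in \eqref{differentiatedEqn} by their paraproducts with the high-frequency part of $(\mathbf{W}, R)$, the natural $L^2$-based energy becomes a quadratic form weighted by symbols built from the advection velocity $\underline{b}$, the Jacobian $1/|1+\mathbf{W}|^2$, and the frequency shift $g+\underline{a}$. The second correction is a paradifferential version of the normal form transformation \eqref{e:NormalForm}, designed to eliminate the non-perturbative quadratic source terms on the right-hand sides of \eqref{differentiatedEqn} and \eqref{EqnYR}; the lower-order $\doth{s-1}$ piece with weight $\gamma^4 \underline{A}^2$ in \eqref{normEquivalence} arises because the normal form correction cannot be bounded at the same level of regularity when $\gamma \neq 0$, and the resulting one-derivative loss is paid by dropping to $\doth{s-1}$.

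With $E_s$ so constructed, the norm equivalence \eqref{normEquivalence} reduces to bounding each cubic correction term by $\underline{A}$ times the quadratic main term, with the remaining error falling into the $\gamma^4 \underline{A}^2 \|(\mathbf{W}, R)\|_{\doth{s-1}}^2$ bucket. Here the hierarchy of pointwise norms $A, A_{-\frac{1}{2}}, A_{-1}, A_{-\frac{3}{2}}, A_{-2}$ together with the $\gamma$-weightings hidden in $\underline{A}$ and $\uAS$ is crucial: each correction is a product of factors whose total order sums to zero, and matching orders lets one play off pointwise control against $L^2$-regularity.

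The heart of the argument is the bound \eqref{strongcubicest}. Differentiating $E_s$ in time and substituting \eqref{differentiatedEqn}, the bad quadratic contributions are cancelled by the design of the paradifferential metric and the normal form correction, leaving a collection of cubic and higher-order commutator and remainder terms. The crucial structural observation, in the spirit of Ai-Ifrim-Tataru \cite{ai2023dimensional}, is that in each of these remainders one may choose a frequency decomposition so that at most one factor carries the full $L^2$-based regularity $|D|^s \mathbf{W}$ or $|D|^{s+\frac{1}{2}} R$, while the other factors are measured in an $L^\infty$- or $L^4$-based norm contained in $\Astar$ or $\uAStar$. Summing over a Littlewood-Paley decomposition and using the Bony paraproduct and remainder estimates produces exactly the balanced factor $\Astar(\gamma^{\frac{1}{4}} + \Astar)$, with the $\gamma^{\frac{1}{4}}$ piece coming from the genuinely vorticity-driven cubic terms and the $\Astar^2$ piece from the irrotational-type terms.

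The principal obstacle, compared with the zero-vorticity argument of \cite{ai2023dimensional}, is that the new vorticity sources $i\frac{\gamma}{2}(R-\bar R)$, the $\underline{M}$-terms, and the $a_1$ and $b_1$ contributions to $\underline{a}$ and $\underline{b}$ generate many additional quadratic expressions that must be killed by the normal form, as well as new cubic remainders carrying mixed $\gamma$-weights. The design of the paradifferential correction must therefore be done order-by-order in $\gamma$, with each $\gamma^k$ piece placed at an appropriately lower base order so that the total order agrees with that of $\Astar$. Verifying that this hierarchy closes, and that every quadratic term coming from \eqref{differentiatedEqn} has a corresponding normal-form killer without producing uncontrollable remainders, is the main technical task of the proof.
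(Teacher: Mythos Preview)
Your proposal is correct and follows essentially the same approach as the paper: paradifferential reduction of \eqref{differentiatedEqn}, balanced normal form corrections modeled on \eqref{e:NormalForm} to kill the nonperturbative quadratic source terms, and a weighted quadratic energy whose time derivative is controlled by $\Astar(\gamma^{\frac{1}{4}}+\Astar)$. The paper organizes the argument somewhat more modularly than you describe: it first builds the energy $E^{s,para}_{lin}$ for the \emph{linearized} paradifferential flow (including a paradifferential conjugation by $\Phi=|1+\mathbf{W}|^{-2s}e^{-s\frac{\gamma}{g}\Re R-s\frac{\gamma^2}{g}\Im W}$ to handle general $s$), then observes that the paradifferential flow extracted from the full system can be reduced to the linearized one by the change of variables $(w,r)=(\partial_\alpha^{-1}\hat w,\ \partial_\alpha^{-1}T_{1+\mathbf W}\hat r-\partial_\alpha^{-1}T_{R_\alpha}\partial_\alpha^{-1}\hat w)$, and finally applies the normal form correction $(\mathbf{W}_{NF},R_{NF})$ to remove the remaining balanced source terms; this separation lets the linearized estimate be reused verbatim rather than rebuilt, but the underlying analytic content is the same as what you outline.
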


Compared to the previous energy estimates Theorem \ref{t:oldcubicenergy}, our energy estimates get improved in the following way:
\begin{enumerate}
\item We take advantage of the paradifferential structure of the water waves system.
Our energy estimates no longer depend on the control norm $\uB$, they now only depend on the control norm $\Astar$, which has $\frac{1}{4}$ less derivatives compared to $\uB$.
This effectively lowers the regularity for well-posedness.
Our normal form corrections are also  at the paradifferential level in the same manner as done in the breakthrough work of Ai-Ifrim-Tataru \cite{ai2023dimensional}.
\item Compared to the cubic energy estimates in Theorem \ref{t:oldcubicenergy}, the right-hand side of our energy estimates no longer depends on $E_{s-1}$.
As a result, we only need to apply Gronwall's inequality for once and don't need to use the induction as in \cite{MR3869381}.
\item We allow the index $s$ to be non-integer, which is more flexible compared to the previous one.
\end{enumerate}

Having established the energy estimate for the full equations, we combine it with the energy estimate for the linearized equations Theorem \ref{t:LinearizedWellposed} to get the following low regularity well-posedness result.
\begin{theorem}
 Let $s \geq s_0 = \frac{3}{4}$. The system \eqref{differentiatedEqn} is locally well-posed for  initial data $(\W_0, R_0 )$ in $\mathcal{H}^s(\mathbb{R})$ (or $\mathbb{T}$)  with the following regularity:
\begin{equation*}
(\W_0, R_0)\in \doth{\frac{3}{4}}, \qquad \gamma^2 (\W_0, R_0)\in \doth{-\frac{1}{4}},
\end{equation*}
and such that $\underline{A}^\sharp_{0} = \uAS(\W_0, R_0)$, $\uA_{0} = \uA(\W_0, R_0)$ are small.
Furthermore, the solution can be continued for as long as $\max \{\uAS, \uA\}$ remains bounded and $\Astar(\gamma^{\frac{1}{4}}+\Astar) \in L^1_t$.
\label{t:MainWellPosedness}
\end{theorem}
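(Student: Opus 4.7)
The plan follows the now-standard Ai-Ifrim-Tataru road map: produce smooth approximate solutions, close a bootstrap for them using the balanced energy estimate of Theorem \ref{t:MainEnergyEstimate}, pass to the limit by the linearized energy estimate (Theorem \ref{t:LinearizedWellposed}), and promote everything to continuity in the strong topology via a frequency envelope argument. Concretely, I regularize the data by smooth frequency truncation $(\W_0^{(n)}, R_0^{(n)}) = P_{\leq n}(\W_0, R_0)$; by Theorem \ref{t:AiResult} each such datum produces a smooth solution $(\W^{(n)}, R^{(n)})$ on some maximal time interval. The task is to show that these solutions exist on a common interval $[0,T]$, with $T$ depending only on $\uAS(\W_0, R_0)$ and $\uA(\W_0, R_0)$, and that they are uniformly bounded in $\doth{s}$.

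To close this bootstrap, assume a priori that $\max\{\uAS, \uA\} \ll 1$ on $[0,T]$. Theorem \ref{t:MainEnergyEstimate} then gives $\tfrac{d}{dt}E_s \lesssim_{\uAS} \Astar(\gamma^{1/4}+\Astar)\,E_s$, so the question reduces to controlling $\Astar$ in $L^2_t$. Since the Sobolev embedding $\doth{3/4}\hookrightarrow \dot{B}^{1/4}_{\infty,2}$ just fails in one spatial dimension, I rely on Strichartz-type estimates for the dispersive part of \eqref{differentiatedEqn}. At high frequency the dispersion relation $\tau^2+\gamma\tau+g\xi=0$ is asymptotically that of the irrotational problem, so the Strichartz parametrix of \cite{ai2023dimensional}, combined with a paradifferential reduction that absorbs the advection $\ub\partial_\alpha$ and the Taylor coefficient $g+\ua$ (and accounts for the full $\uAS$-family of vorticity corrections), should deliver the required $\|\Astar\|_{L^2_t([0,T])} \lesssim \mathbf{N}_s(\W_0,R_0)+\uA(\W_0, R_0)$ with an implicit constant depending on $\uAS$. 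Feeding this into Gronwall's inequality gives a uniform bound on $E_s$, hence via the norm equivalence \eqref{normEquivalence} on $\|(\W^{(n)}, R^{(n)})\|_{\doth{s}}$; the pointwise control norms $\uAS, \uA$ are simultaneously propagated, closing the bootstrap.

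Convergence of the sequence $(\W^{(n)}, R^{(n)})$ is then obtained by applying Theorem \ref{t:LinearizedWellposed} to the difference of two approximate solutions, which satisfies (up to controllable higher-order errors) the linearized system: this gives a Cauchy estimate at a weaker regularity such as $\doth{0}$. Interpolating with the uniform $\doth{s}$ bound produces convergence in $\doth{s'}$ for every $s'<s$ and a limit $(\W, R)\in L^\infty_t\doth{s}$ solving \eqref{differentiatedEqn}. Uniqueness is an immediate consequence of Theorem \ref{t:LinearizedWellposed} applied to the difference of two solutions with identical data. To upgrade $(\W,R)$ to $C_t\doth{s}$ and obtain continuous dependence on the data, I use Tao's frequency envelope method, exactly as in \cite{ai2023dimensional}: propagate an admissible envelope $\{c_k\}$ for the data to the solution via Theorem \ref{t:MainEnergyEstimate} applied dyadically, controlling the high-frequency tails; combined with the linearized estimate for solutions emanating from nearby data, this gives continuous dependence in $\mathcal{H}^s$. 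The continuation criterion then reads off directly from the fact that $\Astar(\gamma^{1/4}+\Astar)\in L^1_t$ together with smallness of $\uAS, \uA$ keeps $E_s$ bounded, preventing blow-up of the $\mathcal{H}^s$ norm.

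The main obstacle is the Strichartz control of $\Astar$ in $L^2_t$ at the critical regularity $s=3/4$: one must adapt the lossless parametrix construction of \cite{ai2023dimensional} to the rotational system \eqref{differentiatedEqn}, verifying that the additional vorticity terms packaged in $\uAS$ and $\uAStar$, although of lower differential order, have the right scaling weight to be absorbed without deteriorating the Strichartz bound. Once this is in hand, the pairing of the balanced energy estimate (which, unlike the cubic estimate Theorem \ref{t:oldcubicenergy}, no longer costs a factor of $\uB$) with the Strichartz $L^2_t\Astar$ bound yields precisely the $1/4$-derivative improvement over \cite{MR3869381}.
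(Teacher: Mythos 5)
The central step of your bootstrap is misdiagnosed, and this sends you into an unnecessary (and unproven) Strichartz detour. You claim that the embedding of $\doth{\frac{3}{4}}$ into the $\dot{B}^{\frac14}_{\infty,2}$-type control norms ``just fails'' in one dimension, so that $\Astar$ must be controlled in $L^2_t$ by a lossless parametrix adapted from \cite{MR4161284, ai2023dimensional}. In fact the embedding holds: by Bernstein, $\|P_k f\|_{L^\infty}\lesssim 2^{k/2}\|P_kf\|_{L^2}$, hence $\dot H^{s}(\mathbb{R})\hookrightarrow \dot B^{s-\frac12}_{\infty,2}(\mathbb{R})$ with $\ell^2$ summation over dyadic blocks, so $A_{\frac14}\lesssim \mathbf{N}_{\frac34}$, and similarly $\dot H^{\frac34}\hookrightarrow \dot W^{\frac14,4}$ gives $A^\sharp$, $\uAS$, $\uAStar\lesssim$ the $\doth{\frac34}$ (plus weighted low-frequency) norm. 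This is exactly how the paper closes the bootstrap: under the a priori assumption, Sobolev/Besov embedding gives $\Astar\lesssim\mathcal{M}$ and $\uAStar\lesssim\mathcal{M}$ pointwise in time, and then Theorem \ref{t:MainEnergyEstimate} plus Gronwall on a short time interval suffices — no dispersive input whatsoever. Indeed the paper stresses that the balanced energy method does not rely on the dispersive character of the problem; the item you label ``the main obstacle'' (adapting the lossless parametrix to nonzero constant vorticity) is a substantial open construction in your write-up that the actual argument never needs. As written, your proof is incomplete precisely at that point, and the fix is to delete the Strichartz step and invoke the embeddings.

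A secondary, more minor divergence: for convergence of the regularized solutions you estimate the difference of two approximate solutions by the linearized system ``up to controllable higher-order errors.'' The paper instead treats the truncation frequency as a continuous parameter and applies Theorem \ref{t:LinearizedWellposed} to $(w^k,r^k)=(\partial_k W_{<k},\,\partial_k Q_{<k}-R_{<k}\partial_k W_{<k})$, which solves the linearized equations exactly; integrating in $k$ then yields the difference bounds without having to quantify the error terms in a genuine difference equation. Your route can be made to work, but those error estimates are not free and would need to be carried out; the continuous-parameter family is the cleaner mechanism, and combined with frequency envelopes it also delivers the $C_t\mathcal{H}^{\frac34}$ regularity and continuous dependence you describe. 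The remaining elements of your outline (regular $\mathcal{H}^n$ bounds, uniqueness from the linearized estimate, continuation criterion from $\Astar(\gamma^{\frac14}+\Astar)\in L^1_t$ with smallness of $\max\{\uAS,\uA\}$) do match the paper's scheme.
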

Although for the second initial condition one can just write $(\W_0, R_0)\in \doth{\frac{3}{4}}\cap\doth{-\frac{1}{4}}$, we keep the coefficient $\gamma^2$ here for book-keeping, and also at time $t=0$, the normal form energy $E_{\frac{3}{4}}(\W_0, R_0)$ constructed in \eqref{normEquivalence} is bounded.
The control norms at time zero $\underline{A}^\sharp_{0}$ and $\uA_{0}$ need to be chosen small so that  $O(\underline{A})$ and $O(\gamma^4 \underline{A})$ in \eqref{normEquivalence} are sufficiently small compared to $1$ and the implicit constant that depends on $\underline{A}^\sharp$ in \eqref{strongcubicest} is bounded at time $t=0$.

\begin{remark}
For local well-posedness, we mean that for all initial data $(\W_0, R_0)$  satisfying the above conditions,  there exists a positive time $T>0$ depending only on the size of initial data, such that the following holds:
\begin{enumerate}
\item \textit{Unique existence for the regular solutions}: If the initial data is in $\mathcal{H}^s$ that satisfies the above conditions for some integer $s\geq 1$, then there exist a unique solution $(\W, R)\in \mathcal{H}^s$ in $[0,T]$, with the property that
\begin{equation*}
    \|(\W, R)\|_{C[0,T; \mathcal{H}^{s^{'}}]} \lesssim \|(\W_0, R_0)\|_{C[0,T; \mathcal{H}^{s^{'}}]}, \quad 1\leq s^{'}\leq s.
\end{equation*}
\item \textit{Existence for rough solutions}: For initial data in $\mathcal{H}^{\frac{3}{4}}$, the solution is in $C[0,T;\mathcal{H}^{\frac{3}{4}}]$, and it can be treated as the unique strong limit of smooth solutions.
\item \textit{Continuous dependence on the  data for rough solutions}: If a sequence $(\W_j, R_j)(0)$ that converges to $(\W_0, R_0)$ in $\mathcal{H}^{\frac{3}{4}}$ topology, then $(\W_j, R_j)(t)$ also converges to $(\W, R)(t)$ in $\mathcal{H}^{\frac{3}{4}}$, for $t\in [0,T]$.
\end{enumerate}
\end{remark}
\begin{remark}
In \cite{MR3869381}, in order to prove the local well-posedness, the initial data needs to satisfy the no interface singularities condition: $|\W(\alpha)+1|>\delta >0$, and the Taylor sign condition: $g+\ua(\alpha)>0$.
Later in \eqref{uaBMO}, we will prove that $\|\ua\|_{L^\infty}\lesssim \uA(1+\uA)$, so that these two conditions are automatically satisfied when $\uA$ is small in our setting.

Compared to the previous well-posedness result Theorem \ref{t:AiResult}, our result lowers $\frac{1}{4}$ Sobolev regularity.
For simplicity, we will focus on the case of the real line.
For the periodic setting, we refer the reader to the discussion in Appendix $A$ of \cite{hunter2016two} for the minor changes.
\end{remark}

\subsection{The structure of the article} In this article, we use plenty of estimates in paradifferential calculus.
The full system and the linearized system will be rewritten in terms of paradifferential equations.
To get ready for this, we review in Section \ref{s:Def} the definitions and notations of paraproducts and some of the classical Coifman-Meyer paraproduct estimates.
We also introduce the necessary definitions of norms and function spaces we will be using.

Then in Section \ref{s:WaterBound} we apply the paraproduct estimates to auxiliary functions for both Sobolev and $BMO$ bounds.
We consider the corresponding bounds for frequency-shift $\ua$, advection velocity $\ub$ and auxiliary functions $Y$ and $\underline{M}$.
In addition, we compute the leading terms of  para-material derivatives of $\W, R, W, Y, X: = T_{1-Y}W, Z:= T_{1-Y}Q,$ and $U:= T_{1-Y}\partial_\alpha^{-1}W$.

Having established the necessary bounds, we consider in Section \ref{s:LinearEstimate} the linearized system and its energy estimate in $\doth{\frac{1}{4}}$.
In order to apply the paradifferential calculus, we first rewrite the linearized system as a system of  paradifferential equations.
Then we consider the estimates for both linear paradifferential flow and the source terms.
The detailed ideas will be described within this section.

Next, we turn our attention to the energy estimate for the full system in Section \ref{s:FullEqn}.
Again we first reduce the full system to another system of paradifferential equations.
In the following, we show that the paradifferential equations can be further reduced to the linearized paradifferential equations with unbalanced source terms.
In the end, we use paradifferential corrections to eliminate these unfavourable source terms.

Finally, collecting the energy estimates for both linearized and the full systems, we give an outline of proof for the local well-posedness of the water wave system in Section \ref{s:Proof}.\\

\textbf{Acknowledgments.} The author would like to thank Mihaela Ifrim and Albert Ai for introduction and discussion of many important details in their work \cite{ai2023dimensional}.

\section{Definition of norms and estimates} \label{s:Def}
In this section we review the definition of norms and estimates we will use later in the article.
Our analysis primarily relies on the paradifferential calculus, especially the paraproduct type estimates.
\subsection{Norms and function spaces}
We begin with  Littlewood-Paley frequency decomposition
\begin{equation*}
    I = \sum_{k\in \mathbb{Z}} P_k,
\end{equation*}
where $P_k$ are smooth symbols with frequency localized at $2^k$.
Most of our analysis is at the level of homogeneous Sobolev spaces $\dot{H}^s$, whose norms are given by
\begin{equation*}
  \|f\|_{\dot{H}^s}  \sim \|(\sum_{k \in \mathbb{Z}} |2^{ks} P_k f|^{2})^{\frac{1}{2}} \|_{L^2}  = \|2^{ks} P_k f\|_{L^2_\alpha l^2_k}.
\end{equation*}
We recall the Littlewood-Paley square function and its restricted version
\begin{equation*}
    S(f)(\alpha) : = \left(\sum_{k\in \mathbb{Z}}|P_k f(\alpha)|^2\right)^\frac{1}{2}, \quad S_{>k}(f)(\alpha) : = \left(\sum_{j>k}|P_j f(\alpha)|^2\right)^\frac{1}{2}.
\end{equation*}
The $BMO$ space can be defined using the following  square function characterization:
\begin{equation*}
    \|f\|_{BMO}^2 : = \sup_k \sup_{|Q| = 2^{-k}}2^k\int_Q |S_{>k}(f)|^2 \,d\alpha.
\end{equation*}
For real number $s$ we can define the homogeneous spaces  $BMO^s$ with norm
\begin{equation*}
    \|f\|_{BMO^s} : = \||D|^s f \|_{BMO}.
\end{equation*}
We will also need the following homogeneous Besov spaces $\dot{B}^s_{\infty, 2}$, whose norms are defined by
\begin{equation*}
    \|f\|_{\dot{B}^s_{\infty, 2}} \sim \left(\sum_{k\in \mathbb{Z}}  2^{2sk} \| P_k f\|^2_{L^\infty} \right)^{\frac{1}{2}}.
\end{equation*}
We have the embedding property $\dot{B}^s_{\infty, 2} \hookrightarrow BMO^s$.

\subsection{ Paraproduct and  Moser type estimates} For the product of two functions $fg$, we  use the Littlewood-Paley paraproduct type decomposition to decompose it as
\begin{equation*}
    fg  = \sum_{k\in \mathbb{Z}} f_{<k-4}g_k + \sum_{k\in \mathbb{Z}} f_{k}g_{<k-4} + \sum_{|k-l|<4}f_k g_l : = T_f g+ T_g f + \Pi(f,g).
\end{equation*}
For the paraproducts, we have the H\"{o}lder type  estimates
\begin{align*}
    &\| T_f g\|_{L^r} \leq \|f\|_{L^p} \|g\|_{L^q}, \quad \frac{1}{r} = \frac{1}{p}+ \frac{1}{q}, \quad 1<p\leq \infty,\quad 1<q,r<\infty, \\
     &\| \Pi (f, g)\|_{L^r}\leq \|f\|_{L^p} \|g\|_{L^q}, \quad \frac{1}{r} = \frac{1}{p}+ \frac{1}{q}, \quad 1<p,q,r<\infty.
\end{align*}
In the case of $q= \infty$, the right-hand side is replaced by  $BMO$ norms:
\begin{equation}
    \| T_f g\|_{L^p} + \| \Pi (f, g)\|_{L^p}\leq \||D|^{-s}f\|_{L^p} \|g\|_{BMO^s}, \quad 1<p<\infty, s\geq 0. \label{LpBalance}
\end{equation}
For the estimate of $T_f g$ term we have the weaker bound
\begin{equation}
    \|T_f g\|_{L^p} \leq \|g\|_{\dot{W}^{s,p}}\|f\|_{BMO^{-s}}, \quad 1<p<\infty, s>0. \label{LpLowhigh}
\end{equation}
We also have the following commutator estimates:
\begin{lemma}[\hspace{1sp}\cite{ai2023dimensional}] \label{t:CommutatorL2}
 The following commutator estimates hold for $1<p<\infty$:
 \begin{align*}
     &\||D|^s [\nP,g]|D|^\sigma f \|_{L^p} \lesssim \| |D|^{s+\sigma}g\|_{BMO}\| f\|_{L^p}, \qquad \sigma\geq 0, \quad s\geq 0,\\
      &\||D|^s [\nP,g]|D|^\sigma f \|_{L^p} \lesssim \| |D|^{s+\sigma}g\|_{L^p}\| f\|_{BMO}, \qquad \sigma> 0, \quad s\geq 0.
 \end{align*}
\end{lemma}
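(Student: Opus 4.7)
The plan is to decompose the product $gh$ (with $h = |D|^\sigma f$) using Littlewood-Paley paraproducts,
$$gh = T_g h + T_h g + \Pi(g, h),$$
and correspondingly split the commutator as
$$[\mathbf{P}, g]h = \bigl(\mathbf{P}(T_g h) - T_g \mathbf{P}h\bigr) + \bigl(\mathbf{P}(T_h g) - T_{\mathbf{P}h} g\bigr) + \bigl(\mathbf{P}(\Pi(g,h)) - \Pi(g, \mathbf{P}h)\bigr),$$
addressing the low-high, high-low, and high-high interactions in turn. After bounding each piece in $L^p$, I would push $|D|^s$ inside via fractional Leibniz applied at the paraproduct level.

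The low-high and high-low pieces both reduce via a frequency-support argument. For the low-high piece, each summand $g_{<k-4}h_k$ has Fourier support in an annulus $|\xi| \sim 2^k$ not meeting the origin, on each connected component of which the symbol $\chi_{\xi<0}$ of $\mathbf{P}$ is constant; splitting $h_k = h_k^+ + h_k^-$ into its positive- and negative-frequency parts shows $[\mathbf{P}, g_{<k-4}]h_k = 0$ exactly, so this piece vanishes. For the high-low piece, the same annular-support argument yields the clean identity
$$\mathbf{P}(T_h g) - T_{\mathbf{P}h}g = T_{\bar{\mathbf{P}}h}(\mathbf{P}g) - T_{\mathbf{P}h}(\bar{\mathbf{P}}g),$$
i.e.\ an honest pair of paraproducts. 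Pushing $|D|^s$ onto the high-frequency factor and applying \eqref{LpBalance} with index $\sigma$ (which requires $\sigma \geq 0$) yields the first inequality, while \eqref{LpLowhigh} with index $\sigma$ (which requires $\sigma > 0$) yields the second -- this is precisely where the strict positivity in the hypothesis originates. The boundedness of $\mathbf{P}, \bar{\mathbf{P}}$ on $L^p$ and $BMO$ removes the projections at the end.

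The main obstacle is the high-high piece $\mathbf{P}(\Pi(g,h)) - \Pi(g,\mathbf{P}h)$, because the product $g_k h_\ell$ with $|k-\ell|<4$ has Fourier support in $[-2^{k+2}, 2^{k+2}]$ which crosses zero, so $\mathbf{P}$ no longer decouples by frequency localization. I would bound the two summands separately, using Bernstein to move $|D|^s$ through each dyadic piece at a cost of $2^{ks}$ and then reassembling via the $\ell^2$ Littlewood-Paley characterization of $L^p$ for $1<p<\infty$. The low-frequency tail created by the $\mathbf{P}$ projection is exactly what \eqref{LpBalance}/\eqref{LpLowhigh} are designed to handle: the $BMO$-$L^p$ duality absorbs the endpoint behavior near $\xi=0$, and the split of $s+\sigma$ derivatives between $g$ and $f$ closes by the same balance as in the high-low case.
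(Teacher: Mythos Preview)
The paper does not prove this lemma itself; it is quoted from \cite{ai2023dimensional} with no argument supplied here, so there is no in-paper proof to compare against.

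Your paraproduct approach is the standard one and is correct. The low-high piece vanishes exactly by the frequency-support argument you give. The high-low identity $\mathbf{P}(T_h g)-T_{\mathbf{P}h}g = T_{\bar{\mathbf{P}}h}\mathbf{P}g - T_{\mathbf{P}h}\bar{\mathbf{P}}g$ is valid, and you are right that the $\sigma>0$ versus $\sigma\ge 0$ dichotomy comes precisely from invoking \eqref{LpLowhigh} versus \eqref{LpBalance} (after shifting $|D|^s$ onto the high-frequency factor $g$, which is harmless since output and high-input frequencies agree in a paraproduct). For the high-high piece your sketch is terse but accurate: $|D|^s$ commutes with $\mathbf{P}$, and since $s\ge 0$ Bernstein on each diagonal block $g_k h_\ell$ (output frequency $\lesssim 2^k$) trades $|D|^s$ for at most $2^{ks}$, which exactly balances against $g_k\sim 2^{-k(s+\sigma)}(|D|^{s+\sigma}g)_k$ and $h_\ell\sim 2^{\ell\sigma}f_\ell$; what remains is then controlled by the $s=0$ case of \eqref{LpBalance} (or its symmetric form for the second inequality). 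One small point: rather than ``bounding the two summands separately'' and summing, it is cleaner to carry the square-function structure through the reduction, since a naive triangle inequality over $k$ would otherwise lose the $\ell^2_k$ orthogonality needed to close in $L^p$.
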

 Later these commutator estimates will be applied to functions which are either holomorphic or anti-holomorphic.

Next we consider some  product type estimates involving $BMO$ or $L^\infty$ norms.
\begin{lemma}[\hspace{1sp}\cite{ai2023dimensional}]
\begin{enumerate}
    \item The following estimates hold:
    \begin{align}
       & \|\Pi(u,v) \|_{BMO}\lesssim \|u\|_{BMO}\|v\|_{BMO}, \label{BMOBalance}\\
        &\|P_{\leq k} \Pi_{\geq k}(u,v)\|_{L^\infty} \lesssim \|u\|_{BMO}\|v\|_{BMO},\\
       & \|T_u v\|_{BMO}\lesssim \|u\|_{L^\infty}\|v\|_{BMO},\\
       & \| T_u v\|_{BMO}\lesssim \|u\|_{BMO^{-s}}\|v\|_{BMO^{s}}, \qquad s>0. \label{BMOSigma}
    \end{align}
    \item For $s>0$, the space $L^\infty\cap BMO^s$ is an algebra and satisfies the estimate
    \begin{equation}
        \|uv\|_{BMO^s}\lesssim \|u\|_{L^\infty} \| v\|_{BMO^s}+ \|v\|_{L^\infty} \| u\|_{BMO^s}.
    \end{equation}
    \item Furthermore, for any smooth function $F$ that vanishes at $0$, then the following Moser type  estimates hold:
    \begin{equation}
       \|F(u)\|_{BMO^s}\lesssim_{\|u\|_{L^\infty}} \|u\|_{BMO^s}. \label{MoserOne}
    \end{equation}
\end{enumerate}
\end{lemma}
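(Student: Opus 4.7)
The plan is to treat the three parts in order, each time reducing to the square function characterization of $BMO$ combined with standard Littlewood--Paley almost orthogonality; these are classical paraproduct bounds, so the sketch is mostly about organizing the dyadic bookkeeping correctly. For the balanced estimate $\|\Pi(u,v)\|_{BMO}\lesssim\|u\|_{BMO}\|v\|_{BMO}$ in \eqref{BMOBalance}, I would fix a dyadic cube $Q$ of side $2^{-k}$ and bound $2^k\int_Q|S_{>k}\Pi(u,v)|^2\,d\alpha$. Each $P_j\Pi(u,v)$ is an $O(1)$ sum of products $u_l v_m$ with $|l-m|<4$ and $l,m\gtrsim j$, so Cauchy--Schwarz in the dyadic indices combined with the square function bounds for $u$ and $v$ on a slight enlargement of $Q$ produces the Carleson condition. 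The second bound $\|P_{\leq k}\Pi_{\geq k}(u,v)\|_{L^\infty}$ is a variant: after summing over $j\geq k$ via Cauchy--Schwarz and averaging by the low-frequency cutoff of scale $2^{-k}$, John--Nirenberg applied to $u$ and $v$ over cubes of that scale delivers the claimed $L^\infty$ estimate.

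For the two $T_u v$ bounds I would exploit that $P_j T_u v$ is essentially $u_{<j-4}v_j$, so the high frequencies of $T_u v$ are frequency-localized copies of those of $v$ multiplied by a pointwise (or lower-regularity) factor. The first bound $\|T_u v\|_{BMO}\lesssim\|u\|_{L^\infty}\|v\|_{BMO}$ then follows directly from the square function characterization, since the square function of $T_u v$ is pointwise dominated by $\|u\|_{L^\infty}$ times that of $v$. The refinement \eqref{BMOSigma} requires shifting $s$ derivatives from $v$ onto $u$: since $\|u_{<j-4}\|_{L^\infty}\lesssim 2^{js}\|u\|_{BMO^{-s}}$ after a dyadic sum, pairing this against $\|P_j v\|\lesssim 2^{-js}\|P_j|D|^s v\|$ and reassembling by Cauchy--Schwarz in $j$ yields the claimed BMO bound.

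Part (2) is then a direct corollary of Bony's decomposition $uv=T_u v+T_v u+\Pi(u,v)$ applied to $|D|^s(uv)$: after commuting $|D|^s$ inside the paraproducts (commutators are of lower order and controlled by Lemma \ref{t:CommutatorL2}), one obtains a piece of type $T_u(|D|^s v)$ bounded via the first $T$-estimate in Part (1), a symmetric piece, and a balanced $\Pi$ piece bounded by \eqref{BMOBalance}. Part (3) then follows by paralinearization: writing $F(u)=\sum_k\bigl(F(u_{<k+1})-F(u_{<k})\bigr)=\sum_k F'(\tilde u_k)\,u_k$ with $|\tilde u_k|\le\|u\|_{L^\infty}$ and regrouping as $T_{F'(u)}u$ plus a smoother remainder, the main term is controlled by Part (2) together with $\|F'(u)\|_{L^\infty}\lesssim_{\|u\|_{L^\infty}} 1$, while the remainder is handled by a further Bony decomposition absorbing one extra derivative. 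The main obstacle I anticipate is the careful bookkeeping in \eqref{BMOSigma}: since $BMO$ does not interpolate cleanly along the $L^p$ scale, verifying the Carleson condition directly over Whitney cubes of scale $2^{-k}$ (rather than passing through Besov characterizations) is the delicate step.
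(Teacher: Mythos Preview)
The paper does not give a proof of this lemma; it is quoted verbatim from \cite{ai2023dimensional} and recorded here without argument. So there is no in-paper proof to compare your proposal against.

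That said, your outline is the standard route and matches what one finds in \cite{ai2023dimensional} and the earlier literature on Coifman--Meyer paraproducts: the Carleson/square-function verification for \eqref{BMOBalance}, the frequency-localized pointwise domination for the $T_u v$ bounds, Bony's decomposition for the algebra property, and paralinearization for the Moser estimate are exactly the expected ingredients. Your concern about \eqref{BMOSigma} is well placed but not fatal: the key observation is that for $s>0$ the low-frequency factor satisfies $\|u_{<j}\|_{L^\infty}\lesssim\sum_{l<j}2^{ls}\|P_l|D|^{-s}u\|_{L^\infty}\lesssim 2^{js}\|u\|_{BMO^{-s}}$ (since $BMO\hookrightarrow\dot B^0_{\infty,\infty}$), and this geometric gain is enough to run the Carleson verification cube by cube without any interpolation. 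Nothing in your sketch is wrong; it is simply more detail than the present paper provides, since the authors defer entirely to the cited reference.
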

As for the estimates in $\dot{H}^s$, we have that
\begin{lemma}[\hspace{1sp}\cite{ai2023dimensional}]
    Let $s>0$, then $\dot{H}^s \cap L^\infty$ is an algebra, with the estimate
\begin{equation}
\|uv\|_{\dot{H}^s} \lesssim \|u\|_{\dot{H}^s} \|v\|_{L^\infty} + \|u\|_{L^\infty}\|v\|_{\dot{H}^s}.
\end{equation}
 Furthermore, for any smooth function $F$ that vanishes at $0$, then the following Moser type  estimates hold:
 \begin{equation}
\|F(u)\|_{\dot{H}^s}\lesssim_{\|u\|_{L^\infty}} \|u\|_{\dot{H}^s}. \label{MoserTwo}
    \end{equation}
\end{lemma}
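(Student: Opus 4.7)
The plan is to handle the algebra estimate by Littlewood--Paley paraproduct decomposition and then bootstrap to the Moser inequality via a telescoping chain-rule argument. First I would write $uv = T_u v + T_v u + \Pi(u,v)$ and estimate the three pieces separately in $\dot H^s$. For the low--high terms $T_u v$, since the frequency of $T_u v$ at scale $2^k$ is captured by $u_{<k-4} v_k$, one has the pointwise bound $|P_k(T_u v)| \lesssim \|u\|_{L^\infty}|v_k|$, and squaring, multiplying by $2^{2ks}$, summing in $k$, and taking $L^2_\alpha$ yields $\|T_u v\|_{\dot H^s} \lesssim \|u\|_{L^\infty}\|v\|_{\dot H^s}$; the symmetric argument handles $T_v u$.

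The high--high term $\Pi(u,v)$ is the only place the hypothesis $s>0$ is needed. Here $P_k\Pi(u,v)$ only sees dyadic blocks of index $j\ge k-c$, so one estimates
\begin{equation*}
2^{ks}\|P_k\Pi(u,v)\|_{L^2} \lesssim \sum_{j\ge k-c} 2^{(k-j)s}\,\|u\|_{L^\infty}\bigl(2^{js}\|P_j v\|_{L^2}\bigr).
\end{equation*}
The kernel $m\mapsto 2^{ms}\mathbf{1}_{m\le c}$ lies in $\ell^1$ precisely because $s>0$, so Young's convolution inequality on $\mathbb{Z}$ yields a clean $\ell^2_k$ bound and hence $\|\Pi(u,v)\|_{\dot H^s}\lesssim \|u\|_{L^\infty}\|v\|_{\dot H^s}$. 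Combining the three paraproduct bounds (symmetrizing between $u$ and $v$ where appropriate) gives the stated algebra estimate \eqref{MoserTwo}'s companion.

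For the Moser estimate on $F(u)$ I would use the standard telescoping decomposition: assuming $F(0)=0$, write
\begin{equation*}
F(u) = \sum_{k\in\mathbb{Z}} \bigl[F(u_{\le k}) - F(u_{\le k-1})\bigr]
     = \sum_{k\in\mathbb{Z}} P_k u \cdot m_k, \qquad m_k := \int_0^1 F'\!\bigl(u_{\le k-1} + t\,P_k u\bigr)\,dt,
\end{equation*}
where each $m_k$ obeys $\|m_k\|_{L^\infty}\lesssim_{\|u\|_{L^\infty}} 1$ since $F$ is smooth. Because $u_{\le k-1}$ has frequency $\lesssim 2^{k-1}$ and $P_k u$ has frequency $\sim 2^k$, the factor $m_k$ is essentially of frequency $\lesssim 2^{k+O(1)}$, so the sum behaves like a paraproduct of $m_k$ against $P_k u$. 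Then
\begin{equation*}
\|F(u)\|_{\dot H^s}^2 \sim \sum_{k}2^{2ks}\|P_k F(u)\|_{L^2}^2 \lesssim \sum_k 2^{2ks}\|m_k\|_{L^\infty}^2\|P_k u\|_{L^2}^2 \lesssim_{\|u\|_{L^\infty}} \|u\|_{\dot H^s}^2,
\end{equation*}
after absorbing the modest frequency overlap coming from $P_k$ hitting $m_k\,P_k u$.

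The main obstacle is the high--high term in $\Pi(u,v)$, where convergence in $\dot H^s$ genuinely requires $s>0$; and in the Moser step, the care needed to justify that $m_k$ behaves like a low-frequency multiplier against $P_k u$ (so that one really can use the $L^\infty$ bound on $m_k$ without losing derivatives). Both issues are standard and resolved by the $\ell^1/\ell^2$ Young convolution trick and the localization properties of Littlewood--Paley projectors; once they are in place, the estimates \eqref{MoserTwo} and the algebra inequality follow immediately.
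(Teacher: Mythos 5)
The paper does not prove this lemma at all: it is quoted verbatim from Ai--Ifrim--Tataru \cite{ai2023dimensional}, so there is no in-paper argument to compare against. Your sketch is the standard (and correct) proof of both statements: the paraproduct splitting $uv = T_uv + T_vu + \Pi(u,v)$ with the $\ell^1$--$\ell^2$ Young convolution step for the balanced piece (this is indeed where $s>0$ enters), and Meyer's telescoping decomposition $F(u)=\sum_k m_k\,P_k u$ for the composition estimate. The one place your write-up is thinner than it should be is the claim that $m_k$ is ``essentially of frequency $\lesssim 2^{k+O(1)}$'': $m_k$ is a composition and has no Fourier support localization, so the correct justification is the derivative bound $\|\partial^N m_k\|_{L^\infty}\lesssim_{\|u\|_{L^\infty}} 2^{kN}$ (each derivative landing on $u_{\le k-1}+tP_ku$ costs a factor $2^k$ by Bernstein), which gives $\|P_j(m_kP_ku)\|_{L^2}\lesssim 2^{-(j-k)N}\|P_ku\|_{L^2}$ for $j\gg k$; choosing $N>s$ and applying the same Young-in-$\ell^1$ argument closes the sum over $k<j-c$, while the regime $k\ge j-c$ only needs $\|m_k\|_{L^\infty}\lesssim 1$ and $s>0$. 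With that quantification spelled out (and the harmless replacement of your pointwise bound on $P_k(T_uv)$ by its $L^2$ version over the finitely many neighboring blocks), the proof is complete and matches the standard argument used in the cited reference.
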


Below we record the following para-commutators, para-products, and para-associativity lemmas.
\begin{lemma}[\hspace{1sp}\cite{ai2023dimensional}]
\begin{enumerate}
    \item (Para-commutators)
   Assume that $s_1, s_2<1$, then we have that
   \begin{equation}
       \|T_fT_g -T_gT_f\|_{\dot{H}^s \rightarrow \dot{H}^{s+s_1+s_2}} \lesssim \||D|^{s_1}f\|_{BMO}\||D|^{s_2}g\|_{BMO}.\label{ParaCommutator}
   \end{equation}
    \item (Para-products)
   Assume that $s_1, s_2<1$, and $s_1+s_2\geq 0$, then
   \begin{align}
       &\|T_fT_g -T_{fg}\|_{\dot{H}^s \rightarrow \dot{H}^{s+s_1+s_2}} \lesssim \||D|^{s_1}f\|_{BMO}\||D|^{s_2}g\|_{BMO},\label{ParaProducts} \\
        &\|T_fT_g -T_{fg}\|_{\dot{W}^{s,4} \rightarrow \dot{W}^{s+s_1+s_2,4}} \lesssim \||D|^{s_1}f\|_{BMO}\||D|^{s_2}g\|_{BMO}.\label{ParaProductsTwo}
   \end{align}
   \item (Para-associativity) For $s+s_2\geq 0$, $s+s_1+s_2\geq 0$, and $s_1<1$, we have
   \begin{align}
       &\|T_f\Pi(v,u) - \Pi(v, T_f u)\|_{\dot{H}^{s+s_1+s_2}} \lesssim \| |D|^{s_1}f\|_{BMO}\||D|^{s_2}v\|_{BMO}\| u\|_{\dot{H}^{s}},\label{ParaAssociateOne}\\
       &\|T_f\Pi(v,u) - \Pi(v, T_f u)\|_{\dot{W}^{s+s_1+s_2,4}} \lesssim \| |D|^{s_1}f\|_{BMO}\||D|^{s_2}v\|_{BMO}\| u\|_{\dot{W}^{s,4}}. \label{ParaAssociateThree}
   \end{align}
    \item For $s_1, s_2< 1$, $s+s_1+s_2\geq 0$, $s+s_2 \geq 0$ and $\bar{v} = \bar{P}\bar{v}$, we have
   \begin{equation}
       \|T_fP(\bar{v}u) - P(\bar{v}T_f u)\|_{\dot{H}^{s+s_1+s_2}} \lesssim \| |D|^{s_1}f\|_{BMO}\||D|^{s_2}v\|_{BMO}\| u\|_{\dot{H}^{s}}. \label{ParaAssociateTwo}
   \end{equation}
\end{enumerate}
\end{lemma}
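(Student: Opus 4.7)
The plan is to establish these bilinear paraproduct inequalities by the standard Littlewood--Paley frequency-localized analysis. Since the statement is cited from \cite{ai2023dimensional}, a clean writeup can simply quote the corresponding propositions there; the outline below describes how I would construct a self-contained argument. The organizing observation is that the three operators $T_f T_g - T_g T_f$, $T_f T_g - T_{fg}$, and $T_f \Pi(v,\cdot) - \Pi(v, T_f \cdot)$ are all \emph{non-resonant}: after expanding all factors in Littlewood--Paley pieces, the two lowest-frequency factors (namely $f$ and $g$ in the first two items, $f$ and $v$ in the third) end up in a commutator-type configuration that yields an extra gain of $s_1+s_2$ derivatives, exactly matching the right-hand sides.

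For the para-commutator \eqref{ParaCommutator}, I would write
\[
(T_f T_g - T_g T_f) u = \sum_k \bigl(f_{<k-4}(g_{<k-4} u_k)_k - g_{<k-4}(f_{<k-4} u_k)_k\bigr).
\]
Up to localization errors of the form $[P_{\sim k}, f_{<k-4}] g_{<k-4} u_k$, the two products cancel pointwise. Each such localization commutator brings down a derivative of $f_{<k-4}$ (respectively $g_{<k-4}$), so Bernstein combined with the BMO square-function bound $\|\nabla f_{<k-4}\|_{L^\infty} \lesssim 2^{k(1-s_1)} \||D|^{s_1} f\|_{BMO}$ (valid for $s_1<1$) converts the operator to one of order $-(s_1+s_2)$, yielding \eqref{ParaCommutator}. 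The para-product estimates \eqref{ParaProducts}--\eqref{ParaProductsTwo} then follow by comparing $T_f T_g u$ with $T_{T_g f + T_f g + \Pi(f,g)} u$: the term $T_{T_g f} u$ absorbs the main part of $T_f T_g u$ (with $g$ at the lowest frequency), while $T_{T_f g} u$ and $T_{\Pi(f,g)} u$ are the high--high interactions of $(f,g)$ against $u$. Since the argument uses only H\"older and Bernstein, it extends verbatim from $L^2$ to $L^4$, giving \eqref{ParaProductsTwo}.

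The associativity items \eqref{ParaAssociateOne}, \eqref{ParaAssociateThree}, \eqref{ParaAssociateTwo} proceed by the same template: expand $\Pi(v,u) = \sum_{|l-m|<4} v_l u_m$, write $T_f$ in terms of $f_{<k-4}$, and regroup so that $f_{<k-4}$ is effectively commuted past the high--high projection in $(v,u)$. The residual is a sum of trilinear pieces in which $f$ and $v$ sit in a commutator configuration and therefore contribute the required $\||D|^{s_1}f\|_{BMO}$ and $\||D|^{s_2}v\|_{BMO}$ factors. For the projection version \eqref{ParaAssociateTwo}, the extra input is that $\bar v = \bar{\nP} \bar v$ forces $\nP(\bar v \, u)$ to be supported where $u$ dominates $\bar v$ in frequency, so $\nP(\bar v\,\cdot)$ behaves like a low--high paraproduct or $\Pi$-type form and reduces to the preceding items.

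The main obstacle is the combinatorial bookkeeping enforced by the combined conditions $s_1,s_2<1$, $s+s_1+s_2 \geq 0$, and (for the last two items) $s+s_2 \geq 0$: the first ensures convergence of the geometric series on the low-frequency side, while the remaining ones ensure convergence on the high-frequency side when summed against $\|u\|_{\doth{s}}$ or $\|u\|_{\dot W^{s,4}}$. Handling the BMO endpoint rigorously, rather than with $L^\infty$, requires the square-function characterization of BMO together with a Carleson measure or $H^1$--BMO duality bound to pair $f$ against a low-frequency factor of $u$; this is the only place where the proof departs from soft H\"older-type manipulations.
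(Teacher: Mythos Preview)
The paper does not prove this lemma; it is simply recorded as a citation from \cite{ai2023dimensional} with no accompanying argument. Your outline is the standard Littlewood--Paley approach and is consistent with how such results are established in the cited reference, so there is nothing to compare against here.
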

Finally, we record here the para-Leibniz rule.
Define the \textit{para-material derivative} to be
\begin{equation*}
    T_{D_t} := \partial_t + T_{\ub}\partial_\alpha.
\end{equation*}
We then consider the following four versions of para-Leibniz errors.
The first two are unbalanced para-Leibniz errors
\begin{align*}
    &E^p_L(u,v) = T_{D_t}T_u v - T_{T_{D_t}u}v -T_uT_{D_t}v,\\
    & \tilde{E}^p_L(u,v) = T_{D_t}T_u v - T_{D_{t} u}v -T_uT_{D_t}v.
\end{align*}
The other two are the balanced para-Leibniz errors
\begin{align*}
    E^\pi_L(u,v) &= T_{D_t}\Pi(u,v) -\Pi(T_{D_t}u,v) -\Pi(u, T_{D_t}v), \\
    \tilde{E}^\pi_L(u,v) &= T_{D_t}\Pi(u,v) -\Pi(D_t u,v) -\Pi(u, T_{D_t}v).
\end{align*}
With above notations, the Leibniz error can be bounded according to the following lemma.
\begin{lemma}[\hspace{1sp}\cite{ai2023dimensional}]
\begin{enumerate}
\item For the unbalanced para-Leibniz error $E^p_L(u,v)$ we have the bounds
\begin{align}
    &\|E^p_{L}(u,v)\|_{\dot{H}^s} \lesssim \uA_{\frac{1}{4}} \|u\|_{BMO^{\frac{1}{4}-\sigma}}\|v\|_{\dot{H}^{s+\sigma}}, \quad \sigma >0, \label{UnbalancedParaLeibnizOne}\\
    &\|E^p_{L}(u,v)\|_{\dot{H}^s} \lesssim \uA_{\frac{1}{4}} \|u\|_{\dot{H}^{\frac{1}{4}-\sigma}}\|v\|_{BMO^{s+\sigma}}, \quad \sigma >0. \label{UnbalancedParaLeibnizTwo}
\end{align}
In the case $\sigma=0$ the same bounds \eqref{UnbalancedParaLeibnizOne} and \eqref{UnbalancedParaLeibnizTwo} hold for $\tilde{E}^p_L(u,v)$ with $\sigma =0$.
\item For the balanced para-Leibniz error  $E^\pi_L(u,v)$ we have the estimate
\begin{equation}
    \|E^\pi_L(u,v) \|_{\dot{H}^s} \lesssim \uA_{\frac{1}{4}} \|u\|_{BMO^{\frac{1}{4}-\sigma}}\|v\|_{\dot{H}^{s+\sigma}}, \qquad \sigma \in \mathbb{R},\quad s\geq 0. \label{BalancedLeibniz}
\end{equation}
In the case $\sigma =0$, we also have the same bound for $\tilde{E}^\pi_L(u,v)$.
\end{enumerate} \label{t:Leibniz}
\end{lemma}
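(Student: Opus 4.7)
The plan is to expand the para-material derivative $T_{D_t} = \partial_t + T_{\ub}\partial_\alpha$ and exploit the fact that the $\partial_t$ piece obeys the exact Leibniz rule
\[
\partial_t(T_u v) = T_{\partial_t u} v + T_u \partial_t v,
\]
together with the analogous identity for $\Pi$. After these $\partial_t$-contributions cancel against the corresponding pieces of $T_{D_t u} v$, $T_u T_{D_t} v$, $\Pi(T_{D_t} u, v)$, $\Pi(u, T_{D_t} v)$, everything that survives comes from commuting $T_{\ub}\partial_\alpha$ past $T_u$ (or past $\Pi(u, \cdot)$). This is exactly the structure handled by the paradifferential lemmas \eqref{ParaCommutator}, \eqref{ParaProducts}, \eqref{ParaAssociateOne}.

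For the unbalanced error in the $D_t$-form, a direct calculation yields
\[
\tilde E^p_L(u, v) = (T_{\ub} T_{\partial_\alpha u} - T_{\ub \partial_\alpha u}) v + [T_{\ub}, T_u]\partial_\alpha v.
\]
I would bound the first summand by \eqref{ParaProducts} with $s_1 = \tfrac34$, $s_2 = -\tfrac34$, using the identification $\|\ub\|_{BMO^{3/4}} \lesssim \uA_{\frac{1}{4}}$ (which is precisely the content of the $\tfrac14$ subscript), and the second by \eqref{ParaCommutator} with $s_1 = \tfrac34$, $s_2 = \tfrac14$, producing exactly the single derivative of gain needed to absorb $\partial_\alpha v$. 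To pass from $\tilde E^p_L$ to $E^p_L$ at $\sigma > 0$, I would use the alternate decomposition
\[
E^p_L(u, v) = (T_{\ub}T_{\partial_\alpha u} - T_{T_{\ub}\partial_\alpha u}) v + [T_{\ub}, T_u]\partial_\alpha v,
\]
and observe that the first summand differs from its $\tilde E^p_L$ counterpart by a symbol-paraproduct $T_{T_{\partial_\alpha u}\ub + \Pi(\ub, \partial_\alpha u)} v$; the balanced piece $\Pi(\ub, \partial_\alpha u)$ supplies the spare $\sigma$ derivatives through a high-frequency balanced estimate, and the low-high piece $T_{T_{\partial_\alpha u}\ub} v$ is bounded by \eqref{LpLowhigh}.

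For the balanced error the analogous decomposition reads
\[
E^\pi_L(u, v) = [T_{\ub}\Pi(\partial_\alpha u, v) - \Pi(T_{\ub}\partial_\alpha u, v)] + [T_{\ub}\Pi(u, \partial_\alpha v) - \Pi(u, T_{\ub}\partial_\alpha v)],
\]
and each summand is a paraassociativity discrepancy directly controlled by \eqref{ParaAssociateOne}. The symmetry of $\Pi$ in its two arguments is what permits $\sigma$ to take either sign: depending on the sign of $\sigma$ one routes $T_{\ub}$ onto whichever of the two slots currently carries the Sobolev norm, while $\ub$ itself always sits at the scale $BMO^{3/4}$ captured by $\uA_{\frac{1}{4}}$. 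The tilde version $\tilde E^\pi_L$ follows by an extra application of \eqref{ParaAssociateOne} to the balanced corrector $\Pi(\ub \partial_\alpha u - T_{\ub}\partial_\alpha u, v)$.

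The hard part will be the index bookkeeping. Because $\uA_{\frac{1}{4}}$ controls $\|\ub\|_{BMO^s}$ only for $s \leq \tfrac34$, the exponent $s_1$ in each application of \eqref{ParaProducts}, \eqref{ParaCommutator}, \eqref{ParaAssociateOne} is essentially forced to be $\tfrac34$; the complementary exponent $s_2$ must then absorb the residual order $\tfrac14 - \sigma$ on $u$, and one must verify that all the side conditions ($s_1, s_2 < 1$ and $s_1 + s_2 \geq 0$ for paraproducts; the chain $s + s_2 \geq 0$, $s + s_1 + s_2 \geq 0$, $s_1 < 1$ for paraassociativity) close in every configuration. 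The inability to reach $\sigma = 0$ in $E^p_L$ is not cosmetic: it reflects a genuine borderline loss coming from the unbalanced symbol $T_{\partial_\alpha u}\ub$ that distinguishes $E^p_L$ from $\tilde E^p_L$, which is why the $D_t u$ formulation is the one that extends all the way to $\sigma = 0$.
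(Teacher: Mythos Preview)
Your approach is correct and is essentially the argument of Lemma~3.6 in \cite{ai2023dimensional}: cancel the $\partial_t$ contributions by the exact Leibniz rule, then control the remaining $T_{\ub}\partial_\alpha$ commutator errors by \eqref{ParaCommutator}, \eqref{ParaProducts}, \eqref{ParaAssociateOne}, feeding in $\||D|^{3/4}\ub\|_{BMO}\lesssim\uA_{\frac14}$ from \eqref{ubBMO}. Note that the paper does not actually reprove the lemma---it simply cites \cite{ai2023dimensional} and remarks that the sole modification here is replacing the zero-vorticity advection $b$ by $\ub$; so your sketch is in fact supplying the argument the paper omits.

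One small point of care in the index bookkeeping: for the term $(T_{\ub}T_{\partial_\alpha u}-T_{\ub\partial_\alpha u})v$ in the $\sigma>0$ regime, the constraint $s_1+s_2\ge 0$ in \eqref{ParaProducts} forces $s_2\ge -\tfrac34$, so you cannot directly set $s_2=-\tfrac34-\sigma$; instead one uses $s_2=-\tfrac34$ and absorbs the residual $\sigma$ shift via the $\dot H^{s}\to\dot H^{s}$ mapping applied with $v\in\dot H^{s+\sigma}$ and then transfers the extra $\sigma$ to the $u$ slot through the balanced corrector $T_{T_{\partial_\alpha u}\ub+\Pi(\ub,\partial_\alpha u)}v$ you already identified. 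This is exactly the mechanism that forces $\sigma>0$ strictly for $E^p_L$ while allowing $\sigma=0$ for $\tilde E^p_L$, as you noted.
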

The proof of this lemma is almost identical to Lemma $3.6$ in \cite{ai2023dimensional}, we ask the interested reader to check the proof there.
The only difference is that we replace the bound for $b$ by the corresponding estimate for $\ub$.

\section{Water waves related bounds} \label{s:WaterBound}
In this section, we first consider the Sobolev and $BMO$ bounds of auxiliary functions $Y$, $\ua$, $\ub$, and $\underline{M}$, then we compute the leading terms of  para-material derivatives of $\W, R, Y, X, Z, U$ and $\ua$.
These estimates will play a role in the construction of normal form energies in later sections.
\subsection{Sobolev and $BMO$ bounds}
We begin with the estimates for the auxiliary function $Y: = \frac{\W}{1+\W}$.
Applying Moser type estimates \eqref{MoserOne} and \eqref{MoserTwo}, one get
\begin{lemma}[\hspace{1sp}\cite{ai2023dimensional}]
 The function $Y$ satisfies the  BMO bound
 \begin{equation}
     \||D|^{\frac{1}{4}}Y\|_{BMO} \lesssim_A A_{\frac{1}{4}}, \label{YBMO}
 \end{equation}
as well as the Sobolev bounds
\begin{equation*}
    \|Y\|_{\dot{H}^{\sigma}} \lesssim_A \|\W\|_{\dot{H}^{\sigma}}, \quad \||D|^{\sigma}Y\|_{L^4}\lesssim \|\W\|_{\dot{W}^{\sigma,4}}, \qquad \sigma \geq 0.
\end{equation*}
\end{lemma}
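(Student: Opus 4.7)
The plan is to write $Y = F(\W)$ where $F(z) = \frac{z}{1+z}$, which is smooth in a neighborhood of $0$ with $F(0)=0$. Under the standing smallness assumption $A \ll 1$, we have $\|\W\|_{L^\infty} \leq A \ll 1$, so the values of $\W$ stay in a region where $|1+\W| \gtrsim 1$; consequently $F$ and all its derivatives are uniformly bounded on this range. The lemma then reduces to a direct application of the Moser estimates recorded in Section~\ref{s:Def}, combined with the Besov embedding $\dot{B}^{\frac{1}{4}}_{\infty,2} \hookrightarrow BMO^{\frac{1}{4}}$.

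For the $BMO$ bound \eqref{YBMO}, I would first invoke this embedding to obtain $\||D|^{\frac{1}{4}} \W\|_{BMO} \lesssim \|\W\|_{\dot{B}^{\frac{1}{4}}_{\infty,2}} \leq A_{\frac{1}{4}}$, and then apply the Moser estimate \eqref{MoserOne} with $s = \frac{1}{4}$ and the function $F$ above; the implicit constant depends only on $\|\W\|_{L^\infty} \leq A$, which is consistent with the notation $\lesssim_A$. For the Sobolev bound $\|Y\|_{\dot H^\sigma} \lesssim_A \|\W\|_{\dot H^\sigma}$, the corresponding Moser estimate \eqref{MoserTwo} applies directly to $F(\W)$.

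The only step not immediately covered by the estimates stated in Section~\ref{s:Def} is the $L^4$-based bound $\||D|^\sigma Y\|_{L^4} \lesssim \|\W\|_{\dot W^{\sigma,4}}$, since the Moser estimate is recorded only in $BMO^s$ and $\dot H^s$. The natural way to handle this, which I expect to be the only technical (though routine) obstacle, is to establish an $L^4$ analogue of \eqref{MoserTwo}: decompose $F(\W) = T_{F'(\W)}\W + \bigl(F(\W) - T_{F'(\W)}\W\bigr)$, so that the leading paraproduct is controlled in $\dot W^{\sigma,4}$ via the low-high paraproduct bound by $\|F'(\W)\|_{L^\infty}\|\W\|_{\dot W^{\sigma,4}}$, while the remainder is handled by a Littlewood--Paley expansion using the smoothness of $F$, exactly as in the standard proof of \eqref{MoserTwo}. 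Since $\|F'(\W)\|_{L^\infty}$ and all higher derivative norms are $O(1)$ under $A \ll 1$, the constant can be taken independent of $A$, matching the stated bound.
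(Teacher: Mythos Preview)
Your proposal is correct and follows the same approach as the paper: the paper simply states ``Applying Moser type estimates \eqref{MoserOne} and \eqref{MoserTwo}'' before the lemma and attributes it to \cite{ai2023dimensional}, so your write-up is in fact more detailed than what appears here. Your handling of the $L^4$ case via a paralinearization argument is the standard route and is consistent with how the paper treats such bounds elsewhere.
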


We continue with the bounds for the Taylor coefficient $a$.
\begin{lemma}[\hspace{1sp}\cite{ai2023dimensional}]
The Taylor coefficient $a$ is nonnegative and satisfies the $BMO$ bound and the uniform bound
\begin{equation*}
    \|a\|_{BMO} \lesssim \|R\|^2_{BMO^{\frac{1}{2}}}, \qquad \|a\|_{L^\infty} \lesssim \|R\|^2_{\dot{B}^{\frac{1}{2}}_{\infty, 2}}.
\end{equation*}
In addition, it satisfies
\begin{equation*}
\||D|^{\frac{1}{4}}a \|_{BMO}\lesssim AA_{\frac{1}{4}}, \quad  \||D|^{\frac{1}{2}}a \|_{BMO}\lesssim A^2_{\frac{1}{4}},
\end{equation*}
and the $L^4$ based bounds
\begin{equation*}
    \||D|^{\frac{1}{2}}a\|_{L^4}\lesssim A^\sharp A_{\frac{1}{4}}, \qquad \| a\|_{L^4}\lesssim A^\sharp A_{-\frac{1}{4}}.
\end{equation*}
\end{lemma}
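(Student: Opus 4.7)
The plan is to begin by recasting $a$ in a form that exposes its paradifferential structure. Since $R$ is holomorphic we have $R_\alpha=-i|D|R$, and dually $\bar R_\alpha = i|D|\bar R$; substituting into the definition of $a$ yields
\[
a \;=\; \bar{\nP}[\bar R\,|D|R] + \nP[R\,|D|\bar R] \;=\; 2\Re\,\bar{\nP}[\bar R\,|D|R],
\]
which in particular shows that $a$ is real-valued. For nonnegativity I would pass to Fourier variables, writing $\tilde R(s):=\hat R(-s)$ for $s\geq 0$, and unfold $\bar{\nP}[\bar R\,|D|R](\alpha)$ as an iterated integral over $\{0\leq \mu\leq \nu\}$. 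After swapping the order of integration and performing one integration by parts in the outer variable, the real part telescopes to the manifestly nonnegative representation
\[
a(\alpha) \;=\; \frac{1}{2\pi}\int_0^\infty \left|\int_\mu^\infty e^{i\nu\alpha}\,\overline{\tilde R(\nu)}\,d\nu\right|^{2} d\mu \;\geq\; 0.
\]

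For the norm bounds the common device is the paraproduct decomposition
\[
\bar R\cdot|D|R \;=\; T_{\bar R}\,|D|R \;+\; T_{|D|R}\,\bar R \;+\; \Pi(\bar R,|D|R),
\]
combined with the observation that $T_{\bar R}|D|R$ has Fourier support in the strictly negative half-line (low frequencies come from the antiholomorphic $\bar R$, high frequencies of opposite sign from the holomorphic $|D|R$) and is therefore annihilated by $\bar{\nP}$. Only $T_{|D|R}\bar R$ and $\bar{\nP}\Pi(\bar R,|D|R)$ survive. For $\|a\|_{BMO}\lesssim\|R\|_{BMO^{1/2}}^2$, estimate \eqref{BMOSigma} with $s=\tfrac12$ gives
\[
\|T_{|D|R}\bar R\|_{BMO} \;\lesssim\; \||D|R\|_{BMO^{-1/2}}\,\|\bar R\|_{BMO^{1/2}} \;=\; \|R\|_{BMO^{1/2}}^2,
\]
while for the balanced piece I redistribute $|D|$ via Littlewood--Paley localization (with error a paraproduct commutator controlled by \eqref{ParaProducts}) to reduce to $\Pi(|D|^{1/2}\bar R,|D|^{1/2}R)$, estimated by \eqref{BMOBalance}. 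The companion $L^\infty$ bound $\|a\|_{L^\infty}\lesssim\|R\|^2_{\dot B^{1/2}_{\infty,2}}$ follows from the same decomposition after running the paraproduct estimates in Besov spaces and invoking $\dot B^0_{\infty,1}\hookrightarrow L^\infty$.

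The higher-regularity estimates $\||D|^{1/4}a\|_{BMO}\lesssim AA_{\frac14}$ and $\||D|^{1/2}a\|_{BMO}\lesssim A^2_{\frac14}$ are obtained by the same scheme: apply $|D|^{1/4}$ or $|D|^{1/2}$ to the decomposition above and distribute the derivative between the two factors; each resulting paraproduct then matches exactly one of the norms in $A$ or $A_{1/4}$. The $L^4$-based bounds $\||D|^{1/2}a\|_{L^4}\lesssim A^\sharp A_{\frac14}$ and $\|a\|_{L^4}\lesssim A^\sharp A_{-\frac14}$ are treated analogously, replacing \eqref{BMOSigma} and \eqref{BMOBalance} by their $L^4$ counterparts \eqref{LpBalance} and \eqref{LpLowhigh} and reading off the relevant $\dot W^{s,4}$-norms from the definitions of $A^\sharp$ and $A_{-1/4}$.

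The main difficulty is purely one of bookkeeping: for each of the seven claimed estimates one must check that, after distributing the derivative weights, the surviving paraproducts land inside the precise norms on the right-hand side, and that the low-high paraproduct in which $\bar R$ occupies the low slot continues to have Fourier support in the negative half-line so that $\bar{\nP}$ kills it. Without this cancellation that piece is the sole obstruction to every one of the claimed bounds; its suppression is exactly what the holomorphic/anti-holomorphic splitting provides.
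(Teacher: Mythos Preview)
The paper does not supply its own proof of this lemma; it is quoted verbatim from \cite{ai2023dimensional} and left unproved here. Your proposal is correct and follows what is essentially the standard argument one would expect in \cite{ai2023dimensional}: rewrite $a=2\Re\,\bar{\nP}[\bar R\,|D|R]$ via the holomorphic identity $R_\alpha=-i|D|R$, establish nonnegativity by a Fourier-side computation, and then run the paraproduct decomposition with the crucial observation that $\bar{\nP}T_{\bar R}|D|R=0$ by frequency support, leaving only the low-high term $T_{|D|R}\bar R$ and the balanced piece to estimate via \eqref{BMOSigma}, \eqref{BMOBalance}, \eqref{LpBalance}, \eqref{LpLowhigh}.
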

For the $BMO$ bound of $a_1$, we can rewrite $N$ as
\begin{equation*}
    N = T_{\bar{R}_\alpha}W +\Pi(W, \bar{R}_\alpha) - T_{\bar{\W}}R - \Pi(\bar{\W}, R)+ T_{R_\alpha}\bar{W}+ \Pi(\bar{W}, R_\alpha)-T_{\W}\bar{R} - \Pi(\W, R_\alpha).
\end{equation*}
We use \eqref{BMOSigma} to estimate the low-high portion of $N$ and \eqref{BMOBalance} to estimate the high-high portion of $N$:
\begin{align*}
    &\|N\|_{BMO} \lesssim A_{-\frac{1}{4}}^2, \quad \gamma\|N\|_{BMO} \lesssim \gamma AA_{-\frac{1}{2}} \lesssim \uA^2, \quad \||D|^{\frac{1}{4}}N\|_{BMO} \lesssim AA_{-\frac{1}{4}}, \\
    &  \gamma\||D|^{\frac{1}{4}}N\|_{BMO} \lesssim \gamma AA_{-\frac{1}{4}} \lesssim A\uA_{\frac{1}{4}}.
\end{align*}
For the $L^\infty$ bound of $a_1$, it is  shown in the Proposition  $A.4$ of \cite{MR3869381} that
\begin{equation*}
    \|a_1\|_{L^\infty} \lesssim_A A_{-\frac{1}{2}}(1+A).
\end{equation*}
As for the $L^4$ based bound of $N$, we write $N$ as
\begin{equation*}
    N = [\bar{\nP}, \bar{W}]R_\alpha - [\bar{\nP}, \bar{R}]\W + [\nP, W]\bar{R}_\alpha - [\nP, R]\bar{\W}.
\end{equation*}
Using Lemma \ref{t:CommutatorL2}, we have
\begin{align*}
    &\gamma\||D|^{\frac{1}{2}}[\bar{\nP}, \bar{W}]R_\alpha \|_{L^4}+\gamma\||D|^{\frac{1}{2}}[\nP, W]\bar{R}_\alpha \|_{L^4} \lesssim \gamma \||D|^{\frac{3}{4}}W\|_{BMO}\||D|^{\frac{3}{4}}R\|_{L^4} \lesssim A^\sharp \Astar, \\
     &\gamma\||D|^{\frac{1}{2}}[\bar{\nP}, \bar{R}]\W \|_{L^4}+\gamma\||D|^{\frac{1}{2}}[\nP, R]\bar{\W} \|_{L^4} \lesssim \gamma \||D|^{\frac{3}{4}}R\|_{BMO}\||D|^{\frac{3}{4}}W\|_{L^4} \lesssim \uAS \Astar,
\end{align*}
and the same estimates hold for complex conjugates.
As a consequence,
\begin{equation*}
\gamma\||D|^{\frac{1}{2}}N \|_{L^4}\lesssim \uAS\Astar, \quad \gamma\|N\|_{L^4} \lesssim A^\sharp A_{-\frac{3}{4}}.
\end{equation*}

Combining the estimates above, we get:
\begin{lemma}
 The  frequency-shift $\ua$ satisfies the $BMO(L^\infty)$ bounds
 \begin{equation}
       \||D|^{\frac{1}{4}}\ua \|_{BMO} \lesssim \uA_{\frac{1}{4}}(1+A), \quad \|\ua\|_{L^\infty}\lesssim_{A} \uA(1+A),\label{uaBMO}
 \end{equation}
 and the $L^4$ based estimates
 \begin{equation}
     \||D|^{\frac{1}{2}}\ua\|_{L^4} \lesssim (\Astar + \gamma^{\frac{1}{2}})\uAS, \qquad \gamma\|\ua\|_{L^4} \lesssim (\Astar + \gamma^{\frac{1}{2}})\uAS. \label{uaAsharp}
 \end{equation}
\end{lemma}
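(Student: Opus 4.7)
The plan is to reduce the bounds for $\ua$ to the bounds for its constituents that have already been (or are just about to be) established in the preceding discussion, namely the bounds for $a$, $N$, and $a_1$. The decomposition is simply
\begin{equation*}
\ua \;=\; a \,+\, \frac{\gamma}{2}a_1 \;=\; a \,+\, \frac{\gamma}{2}(R+\bar R) \,-\, \frac{\gamma}{2}N,
\end{equation*}
so for each of the four quoted estimates it suffices to bound $a$, the linear $R$-contribution, and $N$ separately in the corresponding norm.

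For the first line \eqref{uaBMO}, the $BMO$ bound $\||D|^{1/4}a\|_{BMO}\lesssim AA_{1/4}$ and the already noted $\gamma\||D|^{1/4}N\|_{BMO}\lesssim A\uA_{1/4}$ absorb directly into $(1+A)\uA_{1/4}$, since both $A_{1/4}$ and $\uA_{1/4}=\Astar$ agree in order. For the remaining $R$-piece I would use the embedding $\dot B^{1/4}_{\infty,2}\hookrightarrow BMO^{1/4}$, which gives $\gamma\||D|^{1/4}R\|_{BMO}\lesssim \gamma\|R\|_{\dot B^{1/4}_{\infty,2}}\lesssim \gamma A_{-1/4}$; this is one of the summands of $\Astar$, so it is controlled. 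The $L^\infty$ bound is even more direct: from $\|a\|_{L^\infty}\lesssim \|R\|^2_{\dot B^{1/2}_{\infty,2}}\lesssim A^2$ and the recorded pointwise estimate $\|a_1\|_{L^\infty}\lesssim_A A_{-1/2}(1+A)$, the triangle inequality gives $\|\ua\|_{L^\infty}\lesssim_A A^2+\gamma A_{-1/2}(1+A)$, and one closes using the two trivial facts $A\le \uA$ and $\gamma A_{-1/2}\le \uA$ implicit in the definition of $\uA$.

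For the $L^4$-based bounds \eqref{uaAsharp}, I would again assemble term by term. The $a$-contribution is handled by $\||D|^{1/2}a\|_{L^4}\lesssim A^\sharp A_{1/4}$, which is dominated by $\uAS\Astar$ since $A^\sharp\le \uAS$ and $A_{1/4}\le \Astar$. The $R$-contribution reads $\gamma\||D|^{1/2}R\|_{L^4}=\gamma^{1/2}\cdot\gamma^{1/2}\|R\|_{\dot W^{1/2,4}}\le \gamma^{1/2}\uAS$, where I have used that $\gamma^{1/2}\|R\|_{\dot W^{1/2,4}}$ is literally a summand of $\uAS$. The $N$-contribution is the already-noted $\gamma\||D|^{1/2}N\|_{L^4}\lesssim \uAS\Astar$. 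For the second inequality $\gamma\|\ua\|_{L^4}\lesssim (\Astar+\gamma^{1/2})\uAS$, the same splitting works: $\gamma\|a\|_{L^4}\le A^\sharp(\gamma A_{-1/4})\le \uAS\Astar$, the $R$-piece $\gamma^2\|R\|_{L^4}=\gamma^{1/2}\cdot\gamma^{3/2}\|R\|_{L^4}\le \gamma^{1/2}\uAS$ recognizes another summand of $\uAS$, and the remaining $\gamma^2\|N\|_{L^4}$ is obtained by rebalancing the commutator estimate for $N$ used above (splitting the two factors using the low-$\gamma$-weight summands of $\uAS$ and $\Astar$ rather than the high-$\gamma$-weight summands used in the derivation of $\gamma\||D|^{1/2}N\|_{L^4}\lesssim\uAS\Astar$).

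There is no genuine analytic obstacle in this lemma: the work has been done upstream. The only point requiring care is bookkeeping of the $\gamma$-powers so that each piece lands in exactly the right summand of $\uA_{1/4}=\Astar$ or $\uAS$; this is where one must exploit the fact that both control norms are designed so that every summand has the same total order, which makes the AM-GM and Besov-type redistributions used above automatic rather than delicate.
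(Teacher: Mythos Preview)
Your proposal is correct and follows exactly the paper's approach: the lemma is stated immediately after the individual bounds for $a$, $a_1$, and $N$ with the single sentence ``Combining the estimates above, we get,'' and your write-up simply makes that combination explicit. The only place requiring a moment's thought is the $\gamma^2\|N\|_{L^4}$ term, where the recorded bound $\gamma\|N\|_{L^4}\lesssim A^\sharp A_{-3/4}$ does not quite close by itself; your remark that one must go back to the paraproduct/commutator representation of $N$ and redistribute the $\gamma$-weights (e.g.\ pairing $\gamma^2\|R\|_{\dot W^{-1/4,4}}\le\uAS$ with $\||D|^{1/4}\W\|_{BMO}\le\Astar$, and $\gamma A_{-1/4}\le\Astar$ with $\gamma\|R\|_{\dot W^{1/4,4}}\le\uAS$) is exactly right and matches what the paper intends.
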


Next, we recall the bounds for the transport coefficient $b$.
\begin{lemma}[\hspace{1sp}\cite{ai2023dimensional}]
Let $s>0$, then the transport coefficient $b$ satisfies
\begin{equation*}
    \||D|^sb\|_{BMO}\lesssim_A \||D|^s R\|_{BMO}, \quad \|b\|_{\dot{H}^s} \lesssim_A \|R\|_{\dot{H}^s}, \quad \||D|^s b\|_{L^4}\lesssim_A \||D|^s R\|_{L^4}.
\end{equation*}
In particular, we have
\begin{equation*}
     \||D|^{\frac{1}{2}}b\|_{BMO}\lesssim_A A, \quad  \||D|^{\frac{3}{4}}b\|_{BMO}\lesssim_A A_{\frac{1}{4}}, \quad \||D|^{\frac{3}{4}}b\|_{L^4}\lesssim_A A^\sharp.
\end{equation*}
\end{lemma}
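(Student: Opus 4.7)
The plan is to rewrite $b$ in a form that exposes its essentially linear dependence on $R$, apply Bony's decomposition exploiting the holomorphic structure, and conclude via the paraproduct and Moser-type estimates recorded in Section~\ref{s:Def}. Using $R = Q_\alpha/(1+\W)$ together with $J = |1+\W|^2$, one computes $Q_\alpha/J = R/(1+\bar\W) = R(1-\bar Y)$, so that
\begin{equation*}
b = R + \bar R - \nP[R\bar Y] - \bar\nP[\bar R Y].
\end{equation*}

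For the quadratic correction I would apply Bony's decomposition $R\bar Y = T_R\bar Y + T_{\bar Y}R + \Pi(R,\bar Y)$. Because $\bar Y$ is anti-holomorphic, the paraproduct $T_R\bar Y$ sits in positive frequencies and is killed by $\nP$, whereas $T_{\bar Y}R$ remains holomorphic. Hence only two genuine pieces survive the projection:
\begin{equation*}
\nP[R\bar Y] = T_{\bar Y}R + \nP\Pi(R,\bar Y),
\end{equation*}
and symmetrically for the complex conjugate term.

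For the unbalanced piece, \eqref{LpBalance} and its $BMO^s$ counterpart yield $\||D|^s T_{\bar Y}R\|_X \lesssim \|Y\|_{L^\infty}\||D|^s R\|_X$ for each of $X\in\{BMO,L^2,L^4\}$. For the balanced piece, the frequency-$2^m$ output of $\Pi(R,\bar Y)$ only receives contributions from dyadic scales $\gtrsim 2^m$ of both factors, so the $|D|^s$ derivative can be effectively transferred onto $R$; this gives $\||D|^s\Pi(R,\bar Y)\|_X \lesssim \|Y\|_{L^\infty}\||D|^s R\|_X$ provided $s>0$. The Moser estimate \eqref{MoserOne} applied to $Y=\W/(1+\W)$ yields $\|Y\|_{L^\infty}\lesssim_A 1$, and $\nP$ is bounded on each of $BMO^s$, $\dot H^s$, $\dot W^{s,4}$. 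Combining these bounds with the trivial $R+\bar R$ contribution produces the three general estimates.

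The three particular consequences follow by specializing $s$: for $s=1/2$ one uses $\||D|^{1/2}R\|_{L^\infty}\leq A$ and the embedding $L^\infty\hookrightarrow BMO$; for $s=3/4$ one uses $\|R\|_{\dot B^{3/4}_{\infty,2}}\leq A_{1/4}$ and the embedding $\dot B^{3/4}_{\infty,2}\hookrightarrow BMO^{3/4}$; and for the $L^4$ estimate one reads off $\|R\|_{\dot W^{3/4,4}}\leq A^\sharp$. The step I expect to require the most care is the balanced paraproduct estimate for $s>0$: the square-function characterization of $BMO^s$ produces a sum $\sum_{k\geq m-C}$ which converges only because the strict positivity of $s$ supplies a decay factor $2^{-c(k-m)}$, explaining why the hypothesis $s>0$ is required.
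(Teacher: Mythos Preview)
Your argument is correct and is precisely the approach the paper adopts. The paper itself does not prove this lemma but cites it from \cite{ai2023dimensional}; however, immediately afterwards it proves the analogous bound for $b_1$ by exactly your method: rewrite $b_1 = W-\bar W + T_Y\bar W + \bar\nP\Pi(\bar W,Y) - T_{\bar Y}W - \nP\Pi(W,\bar Y)$, observe that the holomorphic/anti-holomorphic structure kills one of the low--high paraproducts under the projection, and then invoke \eqref{BMOBalance}, \eqref{BMOSigma} together with $\|Y\|_{L^\infty}\lesssim_A 1$. Your decomposition $b = R+\bar R - T_{\bar Y}R - \nP\Pi(R,\bar Y) - \text{(c.c.)}$ is the same computation with $R$ in place of $W$. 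One very minor point: the bound $\|Y\|_{L^\infty}\lesssim_A 1$ does not come from the Moser estimate \eqref{MoserOne} (which is a $BMO^s$ bound) but simply from $Y=\W/(1+\W)$ being a smooth function of $\W$ with $\|\W\|_{L^\infty}\leq A$.
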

For the bound of $b_1$, we  rewrite $b_1$ as
\begin{align*}
    b_1 &= \nP[W(1-\bar{Y})] - \bar{P}[\bar{W}(1-Y)] = W-\bar{W}+\bar{\nP}[\bar{W}Y]-\nP[W\bar{Y}]\\
    & = W-\bar{W}+T_Y \bar{W} + \bar{\nP}\Pi(\bar{W}, Y)- T_{\bar{Y}}W -\nP\Pi(W,\bar{Y}).
\end{align*}
Again we use \eqref{BMOBalance} and \eqref{BMOSigma} to estimate for $s> 0$,
\begin{equation*}
    \||D|^s b_1\|_{BMO} \lesssim_A \||D|^s W\|_{BMO}, \qquad  \| b_1\|_{\dot{H}^s} \lesssim_A \| W\|_{\dot{H}^s}, \quad \|b_1\|_{\dot{W}^{s,4}}\lesssim_A \|W\|_{\dot{W}^{s,4}}.
\end{equation*}
Gathering together the bounds for $b$ and $b_1$, we obtain the following result for $b_1$:
\begin{lemma}
   The advection velocity $\ub$ satisfies the estimates
   \begin{equation}
       \||D|^{\frac{1}{2}}\ub \|_{BMO} \lesssim_A \uA, \quad \||D|^{\frac{3}{4}}\ub \|_{BMO} \lesssim_A \uA_{\frac{1}{4}}, \quad \||D|^{\frac{3}{4}}\ub\|_{L^4}\lesssim_A \uAS. \label{ubBMO}
   \end{equation}
\end{lemma}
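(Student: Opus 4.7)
The plan is to simply combine the decomposition $\ub = b - i\frac{\gamma}{2} b_1$ with the separate estimates for $b$ and $b_1$ that have essentially already been collected in the preceding discussion. The main task is bookkeeping: matching each term on the right-hand side to the appropriate summand in $\uA$, $\uA_{\frac{1}{4}}=\Astar$, or $\uAS$, and keeping track of the factor of $\gamma$ that accompanies $b_1$.

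For the first estimate $\||D|^{\frac{1}{2}}\ub\|_{BMO}\lesssim_A \uA$, I would use the recalled bound $\||D|^{\frac{1}{2}}b\|_{BMO}\lesssim_A A \lesssim \uA$ for the $b$ piece. For the $b_1$ piece, the already-derived inequality $\||D|^{s}b_1\|_{BMO}\lesssim_A \||D|^s W\|_{BMO}$ (from the paraproduct decomposition $b_1 = W-\bar{W}+T_Y\bar W + \bar\nP\Pi(\bar W,Y) - T_{\bar Y} W - \nP\Pi(W,\bar Y)$) together with the trivial embedding $L^\infty\hookrightarrow BMO$ yields $\||D|^{\frac{1}{2}}b_1\|_{BMO}\lesssim_A A_{-\frac{1}{2}}$, and multiplying by $\gamma$ gives exactly the summand $\gamma A_{-\frac{1}{2}}\lesssim \uA$.

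For the second estimate, I would analogously invoke $\||D|^{\frac{3}{4}}b\|_{BMO}\lesssim_A A_{\frac{1}{4}}\lesssim \Astar$, and for the $b_1$ contribution use $\gamma\||D|^{\frac{3}{4}}b_1\|_{BMO}\lesssim_A \gamma\||D|^{\frac{3}{4}}W\|_{BMO}$, where the embedding $\dot{B}^{\frac{3}{4}}_{\infty,2}\hookrightarrow BMO^{\frac{3}{4}}$ and the definition of $A_{-\frac{1}{4}}$ deliver $\gamma\||D|^{\frac{3}{4}}W\|_{BMO}\lesssim \gamma A_{-\frac{1}{4}} \lesssim \Astar = \uA_{\frac{1}{4}}$. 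For the $L^4$ estimate I would combine $\||D|^{\frac{3}{4}}b\|_{L^4}\lesssim_A A^\sharp \lesssim \uAS$ with the $L^4$ counterpart $\|b_1\|_{\dot W^{\frac{3}{4},4}}\lesssim_A \|W\|_{\dot W^{\frac{3}{4},4}}$, which matches exactly the summand $\gamma\|W\|_{\dot W^{\frac{3}{4},4}}$ inside the definition of $\uAS$.

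I do not expect any significant obstacle: all three estimates are of the same flavor, the nonlinear work has already been absorbed into the previously stated bounds on $b$ and $b_1$, and the remaining step is simply to verify that each $\gamma$-weighted piece lands in the right control norm. The one point that merits a line of justification is the embedding $L^\infty\hookrightarrow BMO$ used in the first estimate, and the Besov embedding used in the second; both are standard and mentioned in the preceding section.
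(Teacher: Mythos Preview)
Your proposal is correct and is exactly the approach the paper takes: the paper simply states that gathering together the previously recorded bounds for $b$ and $b_1$ yields the lemma, without writing out any further details. Your bookkeeping of which $\gamma$-weighted piece lands in which summand of $\uA$, $\Astar$, and $\uAS$ is precisely what is needed to fill in that one line.
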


As for the auxiliary functions $M$ and $M_1$, we have
\begin{lemma}[\hspace{1sp}\cite{ai2023dimensional, MR3869381}]
\begin{enumerate}
\item  The function $M$ satisfies the $L^\infty$ bound
\begin{equation}
    \|M\|_{L^\infty} \lesssim_A A^2_{\frac{1}{4}},\label{MBMOOne}
\end{equation}
as well as the Sobolev bounds
\begin{equation*}
   \|M\|_{\dot{H}^{s-\frac{1}{2}}} \lesssim_A A \|(\W, R)\|_{\doth{s}}, \quad s\geq 0.
\end{equation*}
\item The auxiliary function $M_1$ satisfies the $L^\infty$ bound
  \begin{equation}
      \|M_1\|_{L^\infty} \lesssim_A A^2. \label{MoneLinfinity}
  \end{equation}
\end{enumerate}
\end{lemma}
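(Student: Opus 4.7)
The plan is to start from the balanced representations of $M$ and $M_1$ (the second equalities in each definition), which manifest a holomorphic/anti-holomorphic cancellation, and then to expand via paraproducts so that all contributions reduce to expressions where the control norms $A$ and $A_{\frac{1}{4}}$ can be read off directly.

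For $M$ in $L^\infty$: I would first simplify by applying $\nP + \bar\nP = I$ together with the Leibniz identity $\partial_\alpha(\bar R Y) = \bar R_\alpha Y + \bar R Y_\alpha$ to rewrite
\[
M = 2\Re\bigl[\partial_\alpha \bar\nP(\bar R Y) - \bar R_\alpha Y\bigr].
\]
Since $Y$ and $R$ are holomorphic, $\bar\nP Y = 0$, so $\bar\nP(\bar R Y) = [\bar\nP, \bar R]Y$, which already captures a commutator gain. Expanding $\bar R Y = T_{\bar R}Y + T_Y \bar R + \Pi(\bar R, Y)$, the low-high piece $T_{\bar R}Y$ is automatically holomorphic (killed by $\bar\nP$), while $\bar\nP T_Y\bar R = T_Y \bar R$ survives; taking the remaining $\partial_\alpha$ and subtracting $\bar R_\alpha Y$ then cancels the unbalanced low-high pieces, leaving genuine balanced paraproducts $\Pi(\bar R, Y_\alpha)$, $\Pi(\bar R_\alpha, Y)$ and commutators of $\bar\nP$ with these. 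The balanced terms are controlled pointwise by a Cauchy–Schwarz in Littlewood–Paley frequencies,
\[
\|\Pi(\bar R, Y_\alpha)\|_{L^\infty} \lesssim \|R\|_{\dot B^{\frac{3}{4}}_{\infty, 2}} \|Y\|_{\dot B^{\frac{1}{4}}_{\infty, 2}} \lesssim_A A_{\frac{1}{4}}^2,
\]
using the Moser-type bound $\|Y\|_{\dot B^{1/4}_{\infty, 2}} \lesssim_A \|\W\|_{\dot B^{1/4}_{\infty, 2}}$ descending from \eqref{YBMO}.

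For $M$ in $\dot H^{s-\frac{1}{2}}$: the same expansion suffices, and the scaling is now straightforward. The key budget is $\tfrac12$ derivative on $R$ (via $\||D|^{1/2}R\|_{L^\infty}\lesssim A$) and $s$ derivatives on $Y$ (via the Moser estimate \eqref{MoserTwo}, since $\|Y\|_{\dot H^s}\lesssim_A \|\W\|_{\dot H^s}$). Splitting $\bar R Y_\alpha$ and $R_\alpha \bar Y$ by paraproducts, the low-high pieces are handled by \eqref{LpLowhigh}, the high-low pieces by \eqref{LpBalance}, and the balanced pieces by frequency summability; each is bounded by $A\|(\W,R)\|_{\doth{s}}$.

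For $M_1$, I would use
\[
M_1 = \partial_\alpha \nP[W\bar Y] - \partial_\alpha \bar\nP[\bar W Y],
\]
and again exploit holomorphicity: $\nP[W\bar Y] = [\nP, W]\bar Y$ and $\bar\nP[\bar W Y] = [\bar\nP, \bar W]Y$. Distributing $\partial_\alpha$ yields commutators $[\nP, \W]\bar Y$ and $[\nP, W]\bar Y_\alpha$ (and their conjugates), each of which is a balanced product after one further paraproduct decomposition; the surviving pieces (low-high with the low factor vanishing under the projection, and pure balanced terms) are bounded by $\|\W\|_{L^\infty}\|Y\|_{L^\infty} \lesssim A^2$.

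The main obstacle is the $L^\infty$ bound for $M$: a direct Littlewood–Paley expansion produces low-high paraproducts such as $T_{Y_\alpha}\bar R$ that are not individually controlled in $L^\infty$ by $A_{1/4}^2$. Getting around this requires carefully pairing such terms with their conjugate/counterpart contributions so that the unbalanced pieces either cancel outright or reorganize into balanced Coifman–Meyer-type products or $[\bar\nP,\cdot]$ commutators, where the holomorphic structure of $R$ and $Y$ finally allows an honest Besov/BMO estimate at the $A_{\frac{1}{4}}^2$ level. This cancellation-by-holomorphicity is the structural heart of the lemma, and it mirrors the computations carried out in detail in \cite{ai2023dimensional} and \cite{MR3869381}.
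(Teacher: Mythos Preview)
The paper does not prove this lemma at all; it is quoted verbatim from \cite{ai2023dimensional, MR3869381}, so there is no ``paper's own proof'' to compare against. I can only evaluate your sketch on its own merits.

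Your treatment of the Sobolev bound on $M$ and of the $L^\infty$ bound on $M_1$ is essentially correct: once you write $M_1 = \partial_\alpha[\nP,W]\bar Y - \partial_\alpha[\bar\nP,\bar W]Y$ and distribute the derivative, every surviving piece is a balanced product or a commutator with a full derivative gain, and $\|\W\|_{L^\infty}\|Y\|_{L^\infty}\lesssim_A A^2$ closes. Likewise, for $\|M\|_{\dot H^{s-1/2}}$ the budgeting you describe (half a derivative on $R$ in $L^\infty$, $s$ derivatives on $Y$ in $L^2$) goes through.

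The gap is in the $L^\infty$ bound for $M$. Your first paragraph asserts that after writing $\bar\nP(\bar R Y)=T_Y\bar R+\bar\nP\Pi(\bar R,Y)$, ``taking the remaining $\partial_\alpha$ and subtracting $\bar R_\alpha Y$ then cancels the unbalanced low-high pieces.'' This is false: carrying out the subtraction gives
\[
\partial_\alpha T_Y\bar R-\bar R_\alpha Y = T_{Y_\alpha}\bar R - T_{\bar R_\alpha}Y - \Pi(\bar R_\alpha,Y),
\]
so two low-high paraproducts $T_{Y_\alpha}\bar R$ and $T_{\bar R_\alpha}Y$ survive, and neither is individually bounded in $L^\infty$ by $A_{1/4}^2$ (the $\ell^2$ Besov summability of $A_{1/4}$ is one power short of the $\ell^1$ needed for a naive dyadic sum). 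Your last paragraph acknowledges exactly this obstacle but then defers its resolution entirely to \cite{ai2023dimensional, MR3869381} without saying what the mechanism is. That is the substantive content of the estimate, and it is missing from your proposal: you have identified the difficulty but not overcome it. The actual argument in \cite{ai2023dimensional} uses a sharper bilinear $L^\infty$ estimate exploiting the opposite-sign frequency localization of the holomorphic/anti-holomorphic factors, not a simple cancellation of paraproduct pieces.
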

Combining the estimates \eqref{MBMOOne} and \eqref{MoneLinfinity}, we immediately get the $L^\infty$ bound for $\underline{M}$
\begin{equation}
    \|\underline{M}\|_{L^\infty} \lesssim_A \uA_{\frac{1}{4}}^2. \label{uMBound}
\end{equation}

\subsection{Leading terms of para-material derivatives} \label{s:TDtBound}
The material derivative $D_t = \partial_t + \ub \partial_\alpha$ is very important in the water waves system.
At the paradifferential level it is replaced by the para-material derivative $T_{D_t} = \partial_t + T_\ub \partial_\alpha$.
In this subsection, we compute the leading term of para-material derivatives of various functions.

\begin{lemma}
We have the following results on para-material derivatives of $W, \W$ and $R$:
\begin{enumerate}
    \item Para-material derivative of $W$:
\begin{equation}
    T_{D_t}W = -T_{1+W_\alpha} \nP[(1-\bar{Y})R]+ i\frac{\gamma}{2}T_{1+W_\alpha} \nP[W(1-\bar{Y})]-\nP\Pi(W_\alpha, \ub)-i\frac{\gamma}{2}W. \label{ParaW}
\end{equation}
    \item Para-material derivatives of $(\W,R)$:
    \begin{align*}
   &T_{D_t}\W = -T_{\ub_\alpha}\W -i\frac{\gamma}{2}\W-\nP\partial_\alpha \left[T_{1+W_\alpha} [(1-\bar{Y})(R- i\frac{\gamma}{2} W)]+\Pi(\W, \ub)\right],\\
   &T_{D_t}R = -i\gamma R-\nP T_{R_\alpha}\ub -\nP\Pi(R_\alpha, \ub)+i\nP[(g+\ua)Y] -\nP[R\bar{R}_\alpha]-i\frac{\gamma}{2}\nP[W\bar{R}_\alpha-\bar{W}_\alpha R].
    \end{align*}
    \item Leading terms of  para-material derivatives of $(\mathbf{W}, R)$:
     \begin{equation}
\left\{
             \begin{array}{lr}
            T_{D_t} \W +T_{1+\W}T_{1-\bar{Y}}R_\alpha = G  &\\
             T_{D_t} R -iT_{g+\ua}Y +i\gamma R = K,&
             \end{array} \label{ParaWalphaR}
\right.
\end{equation}
where the source terms $(G,K)$ in \eqref{ParaWalphaR} satisfy the BMO bound
\begin{equation*}
    \|G\|_{BMO}+\|K\|_{BMO^{\frac{1}{2}}} \lesssim_\uA \uA^2_{\frac{1}{4}},
\end{equation*}
and the Sobolev bound
\begin{equation*}
    \|G\|_{\dot{W}^{\frac{1}{4},4}}+\gamma\|K\|_{\dot{W}^{\frac{1}{4},4}} \lesssim_\uAS \uA_{\frac{1}{4}}\uAStar.
\end{equation*}
\end{enumerate}
\end{lemma}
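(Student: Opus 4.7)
The plan is to derive each of the three identities from the governing water wave equations by systematically converting ordinary products into paraproducts and using holomorphic projections.

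For part (1), I would start from the first equation of \eqref{e:CVWW1}, namely $W_t + \ub(1+W_\alpha) + i\frac{\gamma}{2}W = \bar R + i\frac{\gamma}{2}\bar W$. Expanding the product via $\ub(1+W_\alpha) = T_{1+W_\alpha}\ub + T_\ub W_\alpha + \Pi(\ub, W_\alpha)$ and rearranging so as to form $T_{D_t}W = W_t + T_\ub W_\alpha$, I arrive at $T_{D_t}W = -T_{1+W_\alpha}\ub - \Pi(\ub, W_\alpha) - i\frac{\gamma}{2}W + \bar R + i\frac{\gamma}{2}\bar W$. Now substitute the explicit formulas $b = \nP[R(1-\bar Y)] + \bar{\nP}[\bar R(1-Y)]$ and $b_1 = \nP[W(1-\bar Y)] - \bar{\nP}[\bar W(1-Y)]$ using $1/(1+\bar\W) = 1-\bar Y$. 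Since $\bar R$ and $\bar W$ are antiholomorphic, the identities $\bar R = \bar{\nP}[\bar R(1-Y)] + \bar{\nP}[\bar R Y]$ and likewise for $\bar W$ let the antiholomorphic portions of $\ub$ cancel against $\bar R + i\frac{\gamma}{2}\bar W$; the holomorphic portions combine into $-T_{1+W_\alpha}\nP[(1-\bar Y)R] + i\frac{\gamma}{2}T_{1+W_\alpha}\nP[W(1-\bar Y)]$, and the remaining high-high piece is projected to $-\nP\Pi(W_\alpha,\ub)$.

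For part (2), I differentiate the identity in part (1) in $\alpha$ using $\partial_\alpha T_{D_t} = T_{D_t}\partial_\alpha + T_{\ub_\alpha}\partial_\alpha$, which gives $T_{D_t}\W = \partial_\alpha T_{D_t}W - T_{\ub_\alpha}\W$. Since $1+W_\alpha$ is holomorphic, $T_{1+W_\alpha}$ commutes with $\nP$ in the sense that $T_{1+W_\alpha}\nP[\cdot] = \nP T_{1+W_\alpha}[\cdot]$ on the image of $\nP$, and the two holomorphic paraproduct terms from part (1) collapse into $\nP T_{1+W_\alpha}[(1-\bar Y)(R - i\tfrac{\gamma}{2}W)]$, matching the claim. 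For $T_{D_t}R$, I begin with the second equation of \eqref{differentiatedEqn}, decompose $\ub R_\alpha = T_\ub R_\alpha + T_{R_\alpha}\ub + \Pi(R_\alpha,\ub)$, transfer $T_\ub R_\alpha$ to the left, and rewrite $-i(g\W - a)/(1+\W)$ using $Y = \W/(1+\W)$ to isolate $i(g+\ua)Y$ up to holomorphic projection; the remaining terms are expanded using the definitions of $a$ and $a_1$ to produce $-\nP[R\bar R_\alpha]$ and the $\gamma$-corrections shown.

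For part (3), the key operation is to extract the leading paraproducts from the identities of part (2). In the $\W$-equation, the main term $-\partial_\alpha\nP T_{1+W_\alpha}[(1-\bar Y)R]$ is converted to $-T_{1+\W}T_{1-\bar Y}R_\alpha$ via the para-products identity \eqref{ParaProducts}, with the paraproduct error absorbed into $G$. The remaining pieces, namely $-T_{\ub_\alpha}\W$, $-i\frac{\gamma}{2}\W$, $-\nP\partial_\alpha\Pi(\W,\ub)$, together with the $\gamma$-corrections from the $W$-part, all go into $G$. In the $R$-equation, the leading term $iT_{g+\ua}Y$ is extracted from $i\nP[(g+\ua)Y]$ by the paraproduct decomposition, with the $T_Y(g+\ua)$ and $\Pi$ pieces, together with $-\nP T_{R_\alpha}\ub$, $-\nP\Pi(R_\alpha,\ub)$, $-\nP[R\bar R_\alpha]$, and the $\gamma$-quadratic terms, absorbed into $K$. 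To bound $G \in BMO$ and $K\in BMO^{1/2}$, I apply \eqref{LpBalance}--\eqref{LpLowhigh}, \eqref{BMOBalance}--\eqref{BMOSigma}, and the commutator estimates of Lemma \ref{t:CommutatorL2}, feeding in the control norm bounds \eqref{YBMO}, \eqref{uaBMO}, \eqref{ubBMO}, and \eqref{uMBound}; for the $\dot W^{1/4,4}$ bounds I use the same paraproduct estimates but with one factor in $L^4$, matched against $\uAS$.

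The main obstacle will be the bookkeeping in part (3): each error piece must match the target order $\uA_{1/4}^2$ (for $BMO$) or $\uA_{1/4}\uAStar$ (for $\dot W^{1/4,4}$), which forces a careful split of each product into low-high and high-high portions with the correctly chosen Sobolev/Besov exponents. The balanced high-high interactions $\Pi(\ub,W_\alpha)$, $\Pi(R_\alpha,\ub)$, and $\nP[R\bar R_\alpha]$ are the most delicate since both factors are at nearly the same regularity; here the Besov-square characterisation of $\dot B^{s}_{\infty,2}$ and the embedding $\dot W^{1/4,4}\hookrightarrow BMO$ will be needed to close the estimates cleanly.
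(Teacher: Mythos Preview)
Your proposal is correct and follows essentially the same approach as the paper: start from \eqref{e:CVWW1}, paraproduct-decompose $\ub(1+W_\alpha)$, apply $\nP$ to kill the antiholomorphic contributions, then differentiate for $\W$ and extract leading terms with the paraproduct/commutator lemmas and the bounds \eqref{uaBMO}, \eqref{uaAsharp}, \eqref{ubBMO}. The only cosmetic difference is that for $T_{D_t}R$ the paper starts from the already-simplified form \eqref{EqnYR}, which hands you $i(g+\ua)Y - i\ua - i\tfrac{\gamma}{2}(R-\bar R)$ directly, whereas you start from \eqref{differentiatedEqn} and reduce by hand; both routes arrive at the same formula.
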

\begin{proof}
 For the para-material derivative of $W$, we use the first equation of \eqref{e:CVWW1} to write
 \begin{equation*}
     T_{D_t}W = -T_{1+W_\alpha} \ub -\Pi(W_\alpha,\ub)- i\frac{\gamma}{2}W + \bar{R}+i\frac{\gamma}{2}\bar{W}.
 \end{equation*}
 After plugging in the expression of $\ub$ and applying the Littlewood-Paley projection $\nP$, we can eliminate the anti-holomorphic portion and obtain the result in $(1)$.
Note that in $T_{D_t}$, the advection velocity $\ub$ has relative low frequency, so that  the Littlewood-Paley projection $\nP$  freely passes over it without causing any trouble.

 By differentiating \eqref{ParaW}, we obtain the para-material derivative of $\W$.
 Using \eqref{EqnYR}, we have
 \begin{equation*}
    T_{D_t}R = -T_{R_\alpha}\ub -\Pi(R_\alpha, \ub)+i(g+\ua)Y -i\ua-i\frac{\gamma}{2}(R-\bar{R}).
 \end{equation*}
 Applying the projection $\nP$ and using the definition of $\ua$, we  obtain the para-material derivative of $R$.

 For the leading term of $T_{D_t}\W$,  all terms in the expression of $T_{D_t}\W$ have a good balance of derivatives, with a derivative falling on a low frequency variable, except  when the derivative $\partial_\alpha$ falls on $R$.
 Hence, we can rewrite
 \begin{equation*}
    T_{D_t}\W = -T_{1+\W}T_{1-\bar{Y}}R_\alpha +G,
 \end{equation*}
 where the source term $G$ is given by
 \begin{align*}
     G = & \nP T_{1+\W}\Pi(\bar{Y}, R_\alpha)-T_{\ub_\alpha}W_\alpha -\nP T_{\W_\alpha}[(1-\bar{Y})R]+ \nP T_{1+\W} [\bar{Y}_\alpha R] + \nP\partial_\alpha \Pi(\W, \ub)\\
     &+ i\frac{\gamma}{2}\nP T_{\W_\alpha} [(1-\bar{Y})W] -i\frac{\gamma}{2}\nP T_{1+\W} [\bar{Y}_\alpha W] +i\frac{\gamma}{2}\nP T_\W \W -i\frac{\gamma}{2}\nP T_{1+\W} [\bar{Y}\W].
 \end{align*}
 Each term in $G$ has a good balanced of derivatives, and satisfies the desired estimate.
 The computation of leading term of $T_{D_t}R$ in part $(3)$ is straightforward.
 We use the estimates \eqref{uaBMO}, \eqref{uaAsharp}, \eqref{ubBMO} and apply inequalities \eqref{LpBalance}, \eqref{LpLowhigh}, \eqref{BMOBalance} and \eqref{BMOSigma} to get the result.
\end{proof}

Next, we compute the para-material derivative of $Y: = \frac{\W}{1+\W}$.
\begin{lemma}
The leading term of the para-material derivative of $Y$ is given by
\begin{equation}
    T_{D_t} Y = -T_{|1-Y|^2} R_\alpha + G. \label{YParaDerivative}
\end{equation}
The source term $G$ satisfies the $BMO$ bound
\begin{equation*}
    \|G\|_{BMO}\lesssim_A \uA^2_{\frac{1}{4}}.
\end{equation*}
We also have
\begin{equation*}
    \| |D|^{-\frac{1}{4}}T_{|1-Y|^2}R_\alpha\|_{BMO}\lesssim_A \uA_{\frac{1}{4}}.
\end{equation*}
\end{lemma}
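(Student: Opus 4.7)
The plan is to derive the claimed identity $T_{D_t}Y = -T_{|1-Y|^2}R_\alpha + G$ by starting from the exact evolution equation for $Y$ in \eqref{EqnYR}, passing to the paradifferential level via Bony's decomposition, and then estimating the resulting source term $G$ in $BMO$ using the paraproduct machinery of Section \ref{s:Def} together with the auxiliary bounds on $Y$, $\ub$, and $\underline{M}$ already established in this section.

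First, I would rewrite the first equation of \eqref{EqnYR} as
\begin{equation*}
D_t Y + |1-Y|^2 R_\alpha = (1-Y)\underline{M} + i\tfrac{\gamma}{2}\Bigl[Y^2 + \tfrac{\bar{Y}}{1-\bar{Y}}Y(1-Y)\Bigr].
\end{equation*}
Applying Bony's decomposition
\begin{equation*}
D_t Y = T_{D_t}Y + T_{Y_\alpha}\ub + \Pi(Y_\alpha,\ub), \qquad |1-Y|^2 R_\alpha = T_{|1-Y|^2}R_\alpha + T_{R_\alpha}|1-Y|^2 + \Pi(|1-Y|^2, R_\alpha),
\end{equation*}
I then identify $G$ as the sum of the four off-diagonal paraproduct remainders together with $(1-Y)\underline{M}$ and the two $\gamma$-cubic terms on the right-hand side.

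Next I would bound each summand of $G$ in $BMO$. For $T_{Y_\alpha}\ub$, apply \eqref{BMOSigma} at level $s=\tfrac34$ and pair \eqref{YBMO} with \eqref{ubBMO} to obtain $\lesssim_A A_{1/4}\uA_{1/4}\lesssim \uA_{1/4}^2$. The balanced term $\Pi(Y_\alpha,\ub)$ is handled analogously via the high-high paraproduct smoothing, which allows one derivative to be transferred onto $\ub$, again producing the product $\||D|^{1/4}Y\|_{BMO}\||D|^{3/4}\ub\|_{BMO}$. For $T_{R_\alpha}|1-Y|^2$ and $\Pi(|1-Y|^2, R_\alpha)$, I would pair $\||D|^{3/4}R\|_{BMO}\lesssim A_{1/4}$ with the Moser bound $\||D|^{1/4}|1-Y|^2\|_{BMO}\lesssim_A A_{1/4}$ that follows from \eqref{MoserOne}. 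The term $(1-Y)\underline{M}$ is absorbed directly from the $L^\infty$ estimate \eqref{uMBound} via $L^\infty\hookrightarrow BMO$. Finally, for the $\gamma$-terms, which are quadratic in $Y$ up to a smooth function of $Y$, I would split $\gamma Y^2 = (\gamma^{1/2}Y)(\gamma^{1/2}Y)$ and use that both $A_{1/4}$ and $\gamma^{1/2}A$ are pieces of $\Astar = \uA_{1/4}$.

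For the auxiliary inequality $\||D|^{-1/4}T_{|1-Y|^2}R_\alpha\|_{BMO}\lesssim_A \uA_{1/4}$, I would commute $|D|^{-1/4}$ past the paraproduct $T_{|1-Y|^2}$ via \eqref{ParaCommutator} (the commutator is strictly better balanced) and then estimate $\|T_{|1-Y|^2}|D|^{-1/4}R_\alpha\|_{BMO}\lesssim_A \||D|^{3/4}R\|_{BMO}\lesssim A_{1/4}\lesssim \uA_{1/4}$, using that $|1-Y|^2$ is bounded in $L^\infty$ by a function of $A$. The main technical delicacy lies in the bookkeeping of derivatives in the balanced remainder $\Pi(Y_\alpha,\ub)$, where exactly one full derivative must be redistributed from $Y_\alpha$ onto $\ub$ by exploiting the high-high cancellation; once this frequency shuffling is set up, the remaining estimates reduce to routine applications of the paraproduct inequalities recalled in Section \ref{s:Def}.
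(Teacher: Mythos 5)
Your proposal follows essentially the same route as the paper's proof: expand the $Y$-equation of \eqref{EqnYR}, keep $T_{|1-Y|^2}R_\alpha$ as the leading paraproduct, place the remaining paraproduct pieces $T_{R_\alpha}|1-Y|^2$, $\Pi(R_\alpha,|1-Y|^2)$, $T_{Y_\alpha}\ub$, $\Pi(Y_\alpha,\ub)$ together with $(1-Y)\underline{M}$ and the vorticity-quadratic terms into $G$, and estimate via \eqref{BMOBalance}, \eqref{BMOSigma}, \eqref{YBMO} and the $L^\infty$ bound on $\underline{M}$, with the final bound on $|D|^{-\frac{1}{4}}T_{|1-Y|^2}R_\alpha$ reducing to $\||D|^{\frac{3}{4}}R\|_{BMO}\lesssim A_{\frac{1}{4}}$. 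The only cosmetic difference is your invocation of \eqref{ParaCommutator} to move $|D|^{-\frac{1}{4}}$ past $T_{|1-Y|^2}$ — the paper simply applies \eqref{BMOSigma} directly, since $|D|^{-\frac{1}{4}}$ acts at the high frequency of the paraproduct — which is immaterial.
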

\begin{proof}
We expand and rewrite the $Y$ equation of \eqref{EqnYR}.
\begin{align*}
    T_{D_t}Y + T_{|1-Y|^2} R_\alpha =& -T_{R_\alpha} |1-Y|^2 - \Pi(R_\alpha, |1-Y|^2) - T_{Y_\alpha} \ub - \Pi(Y_\alpha, \ub) \\
    &+ (1-Y)\underline{M}+ i\frac{\gamma}{2}\left[Y^2+ \frac{\bar{Y}}{1-\bar{Y}}Y(1-Y)\right].
\end{align*}
The first four terms on the right-hand side have a good balance of derivatives, and they satisfy the $BMO$ bound using \eqref{BMOBalance}, \eqref{BMOSigma} and \eqref{YBMO} inequalities.
For the last two terms, we use \eqref{MoneLinfinity} to get the bound.
Hence the right-hand side can be put into the source term $G$.
As for the estimate of the $T_{|1-Y|^2} R_\alpha$, applying \eqref{BMOSigma} yields the estimate.
\end{proof}

We continue to compute the para-material derivatives of the auxiliary functions $X = T_{1-Y}W$, $Z = T_{1-Y}Q$ and $U = T_{1-Y}\partial^{-1}_\alpha W$.
\begin{lemma}
We have the following results on the para-material derivative of
 $X$:
\begin{enumerate}
\item Para-material derivative of $X$:
\begin{equation*}
    T_{D_t}X = T_{T_{1-\bar{Y}}R_\alpha}X -\nP[(1-\bar{Y})R] +i\frac{\gamma}{2}\nP[(1-\bar{Y})W] -\nP\Pi(X_\alpha, \ub)- i \frac{\gamma}{2}X+ E_1,
\end{equation*}
where for $s+\frac{3}{4}\geq 0$, the error $E_1$ satisfies
\begin{equation*}
    \| E_1\|_{\dot{W}^{\frac{5}{4},4}}+ \gamma^2 \| E_1\|_{\dot{W}^{\frac{1}{4},4}}\lesssim_{\uAS} \underline{A}^2_{\frac{1}{4}}, \qquad \|E_1\|_{\dot{H}^{s+\frac{3}{4}}}\lesssim_{\uA}\| \W\|_{\dot{H}^s}.
\end{equation*}
\item Leading term of the para-material derivative of $X$:
\begin{equation*}
    T_{D_t}X +T_{1-\bar{Y}}R  = E_2,
\end{equation*}
where the error $E_2$ satisfies the $BMO$ bound
\begin{equation*}
\||D|E_2\|_{BMO}+ \gamma^2 \|E_2\|_{BMO}\lesssim_{\uAS} \underline{A}^2_{\frac{1}{4}},
\end{equation*}
and the Sobolev bound
\begin{equation*}
    \| E_2\|_{\dot{W}^{\frac{5}{4},4}}+ \gamma^2 \| E_2\|_{\dot{W}^{\frac{1}{4},4}}\lesssim_{\uAS} \underline{A}_{\frac{1}{4}}\uAStar.
\end{equation*}
\item Leading terms of material and para-material derivatives of $X_\alpha$:
\begin{equation*}
    T_{D_t}X_\alpha +T_{1-\bar{Y}}R_\alpha  = E_3,\qquad  D_t X_\alpha +T_{1-\bar{Y}}R_\alpha = E_3,
\end{equation*}
where the error $E_3$ satisfies
\begin{equation*}
\|E_3\|_{BMO}\lesssim_{\uAS}\underline{A}^2_{\frac{1}{4}}, \qquad \|E_3\|_{\dot{W}^{\frac{1}{4},4}}\lesssim_\uAS \Astar \uAStar.
\end{equation*}
\item  Paradifferential identities relating $\W, X_\alpha$, and $Y$:
\begin{equation}
    X_\alpha = T_{1-Y}\W+E_4  = T_{1+\W}Y +E_4, \qquad Y = T_{(1-Y)^2}\W +E_4, \label{XAlphaY}
\end{equation}
where the error $E_4$ satisfies
\begin{equation*}
    \|E_4\|_{\dot{W}^{\frac{1}{2},4}}\lesssim_{A^\sharp} \Astar, \qquad \||D|^{\frac{1}{2}}E_4\|_{BMO}\lesssim_A \underline{A}^2_{\frac{1}{4}}.
\end{equation*}
\end{enumerate} \label{t:XParaMaterial}
\end{lemma}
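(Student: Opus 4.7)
I would compute $T_{D_t} X$ by starting from the definition $X = T_{1-Y} W$ and applying the para-Leibniz rule to expand $T_{D_t}(T_{1-Y} W)$, then substituting the already-established expressions \eqref{ParaW} for $T_{D_t} W$ and \eqref{YParaDerivative} for $T_{D_t} Y$. The algebraic engine driving the simplification is the identity $(1-Y)(1+\W) = 1$, coming from $1-Y = (1+\W)^{-1}$, so that the composition $T_{1-Y} T_{1+\W}$ collapses to the identity modulo paraproduct remainders controlled by \eqref{ParaProducts}.

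For part $(1)$, the unbalanced para-Leibniz rule gives
\[
T_{D_t} X \;=\; T_{1-Y}\, T_{D_t} W \;-\; T_{T_{D_t} Y} W \;+\; \tilde{E}^p_L(-Y,\,W).
\]
Substituting \eqref{ParaW}, the compositions $T_{1-Y} T_{1+W_\alpha}$ collapse to the identity, producing $-\nP[(1-\bar Y)R]$, $i\tfrac{\gamma}{2}\nP[W(1-\bar Y)]$, $-\nP\Pi(W_\alpha,\ub)$, and $-i\tfrac{\gamma}{2} X$ (using $T_{1-Y} W = X$). Substituting \eqref{YParaDerivative} produces a leading $T_{T_{|1-Y|^2} R_\alpha} W$; I factor $|1-Y|^2 = (1-Y)(1-\bar Y)$ and shift the outer $T_{1-Y}$ across by two applications of \eqref{ParaProducts}, arriving at $T_{T_{1-\bar Y} R_\alpha} X$. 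The error $E_1$ then collects the para-Leibniz remainders \eqref{UnbalancedParaLeibnizOne}--\eqref{UnbalancedParaLeibnizTwo}, the para-product remainders \eqref{ParaProducts}, \eqref{ParaProductsTwo}, and the contribution from $G$ in \eqref{YParaDerivative}; its bounds follow by pairing the coefficient estimates \eqref{YBMO}, \eqref{uaBMO}, \eqref{ubBMO}, \eqref{uMBound} against the target norms.

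Parts $(2)$ and $(3)$ are consequences of $(1)$. For $(2)$, I extract the leading piece: $\nP[(1-\bar Y)R]$ decomposes as $T_{1-\bar Y} R + T_R(1-\bar Y) + \nP\Pi(R,1-\bar Y)$, of which only the first is kept, the remaining two being balanced and absorbed in $E_2$; the terms $T_{T_{1-\bar Y} R_\alpha} X$, $i\tfrac{\gamma}{2}\nP[W(1-\bar Y)]$, $-\nP\Pi(X_\alpha,\ub)$, and $-i\tfrac{\gamma}{2} X$ are each balanced or lower order and land in $E_2$ via \eqref{BMOBalance}, \eqref{BMOSigma}, \eqref{LpBalance}, \eqref{LpLowhigh}. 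For $(3)$, I differentiate $(2)$ in $\alpha$, absorbing the commutator $[\partial_\alpha,T_{D_t}] X = -T_{\ub_\alpha}\partial_\alpha X$ into $E_3$; the swap $T_{D_t}\rightsquigarrow D_t$ in the second identity is controlled by $(D_t - T_{D_t}) X_\alpha = T_{X_{\alpha\alpha}} \ub + \Pi(X_{\alpha\alpha},\ub)$, which is balanced and fits into $E_3$ using \eqref{BMOSigma} and \eqref{ubBMO}.

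Part $(4)$ is essentially algebraic. Writing $X_\alpha = T_{1-Y} W_\alpha + T_{-Y_\alpha} W = T_{1-Y}\W - T_{Y_\alpha} W$ directly yields the first identity with $E_4 = -T_{Y_\alpha} W$, whose bounds come from \eqref{LpBalance}--\eqref{LpLowhigh} and \eqref{BMOSigma} combined with \eqref{YBMO}. Expanding the relation $Y = \W(1-Y)$ as a paraproduct gives $Y = T_{1-Y}\W - T_\W Y - \Pi(\W,Y)$, which rearranges to $T_{1-Y}\W = T_{1+\W} Y + E_4$, providing the second identity; multiplying once more by $1-Y$ and reapplying \eqref{ParaProducts} gives $Y = T_{(1-Y)^2}\W + E_4$. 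The main obstacle throughout is the careful bookkeeping of para-Leibniz and para-product errors in the $L^4$-based norms: the $\dot{W}^{5/4,4}$ bound on $E_2$ and the $\dot{H}^{s+3/4}$ bound on $E_1$ valid down to $s = -3/4$ both force using \eqref{ParaProductsTwo} and \eqref{ParaAssociateThree} with a judicious choice of weights, and the vorticity factors $\gamma$, $\gamma^2$ must be routed through the lower-order pieces of $\uAS$ and $\uAStar$ to keep the orders balanced.
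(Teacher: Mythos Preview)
Your overall strategy matches the paper's: para-Leibniz on $X=T_{1-Y}W$, substitute \eqref{ParaW} and \eqref{YParaDerivative}, collapse $T_{1-Y}T_{1+\W}$ via $(1-Y)(1+\W)=1$, and then peel off balanced pieces. Parts $(1)$, $(3)$, $(4)$ are handled essentially as the paper does (the paper simply cites \cite{ai2023dimensional} for $(4)$, while you supply the short algebraic argument).

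There is, however, a genuine gap in your treatment of part $(2)$. You assert that the four terms
\[
T_{T_{1-\bar Y} R_\alpha} X,\qquad i\tfrac{\gamma}{2}\nP[(1-\bar Y)W],\qquad -\nP\Pi(X_\alpha,\ub),\qquad -i\tfrac{\gamma}{2} X
\]
are \emph{each} balanced or lower order and can be absorbed \emph{individually} into $E_2$. This is false for the two vorticity terms. The leading part of $i\tfrac{\gamma}{2}\nP[(1-\bar Y)W]$ is $i\tfrac{\gamma}{2}W$, and the leading part of $-i\tfrac{\gamma}{2}X = -i\tfrac{\gamma}{2}T_{1-Y}W$ is $-i\tfrac{\gamma}{2}W$; both are \emph{linear} in the solution. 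Neither can satisfy the quadratic bound $\gamma^2\|E_2\|_{BMO}\lesssim_{\uAS}\uA_{\frac14}^2$ (nor the $\dot W^{\frac14,4}$ bound) on its own, since for small data a linear quantity cannot be controlled by a quadratic one. The paper resolves this by first reducing $i\tfrac{\gamma}{2}\nP[(1-\bar Y)W]$ to $i\tfrac{\gamma}{2}T_{1-\bar Y}W$ modulo balanced errors and then \emph{combining} the two vorticity terms:
\[
i\tfrac{\gamma}{2}T_{1-\bar Y}W \;-\; i\tfrac{\gamma}{2}X \;=\; i\tfrac{\gamma}{2}T_{Y-\bar Y}W \;=\; -\gamma\, T_{\Im Y}\,W,
\]
which is genuinely quadratic and does land in $E_2$ via \eqref{BMOSigma}. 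You need this cancellation explicitly; without it the stated $E_2$ bounds do not hold.
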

\begin{proof}
 We apply the para-Leibniz rule Lemma \ref{t:Leibniz} to $X$,
 \begin{equation*}
     T_{D_t}X = -T_{D_t Y}W + T_{1-Y}T_{D_t}W +E_1.
 \end{equation*}
For the first term on the right-hand side, we use \eqref{EqnYR} to write
\begin{equation*}
   T_{D_t Y}W = -T_{|1-Y|^2R_\alpha }W + T_{(1-Y)\underline{M}+ i\frac{\gamma}{2}[Y^2+ \frac{\bar{Y}}{1-\bar{Y}}Y(1-Y)]}W.
\end{equation*}
Here for the  term $-T_{|1-Y|^2R_\alpha }W$, we use \eqref{ParaProducts} to separate $X =T_{1-Y}W$, and then \eqref{BMOBalance}, \eqref{BMOSigma} to  peel off perturbative components where  $R_\alpha$ have lower or comparative frequencies than $Y$.
The second term belongs to $E_1$ after applying the bound \eqref{uMBound} directly.

For the second term $T_{1-Y}T_{D_t}W$, we rewrite using \eqref{ParaW},
\begin{equation*}
 T_{1-Y}T_{D_t}W = T_{1-Y}\left(-T_{1+W_\alpha} \nP[(1-\bar{Y})R]+ i\frac{\gamma}{2}T_{1+W_\alpha} \nP[W(1-\bar{Y})]-\nP\Pi(W_\alpha, \ub)-i\frac{\gamma}{2}W\right).
\end{equation*}
After applying \eqref{ParaProducts}, \eqref{ParaProductsTwo}, \eqref{ParaAssociateOne}, \eqref{ParaAssociateThree} on the right-hand side to distribute and simplify the para-coefficient $T_{1-Y}$, we obtain the para-material derivative of $X$.

For the leading term of $D_t X$, we notice that $T_{T_{1-\bar{Y}}R_\alpha}X$ and $\nP\Pi(X_\alpha, \ub)$ have a good balance of derivatives and may be absorbed into $E_2$.
The error $E_1$ is part of $E_2$ due to the embedding $\dot{W}^{\frac{5}{4},4}(\mathbb{R})\hookrightarrow BMO^1(\mathbb{R})$.
For the other terms, we peel off balanced components to write
\begin{equation*}
    -\nP[(1-\bar{Y})R] +i\frac{\gamma}{2}\nP[(1-\bar{Y})W]-i\frac{\gamma}{2}X = -T_{1-\bar{Y}}R + i\frac{\gamma}{2}T_{1-\bar{Y}}W  +E_2.
\end{equation*}
The remaining two vorticity terms can be combined togther
\begin{equation*}
    i\frac{\gamma}{2}T_{1-\bar{Y}}W - i\frac{\gamma}{2}X = i\frac{\gamma}{2}T_{Y-\bar{Y}}W = -\gamma T_{\Im Y}W,
\end{equation*}
which again may be absorbed into $E_2$.

$(3)$ is a direct consequence of $(1)$ and $(2)$, and $(4)$ is proved in Lemma $3.2$ of \cite{ai2023dimensional}.
\end{proof}

\begin{lemma}
We have the following results for para-material derivatives of $Z$ and $U$:
\begin{enumerate}
\item Leading term of the para-material derivative of $Z$:
\begin{equation*}
    T_{D_t}Z - igX +i\gamma Z = E_1,
\end{equation*}
where the error $E_1$ satisfies
\begin{equation*}
   \gamma^2\||D|^{\frac{1}{2}}E_1\|_{BMO}+ \gamma^3\|E_1\|_{BMO}\lesssim_{\uAS} \underline{A}^2_{\frac{1}{4}}.
\end{equation*}
\item Leading term of the para-material derivative of $U$:
\begin{equation*}
    T_{D_t}U +T_{1-\bar{Y}}Z= E_2,
\end{equation*}
where the error $E_2$ satisfies
\begin{equation*}
   \gamma^2\||D|E_2\|_{BMO}\lesssim_{\uAS} \underline{A}^2_{\frac{1}{4}}.
\end{equation*}
\item We have the relations:
\begin{align}
&Z_\alpha = R + E_3, \label{ZAlphaR}\\
&U_\alpha = X + E_4, \label{UAlphaX}
\end{align}
where $E_3$ and $E_4$ satisfy
\begin{equation*}
\gamma^3\|E_3\|_{BMO}\lesssim_A \underline{A}^2_{\frac{1}{4}}, \quad \gamma\||D|E_4\|_{BMO}+\gamma^3\|E_4\|_{BMO}\lesssim_A \underline{A}^2_{\frac{1}{4}}.
\end{equation*}
\item Leading terms of the para-material derivatives of $Z_\alpha$ and $U_\alpha$:
\begin{align*}
&T_{D_t}Z_\alpha -igT_{1-Y}W_\alpha + i\gamma R = E_5,\\
&T_{D_t}U_\alpha + T_{1-\bar{Y}}R = E_6,
\end{align*}
where the errors $E_5$ and $E_6$ satisfy
\begin{align*}
&\gamma\|E_5\|_{BMO}+\||D|^{\frac{1}{2}}E_5\|_{BMO}\lesssim_\uAS \uA^2_{\frac{1}{4}}, \\ &\gamma^2\|E_6\|_{BMO}+\||D|E_6\|_{BMO}\lesssim_\uAS \uA^2_{\frac{1}{4}}.
\end{align*}
\end{enumerate}
\end{lemma}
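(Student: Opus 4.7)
The proof follows the template of Lemma \ref{t:XParaMaterial}. Each equation is obtained by applying the para-Leibniz rule (Lemma \ref{t:Leibniz}) to expand $T_{D_t}(T_{1-Y}\phi)$ as $-T_{D_t Y}\phi + T_{1-Y}T_{D_t}\phi$ up to errors controlled by \eqref{UnbalancedParaLeibnizOne}--\eqref{BalancedLeibniz}. The first summand is dispatched using the leading-term identity \eqref{YParaDerivative}, while the main contribution comes from substituting a known paradifferential equation into $T_{D_t}\phi$ and then peeling off balanced or low-high remainders using the $BMO$ and $L^4$ bounds collected in Section \ref{s:WaterBound}.

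For (1), the second equation of \eqref{e:CVWW1} gives at the paradifferential level
\[
T_{D_t}Q = -T_{Q_\alpha}\ub - \Pi(Q_\alpha,\ub) + igW - i\gamma Q + (\text{quadratic terms in }\bar R,\bar W),
\]
so that after applying $T_{1-Y}$ and using para-associativity \eqref{ParaProducts}--\eqref{ParaAssociateThree} to slide the paracoefficient across, the two leading terms produce exactly $igX - i\gamma Z$, while the transport and quadratic pieces are balanced or low-high and are absorbed into $E_1$. The $\gamma^2$ and $\gamma^3$ weights in the claim match the scalings dictated by \eqref{uaBMO}, \eqref{ubBMO} and by the $\gamma$-weights with which $Y$, $N$ and $\ua$ enter $\uA$ and $\uA_{\frac14}$.

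For (2), the same Leibniz expansion, combined with the commutator identity $[T_{D_t},\partial_\alpha^{-1}]=\partial_\alpha^{-1}T_{\ub_\alpha}$, reduces $T_{D_t}U$ to $T_{1-Y}\partial_\alpha^{-1}T_{D_t}W$ plus acceptable errors; I then substitute \eqref{ParaW}. Its leading piece $-T_{1-Y}\partial_\alpha^{-1}T_{1+W_\alpha}\nP[(1-\bar Y)R]$ simplifies, via the algebraic identity $(1-Y)(1+\W)=1$ (used modulo paraproduct errors) together with the paradifferential form of $Q_\alpha=(1+\W)R$, to $-T_{1-\bar Y}Z$; the $i\gamma$-piece in \eqref{ParaW} yields a further cancellation through the projection structure and is absorbed similarly. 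Part (3) is then algebraic: differentiating $Z=T_{1-Y}Q$ gives $Z_\alpha=T_{1-Y}Q_\alpha+T_{-Y_\alpha}Q$, which collapses to $R+E_3$ using $Q_\alpha=(1+\W)R$, the identity $(1-Y)(1+\W)=1$, and the $\gamma^2$-weighted $BMO^{\frac12}$ bound on $Q$; the identity $U_\alpha=X+T_{-Y_\alpha}\partial_\alpha^{-1}W$ is immediate. Part (4) follows by differentiating the identities of (1) and (2), commuting $\partial_\alpha$ past $T_{D_t}$ at the cost of a favourable $T_{\ub_\alpha}$ correction, and invoking \eqref{XAlphaY} together with (3) to recognise $\partial_\alpha(igX)=igT_{1-Y}W_\alpha+(\text{error})$.

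The main technical obstacle is not any single estimate but the careful tracking of $\gamma$-powers across these reductions. Because $Q$, $W$ and $\partial_\alpha^{-1}W$ carry progressively more negative order than $R$ and $\W$, each source term becomes small in the target $BMO$ norm only after multiplication by the correct power of $\gamma$, which is precisely what the statement places in front of each $E_j$. A slip in this bookkeeping would break the claim that the errors are uniformly of size $\uA_{\frac14}^2$, but no new analytic ideas beyond the para-Leibniz, para-product and Moser tools of Section \ref{s:Def} are required.
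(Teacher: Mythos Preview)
Your proposal is correct and follows essentially the same route as the paper: para-Leibniz expansion, dispose of $T_{D_tY}$ via \eqref{YParaDerivative}, substitute the relevant paradifferential equation for $T_{D_t}Q$ (resp.\ $T_{D_t}W$), and slide $T_{1-Y}$ across using \eqref{ParaProducts}--\eqref{ParaAssociateThree}. The only stylistic difference is in part (2): the paper first records the intermediate identity $T_{D_t}W + T_{1-\bar Y}Q_\alpha = E_7$ (obtained from Lemma~\ref{t:XParaMaterial}) and then applies $\partial_\alpha^{-1}$ and $T_{1-Y}$ to it, which reaches $-T_{1-\bar Y}Z$ in one step, whereas you work directly from \eqref{ParaW} and reduce via $(1-Y)(1+\W)=1$ and $Q_\alpha=(1+\W)R$; both routes are valid and yield the same error bounds.
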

\begin{proof}
\begin{enumerate}
\item We apply the para-Leibniz rule Lemma \ref{t:Leibniz} to $Z$,
 \begin{equation*}
     T_{D_t}Z = -T_{D_t Y}Q + T_{1-Y}T_{D_t}Q +E_1,
 \end{equation*}
 where for $E_1$, we have
 \begin{equation*}
     \gamma^2\|E_1\|_{\dot{W}^{\frac{3}{4},4}}+ \gamma^3\|E_1\|_{\dot{W}^{\frac{1}{4},4}}\lesssim_{A^\sharp}\underline{A}^2_{\frac{1}{4}},
 \end{equation*}
so that it satisfies the above error estimate due to the embedding $\dot{W}^{\frac{1}{4},4}(\mathbb{R})\hookrightarrow BMO(\mathbb{R})$.
The term $T_{D_t Y}Q$ is perturbative, similar to the estimate in Lemma \ref{t:XParaMaterial}.
For $T_{D_t}Q$, we use the second equation of \eqref{e:CVWW1} and apply the porjection $\nP$ to write
\begin{equation*}
    T_{D_t}Q = igW -i\gamma Q + i\frac{\gamma}{2}\nP[\bar{R}W]- T_{Q_\alpha}\nP\ub -\nP\Pi(Q_\alpha, \ub) .
\end{equation*}
Applying $T_{1-Y}$ to $T_{D_t}Q$, using  \eqref{ParaProductsTwo},  \eqref{ParaAssociateThree} and the fact that $R = (1-Y)Q_\alpha$, we have
\begin{equation*}
T_{1-Y}T_{D_t}Q = igX-i\gamma Z  + i\frac{\gamma}{2}\nP[\bar{R}X]- T_{R}\nP\ub -\nP\Pi(R, \ub)+E_1.
\end{equation*}
The last three terms of the right-hand side may be absorbed into $E$ using \eqref{BMOBalance} and \eqref{BMOSigma}.
\item  We apply the para-Leibniz rule Lemma \ref{t:Leibniz} to $U$,
 \begin{equation*}
     T_{D_t}U = -T_{D_t Y}\partial^{-1}_\alpha W + T_{1-Y}T_{D_t}\partial^{-1}_\alpha W +E_2.
 \end{equation*}
The first term on the right $T_{D_t Y}\partial^{-1}_\alpha W$ is perturbative, similar to the estimate in Lemma \ref{t:XParaMaterial}.
Using the computation in Lemma \ref{t:XParaMaterial}, one can write
\begin{equation*}
    T_{D_t}W + T_{1-\bar{Y}}Q_\alpha = E_7,
\end{equation*}
where  the error $E_7$ satisfies
\begin{equation*}
    \gamma^2 \|E_7\|_{BMO}\lesssim_{\uAS} \underline{A}^2_{\frac{1}{4}}.
\end{equation*}
Applying the anti-derivative $\partial^{-1}_\alpha$ to $T_{D_t}W$, one can rewrite using the commutator
\begin{equation*}
 T_{D_t} \partial^{-1}_\alpha W  + T_{1-\bar{Y}}Q = [T_{D_t}, \partial^{-1}_\alpha]W - [T_{\bar{Y}},\partial^{-1}_\alpha]Q_\alpha + E_2.
\end{equation*}
For the two commutators, they are
\begin{align*}
  [T_{D_t}, \partial^{-1}_\alpha]W &= T_{\ub} W -\partial^{-1}_\alpha(T_{\ub} W_\alpha) = \partial^{-1}_\alpha(T_{\ub_\alpha} W), \\
  [T_{\bar{Y}},\partial^{-1}_\alpha]Q_\alpha &= T_{\bar{Y}}Q - \partial^{-1}_\alpha(T_{\bar{Y}} Q_\alpha) = \partial^{-1}_\alpha(T_{\bar{Y}_\alpha}Q),
\end{align*}
so that they may be absorbed into the error $E_2$ using \eqref{BMOSigma}.
Applying $T_{1-Y}$ to $T_{D_t}\partial^{-1}_\alpha W$, we obtain the leading term of $T_{D_t}U$.
\item Taking the derivative of $Z$, we have
\begin{equation*}
    Z_\alpha = \partial_\alpha T_{1-Y}Q = T_{1-Y}Q_\alpha - T_{Y_\alpha}Q = R + T_{Q_\alpha}Y +\Pi(Y,Q_\alpha) -T_{Y_\alpha} Q.
\end{equation*}
Then the last three terms on the right-hand side go to the error $E_3$.

Again applying derivative to $U$, we have
\begin{equation*}
    U_\alpha = \partial_\alpha T_{1-Y}\partial_\alpha^{-1}W = T_{1-Y}W - T_{Y_\alpha}\partial_\alpha^{-1}W = X+ E_4.
\end{equation*}
\item Taking the derivative of $T_{D_t}Z$,
\begin{equation*}
    T_{D_t}Z_\alpha +T_{\ub_\alpha}X_\alpha -igX_\alpha + i\gamma Z_\alpha =E_5.
\end{equation*}
The term $T_{\ub_\alpha}X_\alpha$ can be absorbed into $E_5$.
We change $X_\alpha$ to $T_{1-Y}\W$ by \eqref{XAlphaY}, and change $U_\alpha$ to $X$ by \eqref{UAlphaX}.

Similarly, taking the derivative of $T_{D_t}U$, we obtain
\begin{equation*}
    T_{D_t}U_\alpha +T_{\ub_\alpha}U_\alpha -T_{\bar{Y}_\alpha}Z + T_{1-\bar{Y}}Z_\alpha = E_6.
\end{equation*}
The terms $T_{\ub_\alpha}U_\alpha$ and $T_{\bar{Y}_\alpha}Z$ can be moved into $E_6$ and we replace $Z_\alpha$ by $R$ using \eqref{ZAlphaR}.
\end{enumerate}
\end{proof}

Lastly, we compute the material and para-material derivatives of the frequency-shift $\ua$.
\begin{lemma}
The leading  terms of the material and para-material derivatives of $\ua$ are given by
\begin{align}
    D_t \ua &= -(g+\ua)M+ \gamma^2\Im R - g\gamma \Im \W+E, \label{MaterialA}\\
    T_{D_t} \ua & =-T_{g+\ua}M+ \gamma^2\Im R -g\gamma \Im \W+E, \label{ParaMaterialA}
\end{align}
where the error term $E$ satisfies the estimate
\begin{equation}
    \|E\|_{BMO}\lesssim_\uA \uA^2_{\frac{1}{4}}.\label{ErrorE}
\end{equation}
\end{lemma}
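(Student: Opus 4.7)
The plan is to decompose $\ua = a + \frac{\gamma}{2}a_1$ and compute $D_t a$ and $D_t a_1$ separately, then deduce $T_{D_t}\ua$ from $D_t\ua$. The reduction of the material to the para-material version relies on the identities $D_t\ua - T_{D_t}\ua = T_{\ua_\alpha}\ub + \Pi(\ua_\alpha,\ub)$ and $(g+\ua)M - T_{g+\ua}M = T_M(g+\ua) + \Pi(g+\ua,M)$, each of which is a balanced bilinear combination bounded in $BMO$ by $\uA_{\frac{1}{4}}^2$ using \eqref{BMOBalance}, \eqref{BMOSigma}, \eqref{uaBMO}, \eqref{ubBMO}, and \eqref{MBMOOne}.

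For $D_t a$ with $a = i\bar{\nP}[\bar{R}R_\alpha] - i\nP[R\bar{R}_\alpha]$, I distribute $D_t$ past the Riesz projections at the cost of commutators $[D_t,\nP] = [\ub,\nP]\partial_\alpha$ and its anti-holomorphic analogue. Applied to $\bar{R}R_\alpha$ and $R\bar{R}_\alpha$ these yield balanced bilinear expressions bounded in $BMO$ by $\uA_{\frac{1}{4}}^2$ via standard commutator estimates together with \eqref{ubBMO}. Next I substitute $D_tR = i(g+\ua)Y - i\ua + \gamma\Im R$ and its conjugate from \eqref{EqnYR}, together with $D_t R_\alpha = \partial_\alpha D_t R - \ub_\alpha R_\alpha$. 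Expanding $D_t[\bar{R}R_\alpha] = (D_t\bar{R})R_\alpha + \bar{R}(D_tR_\alpha)$ and its conjugate, the $(g+\ua)Y$-type contributions recombine in the anti-holomorphic half of $a$ to produce $-(g+\ua)\bar{\nP}[\bar{R}Y_\alpha - R_\alpha\bar{Y}]$ and in the holomorphic half to produce $-(g+\ua)\nP[R\bar{Y}_\alpha - \bar{R}_\alpha Y]$, summing exactly to $-(g+\ua)M$ by the projection form of $M$. The $-i\ua$ and $\gamma\Im R$ contributions either cancel across the two halves of $a$ by holomorphic/anti-holomorphic symmetry or reduce to balanced bilinear forms that may be absorbed into $E$ via \eqref{uaBMO} and the bounds from Section \ref{s:WaterBound}.

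For $D_t a_1$ with $a_1 = R + \bar{R} - N$, direct substitution yields $D_t(R+\bar{R}) = -2(g+\ua)\Im Y + 2\gamma\Im R$. Multiplying by $\frac{\gamma}{2}$ produces the principal term $\gamma^2\Im R$ together with $-\gamma(g+\ua)\Im Y$; writing $\Im Y = \Im\W - \Im(\W Y)$, since $Y - \W = -\W Y$, extracts the principal term $-g\gamma\Im\W$ while $\gamma g\Im(\W Y)$ and $\gamma\ua\Im Y$ fit into $E$ by \eqref{BMOBalance}, \eqref{BMOSigma}, \eqref{YBMO}, and \eqref{uaBMO}. For $D_t N$, substituting the evolutions from \eqref{e:CVWW1} and \eqref{EqnYR} into each factor $W\bar{R}_\alpha$, $\bar{\W}R$ and their conjugates yields only balanced bilinear expressions absorbable into $E$. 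The main obstacle is the algebraic bookkeeping in the $D_t a$ step: demonstrating that the four $(g+\ua)$-pieces reassemble exactly into the $M$-structure $\bar{\nP}[\bar{R}Y_\alpha - R_\alpha\bar{Y}] + \nP[R\bar{Y}_\alpha - \bar{R}_\alpha Y]$, and that the non-bilinear $-i\ua$ terms in $D_t R, D_t\bar{R}$ cancel by holomorphic/anti-holomorphic symmetry rather than reappearing unbalanced; without this structural cancellation the bound $\|E\|_{BMO}\lesssim_\uA \uA_{\frac{1}{4}}^2$ would fail.
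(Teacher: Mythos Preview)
Your approach is correct and essentially the same as the paper's: both decompose $\ua=a+\tfrac{\gamma}{2}a_1$, compute the (para-)material derivative of each piece using the $R$-equation, identify $-(g+\ua)M$ as the reassembled leading contribution from $a$, extract $\gamma^2\Im R-g\gamma\Im\W$ from $\tfrac{\gamma}{2}(R+\bar R)$, and absorb $\tfrac{\gamma}{2}D_tN$ (resp.~$\tfrac{\gamma}{2}T_{D_t}N$) into the error. The only differences are cosmetic: the paper works with $T_{D_t}$ first and deduces \eqref{MaterialA} from \eqref{ParaMaterialA}, while you go in the reverse direction; and the paper invokes the para-Leibniz rule (Lemma~\ref{t:Leibniz}) together with para-associativity \eqref{ParaAssociateTwo} to distribute $T_{D_t}$ across $\nP$ and the product, whereas you use classical Leibniz and handle the commutator $[\ub,\nP]\partial_\alpha$ directly. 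Both routes lead to the same structural cancellations you identify as the crux.
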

\begin{proof}
First, the leading terms of material derivative \eqref{MaterialA} is a direct consequence of the leading terms of the para-material derivative \eqref{ParaMaterialA} due to the estimate \eqref{uaBMO}, \eqref{ubBMO} and \eqref{MBMOOne}:
\begin{equation*}
  \|\Pi(\ub, \ua_\alpha)\|_{BMO} + \| T_{\ua_\alpha} \ub\|_{BMO} \lesssim_\uA \uA^2_{\frac{1}{4}},\quad
   \|\Pi(M, \ua)\|_{BMO} + \| T_{M} \ua\|_{BMO} \lesssim_\uA \uA^2_{\frac{1}{4}}.
\end{equation*}
Therefore, it only suffices to compute the leading term of the para-material derivative of $\ua$ \eqref{ParaMaterialA}.

Next, we compute the para-material derivative of $a$.
Using the para-associativity \eqref{ParaAssociateTwo}, the para-Leibniz estimates \eqref{UnbalancedParaLeibnizOne} \eqref{BalancedLeibniz} and also \eqref{ParaWalphaR}, we compute
\begin{align*}
    T_{D_t}\nP(\bar{R}_\alpha R) &= \nP[\partial_\alpha T_{D_t}\bar{R}\cdot R] + \nP[\bar{R}_\alpha T_{D_t}R] + E\\
    & = \nP[\partial_\alpha (-i T_{g+\ua}\bar{Y}+i\gamma \bar{R})\cdot R]+ \nP[\bar{R}_\alpha \cdot(iT_{g+\ua}Y -i\gamma R)]+E,\\
    & = iT_{g+\ua}\nP[\bar{R}_\alpha Y -\bar{Y}_\alpha R] +E
\end{align*}
with error $E$ satisfying \eqref{ErrorE}.
Recall the definition of $M$,
\begin{equation*}
    M = \nP[R\bar{Y}_\alpha - \bar{R}_\alpha Y] + \bar{\nP}[\bar{R}Y_\alpha -R_\alpha\bar{Y}],
\end{equation*}
we have that
\begin{align*}
    T_{D_t} a &= -i(T_{D_t}\nP(\bar{R}_\alpha R) - T_{D_t}\bar{\nP}(R_\alpha \bar{R})) \\
    & =  T_{g+\ua}(\nP[\bar{R}_\alpha Y -\bar{Y}_\alpha R]+ \bar{\nP}[R_\alpha\bar{Y}-\bar{R}Y_\alpha ])+E \\
    & = T_{g+\ua} M+E,
\end{align*}
with error $E$ satisfying \eqref{ErrorE}.

We continue to compute the para-material derivatives of the vorticity terms,
\begin{equation*}
\frac{\gamma}{2}T_{D_t}(R+\bar{R}) = \frac{\gamma}{2}(iT_{g+\ua}Y-iT_{g+\ua}\bar{Y}-i\gamma R+i\gamma\bar{R})+E = \gamma^2 \Im R - g\gamma \Im \W +E.
\end{equation*}

Finally, we show that $\frac{\gamma}{2}T_{D_t}N$ can be put into the error $E$.
We compute
\begin{align*}
    &T_{D_t} \nP[W\bar{R}_\alpha -\bar{\W} R] = \nP[ T_{D_t}W\bar{R}_\alpha + WT_{D_t}\bar{R}_\alpha -T_{D_t}\bar{\W} R-\bar{\W}T_{D_t} R ]\\
    =& \nP[-T_{1+\W}T_{1-\bar{Y}}R\cdot \bar{R}_\alpha -iT_{g+\ua}\bar{Y}_\alpha \cdot W +i\gamma \bar{R}_\alpha W+ T_{1+\bar{\W}}T_{1-Y}\bar{R}_\alpha\cdot R -i\bar{\W}T_{g+\ua}Y+i\gamma R\bar{\W}].
\end{align*}
Each term on the right-hand side can be placed into the error term $E$.
For instance, we can rewrite
\begin{equation*}
    \nP[\bar{R}_\alpha W +R\bar{\W}] = \nP T_{\bar{R}_\alpha} W+ \nP\Pi(\bar{R}_\alpha, W) + \nP T_{\bar{\W}}R +\nP\Pi(R,\bar{\W}),
\end{equation*}
so that by using \eqref{BMOBalance} and \eqref{BMOSigma},
\begin{equation*}
   \gamma^2\|\nP[\bar{R}_\alpha W +R\bar{\W}] \|_{BMO}\lesssim_\uA \gamma^2A_{-\frac{3}{4}}A_{\frac{1}{4}}\lesssim \uA^2_{\frac{1}{4}}.
\end{equation*}
Combining the estimates at each step, we obtain the para-material derivative of $\ua$ \eqref{ParaMaterialA}.
\end{proof}

\section{Estimates for the linearized equations} \label{s:LinearEstimate}
In this section, we derive the balanced energy estimates for the linearized system.
Let the solutions for the linearized
water waves  around a solution $(W, Q)$ to the system \eqref{e:CVWW} by $(w, q)$ and $r: = q-Rw$.
Then it is computed in Section $3$ of \cite{MR3869381} that the linearized variables $(w,r)$ solve the system
\begin{equation}
\left\{
             \begin{array}{lr}
             (\partial_t + \mathfrak{M}_{\underline{b}}\partial_\alpha)w +\mathbf{P}\left[\dfrac{1}{1+\Bar{\mathbf{W}}}r_\alpha\right]+\mathbf{P}\left[\dfrac{R_\alpha}{1+\Bar{\mathbf{W}}}w\right] +\gamma\mathbf{P}\left[\dfrac{\Im \W}{1+\Bar{\mathbf{W}}}w\right] = \mathbf{P}\underline{\mathcal{G}_0}(w,r)&\\
            (\partial_t + \mathfrak{M}_{\underline{b}}\partial_\alpha)r +i\gamma r-i\mathbf{P}\left[\dfrac{g+\underline{a}}{1+\mathbf{W}}w\right]=\mathbf{P}\underline{\mathcal{K}_0}(w,r),&  \label{linearizedeqn}
             \end{array}
\right.
\end{equation}
where $\mathfrak{M}_\ub f = \nP[\ub f]$,
and the source terms $\underline{\mathcal{G}}_0(w,r),  \underline{\mathcal{K}}_0(w,r)$ are given by
\begin{equation}  \label{GKZeroDef}
\begin{aligned}
&\underline{\mathcal{G}}_0(w,r) = \mathcal{G}(w,r)-i\frac{\gamma}{2}\mathcal{G}_1(w,r), \quad \underline{\mathcal{K}}_0(w,r) = \mathcal{K}(w,r)-i\frac{\gamma}{2}\mathcal{K}_1(w,r), \\
&\mathcal{G}(w,r) = (1+\mathbf{W})(\mathbf{P}\Bar{m}+\Bar{\mathbf{P}}m), \quad \mathcal{G}_1(w,r) = -(1+\mathbf{W})(\mathbf{P}\Bar{m}_1-\Bar{\mathbf{P}}m_1), \\
&\mathcal{K}(w,r) = \Bar{\mathbf{P}}n - \mathbf{P}\Bar{n}, \quad \mathcal{K}_1(w,r) = \Bar{\mathbf{P}}m_2 + \mathbf{P}\Bar{m}_2, \quad n : = \frac{\Bar{R}(r_\alpha +R_\alpha w)}{1+\mathbf{W}}, 
\end{aligned}    
\end{equation}
as well as
\begin{equation*}
    \begin{aligned}
    &m : = \frac{q_\alpha - Rw_\alpha}{J}+\frac{\Bar{R}w_\alpha}{(1+\mathbf{W})^2} = \frac{r_\alpha + R_\alpha w}{J}+\frac{\Bar{R}w_\alpha}{(1+\mathbf{W})^2},\\
    &m_1: = \frac{1}{1+\Bar{\mathbf{W}}}w - \frac{\Bar{W}}{(1+\W)^2}w_\alpha, \quad m_2 := \Bar{R}w - \frac{\Bar{W}r_\alpha +\Bar{W}R_\alpha w}{1+\mathbf{W}}.
    \end{aligned}
\end{equation*}

We  define the associated  linear energy
 \begin{equation*}
    E^{(2)}_{lin}(w,r)=\int (g+\underline a)|w|^2+ \Im(r\bar{r}_\alpha)\, d\alpha.
\end{equation*}
Then it is shown in \cite{MR3869381} the following quadratic energy estimate for large data:
\begin{proposition}[\hspace{1sp}\cite{MR3869381}]
 The linearized system \eqref{linearizedeqn} is locally well-posed in $L^2\times \dot{H}^{\frac{1}{2}}$, and the following properties hold:
 \begin{enumerate}[label=(\roman*)]
     \item Norm equivalence:
       \begin{equation*}
           E^{(2)}_{lin}(w,r) \approx_{\underline{A}} \mathcal{E}_0(w,r).
       \end{equation*}
     \item Energy estimate:
      \begin{equation*}
     \frac{d}{dt}E^{(2)}_{lin}(w,r)\lesssim_A (\uB + \gamma \uA)E^{(2)}_{lin}(w,r).
 \end{equation*}
  \end{enumerate} \label{t:QuadraticEnergy}
\end{proposition}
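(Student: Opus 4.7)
The plan is to establish (i) and (ii) by a direct energy computation exploiting the Hamiltonian structure of \eqref{linearizedeqn}.

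For (i), since $r$ is holomorphic, Plancherel gives $\int \Im(r\bar r_\alpha)\,d\alpha = \|r\|^2_{\dot H^{1/2}}$. The norm equivalence then reduces to comparing $\int (g+\ua)|w|^2\,d\alpha$ with $g\|w\|_{L^2}^2$, which is immediate from the $L^\infty$ bound $\|\ua\|_{L^\infty}\lesssim_\uA \uA$ provided by \eqref{uaBMO}. Combining both contributions yields $E^{(2)}_{lin}\approx_\uA \mathcal{E}_0$.

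For (ii), I would differentiate $E^{(2)}_{lin}$ in time to obtain
\[
\frac{d}{dt}E^{(2)}_{lin} = \int (\partial_t\ua)|w|^2 + 2(g+\ua)\Re(\bar w\, w_t) + 2\Im(r_t \bar r_\alpha)\,d\alpha,
\]
and then substitute $w_t, r_t$ using \eqref{linearizedeqn}. Two types of cancellations close the argument. First, the transport operator $\mathfrak{M}_\ub \partial_\alpha$ integrates by parts against $(g+\ua)|w|^2$ and $\Im(r\bar r_\alpha)$, producing only a $\ub_\alpha$-weighted remainder bounded via $\||D|^{1/2}\ub\|_{BMO}\lesssim \uA$ from \eqref{ubBMO}. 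Second, the cross-coupling between the two equations, namely the term $-\nP[r_\alpha/(1+\bar\W)]$ in $w_t$ paired with $2(g+\ua)\bar w$ against the term $i\nP[(g+\ua)w/(1+\W)]$ in $r_t$ paired with $2\bar r_\alpha$, cancels at leading order after integration by parts, because $w,r$ are holomorphic so $\nP$ acts as the identity on them. The residual $\nP$-commutators with $1/(1+\W)$, $1/(1+\bar\W)$, and $g+\ua$ are estimated via Lemma \ref{t:CommutatorL2}, which distributes a derivative onto $\W$ or $\ua$ in BMO, absorbed into $\uB$ or $\gamma\uA$.

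For the $\partial_t\ua$ term I would rewrite $\partial_t\ua = D_t\ua - \ub\ua_\alpha$ and invoke the material derivative formula \eqref{MaterialA}: the leading contribution $-(g+\ua)M$ is controlled using $\|M\|_{L^\infty}\lesssim_\uA \uA^2_{1/4}\lesssim \uB$, while the vorticity corrections $\gamma^2\Im R -g\gamma\Im\W$ contribute $\lesssim \gamma\uA$ after multiplying by $|w|^2$ and integrating. Finally, the source terms $\underline{\mathcal G}_0, \underline{\mathcal K}_0$ defined in \eqref{GKZeroDef} are multilinear in $(w,r,\W,R)$ with one factor being a bounded rough input; each piece admits a Coifman--Meyer bound against $(\uB+\gamma\uA) E^{(2)}_{lin}$. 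The main obstacle is the careful bookkeeping of commutator errors: each $\nP$-commutator must be matched with a factor appearing in $\uB$ rather than a norm with more derivatives, and the fractional-derivative distribution across $\W, R_\alpha$ and $\ua$ has to be tracked. Once the differential inequality $\tfrac{d}{dt}E^{(2)}_{lin}\lesssim_\uA(\uB+\gamma\uA)E^{(2)}_{lin}$ is established, Gronwall yields uniqueness and a priori bounds; existence in $L^2\times \dot H^{1/2}$ follows by regularizing the initial data and passing to the limit, using that the adjoint linearized system satisfies the same estimate.
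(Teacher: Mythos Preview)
The paper does not prove this proposition; it is quoted verbatim as a result from \cite{MR3869381} (note the citation in the proposition header and the sentence preceding it: ``Then it is shown in \cite{MR3869381} the following quadratic energy estimate for large data''). So there is no in-paper proof to compare against.

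That said, your outline is essentially the correct strategy and matches how the result is proved in \cite{MR3869381}: direct time differentiation of $E^{(2)}_{lin}$, integration by parts of the transport terms, cancellation of the leading cross-coupling by the holomorphic projection structure, and commutator estimates for the residuals. A few small points deserve care. First, you invoke \eqref{MaterialA} and the bound \eqref{MBMOOne} for $M$, but these are the sharper \emph{balanced} results proved later in the present paper; the original \cite{MR3869381} argument uses cruder direct bounds of the type $\|D_t\ua\|_{L^\infty}\lesssim_A \uB+\gamma\uA$ and $\|M\|_{L^\infty}\lesssim_A AB$, which are what you actually need here (note also that \eqref{MaterialA} gives only a $BMO$ remainder, whereas pairing against $|w|^2$ requires $L^\infty$). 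Second, the step ``$\uA^2_{1/4}\lesssim \uB$'' is not a valid inequality on its own; what one has by interpolation is $\uA^2_{1/4}\lesssim_{\uA}\uA\,\uB$, which still suffices since the constant is allowed to depend on $\uA$. With these adjustments your argument goes through.
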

In this section, we prove the following  energy estimate for small data of \eqref{linearizedeqn}.

\begin{theorem}
Assume that $\max\{\uA, \uAS \} \lesssim 1$.
Then the linearized system \eqref{linearizedeqn} is locally well-posed in $\doth{\frac{1}{4}}$.
Moreover, there exists an energy functional $E^{\frac{1}{4}}_{lin}(w,r)$ with the following properties:
\begin{enumerate}
    \item Norm equivalence:
    \begin{equation*}
        E^{\frac{1}{4}}_{lin}(w,r) \approx_{\uAS} \| (w,r)\|^2_{\doth{\frac{1}{4}}} + O(\gamma^4 \uAS^2) \| (w,r)\|^2_{\doth{-\frac{3}{4}}}.
    \end{equation*}
    \item Energy estimate:
    \begin{equation*}
        \frac{d}{dt} E^{\frac{1}{4}}_{lin}(w,r) \lesssim_{\uAS} \Astar(\gamma^{\frac{1}{2}}+\uAStar)E^{\frac{1}{4}}_{lin}(w,r).
    \end{equation*}
\end{enumerate} \label{t:LinearizedWellposed}
\end{theorem}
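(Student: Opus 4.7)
\medskip

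\noindent\textbf{Proof proposal.} The plan is to follow the same philosophy used for the full system in later sections: first reduce \eqref{linearizedeqn} to a clean paradifferential equation where the principal part is self-adjoint and the remainder is either source-like or balanced, and then introduce paradifferential normal form corrections to absorb the unbalanced (low-high) quadratic terms that would otherwise require the $BMO$-based norm $\uB$. Concretely, using the commutator estimates of Lemma \ref{t:CommutatorL2} together with the para-commutator/para-product bounds \eqref{ParaCommutator}--\eqref{ParaProductsTwo} and the control norm estimates \eqref{uaBMO}--\eqref{ubBMO}, I would replace $\mathfrak{M}_\ub\partial_\alpha$ by the para-material derivative $T_{D_t}$ and the scalar coefficients $\frac{1}{1+\bar\W}$, $\frac{g+\ua}{1+\W}$, etc.\ by their para-coefficients $T_{\frac{1}{1+\bar\W}}$, $T_{g+\ua}$ modulo errors that are controlled by $\Astar(\gamma^{1/2}+\uAStar)$ times the $\doth{\frac{1}{4}}$-norm.

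\medskip

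Next, I would define the base energy at regularity $\frac{1}{4}$ in a paradifferential way:
\begin{equation*}
E^{\frac{1}{4}}_0(w,r) = \int T_{g+\ua}\, |D|^{\frac{1}{4}}w\cdot \overline{|D|^{\frac{1}{4}}w} + |D|^{\frac{1}{4}}r\cdot \overline{|D|^{\frac{3}{4}}r} \, d\alpha,
\end{equation*}
and compute $\tfrac{d}{dt}E^{\frac{1}{4}}_0$. The leading contributions $T_{g+\ua}r_\alpha w$-type and symmetric $r w$-type terms cancel after using the self-adjointness of the paradifferential principal operators and the fact that $\Re \langle T_{D_t} f, f\rangle_{L^2}$ produces only a $T_{\ub_\alpha}$ commutator that is absorbed using \eqref{ubBMO}. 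The $\partial_t \ua$ contribution is handled via \eqref{ParaMaterialA}, which produces the controllable $T_{g+\ua}M$ plus $\gamma^2\Im R$, $g\gamma\Im\W$, all of which are balanced by $\Astar(\gamma^{1/2}+\uAStar)$. The remaining residue consists exactly of the unbalanced low-high quadratic interactions, in which derivatives fall on the coefficients $\W, R$.

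\medskip

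For these unbalanced residues I would construct paradifferential normal form corrections $(w^{[2]}, r^{[2]})$ modelled on \eqref{e:NormalForm} but with every bilinear product replaced by its low-high paraproduct $T_{\cdot}(\cdot)$, and define
\begin{equation*}
E^{\frac{1}{4}}_{lin}(w,r) = E^{\frac{1}{4}}_0(w,r) + E^{\frac{1}{4}}_{NF}(w,r),
\end{equation*}
where $E^{\frac{1}{4}}_{NF}$ is a bilinear-in-$(w,r)$ quadratic form with coefficients built from $(\W, R, W, Q, \ua, \ub)$. The corrections are chosen so that $\partial_t E^{\frac{1}{4}}_{NF}$, computed using the para-material derivatives of $\W, R, W, Y$ from Section \ref{s:TDtBound}, produces precisely the unbalanced residues left over from $\partial_t E^{\frac{1}{4}}_0$ but with opposite sign, leaving behind only cubic and higher-order errors. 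The norm equivalence \eqref{normEquivalence}-type statement follows because $|E^{\frac{1}{4}}_{NF}|\lesssim \uAS \,\|(w,r)\|^2_{\doth{1/4}}$ up to a $\gamma^4\uAS^2\|(w,r)\|^2_{\doth{-3/4}}$ perturbation, using \eqref{LpLowhigh}.

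\medskip

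The main obstacle I anticipate is the careful bookkeeping in step three, namely designing $E^{\frac{1}{4}}_{NF}$ so that every unbalanced term in $\partial_t E^{\frac{1}{4}}_0$ is eliminated, including the low-frequency tails generated by the vorticity corrections $-i\frac{\gamma}{2}$ that multiply $a_1, b_1, F_1, T_1$; these carry extra factors of $\gamma$ and require pairing with the $\gamma^{1/2}\|R\|_{\dot W^{3/4,4}}$, $\gamma\|W\|_{\dot W^{1,4}}$, and $\gamma^{3/2}\|R\|_{\dot W^{1/4,4}}$ pieces of $\uAStar$ through Hölder in $L^4\cdot L^4\cdot L^2$ to beat the failing $BMO\cdot L^2$ pairing. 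Once all such residues are matched and the remaining errors are bounded by $\Astar(\gamma^{1/2}+\uAStar)E^{\frac{1}{4}}_{lin}$, the well-posedness conclusion follows by the standard approximation/Gronwall argument, using Proposition \ref{t:QuadraticEnergy} on mollified data and passing to the limit in the $\doth{\frac{1}{4}}$ topology.
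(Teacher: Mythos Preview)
Your overall architecture---paradifferential reduction, a base energy at regularity $\tfrac14$, then normal form corrections---is reasonable, and the paper's proof does follow a broadly similar outline. But there is a concrete gap in your second step that would break the argument as written.

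You claim that $\Re\langle T_{D_t}f,f\rangle_{L^2}$ on the $|D|^{1/4}$-conjugated variables produces ``only a $T_{\ub_\alpha}$ commutator that is absorbed using \eqref{ubBMO}.'' This is not correct. Conjugating the transport term by $|D|^{1/4}$ generates an extra commutator $[|D|^{1/4},T_{\ub}]\partial_\alpha$, whose principal part is $\sim \tfrac14 T_{\ub_\alpha}$ acting on $|D|^{1/4}w$; the resulting contribution to $\tfrac{d}{dt}E^{1/4}_0$ is of size $\|\ub_\alpha\|_{L^\infty}\|w\|^2_{\dot H^{1/4}}$. The estimate \eqref{ubBMO} only gives $\||D|^{3/4}\ub\|_{BMO}\lesssim_A\Astar$, i.e.\ control of $\ub_\alpha$ in $BMO^{-1/4}$, not in $L^\infty$. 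So this term cannot be absorbed directly, and it is not of the form covered by the normal form \eqref{e:NormalForm} either: it arises purely from the choice of regularity $s=\tfrac14$, not from the nonlinear structure of the water waves system.

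The paper handles this by working at the level of \emph{variables} rather than the energy: it first conjugates $(|D|^s w,|D|^s r)$ by a carefully chosen paracoefficient $T_\Phi$ with $\Phi=|1+\W|^{-2s}\exp\bigl(-s\tfrac{\gamma}{g}\Re R-s\tfrac{\gamma^2}{g}\Im W\bigr)$, designed so that $D_t\Phi=s\Phi\,\ub_\alpha+O_{\uA}(\uA_{1/4}^2)$. This conjugation exactly cancels the leading $sT_{\ub_\alpha}$ commutator term, after which residual $L_1$-type commutators are removed by secondary normal form corrections. Only then does the paper turn to the source terms $(\underline{\mathcal G}_0,\underline{\mathcal K}_0)$, $(\underline{\mathcal G}_1,\underline{\mathcal K}_1)$, which are eliminated by further paradifferential normal form corrections on the variables (not on the energy), modeled on \eqref{e:NormalForm} but split into low--high, balanced holomorphic, and balanced mixed-holomorphicity pieces. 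Your proposal conflates the flow issue (requiring the $\Phi$-conjugation) with the source-term issue (requiring the normal form), and the single ``$E^{1/4}_{NF}$ modeled on \eqref{e:NormalForm}'' you describe would not address the former.
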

Compared to the previous result Proposition \ref{t:QuadraticEnergy}, in our theorem, the coefficient of the energy estimate does not depend on the pointwise control norm $\uB$.
It merely depends on $\Astar$, which can be seen as an intermediate control norm between $\uA$ and $\uB$, and also the $L^4$ based control norm $\uAStar$.
$\doth{\frac{1}{4}}$ is the minimal Sobolev regularity one can expect for $(w, r)$ to have the above balanced energy estimate.

From the linearized equations \eqref{linearizedeqn} we obtain the corresponding paradifferential flow
\begin{equation}
\left\{
             \begin{array}{lr}
             T_{D_t}w+T_{1-\bar{Y}}\partial_\alpha r + T_{(1-\bar{Y})R_\alpha }w +\gamma T_{\Im \W}w = 0 &\\
           T_{D_t}r +i\gamma r -iT_{1-Y}T_{g+\ua}w=0.&  \label{ParadifferentialFlow}
             \end{array}
\right.
\end{equation}
In the following, we will fix a self-adjoint quantization for $T$.
To achieve this, we may use the Weyl quantization, or simply the average $\frac{1}{2}(T+T^{*})$.
Using the self-adjoint quantization, later for computations of the integrals such as \eqref{TimeDerivativeE0Para}, for any real-valued function $f$, one can distribute the para-coefficient $T_f$ so that
\begin{equation*}
\int T_f g\cdot h \,d\alpha = \int  g\cdot T_f h \,d\alpha, \quad \forall f,g\in L^2.
\end{equation*}
This will make our computation easier by avoiding the estimates for $(T_{f})^{*}$.

The linearized equations \eqref{linearizedeqn} can be rewritten in the paradifferential form
\begin{equation}
\left\{
             \begin{array}{lr}
             T_{D_t}w+T_{1-\bar{Y}}\partial_\alpha r + T_{(1-\bar{Y})R_\alpha }w +\gamma T_{\Im \W}w = \mathcal{G}^{\sharp}(w,r) &\\
           T_{D_t}r +i\gamma r -iT_{1-Y}T_{g+\ua}w=\mathcal{K}^{\sharp}(w,r),&  \label{ParadifferentialLinearEqn}
             \end{array}
\right.
\end{equation}
where the source terms $(\mathcal{G}^{\sharp}, \mathcal{K}^{\sharp})$ are given by
\begin{equation*}
   \mathcal{G}^{\sharp} = \nP(\underline{\mathcal{G}}_0+\underline{\mathcal{G}}_1), \qquad   \mathcal{K}^{\sharp} = \nP(\underline{\mathcal{K}}_0+\underline{\mathcal{K}}_1),
\end{equation*}
with $(\underline{\mathcal{G}}_0, \underline{\mathcal{K}}_0)$ are as \eqref{GKZeroDef} and
\begin{align*}
  \underline{\mathcal{G}}_1 = &(T_{r_\alpha}\bar{Y}+ \Pi(r_\alpha, \bar{Y})) -(T_{w_\alpha}\ub + \Pi(w_\alpha, \ub)) -(T_w ((1-\bar{Y})R_\alpha+ \gamma \Im \W)\\
  & + \Pi(w, (1-\bar{Y})R_\alpha + \gamma \Im\W)),\\
  \underline{\mathcal{K}}_1 = & -(T_{r_\alpha}\ub + \Pi(r_\alpha, \ub))+ i(T_{1-Y}T_w\ua+T_{1-Y}\Pi(w,\ua)-T_{(g+\ua)w}Y-\Pi((g+\ua)w, Y))
\end{align*}
are the paradifferential truncations.

The proof of Theorem \ref{t:LinearizedWellposed} is divided into the following steps.
First, we consider a variant of \eqref{ParadifferentialLinearEqn} with more general right-hand side $(G,K)$,
\begin{equation}
\left\{
             \begin{array}{lr}
             T_{D_t}w+T_{1-\bar{Y}}\partial_\alpha r + T_{(1-\bar{Y})R_\alpha }w +\gamma T_{\Im \W}w = G &\\
           T_{D_t}r +i\gamma r -iT_{1-Y}T_{g+\ua}w=K.&  \label{SourceParadifferential}
             \end{array}
\right.
\end{equation}
Under this setting, one can prove the following result.
\begin{proposition}
Assume that $\max\{\uA, \uAS\}$ is small,  then the homogeneous paradifferential system \eqref{ParadifferentialFlow} is locally well-posed in $\doth{s}$ for any $s\in \mathbb{R}$.
Furthermore, for each $s$, there exists an  energy functional $E^{s,para}_{lin}(w, r)$ such that we have
\begin{enumerate}
\item The norm equivalence:
\begin{equation*}
    E^{s,para}_{lin}(w, r) \approx_{A^\sharp} \|(w,r)\|_{\doth{s}}^2.
\end{equation*}
\item The time derivative of $E^{s,para}_{lin}(w, r)$ is bounded by
\begin{equation*}
    \frac{d}{dt}  E^{s,para}_{lin}(w, r) \lesssim \Astar(\gamma^{\frac{1}{2}}+\uAStar)  \|(w,r)\|_{\doth{s}}^2.
\end{equation*}
\end{enumerate}  \label{t:wellposedflow}
\end{proposition}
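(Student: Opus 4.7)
The strategy is to reduce to the case $s = 0$ via $|D|^s$ conjugation, construct a symmetric quadratic energy adapted to the Hamiltonian structure of \eqref{ParadifferentialFlow}, and exploit the formal self-adjointness of the paradifferential operators to obtain cancellation of the top-order terms.

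For the conjugation, I set $w_s := |D|^s w$ and $r_s := |D|^s r$. Commuting $|D|^s$ through each operator in \eqref{ParadifferentialFlow} produces commutators such as $[|D|^s, T_\ub\partial_\alpha]$, $[|D|^s, T_{1-\bar Y}\partial_\alpha]$, $[|D|^s, T_{(1-\bar Y)R_\alpha}]$, $[|D|^s, T_{1-Y}T_{g+\ua}]$, and $[|D|^s, T_{\Im \W}]$. All of these are handled by the para-commutator and para-product estimates \eqref{ParaCommutator}, \eqref{ParaProducts}, together with the auxiliary bounds \eqref{YBMO}, \eqref{uaBMO}, \eqref{ubBMO}; in each case the output is a source term whose $\doth{s}$-norm is dominated by $\Astar(\gamma^{\frac{1}{2}} + \uAStar)\|(w,r)\|_{\doth{s}}$. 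Hence $(w_s, r_s)$ satisfies the same homogeneous paradifferential system up to admissible source terms.

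I then fix the self-adjoint quantization, so that $T_f^{*} = T_{\bar f}$, and define the symmetric quadratic energy
\[
E^{s,para}_{lin}(w,r) := \Re \langle T_{g+\ua}\, w_s, w_s\rangle + \langle |D| r_s, r_s\rangle.
\]
The norm equivalence with $\|(w,r)\|_{\doth{s}}^2$ follows from the positivity $g + \ua \geq g/2$, guaranteed by the smallness of $\uA$ through the $L^\infty$ bound in \eqref{uaBMO}. Differentiating in time, the contributions split into three types: a weight-derivative term $\Re\langle T_{\partial_t(g+\ua)}w_s, w_s\rangle$, symmetric transport terms involving $T_\ub$ which are antisymmetric modulo para-commutators (noting that $\ub$ is real-valued), and cross terms of the form $-2\Re\langle T_{g+\ua}w_s,\, T_{1-\bar Y}\partial_\alpha r_s\rangle + 2\Re\langle|D|r_s,\, iT_{1-Y}T_{g+\ua}w_s\rangle$. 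The cross terms cancel at leading order via the identity $\partial_\alpha r_s = -i|D|r_s$ on the holomorphic space and the adjoint relation $T_{1-Y}^{*} = T_{1-\bar Y}$; the remaining defect is a para-associativity error controlled by \eqref{ParaAssociateOne} and \eqref{ParaProducts}. For the weight-derivative term, the formula \eqref{ParaMaterialA} expresses $T_{D_t}(g+\ua)$ as $-T_{g+\ua}M + \gamma^2\Im R - g\gamma\Im\W$ plus a $BMO$-small error, each of which is absorbed into $\Astar(\gamma^{\frac{1}{2}}+\uAStar) E^{s,para}_{lin}$ using $\|M\|_{L^\infty}\lesssim A_{\frac{1}{4}}^2\lesssim\Astar^2$ from \eqref{MBMOOne} together with the vorticity-weighted pieces built into the definition of $\Astar$.

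The principal obstacle is quantitative: every remainder must fit into $\Astar(\gamma^{\frac{1}{2}}+\uAStar)\|(w,r)\|_{\doth{s}}^2$ rather than the cruder $\uAS \|(w,r)\|_{\doth{s}}^2$. The sharpest contributions come from the zero-order term $T_{(1-\bar Y)R_\alpha}w$ in the $w$-equation, for which $\|R_\alpha\|_{BMO}$ must be estimated by $\uAS$ through the Sobolev embedding $\dot W^{\frac{1}{4},4}\hookrightarrow BMO$, and from the vorticity term $\gamma T_{\Im \W}w$, which requires the weighted pointwise bound $\gamma\|\Im\W\|_{BMO}\lesssim\gamma A\lesssim\Astar$. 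Once the energy estimate holds for arbitrary $s\in\mathbb{R}$, local well-posedness in $\doth{s}$ follows by a standard duality and regularization argument applied to both the direct and dual paradifferential flows, since the adjoint system has the same structural form.
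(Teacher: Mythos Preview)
Your $s=0$ argument is essentially the paper's, and is fine. The gap is in the reduction from general $s$ to $s=0$. You assert that the commutators $[|D|^s, T_\ub\partial_\alpha]$, $[|D|^s, T_{1-\bar Y}\partial_\alpha]$, etc.\ produce source terms bounded by $\Astar(\gamma^{\frac12}+\uAStar)\|(w,r)\|_{\doth{s}}$, but this is false. The leading part of $[|D|^s, T_\ub\partial_\alpha]|D|^{-s}$ is $-sT_{\ub_\alpha}$, and to bound $\|T_{\ub_\alpha}w_s\|_{L^2}$ in terms of $\|w_s\|_{L^2}$ one needs $\|\ub_\alpha\|_{L^\infty}$, which lives at the level of the old control norm $\uB$, not $\Astar$. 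The only balanced estimate available is \eqref{ubBMO}, namely $\||D|^{3/4}\ub\|_{BMO}\lesssim\Astar$, which is a quarter-derivative short. The para-commutator lemma \eqref{ParaCommutator} does not help here because it trades derivatives between the two symbols, whereas $|D|^s$ has a scalar symbol with no decay to give up. The paper explicitly notes this failure: the naive source terms $(\mathcal{G}^s_0,\mathcal{K}^s_0)$ after $|D|^s$-conjugation do not satisfy the required bound.

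The remedy in the paper is a genuine additional idea: one replaces $(|D|^s w,|D|^s r)$ by $(T_\Phi|D|^s w,\,T_\Phi|D|^s r)$ with the conjugation weight
\[
\Phi=|1+\W|^{-2s}\exp\Bigl(-s\tfrac{\gamma}{g}\Re R-s\tfrac{\gamma^2}{g}\Im W\Bigr),
\]
chosen precisely so that $D_t\Phi=s\Phi\,\ub_\alpha+O_\uA(\uA_{1/4}^2)$. This kills the dangerous $sT_{\ub_\alpha}$ term arising from the transport commutator. Even after this conjugation, subprincipal commutators of type $L_1(\ub_{\alpha\alpha},\partial_\alpha^{-1}\cdot)$ and secondary $L$-terms survive and are still only linearly small in $\Astar$; the paper removes them with two further rounds of explicit paradifferential normal-form corrections $(\tilde w^s_2,\tilde r^s_2)$ and $(\tilde w^s_3,\tilde r^s_3)$ before the source finally satisfies the quadratic bound $\uA_{1/4}^2\|(w,r)\|_{\doth{s}}$. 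Your proposal skips all of this, so as written it only recovers an estimate with coefficient $\uB$ rather than $\Astar(\gamma^{1/2}+\uAStar)$, which is exactly the improvement the proposition is claiming.
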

We will first prove the easier case for $s=0$, and then consider the case for more general $s$.
Clearly, Theorem \ref{t:LinearizedWellposed} will follow directly from Proposition \ref{t:wellposedflow} as long as the source terms $(\mathcal{G}^{\sharp}, \mathcal{K}^{\sharp})$ satisfy for $s = \frac{1}{4}$,
\begin{equation}
    \|(\mathcal{G}^{\sharp}, \mathcal{K}^{\sharp})\|_{\doth{s}}\lesssim \Astar(\gamma^{\frac{1}{2}}+\uAStar) \| (w,r)\|_{\doth{s}}. \label{GoodSourceTermBound}
\end{equation}

Next, we take into account the nonlinear source terms $(\mathcal{G}^{\sharp}, \mathcal{K}^{\sharp})$ on the right-hand side.
Unfortunately, the source terms $(\mathcal{G}^{\sharp}, \mathcal{K}^{\sharp})$ do not satisfy the bound \eqref{GoodSourceTermBound} for any $s$, because of both the quadratic contributions and  unbalanced cubic contributions.

In order to deal with these unfavourable source terms, we will use the paradifferential normal form analysis to construct the modified normal form linear variables $(w_{NF}, r_{NF})$.
We work at a specific regularity level, namely $s=\frac{1}{4}$, as this is the minimal Sobolev index that allows us to obtain the perturbative bounds in \eqref{GoodSourceTermBound}.
 With these new modified normal form linear variables $(w_{NF}, r_{NF})$, the source terms become perturbative and satisfy the bound of \eqref{GoodSourceTermBound} type.
\begin{proposition}
  Assuming that $(w,r)$ solve the linearized paradifferential system \eqref{ParadifferentialLinearEqn}, then there exist modified normal form linear variables $(w_{NF}, r_{NF})$ satisfying \eqref{SourceParadifferential} and that we have
\begin{enumerate}
\item Invertibility:
\begin{equation*}
    \| (w_{NF}, r_{NF})-(w,r)\|_{\doth{\frac{1}{4}}}\lesssim_{\uA} \uAS\left(\|(w,r) \|_{\doth{\frac{1}{4}}}+ \gamma^2 \| (w,r)\|_{\doth{-\frac{3}{4}}}\right).
\end{equation*}
\item  Perturbative source terms:
\begin{equation*}
 \| (G,K)\|_{\doth{\frac{1}{4}}}\lesssim_{\uAS}  \Astar \uAStar \left(\| (w,r)\|_{\doth{\frac{1}{4}}}+ \gamma^2 \| (w,r)\|_{\doth{-\frac{3}{4}}}\right).
\end{equation*}
\end{enumerate}
\end{proposition}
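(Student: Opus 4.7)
The plan is to construct $(w_{NF}, r_{NF})$ via a paradifferential normal form transformation at the bilinear level, following the blueprint of Ai-Ifrim-Tataru \cite{ai2023dimensional} but with the additional vorticity-weighted contributions forced by the system \eqref{e:CVWW1}. The guiding principle is that the full-system normal form \eqref{e:NormalForm} of Ifrim-Tataru already kills the quadratic source terms of \eqref{e:CVWW}; linearizing this transformation at the paradifferential level will then, by construction, cancel the unbalanced quadratic part of $(\mathcal{G}^\sharp, \mathcal{K}^\sharp)$ in \eqref{ParadifferentialLinearEqn}.

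First I would expand the source terms $(\mathcal{G}^\sharp, \mathcal{K}^\sharp)$ extracted from \eqref{GKZeroDef} and the paradifferential truncations $\underline{\mathcal{G}}_1, \underline{\mathcal{K}}_1$, sorting contributions by vorticity order $\gamma^k$. The $\Pi$-type pieces and the low-high pieces of the form $T_{\text{low}}(\text{high})$ where the high-frequency factor is the linearized variable $(w,r)$ differentiated at most as $\partial_\alpha^{1/2}$ already satisfy the $\doth{1/4}$ bound with constant $\Astar\uAStar$ via \eqref{LpBalance}, \eqref{LpLowhigh}, \eqref{BMOBalance}, \eqref{BMOSigma}, together with the estimates on $\ua, \ub, Y, M$ collected in Section \ref{s:WaterBound}; these are left on the right-hand side. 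The remaining terms --- unbalanced expressions of the form $T_{f(W,R)} \partial_\alpha w$, $T_{f(W,R)} \partial_\alpha r$ with $f$ a monomial drawn from the nonlinearity --- are the ones that must be removed.

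Next I would set
\begin{equation*}
w_{NF} = w + \mathbf{P} w^{[2]}, \qquad r_{NF} = r + \mathbf{P} r^{[2]},
\end{equation*}
where $w^{[2]} = w^{[2]}(W,Q;w,r)$ and $r^{[2]} = r^{[2]}(W,Q;w,r)$ are bilinear paradifferential analogues of $dW^{[2]}[w,r]$ and $dQ^{[2]}[w,r]$ obtained by linearizing \eqref{e:NormalForm}: each monomial $W^a Q^b$ (with the appropriate $\gamma/g$ weight) contributes truncations such as $T_{W^{a-1} Q^b} w$ and $T_{W^a Q^{b-1}} r$, and similarly for the $\partial^{-1}$ factors. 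Applying $T_{D_t}$ to these correction terms with the para-Leibniz rule Lemma \ref{t:Leibniz}, and substituting the leading-term expansions of $T_{D_t} W$, $T_{D_t} R$, $T_{D_t} Y$, $T_{D_t} \ua$ from Section \ref{s:WaterBound} together with the equations \eqref{ParadifferentialLinearEqn} for $(w,r)$, reproduces up to balanced errors exactly the unbalanced portion of $(\mathcal{G}^\sharp, \mathcal{K}^\sharp)$. Cancellation occurs at each $\gamma^k$-level separately because \eqref{e:NormalForm} is designed to eliminate quadratic source terms of the full system. The residual $(G,K)$ is then made of para-commutator errors \eqref{ParaCommutator}, para-product defects \eqref{ParaProducts}, \eqref{ParaAssociateOne}, \eqref{ParaAssociateTwo}, Leibniz errors \eqref{UnbalancedParaLeibnizOne}--\eqref{BalancedLeibniz}, and genuinely cubic contributions, all of which are perturbative and yield the claimed $\Astar\uAStar$ factor applied to $\|(w,r)\|_{\doth{1/4}} + \gamma^2 \|(w,r)\|_{\doth{-3/4}}$.

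For the invertibility statement, each paradifferential monomial in $(w^{[2]}, r^{[2]})$ is directly estimated in $\doth{1/4}$ via \eqref{LpBalance} and \eqref{LpLowhigh}, using the $BMO$ bounds on the low-frequency coefficients $W, Y$ and $L^4$-based bounds on $R$ from Section \ref{s:WaterBound}; the $\gamma^2 \doth{-3/4}$ contribution comes precisely from the monomials weighted by $\gamma^k$ with $k \geq 2$, for which the scaling forces $(w,r)$ to be measured at correspondingly lower Sobolev index. The main obstacle I foresee is the bookkeeping across vorticity orders: unlike the irrotational situation of \cite{ai2023dimensional}, each nonlinear monomial in \eqref{e:NormalForm} produces a whole tower of $\gamma^k$-corrections, and one must check term by term that the weights assembled in $\uAS$ and $\uAStar$ are consistent, in particular that moving the $\mathbf{P}$ projection past the low-frequency para-coefficients via Lemma \ref{t:CommutatorL2} and \eqref{ParaAssociateTwo} only produces commutator errors that are still balanced with the correct $\gamma$ tagging so as to be absorbable into $\Astar\uAStar$.
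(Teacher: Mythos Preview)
Your proposal is correct and follows essentially the same strategy as the paper: linearize the normal form transformation \eqref{e:NormalForm} at the paradifferential level, apply the para-Leibniz rule and the para-material derivative computations from Section~\ref{s:WaterBound}, and verify that the residual errors are balanced cubic terms controlled by $\Astar\uAStar$. The paper organizes the computation slightly differently --- splitting the source into $(\underline{\mathcal{G}}_1,\underline{\mathcal{K}}_1)$ (paradifferential truncation errors, Section~\ref{s:GKOne}) and $(\underline{\mathcal{G}}_0,\underline{\mathcal{K}}_0)$ (the original source terms, Section~\ref{s:GKZero}), constructing a separate correction for each, and working throughout with the auxiliary variables $X=T_{1-Y}W$, $Z=T_{1-Y}Q$, $U=T_{1-Y}\partial_\alpha^{-1}W$ and their linearized counterparts $x=T_{1-Y}w$, $u=T_{1-Y}\partial_\alpha^{-1}w$, whose para-material derivatives (computed in Section~\ref{s:TDtBound} and Section~\ref{s:SourceBound}) have cleaner leading terms than those of the raw $W,Q,\partial_\alpha^{-1}W$ --- but this is bookkeeping rather than a different idea.
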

\begin{remark}
It is possible to improve the above source term bound, replacing $\Astar \uAStar$ by a slightly smaller constant $\uA^2_{\frac{1}{4}}$ as in \cite{ai2023dimensional}.
However, this not only requires more delicate estimates for the para-material derivatives and more steps of paradifferential normal form corrections, but also does not help in the proof of the local well-posedness.
We will not prove this sharper version of estimates here.
\end{remark}
The rest of this section is devoted to the proof of the above results.
In Section \ref{s:H0Bound}, we compute the time derivative for the linear paradifferential energy $E^{0,para}_{lin}$ when $s=0$.
Then in Section \ref{s:HsBound}, we consider for general $s\in \mathbb{R}$.
$E^{0,para}_{lin}(|D|^s w, |D|^s r)$ does not satisfy our need for paradifferential energy because it brings additional nonperturbative source terms.
In order to eliminate these bad terms, we use the paradifferential conjugation to construct new variables $(\tilde{w}^s, \tilde{r}^s)$.
This change of variables reduces the source terms of paradifferential equations to balanced ones, thus proving Proposition \ref{t:wellposedflow}.
In the rest of Section \ref{s:LinearEstimate} we take into account the effect of source terms $(\mathcal{G}^{\sharp}, \mathcal{K}^{\sharp})$.
In Section \ref{s:SourceBound}, we compute the $\mathcal{H}$ bound of $(\underline{\mathcal{G}}_0, \underline{\mathcal{K}}_0)$ and its leading parts.
Then we compute the material and para-material derivatives of $(w,r)$, $x: = T_{1-Y}w$, and $u: = T_{1-Y}\partial^{-1}_\alpha w$.
Next, in Section \ref{s:GKOne}, we construct the normal form corrections that remove $(\underline{\mathcal{G}}_1, \underline{\mathcal{K}}_1)$ up to balanced cubic terms.
Finally in Section \ref{s:GKZero}, we construct the normal form corrections that remove the leading term of $(\underline{\mathcal{G}}_0, \underline{\mathcal{K}}_0)$.
After these paradifferential normal form transformations, the system is finally reduced to the desired form, and this finishes the proof of Theorem \ref{t:LinearizedWellposed}.

\subsection{$\mathcal{H}^0$ bound for the paradifferential equation}
\label{s:H0Bound}
In this subsection, we prove Proposition \ref{t:wellposedflow} in the case $s=0$.
Following the setup in \cite{ai2023dimensional}, we consider the  paradifferential energy
\begin{equation*}
    E^{0,para}_{lin}(w,r) = \int_{\mathbb{R}} T_{g+\ua}w \cdot \bar{w} + \Im(r\bar{r}_\alpha)\,d\alpha.
\end{equation*}
Clearly, we have the norm equivalence since
\begin{equation*}
    \| T_\ua w\|_{L^2} \lesssim \|\ua\|_{L^\infty}\|w\|_{L^2} \lesssim_A O(\uA) \|w\|_{L^2}.
\end{equation*}
The assumption  $\uA \lesssim 1$ ensures that $ E^{0,para}_{lin}(w,r)$ is a positive energy.
As for the time derivative of the energy, we have the following computation.
\begin{proposition}
 Suppose that $(w,r)$ solve the  \eqref{SourceParadifferential}, and $(G,K)\in L^2\times \dot{H}^{\frac{1}{2}}$, then the time derivative of the paradifferential energy is:
\begin{equation}
  \frac{d}{dt}  E^{0,para}_{lin} = 2\Re\int T_{g+\ua}\bar{w}G -i\bar{r}_\alpha K \,d\alpha + \gamma \int  T_{\gamma\Im R -2g\Im \W}\bar{w}\cdot w \,d\alpha + O_{\uA} (\underline{A}^2_{\frac{1}{4}})E^{0,para}_{lin}. \label{TimeDerivativeE0Para}
\end{equation}
\end{proposition}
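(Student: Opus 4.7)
The plan is to differentiate $E^{0,para}_{lin}$ directly, substitute $\partial_t = T_{D_t} - T_\ub\partial_\alpha$ using the paradifferential equations \eqref{SourceParadifferential}, and then isolate the three quoted main terms while demonstrating that all remaining contributions are of size $\uA^2_{\frac{1}{4}}$ times the energy. First I would compute
\begin{equation*}
\frac{d}{dt}E^{0,para}_{lin} = \int T_{\partial_t\ua}w\bar w\,d\alpha + 2\Re\int T_{g+\ua}\partial_t w\cdot \bar w\,d\alpha + 2\Im\int \partial_t r\cdot \bar r_\alpha\,d\alpha,
\end{equation*}
where I used an integration by parts to reduce the time-derivative of $\int\Im(r\bar r_\alpha)d\alpha$ to $2\Im\int r_t\bar r_\alpha d\alpha$. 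Then I split each $\partial_t$ into $T_{D_t} - T_\ub\partial_\alpha$ and substitute $T_{D_t}w$ and $T_{D_t}r$ from \eqref{SourceParadifferential}.

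The first key cancellation is between the $r$-field and $w$-field high-frequency interaction. Since the quantization is self-adjoint, $(T_{1-\bar Y})^* = T_{1-Y}$ and $T_{g+\ua}$ is self-adjoint because $g+\ua$ is real; transferring the operators across the $L^2$ pairing yields $\Re\int T_{g+\ua}T_{1-\bar Y}r_\alpha\,\bar w\,d\alpha = \Re\int \bar r_\alpha T_{1-Y}T_{g+\ua}w\,d\alpha$, so $-2\Re\int T_{g+\ua}T_{1-\bar Y}r_\alpha\bar w + 2\Re\int T_{1-Y}T_{g+\ua}w\,\bar r_\alpha = 0$. The contributions from $-i\gamma r$ in $r_t$ vanishes (since $\int\Re(r\bar r_\alpha)=0$), and the transport term $2\Im\int T_\ub r_\alpha\,\bar r_\alpha\,d\alpha$ also vanishes because $T_\ub$ is self-adjoint with real symbol $\ub$, making the integral real.

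The second and more delicate cancellation handles the apparently nonperturbative $T_{\ub_\alpha}$ and $T_{\Re R_\alpha}$ contributions. After integration by parts the $w$-transport term contributes, to leading order, $+g\int T_{\ub_\alpha}|w|^2\,d\alpha$, while the $-T_{(1-\bar Y)R_\alpha}w$ term, symmetrized via self-adjointness, contributes $-g\int T_{2\Re[(1-\bar Y)R_\alpha]}|w|^2\,d\alpha$. Invoking the identity \eqref{ubalpha} in the form $2\Re[(1-\bar Y)R_\alpha] = \ub_\alpha - \gamma\Im\W + \underline M$ makes the two $T_{\ub_\alpha}$ terms cancel exactly, the $T_{\underline M}|w|^2$ piece becomes perturbative via \eqref{uMBound}, and there remains $+g\gamma\int T_{\Im\W}|w|^2\,d\alpha$. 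Adding this to the $-\gamma T_{\Im\W}w$ contribution, which after symmetrization yields $-2g\gamma\int T_{\Im\W}|w|^2\,d\alpha$, and to the $T_{\partial_t\ua}w\bar w$ contribution, whose leading part by \eqref{ParaMaterialA} is $\gamma^2\int T_{\Im R}|w|^2 -g\gamma\int T_{\Im\W}|w|^2$, the $\Im\W$ coefficients sum to $-2g\gamma$, producing exactly $\gamma\int T_{\gamma\Im R-2g\Im\W}\bar w w\,d\alpha$. The $G$ and $K$ terms come directly from the inhomogeneous parts of \eqref{SourceParadifferential}, with $2\Im\int K\bar r_\alpha = 2\Re\int(-i\bar r_\alpha K)$.

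The main obstacle is the bookkeeping of lower-order errors: every composition like $T_{g+\ua}T_{1-\bar Y}$, $T_{g+\ua}T_\ub$, $T_{g+\ua}T_{\Im\W}$ incurs para-commutator, para-product, and para-associativity errors from \eqref{ParaCommutator}--\eqref{ParaAssociateOne}, the non-leading part of \eqref{ParaMaterialA} contributes $-T_{g+\ua}M$ plus the error $E$ from \eqref{ErrorE}, and the transport of $\ua$ contributes $T_{T_\ub\ua_\alpha}w\bar w$. Each of these must be shown to be $O_\uA(\uA^2_{\frac{1}{4}})E^{0,para}_{lin}$, which follows systematically by combining the BMO bounds \eqref{uaBMO}, \eqref{ubBMO}, \eqref{MBMOOne}, \eqref{uMBound}, the auxiliary bound $\||D|^{\frac{1}{4}}Y\|_{BMO}\lesssim_A A_{\frac{1}{4}}$, and the algebra of paraproducts from Section \ref{s:Def}. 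The overall structure mirrors the zero-vorticity computation in \cite{ai2023dimensional}, with the new vorticity-generated symbols $\gamma\Im\W$, $\gamma\Im R$ and $\ub_1$-corrections tracked using the order convention of Section \ref{s:Introduction}.
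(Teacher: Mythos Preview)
Your overall strategy---differentiate, substitute $T_{D_t}$ from \eqref{SourceParadifferential}, exploit self-adjointness for the $r_\alpha$/$w$ cross terms, and combine the remaining real symbols via \eqref{ubalpha} and \eqref{MaterialA}---is exactly the paper's approach, and your final $\Im\W$ coefficient accounting ($+g\gamma-2g\gamma-g\gamma=-2g\gamma$) is correct.

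There is, however, a genuine gap in your error bookkeeping. You invoke \eqref{ParaMaterialA} for $\partial_t\ua$ and then list ``the transport of $\ua$ contributes $T_{T_\ub\ua_\alpha}w\bar w$'' among the perturbative errors. But $\ub\notin L^\infty$, so neither $T_\ub\ua_\alpha$ nor $\ub\ua_\alpha$ is controlled in $L^\infty$ by $\uA^2_{\frac{1}{4}}$; this term is \emph{not} individually perturbative. The paper avoids this by not separating: the $w$-transport integration by parts produces $-\int T_{((g+\ua)\ub)_\alpha}\bar w\cdot w$, whose symbol contains $\ua_\alpha\ub$, and this piece combines exactly with $T_{\ua_t}$ to give $T_{D_t\ua}$, at which point \eqref{MaterialA} (not \eqref{ParaMaterialA}) applies. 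The same issue arises when you write the transport and $R_\alpha$ contributions ``to leading order'' with prefactor $g$ instead of $(g+\ua)$: the discarded pieces $\ua\ub_\alpha$ and $\ua\cdot 2\Re[(1-\bar Y)R_\alpha]$ are not separately perturbative since $\ub_\alpha,R_\alpha\notin L^\infty$. You must keep the full $(g+\ua)$ factor through the cancellation $\ub_\alpha-2\Re[(1-\bar Y)R_\alpha]=\gamma\Im\W-\underline M$, and only afterwards discard $\ua$ multiplied by the now-bounded remainder. In short, assemble the single symbol $D_t\ua+(g+\ua)(\ub_\alpha-2\Re\frac{R_\alpha}{1+\bar\W}-2\gamma\Im\W)$ before estimating anything; the paper's computation is organized precisely to do this.
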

\begin{proof}
By direct computation, and the self-adjointness of $T$,
\begin{equation*}
\frac{d}{dt} E^{0,para}_{lin}(w,r) = 2\Re\int T_{g+\ua} \bar{w}\cdot w_t\, d\alpha + 2\Im \int \bar{r}_\alpha r_t \,d\alpha + \int T_{\ua_t} \bar{w}\cdot w \,d\alpha.
\end{equation*}
The strategy here is to replace the time derivatives $w_t$ and $r_t$ by para-material derivatives $T_{D_t} w$ and $T_{D_t} r$, so that we can use the system \eqref{SourceParadifferential}.
Using integration by parts, we write
\begin{align*}
 2\Re \int& T_{g+\ua}\bar{w}\cdot T_{\ub}w_\alpha \,d\alpha    = -\int T_{((g+\ua)\ub)_\alpha}\bar{w}\cdot w \,d\alpha \\
 &- \int (T_{\ub_\alpha}T_{\ua} + T_{\ub}T_{\ua_\alpha}-T_{(\ua\ub)_\alpha})\bar{w}\cdot w \,d\alpha -\int(T_\ub T_\ua -T_\ua T_\ub )\bar{w}_\alpha \cdot w \,d\alpha, \\
 2\Im \int& \bar{r}_\alpha\cdot T_\ub \partial_\alpha r \,d\alpha  =0.
\end{align*}
The  above commutator integrals satisfy
\begin{equation*}
  \int (T_{\ub_\alpha}T_{\ua} + T_{\ub}T_{\ua_\alpha}-T_{(\ua\ub)_\alpha})\bar{w}\cdot w \,d\alpha + \int(T_\ub T_\ua -T_\ua T_\ub )\bar{w}_\alpha  \cdot w \,d\alpha  = O_\uA(\underline{A}^2_{\frac{1}{4}}) \|w\|_{L^2}^2
\end{equation*}
due to the para-product estimate \eqref{ParaProducts}, the para-commutator estimate \eqref{ParaCommutator} and also the $BMO$ bounds \eqref{uaBMO} and \eqref{ubBMO}.

Adding the $T_\ub$ integral to the energy estimate, and using \eqref{SourceParadifferential}, we get that
\begin{align*}
    \frac{d}{dt}  E^{0,para}_{lin}(w,r)
     = & 2\Re\int T_{g+\ua} \bar{w}\cdot T_{D_t}w \,d\alpha + 2\Im \int \bar{r}_\alpha T_{D_t}r \,d\alpha \\
     &+ \int T_{\ua_t + \ub \ua_\alpha + (g+\ua)\ub_\alpha}\bar{w}\cdot w d\alpha + O_\uA(\underline{A}^2_{\frac{1}{4}}) \|w\|_{L^2}^2\\
    = & 2\Re\int T_{g+\ua} \bar{w}\cdot G -i\bar{r}_\alpha K -T_{g+\ua}\bar{w} \cdot T_{(1-\bar{Y})R_\alpha +\gamma \Im\W}w \,d\alpha \\
    & + \int T_{\ua_t + \ub \ua_\alpha + (g+\ua)\ub_\alpha}\bar{w}\cdot w d\alpha + O_\uA(\underline{A}^2_{\frac{1}{4}}) \|w\|_{L^2}^2 \\
      = & 2\Re\int T_{g+\ua} \bar{w}\cdot G -i\bar{r}_\alpha K \,d\alpha  + O_\uA(\underline{A}^2_{\frac{1}{4}})\|w\|_{L^2}^2\\
     &+ \int T_{D_t \ua + (g+\ua)(\ub_\alpha -2\Re \frac{R_\alpha}{1+\bar{\W}}-2\gamma \Im \W)}\bar{w}\cdot w \,d\alpha.
\end{align*}
Here, using \eqref{MaterialA} and the definition of $\underline{M}$, we have
\begin{align*}
  D_t \ua &+ (g+\ua)\left(\ub_\alpha -2\Re \frac{R_\alpha}{1+\bar{\W}}-2\gamma \Im \W\right) = D_t \ua -(g+\ua)(\underline{M}+\gamma \Im \W)\\
  &=-2(g+\ua)M+ \gamma^2\Im R - 2g\gamma \Im \W+i\frac{\gamma}{2}(g+\ua)M_1-\ua \gamma \Im \W+O_\uA(\underline{A}^2_{\frac{1}{4}})\\
  & = \gamma^2 \Im R -2g\gamma \Im \W + O_\uA(\underline{A}^2_{\frac{1}{4}}),
\end{align*}
where we use the $L^\infty$ bounds  \eqref{uaBMO}, \eqref{MBMOOne}, and \eqref{MoneLinfinity}.
Gathering all the terms, we obtain the estimate \eqref{TimeDerivativeE0Para}.
\end{proof}
The local well-posedness of the homogeneous paradifferential flow in $\mathcal{H}$ follows by a direct fixed point argument.

\subsection{$\doth{s}$ bounds for the linear paradifferential flow.}
\label{s:HsBound}
We now consider more general $s\in \mathbb{R}$ for Proposition \ref{t:wellposedflow}, and prove the $\doth{s}$ well-posedness of the linear paradifferential flow.
We will construct the following variables $(\tilde{w}^s, \tilde{r}^s)$:
\begin{proposition}
   Let $s\in \mathbb{R}$, and $(w,r)$ solve the linear paradifferential flow \eqref{ParadifferentialFlow}.
   Then there exist  linearized, normalized variables $(\tilde{w}^s, \tilde{r}^s)$ solving
   \begin{equation}
\left\{
             \begin{array}{lr}
             T_{D_t}\tilde{w}^s+T_{1-\bar{Y}}\partial_\alpha \tilde{r}^s + T_{(1-\bar{Y})R_\alpha+ \gamma \Im\W }\tilde{w}^s = G_s &\\
           T_{D_t}\tilde{r}^s +i\gamma \tilde{r}^s -iT_{1-Y}T_{g+\ua}\tilde{w}^s=K_s,&  \label{RSourceParadifferential}
             \end{array}
\right.
\end{equation}
and such that
\begin{equation*}
    \| (\tilde{w}^s, \tilde{r}^s)-|D|^s(w,r)\|_{\mathcal{H}^0} \lesssim_\uA \uA \| (w,r)\|_{\doth{s}},
\end{equation*}
\begin{equation}
    \| (G_s, K_s)\|_{\mathcal{H}^0} \lesssim_\uA \underline{A}^2_{\frac{1}{4}}\|(w,r)\|_{\doth{s}}. \label{GsHsSource}
\end{equation}
\end{proposition}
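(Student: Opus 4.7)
The plan is to construct $(\tilde{w}^s, \tilde{r}^s)$ as a paradifferential conjugation of $(|D|^s w, |D|^s r)$, with corrections chosen precisely to cancel the non-perturbative commutators that arise when $|D|^s$ is pushed past the principal part of the flow \eqref{ParadifferentialFlow}. I would begin by applying $|D|^s$ to both equations and partitioning the resulting commutators into two classes. The commutators of $|D|^s$ against the zeroth-order paradifferential coefficients $T_{(1-\bar Y)R_\alpha}$, $\gamma T_{\Im \W}$, and $T_{g+\underline a}$ are already balanced at the desired level: using the para-commutator estimate \eqref{ParaCommutator} together with the BMO bounds \eqref{YBMO} and \eqref{uaBMO}, each is of size $O(\underline A^2_{\frac{1}{4}})\|(w,r)\|_{\doth s}$. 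The genuinely obstructing terms come from the two first-order paradifferential operators in the system: $T_{\underline b}\partial_\alpha$ inside $T_{D_t}$, and $T_{1-\bar Y}\partial_\alpha$ in the diagonal propagator (with an analogous contribution from $T_{1-Y}T_{g+\underline a}$ in the second equation).

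To remove these obstructions I would use a diagonal ansatz
\begin{equation*}
    \tilde w^s = |D|^s w + T_{B^s} w, \qquad \tilde r^s = |D|^s r + T_{E^s} r,
\end{equation*}
where $B^s$ and $E^s$ are symbols of order $s$ built, respectively, from $\underline b$ and $\bar Y$. Their precise form is fixed by requiring that, under the self-adjoint quantization convention adopted in Section \ref{s:H0Bound}, the top-order part of $[T_{D_t},|D|^s] + T_{D_t}T_{B^s} - T_{B^s}T_{D_t}$ and of the corresponding commutator in the $r$-equation both vanish modulo balanced errors. Carrying out the symbolic calculation with \eqref{ParaProducts} and matching orders, $B^s$ and $E^s$ emerge as paraproducts of $|D|^{s-1}$ against linear functionals of $\underline b_\alpha$ and $\bar Y_\alpha$, and are therefore pointwise of order $s$ with coefficient bounded by $\underline A$ in view of \eqref{YBMO} and \eqref{ubBMO}.

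With this choice, the invertibility bound $\|(\tilde w^s,\tilde r^s)-D^s(w,r)\|_{\mathcal H^0}\lesssim_{\underline A} \underline A\,\|(w,r)\|_{\doth s}$ is immediate from the paraproduct estimate \eqref{LpBalance} and the size of $B^s, E^s$. For the source-term estimate \eqref{GsHsSource}, the first-order obstructions cancel by construction, while three residual contributions remain to be controlled: (i) second-order commutator remainders in the paradifferential symbol calculus, which are perturbative by \eqref{ParaCommutator}; (ii) the terms $T_{D_t}T_{B^s}w$ and $T_{D_t}T_{E^s}r$, which generate $D_t\underline b$ and $D_t Y$ factors and are therefore handled by \eqref{YParaDerivative}, \eqref{ParaMaterialA} together with the para-Leibniz rule Lemma \ref{t:Leibniz}; and (iii) the cross-commutators of the corrections against $T_{1-\bar Y}\partial_\alpha$ and $T_{1-Y}T_{g+\underline a}$, which are balanced by \eqref{ParaProducts}. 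In each case the output is quadratic in $\underline A_{\frac{1}{4}}$ acting on $(w,r)\in\doth s$.

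The principal difficulty is bookkeeping. One must enumerate every commutator generated by $|D|^s$ against the full principal symbol, identify which are already second-order (hence balanced by $\underline A^2_{\frac{1}{4}}$) and which require active cancellation, and verify that the diagonal ansatz is rich enough to eliminate all of the latter simultaneously in both equations. A secondary delicate point is that the corrections introduce their own time derivatives, so one needs the precise para-material derivative structure of Section \ref{s:TDtBound} -- in particular the perturbative control on $T_{D_t}Y$ and $T_{D_t}\underline a$ -- to close the estimate at the $\underline A^2_{\frac{1}{4}}$ level rather than merely at the $\underline A_{\frac{1}{4}}$ level one would otherwise obtain.
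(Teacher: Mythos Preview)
Your proposal misidentifies the primary correction. The leading obstruction is the transport commutator $[|D|^s,T_{\underline b}\partial_\alpha]\approx -sT_{\underline b_\alpha}|D|^s$; to cancel it one needs a correction whose \emph{material derivative} reproduces $s\underline b_\alpha$. The paper does this multiplicatively, conjugating by $T_\Phi$ with
\[
\Phi=|1+\W|^{-2s}\exp\Bigl(-s\tfrac{\gamma}{g}\Re R-s\tfrac{\gamma^2}{g}\Im W\Bigr),
\]
chosen precisely so that $D_t\Phi=s\Phi\,\underline b_\alpha+O(\underline A_{1/4}^2)$. This identity is obtained by \emph{integrating the equations of motion} (using $D_t\W\approx -(1-\bar Y)R_\alpha$, $D_t R\approx igY-i\gamma R$, $D_t W\approx -R$ to express a time-antiderivative of $\underline b_\alpha$ in terms of $\W,R,W$). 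Your $B^s$, being built from $\underline b_\alpha$ and $\bar Y_\alpha$ themselves, has the wrong structure: $D_tB^s$ would then involve $D_t\underline b_\alpha$, a quadratic quantity that does not reproduce $\underline b_\alpha$, so the cancellation you assert in the expression $[T_{D_t},|D|^s]+[T_{D_t},T_{B^s}]$ does not take place.

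Second, even after the correct conjugation the source is still only of size $\underline A_{1/4}$, not $\underline A_{1/4}^2$: subprincipal commutators $L_1(f_{\alpha\alpha},\partial_\alpha^{-1}u)\approx -s(s-1)T_{f_{\alpha\alpha}}\partial_\alpha^{-1}u$ and several cross-terms remain (the paper's $(\mathcal G^s_1,\mathcal K^s_1)$). A \emph{second} layer of additive normal-form corrections $(\tilde w^s_2,\tilde r^s_2)$ and $(\tilde w^s_3,0)$ is needed, with coefficients built from $\W_\alpha,R_\alpha,\W$ so that their para-material derivatives (via Section~\ref{s:TDtBound}) regenerate and cancel the $L_1$ obstructions. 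A single diagonal step cannot simultaneously achieve the transport-ODE cancellation and this secondary reduction; indeed the second layer is not even diagonal ($\tilde r^s_3=0$ while $\tilde w^s_3\neq 0$).
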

Then Proposition \ref{t:wellposedflow} follows by the fixed point argument and choosing
\begin{equation*}
    E^{s,para}_{lin}(w,r) = E^{0,para}_{lin}(\tilde{w}^s,\tilde{r}^s).
\end{equation*}
\begin{proof}
A first idea to this problem is to consider the variables
  \begin{equation*}
      (\tilde{w}^s, \tilde{r}^s) := (|D|^s w, |D|^s r).
  \end{equation*}
The new variables $(\tilde{w}^s, \tilde{r}^s)$ solve the system
\begin{equation}
\left\{
             \begin{array}{lr}
             T_{D_t}w^s+T_{1-\bar{Y}}\partial_\alpha r^s + T_{(1-\bar{Y})R_\alpha + \gamma \Im\W}w^s = \mathcal{G}^s_0 &\\
           T_{D_t}r^s +i\gamma r^s -iT_{1-Y}T_{g+\ua}w^s=\mathcal{K}^s_0,&
             \end{array}
\right. \label{TPhiwrsSystem}
\end{equation}
where the source terms $(\mathcal{G}^s_0, \mathcal{K}^s_0)$ are given by
\begin{align*}
    &\mathcal{G}^s_0 = L(\ub_\alpha, w^s)-L(\bar{Y}_\alpha, r^s) + L([(1-\bar{Y})R_\alpha]_\alpha + \gamma \Im\W_\alpha, \partial^{-1}_\alpha w^s),\\
    &\mathcal{K}^s_0 = L(\ub_\alpha, r^s)+ iL(Y_\alpha, \partial_\alpha^{-1}T_{g+\ua}w^s).
\end{align*}
Here $L$ denotes the order zero paradifferential commutator
\begin{equation*}
L(f_\alpha, u) = -[|D|^s, T_f]\partial_\alpha |D|^{-s}u \approx -sT_{f_\alpha} u + \text{lower order terms}.
\end{equation*}
Unfortunately, the source terms $(\mathcal{G}^s_0, \mathcal{K}^s_0)$ do not satisfy the bounds \eqref{GsHsSource}, so that they cannot be treated perturbatively.
Even after applying the normal form transformation, there are still some unbalanced quadratic and higher order terms left.

To eliminate these unbalanced terms, we first apply the paradifferential conjugation, which is similar to the  renormalization approach in \cite{MR3948114} and \cite{wan2023l2}, but is performed at the paradifferential level.
Precisely, we define the new variables
  \begin{equation*}
     (\tilde{w}_{1}^s, \tilde{r}_{1}^s) : = (T_{\Phi}|D|^s w, T_{\Phi}|D|^s r),
  \end{equation*}
  where  the conjugation function $\Phi$ satisfies the differential equation
  \begin{equation}
      D_t \Phi = s\Phi \ub_\alpha + O_\uA(\underline{A}^2_{\frac{1}{4}}). \label{PhaseEquation}
  \end{equation}
We choose the real-valued conjugation function $\Phi$ by
  \begin{equation*}
      \Phi :=  |1+\W|^{-2s}e^{-s\frac{\gamma}{g}\Re R -s \frac{\gamma^2}{g}\Im W}.
  \end{equation*}
Indeed, one can compute
\begin{align*}
D_t \Phi &= \partial_{\W}\Phi D_t \W+\partial_{\bar{\W}}\Phi D_t \bar{\W} \\
&= s\Phi\left(R_\alpha(1-\bar{Y})+\bar{R}_\alpha(1-Y)-2\underline{M}+\gamma \Im \W (Y+\bar{Y})  -\frac{\gamma}{g}D_t \Re R - \frac{\gamma^2}{g}D_t \Im W\right) \\
&= s\Phi\left(\ub_\alpha -\underline{M}+\gamma \Im \W (Y+\bar{Y}-1) + \gamma \Im \W\right) +  O_\uA(\underline{A}^2_{\frac{1}{4}})\\
&= s\Phi \ub_\alpha + O_\uA(\underline{A}^2_{\frac{1}{4}}),
\end{align*}
where we use  the fact that
\begin{equation*}
  \gamma D_t \Re R = -g\gamma \Im \W + \gamma^2 \Im R +  O_\uA(\underline{A}^2_{\frac{1}{4}}), \quad
  \gamma^2 D_t \Im W = -\gamma^2 \Im R +  O_\uA(\underline{A}^2_{\frac{1}{4}}),
\end{equation*}
and also \eqref{ubalpha}, \eqref{uMBound}.
We then apply the para-product rule \eqref{ParaProducts} for $\Phi \ub_\alpha$ to get
\begin{equation*}
    T_{D_t}\tilde{w}_{1}^s = T_\Phi T_{D_t}w^s + sT_{\ub_\alpha}\tilde{w}_{1}^s +G_s.
\end{equation*}
Similarly, we have
\begin{equation*}
    T_{D_t}\tilde{r}_{1}^s = T_\Phi T_{D_t}r^s + sT_{\ub_\alpha}\tilde{r}_{1}^s +K_s.
\end{equation*}
For the other terms on the left-hand side of \eqref{TPhiwrsSystem}, we use \eqref{ParaCommutator} and \eqref{ParaProducts} to write
\begin{align*}
    &T_{1-\bar{Y}}\partial_\alpha \tilde{r}^s_1 = T_\Phi T_{1-\bar{Y}}\partial_\alpha r^s -s(T_{1-\bar{Y}}T_{(1+\W)Y_\alpha +\frac{\gamma}{g}\Re R_\alpha + \frac{\gamma^2}{g}\Im \W}+T_{\bar{Y}_\alpha})\tilde{r}^s_1 +G_s, \\
    &T_{(1-\bar{Y})R_\alpha + \gamma \Im\W }\tilde{w}^s_1 = T_\Phi T_{(1-\bar{Y})R_\alpha + \gamma \Im\W}w^s_1 + G_s
\end{align*}
in the first equation, and
\begin{equation*}
    T_{1-Y}T_{g+\ua} \tilde{w}^s_1 =T_\Phi T_{1-Y}T_{g+\ua} w^s_1+K_s
\end{equation*}
in the second equation.
$T_\Phi$ commutes with the coefficients on the right side of \eqref{TPhiwrsSystem}  modulo acceptable errors.
Therefore, for $(\tilde{w}^s_1, \tilde{r}^s_1)$ we write
\begin{equation}
\left\{
             \begin{array}{lr}
             T_{D_t}\tilde{w}^s_1+T_{1-\bar{Y}}\partial_\alpha \tilde{r}^s_1 + T_{(1-\bar{Y})R_\alpha + \gamma \Im\W}\tilde{w}^s_1 = \mathcal{G}^s_1 +G_s &\\
           T_{D_t}\tilde{r}^s_1 +i\gamma \tilde{r}^s_1 -iT_{1-Y}T_{g+\ua}\tilde{w}^s_1 =\mathcal{K}^s_1+K_s,&
             \end{array}
\right. \label{TPhiTildewrsSystem}
\end{equation}
with the nonperturbative source terms
\begin{align*}
 \mathcal{G}^s_1 = &L(\ub_\alpha, \tilde{w}^s_1) +sT_{\ub_\alpha}\tilde{w}^s_1 - L(\bar{Y}_\alpha, \tilde{r}^s_1) -sT_{\bar{Y}_\alpha} \tilde{r}^s_1 \\
 &+ L([(1-\bar{Y})R_\alpha]_\alpha+ \gamma \Im\W_\alpha, \partial_\alpha^{-1}\tilde{w}^s_1)
 -sT_{1-\bar{Y}}T_{(1+\W)Y_\alpha+\frac{\gamma}{g}\Re R_\alpha + \frac{\gamma^2}{g}\Im \W}\tilde{r}^s_1,\\
 \mathcal{K}^s_1 = & L(\ub_\alpha, \tilde{r}^s_1) +sT_{\ub_\alpha}\tilde{r}^s_1 +iL(Y_\alpha, \partial_\alpha^{-1}T_{g+\ua}\tilde{w}^s_1).
\end{align*}
The first two leading terms in both $\mathcal{G}^s_1$ and $\mathcal{K}^s_1$ cancel.
More precisely, define the bilinear term
\begin{equation*}
    L_1(f_{\alpha\alpha}, \partial_\alpha^{-1}u) = L(f_\alpha, u) +sT_{f_\alpha} u\approx -s(s-1)T_{f_{\alpha\alpha}} \partial_\alpha^{-1}u + \text{lower order terms},
\end{equation*}
we can rewrite
\begin{align*}
 \mathcal{G}^s_1 = &L_1(\ub_{\alpha\alpha}, \partial^{-1}_\alpha\tilde{w}^s_1)  - L_1(\bar{Y}_{\alpha\alpha}, \partial_\alpha^{-1}\tilde{r}^s_1) + L([(1-\bar{Y})R_\alpha]_\alpha + \gamma \Im\W_\alpha, \partial_\alpha^{-1}\tilde{w}^s_1) \\
 &-sT_{1-\bar{Y}}T_{(1+\W)Y_\alpha+\frac{\gamma}{g}\Re R_\alpha + \frac{\gamma^2}{g}\Im \W}\tilde{r}^s_1,\\
 \mathcal{K}^s_1 = & L_1(\ub_{\alpha\alpha}, \partial_\alpha^{-1}\tilde{r}^s_1)  +iL(Y_\alpha, \partial_\alpha^{-1}T_{g+\ua}\tilde{w}^s_1).
\end{align*}
Here the source terms $(\mathcal{G}^s_1, \mathcal{K}^s_1)$ satisfy the estimate
\begin{equation*}
    \||D|^{-\frac{1}{4}}(\mathcal{G}^s_1, \mathcal{K}^s_1) \|_{\mathcal{H}^0}\lesssim_\uA \Astar \| (w,r)\|_{\doth{s}}.
\end{equation*}
We claim that
\begin{align*}
    &\|(\tilde{w}_{1}^s, \tilde{r}_{1}^s)\|_{\mathcal{H}^0}\approx_{\uA} \|(w,r)\|_{\doth{s}}, \\
     &\| (\tilde{w}_{1}^s, \tilde{r}_{1}^s)-|D|^s(w,r)\|_{\mathcal{H}^0} \lesssim_\uA \uA \| (w,r)\|_{\doth{s}}.
\end{align*}
In fact, it is shown in \cite{hunter2016two} and \cite{ai2023dimensional} that
\begin{equation*}
    \|(T_{|1+\W|^{-2s}}\tilde{w}^s, T_{|1+\W|^{-2s}}\tilde{r}^s)\|_{\mathcal{H}^0}\approx_{\uA} \|(w,r)\|_{\doth{s}},
\end{equation*}
and also the corresponding difference bound.
For the other exponential factor,
\begin{equation*}
   \Big|-s\frac{\gamma}{g}\Re R - s\frac{\gamma^2}{g}\Im W \Big| \leq \frac{s}{g} (\gamma |\Re R|+ \gamma^2 |\Im W|)\lesssim_{s,g} \uA.
\end{equation*}
As long as $\uA \ll 1$, the exponential factor is harmless, and does not contribute too much to the $\mathcal{H}^0$ norm.

Next, we proceed with additional normal form corrections.
The normal form corrections not only  replace bilinear source terms by trilinear terms, but also turn trilinear and higher unbalanced interactions into balanced ones.
Our normal form corrections will consist of the $L_1$ bilinear corrections which eliminate the $L_1$ terms, and also the secondary corrections which eliminate the other terms.

We begin with the corrections consisting of $L_1$ bilinear forms.
Heuristically, at the paradifferential level, we use \eqref{ubalpha} and the para-material derivatives \eqref{ParaWalphaR} to write
\begin{align*}
    \ub_{\alpha\alpha} &\approx 2\Re T_{1-\bar{Y}}R_{\alpha\alpha} + \gamma \Im T_{1-\bar{Y}}\W_\alpha \approx -2\Re T_{1-Y}T_{D_t}\W_\alpha - \gamma \Im T_{1-\bar{Y}}(iT_{D_t}R_\alpha -\gamma R_\alpha)\\
    & = -2\Re T_{1-Y}T_{D_t}\W_\alpha - \frac{\gamma}{g}\Re T_{1-\bar{Y}}T_{D_t}R_\alpha -\frac{\gamma^2}{g}\Im T_{1-Y}T_{D_t}\W.
\end{align*}
This motivates us to set
\begin{equation*}
\left\{
             \begin{array}{lr}
             \tilde{w}^s_2 = L_1(2\Re T_{1-Y}\W_\alpha + \frac{\gamma}{g}\Re T_{1-\bar{Y}}R_\alpha + \frac{\gamma^2}{g}\Im T_{1-Y}\W , \partial_\alpha^{-1}\tilde{w}^s_1) &\\
           \tilde{r}^s_2 = L_1(2\Re T_{1-Y}\W_\alpha + \frac{\gamma}{g}\Re T_{1-\bar{Y}}R_\alpha + \frac{\gamma^2}{g}\Im T_{1-Y}\W , \partial_\alpha^{-1}\tilde{r}^s_1).&
             \end{array}
\right.
\end{equation*}
We claim that this correction has the following effect
\begin{equation}
\left\{
             \begin{array}{lr}
        T_{D_t}(\tilde{w}^s_1+\tilde{w}^s_2)+T_{1-\bar{Y}}\partial_\alpha (\tilde{r}^s_1+\tilde{r}^s_2) + T_{(1-\bar{Y})R_\alpha + \gamma \W}(\tilde{w}^s_1+\tilde{w}^s_2) = \mathcal{G}^s_2  +G_s &\\
           T_{D_t}(\tilde{r}^s_1+\tilde{r}^s_2) +i\gamma (\tilde{r}^s_1+\tilde{r}^s_2) -iT_{1-Y}T_{g+\ua}(\tilde{w}^s_1+\tilde{w}^s_2) =\mathcal{K}^s_2 +K_s,&
             \end{array}
\right. \label{TTildewr2sSystem}
\end{equation}
where
\begin{align*}
    &\mathcal{G}^s_2 =  L\left([(1-\bar{Y})R_\alpha]_\alpha +  \gamma \Im\W_\alpha, \partial_\alpha^{-1}\tilde{w}^s_1\right)  + L\left((1+\W)Y_\alpha +\frac{\gamma}{g}\Re R_\alpha+\frac{\gamma^2}{g}\Im \W , T_{1-\bar{Y}}\tilde{r}^s_1\right),\\
    &\mathcal{K}^s_2 = iL(Y_\alpha, \partial_\alpha^{-1}T_{g+\ua}\tilde{w}^s_1).
\end{align*}
To prove this we begin with the first equation of the system.
When applying the para-material derivatives $T_{D_t}$, we use either \eqref{UnbalancedParaLeibnizOne} or \eqref{UnbalancedParaLeibnizTwo} to  distribute it to the paraproducts.
 The commutators with $\partial_\alpha$ involve $T_{\ub_\alpha}$, and we use \eqref{ubBMO} to place it into the error term $G_s$.
 We compute
 \begin{align*}
     T_{D_t}\tilde{w}^s_2 = &-  L_1\left(2\Re T_{T_{D_t}Y}\W_\alpha + \frac{\gamma}{g}\Re T_{T_{D_t}\bar{Y}}R_\alpha + \frac{\gamma^2}{g}\Im T_{T_{D_t}Y}\W , \partial_\alpha^{-1}\tilde{w}^s_1\right) \\
     &+ L_1\left(2\Re T_{1-Y}T_{D_t}\W_\alpha + \frac{\gamma}{g}\Re T_{1-\bar{Y}}T_{D_t}R_\alpha + \frac{\gamma^2}{g}\Im T_{1-Y}T_{D_t}\W , \partial_\alpha^{-1} \tilde{w}^s_1\right) \\
     &+ L_1\left(2\Re T_{1-Y}\W_\alpha + \frac{\gamma}{g}\Re T_{1-\bar{Y}}R_\alpha + \frac{\gamma^2}{g}\Im T_{1-Y}\W , \partial_\alpha^{-1} T_{D_t}\tilde{w}^s_1\right) + G_s.
 \end{align*}
 The first term can be added into $G_s$ in view of \eqref{YParaDerivative}.
 The second term becomes $L_1(\ub_{\alpha\alpha}, \partial_\alpha^{-1}\tilde{w}^s_1) + G_s$.
 For the third term, we use \eqref{TPhiTildewrsSystem} together with the source term bounds to handle $T_{D_t}\tilde{w}_1^s$.
These bounds allow us to estimate the corresponding $L_1$ contributions by taking advantage of the fact that $L_1$ has a paraproduct structure.
 As a consequence,
 \begin{equation*}
   T_{D_t}\tilde{w}^s_2 = -L_1(\ub_{\alpha\alpha}, \partial_\alpha^{-1}\tilde{w}^s_1) - L_1\left(2\Re T_{1-Y}\W_\alpha + \frac{\gamma}{g}\Re T_{1-\bar{Y}}R_\alpha - \frac{\gamma^2}{g}\Im T_{1-Y}\W , T_{1-\bar{Y}}\tilde{r}^s_1\right) + G_s.
 \end{equation*}
 For the remaining two terms on the left-hand side of the first equation,
 we repeatedly apply  \eqref{ParaCommutator} and \eqref{ParaProducts} so that the error terms can be absorbed into $G_s$.
 Also,  when derivatives fall on the coefficients, we have a good balance of derivatives.
 We get
 \begin{align*}
             T_{1-\bar{Y}}\partial_\alpha\tilde{r}^s_2 = & L_1\left(2\Re T_{1-Y}\W_\alpha + \frac{\gamma}{g}\Re T_{1-\bar{Y}}R_\alpha - \frac{\gamma^2}{g}\Im T_{1-Y}\W , T_{1-\bar{Y}}\tilde{r}^s_1\right)\\
             + & L_1(\bar{Y}_{\alpha\alpha}, \partial_\alpha^{-1}\tilde{w}^s_1)+L_1\left( \W_{\alpha\alpha} + \frac{\gamma}{g}R_{\alpha\alpha} - \frac{\gamma^2}{g}\W_\alpha,T_{1-\bar{Y}}\partial_\alpha^{-1}\tilde{r}^s_1\right)+G_s, \\
           T_{(1-\bar{Y})R_\alpha+ \gamma \Im\W}\tilde{w}^s_2 = & G_s.
\end{align*}
We repeat above computation for the second equation of the system.
After simplification,
\begin{align*}
   &T_{D_t}\tilde{r}^s_2 =   L_1\left(2\Re T_{1-Y}\W_\alpha + \frac{\gamma}{g}\Re T_{1-\bar{Y}}R_\alpha - \frac{\gamma^2}{g}\Im T_{1-Y}\W , \partial_\alpha^{-1}T_{D_t}\tilde{r}^s_1\right)-L_1(\ub_{\alpha\alpha}, \partial_\alpha^{-1}\tilde{r}^s_1) + K_s, \\
   &-iT_{1-Y}T_{g+\ua} \tilde{w}^s_2 = L_1\left(2\Re T_{1-Y}\W_\alpha + \frac{\gamma}{g}\Re T_{1-\bar{Y}}R_\alpha - \frac{\gamma^2}{g}\Im T_{1-Y}\W , -iT_{1-Y}T_{g+\ua}\partial_\alpha^{-1}\tilde{w}^s_1\right) + K_s, \\
   & i\gamma \tilde{r}^s_2 = L_1\left(2\Re T_{1-Y}\W_\alpha + \frac{\gamma}{g}\Re T_{1-\bar{Y}}R_\alpha - \frac{\gamma^2}{g}\Im T_{1-Y}\W , i\gamma\partial_\alpha^{-1}\tilde{r}^s_1\right) + K_s.
 \end{align*}
 Collecting and cancelling all these contributions,  and extracting perturbative paradifferential terms into $(G_s, K_s)$, all $L_1$ terms in $(\mathcal{G}^s_1, \mathcal{K}^s_1)$ cancel, and we prove the claim for \eqref{TTildewr2sSystem}.

 Then we consider the corrections consisting of the remaining $L$ bilinear forms in the secondary source term $(\mathcal{G}^s_2, \mathcal{K}^s_2)$.
 For $(\tilde{w}^s_3,\tilde{r}^s_3)$ defined by
 \begin{align*}
     &\tilde{w}^s_3 : = L\left((1+\W)Y_\alpha +\frac{\gamma}{g}\Re R_\alpha +\frac{\gamma^2}{g}\Im \W , \partial_\alpha^{-1}\tilde{w}^s_1\right), \\
     &\tilde{r}^s_3 : = 0,
 \end{align*}
 they are easily checked to cancel the secondary source terms $(\mathcal{G}^s_2,\mathcal{K}^s_2)$.

 Finally, by using the paraproduct estimate \eqref{LpLowhigh} to redistribute the derivatives if necessary, the normal form corrections satisfy the bound
 \begin{equation*}
     \|(\tilde{w}^s_2 +\tilde{w}^s_3, \tilde{r}^s_2 +\tilde{r}^s_3) \|_{\mathcal{H}^0} \lesssim_\uA \uA\|(w,r)\|_{\doth{s}}.
 \end{equation*}
 Let
 \begin{align*}
   &\tilde{w}^s =   \tilde{w}^s_1 +\tilde{w}^s_2 +\tilde{w}^s_3, \\
   &\tilde{r}^s =   \tilde{r}^s_1 +\tilde{r}^s_2 +\tilde{r}^s_3,
 \end{align*}
then $(\tilde{w}^s, \tilde{r}^s)$ solve \eqref{SourceParadifferential} with the desired bounds, and this finishes the proof of the proposition.
\end{proof}

\subsection{The bounds for the paradifferential source terms} \label{s:SourceBound}
In this subsection, we first compute the paradifferential representation of the source terms $(\underline{\mathcal{G}}_0, \underline{\mathcal{K}}_0)$ and $(\underline{\mathcal{G}}_1, \underline{\mathcal{K}}_1)$.
Then we compute the leading terms of para-material derivatives of $w,r$ and auxiliary functions $x = T_{1-Y}w$, $u = T_{1-Y}\partial^{-1}_\alpha w$.
\begin{lemma} \label{t:G0K0Estimate}
The source terms $(\underline{\mathcal{G}}_0, \underline{\mathcal{K}}_0)$ satisfy the bound
\begin{equation}
    \|(\underline{\mathcal{G}}_0, \underline{\mathcal{K}}_0) \|_{\mathcal{H}^0} \lesssim_{A^\sharp} \Astar \left(\|(w,r) \|_{\doth{\frac{1}{4}}}+ \gamma \|(w,r) \|_{\doth{-\frac{1}{4}}} \right). \label{MathcalG0K0}
\end{equation}
Moreover, they have the representation
\begin{equation*}
 \nP\underline{\mathcal{G}}_0 = \mathcal{G}_{0,0} +G, \quad  \nP\underline{\mathcal{K}}_0 = \mathcal{K}_{0,0} +K,
\end{equation*}
where $(G,K)$ are perturbative in the sense that they satisfy the quadratic bound
\begin{equation*}
 \| (G,K)\|_{\doth{\frac{1}{4}}}\lesssim_{\uAS}    \Astar \uAStar\left(\| (w,r)\|_{\doth{\frac{1}{4}}}+ \gamma \| (w,r)\|_{\doth{-\frac{1}{4}}}\right).
\end{equation*}
The main parts of the source terms $(\mathcal{G}_{0,0}, \mathcal{K}_{0,0})$ are given by
\begin{align*}
   &\mathcal{G}_{0,0} = -\nP[T_{1-\bar{Y}}\bar{r}_\alpha T_{1+\W}Y]+\nP[T_{1-\bar{Y}}\bar{x}_\alpha T_{1+\W}R] -i\frac{\gamma}{2}\nP[\bar{w}T_{1+\W}Y +T_{1-\bar{Y}}\bar{x}_\alpha T_{1+\W}W],\\
   &\mathcal{K}_{0,0} = -\nP[T_{1-\bar{Y}}\bar{r}_\alpha R] -i\frac{\gamma}{2}\nP[\bar{w}R - T_{1-\bar{Y}}\bar{r}_\alpha W].
\end{align*}
\end{lemma}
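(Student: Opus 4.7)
The plan is to substitute the explicit expressions for $m, n, m_1, m_2$ into the definitions of $\mathcal{G}, \mathcal{G}_1, \mathcal{K}, \mathcal{K}_1$, then decompose each resulting product via $fg = T_f g + T_g f + \Pi(f,g)$. I would simplify the denominators first using $\frac{1}{J} = (1-Y)(1-\bar Y)$ and $\frac{1}{1+\W} = 1-Y$ so that every factor is either one of the coefficients $W, \W, R, Y$ (or a smooth function of them, such as $(1+\W)^{-2}$ controlled by Moser) or one of the linearization variables $w, r$ together with at most one derivative. Each resulting paraproduct piece is then classified as low-high, high-low, or balanced ($\Pi$) and estimated accordingly.

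For the crude $\mathcal{H}^0$ bound \eqref{MathcalG0K0}, I would pair an $L^\infty$ or $BMO$ factor on the coefficient side (controlled by $\Astar$ via \eqref{YBMO}, \eqref{MoserOne}, and the definition of the control norms) against the $\doth{1/4}$ factor on the linearization side, invoking \eqref{LpBalance}, \eqref{LpLowhigh}, and \eqref{BMOBalance}. The $\gamma\|(w,r)\|_{\doth{-1/4}}$ contribution arises from the $\gamma$-weighted pieces of $\mathcal{G}_1, \mathcal{K}_1$ (which contain $w$ rather than $w_\alpha$), where the $\gamma$-weighted coefficient is paired against a linearization factor measured at lower Sobolev regularity. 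The Moser bound \eqref{MoserTwo} is applied to deal with $(1+\W)^{-1}$ and $J^{-1}$.

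For the paradifferential representation, I would isolate the non-perturbative pieces: these are the low-high paraproducts in which the linearization variable appears complex conjugated, so that the anti-holomorphic content survives $\nP$ only through balanced and mixed interactions. After rewriting $\bar w_\alpha$ via the auxiliary variable $\bar x_\alpha = T_{1-\bar Y}\bar w_\alpha + \text{perturbative}$ in the spirit of \eqref{XAlphaY} and factoring $T_{1+\W}$ using \eqref{ParaProducts}, \eqref{ParaAssociateOne}, \eqref{ParaAssociateTwo}, exactly the listed expressions for $\mathcal{G}_{0,0}, \mathcal{K}_{0,0}$ emerge. Every other contribution (high-low pieces with the derivative on the coefficient, and high-high balanced products) should be $\Astar\uAStar$-bounded in $\doth{1/4}$ via \eqref{ParaProducts}, \eqref{ParaProductsTwo}, \eqref{BMOBalance}, \eqref{BMOSigma}, the embedding $\dot W^{1/4,4}\hookrightarrow BMO$, and the auxiliary bounds on $Y, \ua, \ub, \underline M$ collected earlier in Section \ref{s:WaterBound}.

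The main obstacle will be the careful bookkeeping of the $\gamma$-weighted vorticity contributions. The control norms $\Astar$ and $\uAStar$ each contain several components weighted by different powers of $\gamma^{1/2}$ paired with different Sobolev indices on $W$ and $R$, and each piece of $\mathcal{G}_1, \mathcal{K}_1, m_1, m_2$ must be paired with the precisely matching component. A related subtlety is deciding which low-high paraproducts are genuinely resonant (hence will require the normal form corrections of Sections \ref{s:GKOne}--\ref{s:GKZero}) versus those which are already perturbative after pulling out the $T_{1+\W}$ and $T_{1-\bar Y}$ factors; getting this split exactly right is what produces the specific form of $\mathcal{G}_{0,0}$ and $\mathcal{K}_{0,0}$ quoted in the lemma.
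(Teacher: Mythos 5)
Your plan is correct and follows essentially the same route as the paper: decompose the source terms into paraproducts, exploit the holomorphic/anti-holomorphic frequency support to kill or gain from the terms with interior projections, extract the resonant pieces $\mathcal{G}_{0,0},\mathcal{K}_{0,0}$, and absorb the $T_{\bar x}\bar R_\alpha$-type and balanced contributions into $(G,K)$ using the $L^4$-based norms $\Astar,\uAStar$ and Sobolev embedding. The only packaging differences are that the paper imports the non-vorticity analysis of $(\mathcal{G},\mathcal{K})$ from Lemma 5.3 of \cite{ai2023dimensional} and organizes the frequency-support cancellations you describe as the commutator estimates of Lemma \ref{t:CommutatorL2}, proving the dedicated bounds \eqref{PMOneLTwo}--\eqref{PMTwoLTwo} for $\bar{\nP}m_1,\bar{\nP}m_2$ rather than redoing the full paraproduct trichotomy.
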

\begin{proof}
  Recall that
  \begin{align*}
     &\underline{\mathcal{G}}_0(w,r) = \mathcal{G}(w,r)+i\frac{\gamma}{2}(1+\mathbf{W})(\mathbf{P}\Bar{m}_1-\Bar{\mathbf{P}}m_1), \\
     &\underline{\mathcal{K}}_0(w,r) = \mathcal{K}(w,r)-i\frac{\gamma}{2}(\Bar{\mathbf{P}}m_2 + \mathbf{P}\Bar{m}_2),
  \end{align*}
  where
\begin{equation*}
m_1 = (1-\bar{Y})w - (1-Y)^2\bar{W}w_\alpha, \quad m_2 = \bar{R}w -(1-Y)\bar{W}(r_\alpha +R_\alpha w).
\end{equation*}
The corresponding analysis for $(\mathcal{G}, \mathcal{K})$ is already carried out in Lemma $5.3$
of \cite{ai2023dimensional}, the only difference compared to \cite{ai2023dimensional} is that we can put $-\nP(T_{\bar{x}}\bar{R}_\alpha T_{1+\W}R)$ into $G$, and $-\nP(T_{\bar{x}}\bar{R}_\alpha R)$ into $K$ since we use the control norm $\uAStar$.
It  suffices to compute the vorticity terms.

We consider $-i\frac{\gamma}{2}\mathcal{G}_1(w,r)$ term first.
We claim that the term
\begin{equation*}
    -i\frac{\gamma}{2}\nP((1+\W)\bar{\nP}m_1) = -i\frac{\gamma}{2}\nP(\W \bar{\nP}m_1)
\end{equation*}
is perturbative.
Since by using the commutator structure and Lemma \ref{t:CommutatorL2},
\begin{equation*}
    \|\nP(\W \bar{\nP}m_1) \|_{\dot{H}^{\frac{1}{4}}}= \||D|^{\frac{1}{4}}[\nP, \W]\bar{\nP} m_1 \|_{L^2}\lesssim \||D|^{\frac{1}{4}}\W \|_{BMO}\| \bar{\nP}m_1\|_{L^2},
\end{equation*}
 it only remains to show that
 \begin{equation}
     \gamma \| \bar{\nP}m_1\|_{L^2}\lesssim_\uA \Astar \left(\|w\|_{\dot{H}^{\frac{1}{4}}} + \gamma\|w\|_{\dot{H}^{-\frac{1}{4}}}\right). \label{PMOneLTwo}
 \end{equation}
 After expanding the $m_1$ expression, we apply the Lemma \ref{t:CommutatorL2},
 \begin{align*}
    & \gamma\| \bar{\nP}(\bar{Y}w)\|_{L^2} \lesssim \gamma \| [\bar{\nP}, \bar{Y}]w\|_{L^2} \lesssim \gamma\||D|^{\frac{1}{4}}Y\|_{BMO} \|w\|_{\dot{H}^{-\frac{1}{4}}} \lesssim_\uA \gamma \Astar  \|w\|_{\dot{H}^{-\frac{1}{4}}} ,\\
    & \gamma \|\bar{\nP}(\bar{W}w_\alpha)\|_{L^2} \lesssim \|[\bar{\nP}, \bar{W}]w_\alpha \|_{L^2} \lesssim \gamma \||D|^{\frac{3}{4}}W \|_{BMO}\| w\|_{\dot{H}^{\frac{1}{4}}} \lesssim_\uA \Astar \|w\|_{\dot{H}^{\frac{1}{4}}},\\
    & \gamma \|\bar{\nP}(Y\bar{W}w_\alpha)\|_{L^2} \lesssim \|Y\|_{L^\infty} \gamma \|\bar{\nP}(\bar{W}w_\alpha)\|_{L^2} \lesssim_\uA \Astar \|w\|_{\dot{H}^{\frac{1}{4}}}.
 \end{align*}

 Next, for the $i\frac{\gamma}{2}\nP((1+\W)\nP\bar{m}_1)$ term, the high-low and high-high components are balanced owing to \eqref{LpBalance} and \eqref{PMOneLTwo},
 so that
 \begin{equation*}
   i\frac{\gamma}{2}\nP((1+\W)\nP\bar{m}_1) = i\frac{\gamma}{2}T_{1+\W}\nP\bar{m}_1 +G,
 \end{equation*}
 where $G$ is perturbative.
 We then write
 \begin{equation*}
  \nP\bar{m}_1 =- \nP[Y\bar{w}+(1-\bar{Y})^2W\bar{w}_\alpha].
 \end{equation*}
For each cubic term, when the lowest frequency variable is differentiated, it may be absorbed into $G$.
In particular, $Y$ and $W$ cannot have the lowest frequency due to the Littlewood-Paley projection $\nP$.
Also, when $\bar{w}_\alpha$ has the lowest frequency, the term is perturbative.
Therefore, after applying \eqref{ParaAssociateTwo} we obtain
\begin{equation*}
      i\frac{\gamma}{2}\nP((1+\W)\nP\bar{m}_1) = -i\frac{\gamma}{2}\nP(\bar{w}T_{1+\W}Y +T_{1-\bar{Y}}\bar{x}_\alpha T_{1+\W}W) +G.
\end{equation*}

Then we consider the $-i\frac{\gamma}{2}\mathcal{K}_1(w,r)$ term.
Applying the Lemma \ref{t:CommutatorL2} to $\gamma\bar{\nP}m_2$ in $\dot{H}^{\frac{1}{2}}$, we similarly compute
\begin{align*}
   &\gamma\| \bar{\nP}(\bar{R}w)\|_{\dot{H}^{\frac{1}{2}}} \lesssim \gamma\| |D|^{\frac{1}{2}}[\bar{\nP},\bar{R}]w\|_{L^2} \lesssim \gamma \||D|^{\frac{3}{4}}R\|_{BMO}\|w\|_{\dot{H}^{-\frac{1}{4}}} \lesssim_\uA \Astar  \gamma\|w\|_{\dot{H}^{-\frac{1}{4}}} ,\\
   &  \gamma \|\bar{\nP}(\bar{W}r_\alpha)\|_{\dot{H}^{\frac{1}{2}}} \lesssim \||D|^{\frac{1}{2}}[\bar{\nP}, \bar{W}]r_\alpha \|_{L^2} \lesssim \gamma \||D|^{\frac{3}{4}}W \|_{BMO}\| r\|_{\dot{H}^{\frac{3}{4}}} \lesssim_\uA \Astar \|r\|_{\dot{H}^{\frac{3}{4}}},\\
    & \gamma \|\bar{\nP}(Y\bar{W}r_\alpha)\|_{\dot{H}^{\frac{1}{2}}} \lesssim  \gamma \| \bar{\nP}(\bar{W}Y)\|_{\dot{W}^{\frac{1}{2},4}} \|r_\alpha\|_{L^4} \lesssim \gamma \||D|^{\frac{1}{2}}[\bar{P},\bar{W}]Y \|_{L^4}\|r_\alpha\|_{\dot{H}^{\frac{3}{4}}}\lesssim_\uA \uAS\Astar \|r\|_{\dot{H}^{\frac{3}{4}}}, \\
    &\gamma \|\bar{\nP}[\bar{W}R_\alpha w] \|_{\dot{H}^{\frac{1}{2}}} +\gamma \|\bar{\nP}[Y\bar{W}R_\alpha w] \|_{\dot{H}^{\frac{1}{2}}} \lesssim_\uA \uAS \Astar  \left(\|w\|_{\dot{H}^{\frac{1}{4}}} + \gamma\|w\|_{\dot{H}^{-\frac{1}{4}}} \right).
\end{align*}
Putting these together, we get
\begin{equation}
    \gamma \|\bar{\nP}m_2\|_{\dot{H}^{\frac{1}{2}}} \lesssim_\uA \Astar \left(\|(w,r) \|_{\doth{\frac{1}{4}}} + \gamma \|(w,r)\|_{\doth{-\frac{1}{4}}}\right). \label{PMTwoLTwo}
\end{equation}
$\bar{\nP}m_2$ becomes zero after applying the Littlewood-Paley projection $\nP$.
We argue as before to get
\begin{equation*}
    -i\frac{\gamma}{2}\nP \bar{m}_2 = -i\frac{\gamma}{2}\nP(\bar{w}R -(T_{\bar{x}}\bar{R}_\alpha + T_{1-\bar{Y}}\bar{r}_\alpha)W) +K =-i\frac{\gamma}{2}\nP(\bar{w}R-T_{1-\bar{Y}}\bar{r}_\alpha W) +K,
\end{equation*}
because we have
\begin{equation*}
    \gamma\|\nP(T_{\bar{x}}\bar{R}_\alpha W) \|_{\dot{H}^{\frac{3}{4}}}\lesssim \|\bar{x}\|_{L^4}\|\bar{R}_\alpha\|_{L^4}\gamma\||D|^{\frac{3}{4}}W\|_{BMO}\lesssim \Astar \uAStar \|w\|_{\dot{H}^{\frac{1}{4}}},
\end{equation*}
using Sobolev embedding.
Finally, the source terms estimate \eqref{MathcalG0K0} follows directly from \eqref{PMOneLTwo}, \eqref{PMTwoLTwo} and the corresponding estimates in  \cite{ai2023dimensional}.
\end{proof}

As for the source terms  $(\underline{\mathcal{G}}_1, \underline{\mathcal{K}}_1)$, we have the following result:
\begin{lemma}
    The  source terms $(\underline{\mathcal{G}}_1, \underline{\mathcal{K}}_1)$ have the decomposition
    \begin{equation*}
    \begin{aligned}
        &\nP \underline{\mathcal{G}}_1 =  -T_w T_{1-\bar{Y}}R_\alpha +G, \\
        &\nP \underline{\mathcal{K}}_1 = -iT_wT_{g+\ua}Y +K,
    \end{aligned}
    \end{equation*}
 where $(G,K)$ satisfy the  quadratic bound \eqref{MathcalG0K0}. \label{t:G1K1Estimate}
\end{lemma}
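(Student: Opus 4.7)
The plan is to peel off each piece of $\nP\underline{\mathcal G}_1$ and $\nP\underline{\mathcal K}_1$ in turn, placing into the perturbative remainders $(G,K)$ every contribution that is either of balanced $\Pi$-type, or a low-high paraproduct $T_f g$ where the derivative sits on the low-frequency factor $f$, or a $\gamma$-weighted factor already controlled by $\Astar$ or $\uAStar$. The only genuinely non-perturbative pieces will be the single low-high paraproducts $-T_w((1-\bar Y)R_\alpha)$ in $\underline{\mathcal G}_1$ and the bracket $iT_{1-Y}T_w\underline a - iT_{(g+\underline a)w}Y$ in $\underline{\mathcal K}_1$, both of which must be rewritten by paradifferential calculus into the claimed form. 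The principal tools are the Coifman--Meyer/BMO paraproduct bounds \eqref{LpBalance}--\eqref{BMOBalance} and the low-high version \eqref{LpLowhigh}, together with the para-commutator, para-product, and para-associativity lemmas \eqref{ParaCommutator}--\eqref{ParaAssociateTwo}, fed with \eqref{YBMO}, \eqref{uaBMO}, \eqref{ubBMO}.

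For $\nP\underline{\mathcal G}_1$, the balanced pieces $\Pi(r_\alpha,\bar Y)$, $\Pi(w_\alpha,\underline b)$, $\Pi(w,(1-\bar Y)R_\alpha+\gamma\Im\W)$ go into $G$ by \eqref{BMOBalance}/\eqref{LpBalance} using the BMO control of $\bar Y,\underline b, R_\alpha$, with $\gamma$-weighted pieces absorbed into $\Astar$. The low-high paraproducts $T_{r_\alpha}\bar Y$ and $T_{w_\alpha}\underline b$ have the derivative on the low-frequency linearized factor; shifting it off via \eqref{LpLowhigh} makes them perturbative. The vorticity term $T_w(\gamma\Im\W)$ is also perturbative since $\gamma|D|^{1/2}\W$ lies in the BMO piece of $\Astar$. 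What remains is $-T_w((1-\bar Y)R_\alpha)$. Using the decomposition
\[
(1-\bar Y)R_\alpha = T_{1-\bar Y}R_\alpha - T_{R_\alpha}\bar Y - \Pi(\bar Y, R_\alpha),
\]
applying $T_w$, and invoking \eqref{ParaAssociateOne} on the $\Pi$-piece and \eqref{ParaProducts} on $T_w T_{R_\alpha}\bar Y$, the correction terms are absorbed into $G$, and we are left with $-T_w T_{1-\bar Y}R_\alpha$ as claimed.

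For $\nP\underline{\mathcal K}_1$, the pieces $T_{r_\alpha}\underline b$, $\Pi(r_\alpha,\underline b)$, $T_{1-Y}\Pi(w,\underline a)$, and $\Pi((g+\underline a)w,Y)$ are treated analogously. It remains to show that $iT_{1-Y}T_w\underline a - iT_{(g+\underline a)w}Y = -iT_w T_{g+\underline a}Y$ modulo $K$. Applying \eqref{ParaProducts} to $T_{(g+\underline a)w}Y$ collapses it to $T_w T_{g+\underline a}Y$ up to a perturbative error controlled via the BMO bound on $\underline a$, yielding the desired principal term. The leftover $iT_{1-Y}T_w\underline a$ is rearranged by \eqref{ParaCommutator} and \eqref{ParaProducts}: after canceling a copy of $T_w T_\underline a Y$ against $T_w T_Y\underline a$ using para-commutators, what remains is either $T_Y$ acting on a low-high paraproduct (perturbative through $Y\in BMO^{1/4}$ via \eqref{YBMO}) or a balanced $\Pi(\underline a, Y)$-type term (perturbative through $|D|^{1/4}\underline a\in BMO$ from \eqref{uaBMO}). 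The main obstacle is maintaining the correct $\dot H^{3/4}$ bound on the $K$-remainder without losing factors of $\gamma$; this is resolved by pairing the $L^4$-based control \eqref{uaAsharp} of $\underline a$ with the Sobolev embedding $\dot W^{1/2,4}\hookrightarrow\dot H^{3/4}$, so that the $\gamma$-weighted contributions close through the $\uAStar$ factor in the target estimate \eqref{MathcalG0K0}.
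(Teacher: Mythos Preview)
Your treatment of $\nP\underline{\mathcal G}_1$ is essentially correct and matches the paper's argument, though you miss one simplification: since $\bar Y$ is anti-holomorphic and in $T_{r_\alpha}\bar Y$ the factor $\bar Y$ sits at high frequency, the output is anti-holomorphic and $\nP T_{r_\alpha}\bar Y = 0$ outright---no derivative-shifting via \eqref{LpLowhigh} is needed there.

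Your treatment of $\nP\underline{\mathcal K}_1$ has real problems. First, the target space for $K$ is $\dot H^{1/2}$ (since $\mathcal H^0 = L^2\times\dot H^{1/2}$), not $\dot H^{3/4}$; and the embedding $\dot W^{1/2,4}\hookrightarrow\dot H^{3/4}$ you invoke is false in one dimension (the Sobolev embedding goes the other way). More seriously, your handling of $iT_{1-Y}T_w\ua$ via ``canceling a copy of $T_wT_{\ua}Y$ against $T_wT_Y\ua$'' is not coherent: there is no high-frequency $Y$ in $T_{1-Y}T_w\ua$, and nothing to cancel. The paper's route is much simpler. The entire term $T_{1-Y}T_w\ua$ is perturbative because $\ua$ is itself already quadratic in the solution; one estimates directly
\[
\|T_{1-Y}T_w\ua\|_{\dot H^{1/2}} \lesssim (1+\|Y\|_{L^\infty})\,\|w\|_{L^4}\,\||D|^{1/2}\ua\|_{L^4},
\]
using the $L^4$ bound \eqref{uaAsharp} for $\||D|^{1/2}\ua\|_{L^4}$ and the embedding $\dot H^{1/4}\hookrightarrow L^4$ for $w$. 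The leading piece $-iT_wT_{g+\ua}Y$ then comes solely from $-iT_{(g+\ua)w}Y$: since $g$ is constant one writes $-iT_{(g+\ua)w}Y = -igT_wY - iT_{\ua w}Y$, the second term is placed into $K$ by the same mechanism, and one freely adds back the perturbative $-iT_wT_\ua Y$ to recast $-igT_wY$ as $-iT_wT_{g+\ua}Y$. No interaction between the two brackets is needed. Finally, your appeal to \eqref{ParaProducts} for $T_{(g+\ua)w}Y \approx T_wT_{g+\ua}Y$ is not licit as stated, since that lemma requires both symbols in BMO while $w$ lies only in Sobolev; the direct estimate above bypasses this issue.
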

\begin{proof}
In the source term $\underline{\mathcal{G}}_1$, the anti-holomorphic term $T_{r_\alpha}\bar{Y}$ is eliminated after applying the Littlewood-Paley projection $\nP$.
For terms having $r_\alpha$ or $w_\alpha$, they satisfy \eqref{MathcalG0K0}.
For instance, we use \eqref{LpBalance} to estimate
\begin{equation*}
    \|T_{r_\alpha}\bar{Y}\|_{L^2}+ \|\Pi(r_\alpha, \bar{Y})\|_{L^2} \lesssim \||D|^{\frac{3}{4}}r\|_{L^2}\| |D|^{\frac{1}{4}}Y\|_{BMO}\lesssim \Astar \|r\|_{\dot{H}^{\frac{3}{4}}}.
\end{equation*}
For the rest of the terms in $\underline{\mathcal{G}}_1$, we write
\begin{equation*}
-(T_w ((1-\bar{Y})R_\alpha+ \gamma \Im \W)
   + \Pi(w, (1-\bar{Y})R_\alpha + \gamma \Im\W)) = -T_w T_{1-\bar{Y}}R_\alpha  +G,
\end{equation*}
where terms except $-T_w T_{1-\bar{Y}}R_\alpha$ are balanced, and we can put them into $G$.
For instance, using the fact that $\dot{H}^{\frac{1}{4}}(\mathbb{R})\hookrightarrow L^4(\mathbb{R})$,
we can bound
\begin{equation*}
\| T_w \Pi(\bar{Y}, R_\alpha)\|_{L^2}\lesssim \|w\|_{L^4}\| \Pi(\bar{Y}, R_\alpha)\|_{L^4} \lesssim \||D|^{\frac{1}{4}}Y\|_{BMO}\||D|^{\frac{3}{4}} R\|_{L^4}\|w\|_{\dot{H}^{\frac{1}{4}}}\lesssim_{A^\sharp}\Astar \|w\|_{\dot{H}^{\frac{1}{4}}}.
\end{equation*}
We also use \eqref{LpBalance} to estimate
\begin{equation*}
    \gamma\|T_w \Im \W\|_{L^2}\lesssim \gamma\||D|^{-\frac{1}{4}}w\|_{L^2} \||D|^{\frac{1}{4}}\W\|_{BMO} \lesssim \Astar \gamma\|w\|_{\dot{H}^{-\frac{1}{4}}}.
\end{equation*}
For other terms in $\underline{\mathcal{K}}_1$, we use \eqref{ParaProducts} to write
\begin{equation*}
i(T_{1-Y}T_w\ua+T_{1-Y}\Pi(w,\ua)-T_{(g+\ua)w}Y-\Pi((g+\ua)w, Y)) = -iT_wT_{g+\ua}Y +K,
\end{equation*}
where  terms except $-iT_wT_{g+\ua}Y$ are balanced, and they can be absorbed into $K$.
For instance, using \eqref{uaAsharp} for $\ua$ as well as the Sobolev embedding $\dot{H}^{\frac{1}{4}}(\mathbb{R})\hookrightarrow L^4(\mathbb{R})$, we bound
\begin{equation*}
    \|T_{1-Y}T_w\ua \|_{\dot{H}^{\frac{1}{2}}}\lesssim (1+\|Y\|_{L^\infty})\|w\|_{L^4}\||D|^{\frac{1}{2}}\ua \|_{L^4} \lesssim_{A^\sharp}\Astar \|w\|_{\dot{H}^{\frac{1}{4}}}.
\end{equation*}
These give the decomposition for   $(\nP\underline{\mathcal{G}}_1, \nP\underline{\mathcal{K}}_1)$.
\end{proof}
Putting together Lemma \ref{t:G0K0Estimate}, Lemma \ref{t:G1K1Estimate} and  equations \eqref{ParadifferentialLinearEqn}, one can compute the following para-material derivatives for $(w,r)$:
\begin{lemma}
  For $(w,r)$ that satisfy \eqref{ParadifferentialLinearEqn}, they have the representation
  \begin{equation*}
\left\{
             \begin{array}{lr}
             T_{D_t}w= -T_{1-\bar{Y}}(r_\alpha + T_w R_\alpha ) + G_2=: G_1 +G_2 &\\
           T_{D_t}r =-i\gamma r +iT_{g+\ua} (x -T_w Y)+ K_2 =: K_1+ K_2,&
             \end{array}
\right.
\end{equation*}
and likewise with $(D_t w, D_t r)$ in place of $(T_{D_t}w, T_{D_t}r)$.
$(G_1, K_1)$ satisfy the linear bound
\begin{equation*}
    \||D|^{-\frac{1}{4}}(G_1, K_1)\|_{\mathcal{H}^0} \lesssim_{\uAS} \| (w,r)\|_{\doth{\frac{1}{4}}}+ \gamma \|(w,r) \|_{\doth{-\frac{1}{4}}},
\end{equation*}
and $(G_2, K_2)$ satisfy the quadratic bound \eqref{MathcalG0K0}.  \label{t:wrParaMaterialD}
\end{lemma}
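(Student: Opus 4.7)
The plan is to start from the paradifferential equations \eqref{ParadifferentialLinearEqn} solved for $T_{D_t}w$ and $T_{D_t}r$, and then substitute the structural decompositions of the nonlinear source terms provided by Lemmas \ref{t:G0K0Estimate} and \ref{t:G1K1Estimate}. This gives
\begin{equation*}
T_{D_t}w = -T_{1-\bar{Y}}r_\alpha - T_{(1-\bar{Y})R_\alpha}w -\gamma T_{\Im\W}w + \mathcal{G}_{0,0} - T_w T_{1-\bar{Y}}R_\alpha + G,
\end{equation*}
with $\mathcal{G}_{0,0}$ and $G$ controlled by \eqref{MathcalG0K0}, and analogously for $T_{D_t}r$ with the cue $-iT_wT_{g+\ua}Y$ coming from $\nP\underline{\mathcal{K}}_1$. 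The claim is then that after isolating the two linear-order paradifferential pieces $-T_{1-\bar Y}(r_\alpha + T_w R_\alpha)$ and $-i\gamma r + iT_{g+\ua}(x - T_w Y)$, all remaining terms are perturbative.

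Next, I would reduce the first equation to the claimed $G_1$ by combining $-T_w T_{1-\bar Y}R_\alpha$ with $-T_{1-\bar Y}T_w R_\alpha$ via the para-commutator estimate \eqref{ParaCommutator}, which yields an $\dot H^{1/4}$ error controlled by $\||D|^{1/4}Y\|_{BMO}\cdot\||D|^{3/4}R\|_{L^4}\lesssim \Astar\uAStar$; the leftover residual $-T_{(1-\bar Y)R_\alpha}w - \gamma T_{\Im\W}w$ is handled by \eqref{LpLowhigh} and \eqref{LpBalance}, with coefficients in $BMO^{-1/4}$ controlled again by $\Astar$. For the $r$ equation, I would apply the commutator identity $T_{1-Y}T_{g+\ua}w - T_{g+\ua}T_{1-Y}w \in G_2$ (again by \eqref{ParaCommutator} with coefficients given \eqref{uaBMO} and \eqref{YBMO}), followed by the para-product identity \eqref{ParaProducts} to reassemble $T_{g+\ua}(x - T_w Y)$; the by-products once more fit into $K_2$ and obey \eqref{MathcalG0K0}.

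The linear bound on $G_1$ then follows from $(1+\|Y\|_{L^\infty})\|r_\alpha\|_{\dot H^{-1/4}} \lesssim \|r\|_{\dot H^{3/4}}$ together with \eqref{LpLowhigh} applied to $T_wR_\alpha$ using $\||D|^{3/4}R\|_{BMO}\lesssim \Astar$, while $K_1$ is controlled by \eqref{uaBMO}, the embedding $\dot H^{1/4}\hookrightarrow L^4$, and the trivial bounds on $T_{1-Y}$; the $-i\gamma r$ contribution lands in $\gamma \|(w,r)\|_{\doth{-1/4}}$ directly. For the replacement of $T_{D_t}$ by $D_t$, the difference is $(\ub-T_\ub)\partial_\alpha w = T_{w_\alpha}\ub + \Pi(w_\alpha,\ub)$ and similarly for $r$, both balanced paraproducts whose $\mathcal{H}^{1/4}$ norms are estimated by \eqref{LpBalance} and \eqref{BMOBalance} combined with \eqref{ubBMO}, giving the same quadratic $\Astar\uAStar$ bound and so absorbing into $(G_2,K_2)$.

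The main obstacle, as I see it, is the paraproduct bookkeeping in the first step: one must verify that each of the bilinear building blocks produced by $\nP\underline{\mathcal{G}}_1, \nP\underline{\mathcal{K}}_1$ and the paradifferential truncation terms $T_{(1-\bar Y)R_\alpha}w$, $\gamma T_{\Im\W}w$, and $T_{1-Y}T_{g+\ua}w$ interact correctly to produce the neat factorized forms $-T_{1-\bar Y}(r_\alpha + T_wR_\alpha)$ and $iT_{g+\ua}(x - T_wY)$, with every mismatch falling into the quadratic class controlled by \eqref{MathcalG0K0}. Once the algebra is laid out, the remaining estimates are routine applications of the para-commutator, para-product, and para-associativity lemmas of Section \ref{s:Def}, combined with the $BMO$ and $L^4$ bounds of Section \ref{s:WaterBound}.
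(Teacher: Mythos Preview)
Your approach is the paper's approach: you solve \eqref{ParadifferentialLinearEqn} for $T_{D_t}w$ and $T_{D_t}r$, feed in Lemmas~\ref{t:G0K0Estimate} and \ref{t:G1K1Estimate}, absorb $T_{(1-\bar Y)R_\alpha}w$ and $\gamma T_{\Im\W}w$ into $G_2$, reorder paraproducts by \eqref{ParaCommutator}, and handle $D_t-T_{D_t}$ via \eqref{LpBalance} and \eqref{ubBMO}. All of this matches the paper.

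There is one point where your estimate does not work as written. For the linear bound on the piece $T_{1-\bar Y}T_wR_\alpha$ of $G_1$, you propose to invoke \eqref{LpLowhigh} with $\||D|^{3/4}R\|_{BMO}\lesssim\Astar$. But \eqref{LpLowhigh} places the \emph{low-frequency} factor $w$ in a negative $BMO$ space, not $R$; and more importantly, even if you could extract an $\Astar$ coefficient, the stated linear bound is $\lesssim_{\uAS}$, and $\Astar$ is not controlled by $\uAS$ (it is the time-varying norm that only sits in $L^1_t$). The paper instead uses the $L^4\times L^4\to L^2$ H\"older/Sobolev route:
\[
\||D|^{-1/4}T_{1-\bar Y}T_wR_\alpha\|_{L^2}\lesssim_{A}\|w\|_{L^4}\||D|^{3/4}R\|_{L^4}\lesssim A^\sharp\|w\|_{\dot H^{1/4}},
\]
and likewise $\||D|^{1/4}T_{g+\ua}T_wY\|_{L^2}\lesssim \|g+\ua\|_{L^\infty}\|w\|_{L^4}\||D|^{1/4}Y\|_{L^4}\lesssim_{A}A^\sharp\|w\|_{\dot H^{1/4}}$. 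Since $A^\sharp\le\uAS$, this yields the required $\lesssim_{\uAS}$ bound. With this correction the rest of your outline goes through exactly as in the paper.
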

\begin{proof}
From Lemma \ref{t:G0K0Estimate}, we know that the source terms $(\nP\underline{\mathcal{G}}_0, \nP\underline{\mathcal{K}}_0)$ can be absorbed into $(G_2, K_2)$.
The source terms $(\nP\underline{\mathcal{G}}_1, \nP\underline{\mathcal{K}}_1)$ and the terms on the left-hand side of \eqref{ParadifferentialLinearEqn} belong to $(G_1, K_1)$.
The only exceptions are $T_{(1-\bar{Y})R_\alpha }w$ and $\gamma T_{\Im \W}w$.
For these two terms. we have
\begin{align*}
&\|T_{(1-\bar{Y})R_\alpha }w\|_{L^2} \lesssim \|\bar{Y}\|_{L^\infty}\||D|^{\frac{3}{4}}R\|_{BMO}\|w\|_{\dot{H}^{\frac{1}{4}}} \lesssim_A \Astar \|w\|_{\dot{H}^{\frac{1}{4}}},\\
&\gamma\|T_{\Im \W}w\|_{L^2}\lesssim \gamma \||D|^{\frac{3}{4}}W\|_{BMO}\|w\|_{\dot{H}^{\frac{1}{4}}} \lesssim \Astar \|w\|_{\dot{H}^{\frac{1}{4}}},
\end{align*}
so that they can be put into $G_2$.
We can use para-commutator \eqref{ParaCommutator} to reorder paraproducts freely.
For the difference between $(T_{D_t}w, T_{D_t}r)$ and $(D_t w, D_t r)$,
\begin{align*}
    &\|T_{D_t}w - D_t w\|_{L^2} \leq \|T_{w_
    \alpha}\ub\|_{L^2} + \|\Pi(w_\alpha, \ub) \|_{L^2} \lesssim \|w\|_{\dot{H}^{\frac{1}{4}}}\||D|^{\frac{3}{4}}\ub\|_{BMO} \lesssim \Astar \|w\|_{\dot{H}^{\frac{1}{4}}}, \\
    &\|T_{D_t}r - D_t r\|_{\dot{H}^{\frac{1}{2}}} \leq \|T_{r_\alpha}\ub\|_{\dot{H}^{\frac{1}{2}}} + \|\Pi(r_\alpha, \ub) \|_{\dot{H}^{\frac{1}{2}}} \lesssim \|r\|_{\dot{H}^{\frac{3}{4}}}\||D|^{\frac{3}{4}}\ub\|_{BMO} \lesssim \Astar \|r\|_{\dot{H}^{\frac{3}{4}}},
\end{align*}
so that they can be added to $(G_2,K_2)$, and  $(D_t w, D_t r)$ have the same representation as $(T_{D_t}w, T_{D_t}r)$.

To estimate $(G_1, K_1)$, we notice that the terms
\begin{equation*}
    -T_{1-\bar{Y}}r_\alpha, \quad -i\gamma r, \quad iT_{g+\ua}x
\end{equation*}
are already balanced, and their estimates are straightforward.
For the two other terms, we use the $L^4$ Sobolev embedding
\begin{align*}
    &\||D|^{-\frac{1}{4}}T_{1-\bar{Y}}r_\alpha\|_{L^2} \lesssim \|1-\bar{Y}\|_{L^\infty}\|w\|_{L^4}\||D|^{\frac{3}{4}}R\|_{L^4}\lesssim_\uA A^\sharp \|w\|_{\dot{H}^{\frac{1}{4}}},\\
     &\|i|D|^{-\frac{1}{4}}T_{g+\ua}T_w Y \|_{\dot{H}^{\frac{1}{2}}}\lesssim \|g+\ua\|_{L^\infty}\|w\|_{L^4}\||D|^{\frac{1}{4}}Y\|_{L^4}\lesssim_\uA A^\sharp \|w\|_{\dot{H}^{\frac{1}{4}}},
\end{align*}
where we use the fact that
\begin{equation*}
    \||D|^{\frac{1}{4}}Y\|_{L^4}\lesssim \||D|^{\frac{1}{4}}\W\|_{L^4} \lesssim_\uA A^\sharp.
\end{equation*}
Therefore, we have shown that $(G_1, K_1)$ satisfy the linear bound.
\end{proof}

Finally, we compute the para-material derivatives of $x  = T_{1-Y}w$, $\partial^{-1}_\alpha r$, and $u= T_{1-Y}\partial^{-1}_\alpha w$.
\begin{lemma}
   For $(w,r)$ that satisfy \eqref{ParadifferentialLinearEqn}, $T_{D_t}x$ has the following representation
   \begin{equation*}
     T_{D_t}x = -T_{1-\bar{Y}}(T_{1-Y}r_\alpha + T_x R_\alpha)+G_2 = :G_1 +G_2,
   \end{equation*}
   where $G_1$ satisfies the linear bound
   \begin{equation*}
       \||D|^{-\frac{1}{4}}G_1\|_{L^2}\lesssim_{A^\sharp} \|(w,r)\|_{\doth{\frac{1}{4}}},
   \end{equation*}
  and $G_2$ satisfies the quadratic bound
  \begin{equation*}
      \|G_2\|_{L^2}\lesssim_{A^\sharp}\Astar \| (w,r)\|_{\doth{\frac{1}{4}}}.
  \end{equation*}
  Similar estimates hold when $T_{D_t}x$ is replaced by $D_t x$.
\end{lemma}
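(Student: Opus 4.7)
The plan is to apply the para-Leibniz rule to $T_{D_t}x = T_{D_t}T_{1-Y}w$, substitute the already-established expansions of $T_{D_t}Y$ from \eqref{YParaDerivative} and of $T_{D_t}w$ from Lemma \ref{t:wrParaMaterialD}, and then rearrange the resulting paraproducts to isolate the claimed leading term $-T_{1-\bar Y}(T_{1-Y}r_\alpha + T_x R_\alpha)$ while showing that every other contribution fits the quadratic $L^2$ bound for $G_2$.

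More precisely, Lemma \ref{t:Leibniz} first gives
\begin{equation*}
T_{D_t}x = -T_{T_{D_t}Y}w + T_{1-Y}T_{D_t}w + E^p_L(1-Y,w),
\end{equation*}
and \eqref{UnbalancedParaLeibnizOne} (applied with $\sigma = 1/4$) puts the para-Leibniz remainder in $G_2$. Next, \eqref{YParaDerivative} shows $T_{D_t}Y$ splits as $-T_{|1-Y|^2}R_\alpha$ (bounded in $BMO^{-1/4}$ by $\uA_{1/4}$) plus a $BMO$-remainder of size $\uA_{1/4}^2$; testing the low-high paraproduct $T_{T_{D_t}Y}w$ against $w\in\dot H^{1/4}$ via \eqref{LpLowhigh} yields an $L^2$ bound $\lesssim \uA_{1/4}\|w\|_{\dot H^{1/4}}\lesssim\Astar\|w\|_{\dot H^{1/4}}$, so the whole term $-T_{T_{D_t}Y}w$ is absorbed into $G_2$. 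Then Lemma \ref{t:wrParaMaterialD} gives $T_{D_t}w = -T_{1-\bar Y}(r_\alpha + T_w R_\alpha) + G_2^w$; the perturbative piece $T_{1-Y}G_2^w$ is absorbed using $\|Y\|_{L^\infty}\lesssim_A 1$, leaving the leading contribution $-T_{1-Y}T_{1-\bar Y}(r_\alpha + T_w R_\alpha)$.

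The final rearrangement proceeds in two substeps. First, the para-commutator estimate \eqref{ParaCommutator} with indices $s_1=1/4,\ s_2=0$ swaps $T_{1-Y}T_{1-\bar Y}\to T_{1-\bar Y}T_{1-Y}$ modulo an error of size $\||D|^{1/4}Y\|_{BMO}\|Y\|_{BMO}\lesssim A\uA_{1/4}$ applied to $(r_\alpha + T_wR_\alpha)\in\dot H^{-1/4}$, which lies in $G_2$. Second, to convert $T_{1-\bar Y}T_{1-Y}T_w R_\alpha$ into $T_{1-\bar Y}T_x R_\alpha$, I would use the identity $(1-Y)w = T_{1-Y}w - T_wY - \Pi(Y,w) = x - T_wY - \Pi(Y,w)$, together with the para-product rule \eqref{ParaProducts} to identify $T_{1-Y}T_w$ with $T_{(1-Y)w}$ up to a balanced error; the residual $T_{T_wY + \Pi(Y,w)}R_\alpha$ is then the place where one must be careful.

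That last estimate is the main obstacle: $w\in\dot H^{1/4}$ just fails to embed into $BMO$, so the naive para-product rule is slightly off. The fix is to pass to $L^4$: using the Sobolev embedding $\dot H^{1/4}\hookrightarrow L^4$ together with \eqref{LpBalance} and the $L^4$-based control norms $A^\sharp$, $\uAStar$, one bounds $\|T_wY\|_{L^4}$ and $\|\Pi(Y,w)\|_{L^4}$ by $A\|w\|_{\dot H^{1/4}}$, and then $\|T_{\cdot} R_\alpha\|_{L^2}\lesssim \|\cdot\|_{L^4}\|R_\alpha\|_{L^4}$ contributes at most $A\cdot A^\sharp\cdot\|w\|_{\dot H^{1/4}}\lesssim \Astar\|w\|_{\dot H^{1/4}}$. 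Collecting all the perturbative pieces into $G_2$ yields the stated representation, and the linear bound on $G_1$ is immediate from \eqref{LpBalance}–\eqref{LpLowhigh} applied to $T_{1-Y}r_\alpha$ and $T_x R_\alpha$. Finally, the $D_tx$ version follows because $(D_t - T_{D_t})x = T_{x_\alpha}\ub + \Pi(x_\alpha,\ub)$ is controlled in $L^2$ by $\|x\|_{\dot H^{1/4}}\||D|^{3/4}\ub\|_{BMO}\lesssim_\uA\Astar\|w\|_{\dot H^{1/4}}$ via \eqref{LpBalance} and \eqref{ubBMO}, which can be absorbed into $G_2$.
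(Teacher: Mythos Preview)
Your approach follows the paper's exactly: apply the para-Leibniz rule to $T_{D_t}T_{1-Y}w$, insert the known expansions of $T_{D_t}Y$ and $T_{D_t}w$, absorb the first and the Leibniz remainder into $G_2$, then rearrange the main piece via the para-commutator \eqref{ParaCommutator}. You are also right to flag the conversion $T_{1-\bar Y}T_{1-Y}T_wR_\alpha \to T_{1-\bar Y}T_xR_\alpha$ as the delicate step; the paper passes over it with a one-word reference to ``para-commutator,'' whereas you correctly note that neither \eqref{ParaCommutator} nor \eqref{ParaProducts} applies directly because $w\notin BMO$.

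There is, however, a bookkeeping slip in your $L^4$ fix. You write that $\|R_\alpha\|_{L^4}$ is controlled by $A^\sharp$, but $A^\sharp=\|\W\|_{\dot W^{1/4,4}}+\|R\|_{\dot W^{3/4,4}}$ only carries $3/4$ of a derivative on $R$; what you actually need is $\|R\|_{\dot W^{1,4}}$, which is a component of $A^\sharp_{1/4}$ (hence of $\uAStar$). So your residual estimate reads $\|T_{T_wY+\Pi(Y,w)}R_\alpha\|_{L^2}\lesssim A\cdot A^\sharp_{1/4}\cdot\|w\|_{\dot H^{1/4}}$, not $A\cdot A^\sharp$. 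Moreover, even the inequality you assert, $A\cdot A^\sharp\lesssim\Astar$, is not valid: $A$ sits at zero derivatives of $\W$ while $\Astar$ begins at $A_{1/4}$, and there is no interpolation or absorption (into $\lesssim_{A^\sharp}$) that bridges them. What your argument genuinely yields is a bound $\lesssim_{\uA}\,\uAStar\,\|w\|_{\dot H^{1/4}}$, which is slightly weaker than the lemma's stated $\Astar$ coefficient. The paper is equally terse here and does not resolve this point more carefully than you do; for the downstream applications in Sections~\ref{s:GKOne}--\ref{s:GKZero} the weaker bound is harmless, since the target cubic error there is $\Astar\uAStar$ anyway.
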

\begin{proof}
Applying the para-Leibniz rule \eqref{UnbalancedParaLeibnizOne}, we write
\begin{equation*}
    T_{D_t}x= T_{-T_{D_t}Y}w + T_{1-Y}T_{D_t}w + G_2,
\end{equation*}
where the para-Leibniz error can be placed into $G_2$.
For $T_{D_t}Y$, we  use \eqref{YParaDerivative}, which shows that the $T_{-T_{D_t}Y}w$ term on the right belongs to $G_2$.
For the $T_{1-Y}T_{D_t}w$ term, we apply the formula of $T_{D_t}w$ in Lemma \ref{t:wrParaMaterialD}  and para-commutator \eqref{ParaCommutator} to write
\begin{equation*}
    -T_{1-Y}T_{1-\bar{Y}}(r_\alpha + T_w R_\alpha )+ G_2 = -T_{1-\bar{Y}}(T_{1-Y}r_\alpha + T_x R_\alpha) + G_2.
\end{equation*}
The bounds for $G_1$, $G_2$ as well as the representation of $D_t w$ follow directly from Lemma \ref{t:wrParaMaterialD}.
\end{proof}
\begin{lemma}
    For $(w,r)$ that satisfy \eqref{ParadifferentialLinearEqn}, we have the following results for para-material derivatives of $\partial^{-1}_\alpha r$, $\partial_\alpha^{-1} w$ and $u = T_{1-Y}\partial^{-1}_\alpha w$:
    \begin{enumerate}
        \item Leading term of the para-material derivative of $\partial^{-1}_\alpha r$:
        \begin{equation*}
            T_{D_t}\partial^{-1}_\alpha r = -i\gamma \partial^{-1}_\alpha r + iT_{g+\ua}(u -T_w X) +K_2 =: K_1 +K_2,
        \end{equation*}
        and likewise with $D_t \partial^{-1}_\alpha r$ in place of $T_{D_t} \partial^{-1}_\alpha r$.
        $K_1$ satisfies the linear bound
        \begin{equation*}
            \gamma \|K_1 \|_{\dot{H}^{\frac{3}{4}}} + \|K_1 \|_{\dot{H}^{\frac{5}{4}}}\lesssim_{\uAS}\|(w,r)\|_{\doth{\frac{1}{4}}}+\gamma^2 \|(w,r)\|_{\doth{-\frac{3}{4}}},
        \end{equation*}
        and $K_2$ satisfies the quadratic bound
        \begin{equation*}
           \gamma^2\|K_2\|_{\dot{H}^{\frac{1}{2}}}  \lesssim_{A^\sharp} \Astar \left(\|(w,r) \|_{\doth{\frac{1}{4}}}+ \gamma^2 \|(w,r) \|_{\doth{-\frac{3}{4}}} \right).
        \end{equation*}
        \item Leading terms of the para-material derivatives of $\partial_\alpha^{-1} w$ and $ u$:
         \begin{align*}
         &T_{D_t}\partial_\alpha^{-1} w = -T_{1-\bar{Y}}(r+ T_w R) +G_2 =: G_1 +G_2,\\
        &T_{D_t}u = -T_{1-\bar{Y}}(T_{1-Y}r+ T_x R) +G_2 =: G_1 +G_2,
        \end{align*}
        and likewise with $(D_t\partial_\alpha^{-1} w, D_t u)$ in place of $(T_{D_t}\partial_\alpha^{-1} w, T_{D_t} u)$.
        $G_1$ satisfies the linear bound
        \begin{equation*}
            \gamma^2\|G_1 \|_{\dot{H}^{-\frac{1}{4}}}+ \|G_1 \|_{\dot{H}^{\frac{3}{4}}}\lesssim_{\uAS}\|(w,r)\|_{\doth{\frac{1}{4}}}+\gamma^2 \|(w,r)\|_{\doth{-\frac{3}{4}}},
        \end{equation*}
        and $G_2$ satisfies the quadratic bound
        \begin{equation*}
           \gamma^2 \|G_2\|_{L^2}  \lesssim_{A^\sharp} \Astar \left(\|(w,r) \|_{\doth{\frac{1}{4}}}+ \gamma^2 \|(w,r) \|_{\doth{-\frac{3}{4}}} \right).
        \end{equation*}
    \end{enumerate}
\end{lemma}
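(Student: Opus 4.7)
The plan is to prove all three identities by a common recipe: apply $\partial_\alpha^{-1}$ (or combine it with $T_{1-Y}$ via the para-Leibniz rule) to the formulas for $T_{D_t}w$ and $T_{D_t}r$ supplied by Lemma \ref{t:wrParaMaterialD}. The key tools are the commutator identity $[T_{D_t},\partial_\alpha^{-1}]=\partial_\alpha^{-1}T_{\ub_\alpha}$, the para-product rearrangement identities from Section \ref{s:Def}, and the already-established relation $Y = T_{(1-Y)^2}\W + E_4$ from \eqref{XAlphaY}. At each step the leading linear pieces must be brought into the stated form, while every commutator or paraproduct rearrangement error is relegated either to $G_1, K_1$ (when it arises solely from moving $\partial_\alpha^{-1}$ through a paraproduct) or to the quadratic remainders $G_2, K_2$. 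The material derivative versions follow immediately from the para-material ones, since their difference $T_{\cdot_\alpha}\ub + \Pi(\cdot_\alpha,\ub)$ fits into the quadratic error via the $BMO$ bound \eqref{ubBMO}.

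For part $(1)$, I would apply $\partial_\alpha^{-1}$ to $T_{D_t}r = -i\gamma r + iT_{g+\ua}(x - T_wY) + K_2^{\mathrm{old}}$ and add the commutator term $\partial_\alpha^{-1}T_{\ub_\alpha}r$, so that $-i\gamma r$ contributes exactly $-i\gamma\partial_\alpha^{-1}r$. Commuting $\partial_\alpha^{-1}$ past $T_{g+\ua}$ costs only $\partial_\alpha^{-1}T_{\ua_\alpha}\partial_\alpha^{-1}$, which goes into $K_2$. The identity $[\partial_\alpha^{-1},T_{1-Y}]=\partial_\alpha^{-1}T_{-Y_\alpha}\partial_\alpha^{-1}$ then gives $\partial_\alpha^{-1}x = u$ modulo a quadratic remainder. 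For $\partial_\alpha^{-1}T_wY$, I would use \eqref{XAlphaY} to write $Y = T_{(1-Y)^2}\W + E_4$; applying $\partial_\alpha^{-1}$ produces $T_{(1-Y)^2}W$ modulo commutators, then the para-product identity \eqref{ParaProducts} rearranges this as $T_{1-Y}X$. Pulling $T_w$ out via \eqref{ParaCommutator}--\eqref{ParaProducts} finally yields $T_wX$, with all rearrangement errors (including the cubic $T_wT_YX$) falling into $K_2$.

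For part $(2)$, the formula $T_{D_t}w = -T_{1-\bar Y}(r_\alpha + T_wR_\alpha) + G_2^{\mathrm{old}}$ is already primed to produce the claimed identity. Integrating by parts within each paraproduct (e.g.\ $T_{1-\bar Y}r_\alpha = (T_{1-\bar Y}r)_\alpha + T_{\bar Y_\alpha}r$ and $T_wR_\alpha = (T_wR)_\alpha - T_{w_\alpha}R$) yields $-T_{1-\bar Y}(r+T_wR)$ as the leading piece, with the $T_{\bar Y_\alpha}r$ and $T_{w_\alpha}R$ contributions absorbed into $G_2$. For $T_{D_t}u$, the para-Leibniz rule of Lemma \ref{t:Leibniz} splits $T_{D_t}u$ into $T_{-T_{D_t}Y}\partial_\alpha^{-1}w + T_{1-Y}T_{D_t}\partial_\alpha^{-1}w$ plus a para-Leibniz error; the first piece goes to $G_2$ by \eqref{YParaDerivative}, and the second gives $-T_{1-Y}T_{1-\bar Y}(r + T_wR)$, which \eqref{ParaCommutator}--\eqref{ParaProducts} allow me to rewrite as $-T_{1-\bar Y}T_{1-Y}(r + T_wR)$ and then, using $T_{1-Y}T_w = T_x + \mathrm{error}$, finally as $-T_{1-\bar Y}(T_{1-Y}r + T_xR)$.

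For the bounds, the linear estimates on $K_1, G_1$ follow from \eqref{LpBalance}--\eqref{LpLowhigh} combined with the Section \ref{s:WaterBound} control-norm bounds on $X$, $U$, and $\ua$. The elementary interpolation $2\gamma\|f\|_{\dot H^{1/4}} \leq \|f\|_{\dot H^{3/4}} + \gamma^2\|f\|_{\dot H^{-1/4}}$ is exactly what converts the $-i\gamma\partial_\alpha^{-1}r$ contribution (measured at the $\dot H^{5/4}$ level) into the stated combination $\|(w,r)\|_{\doth{1/4}} + \gamma^2\|(w,r)\|_{\doth{-3/4}}$. The quadratic bounds on $K_2, G_2$ follow from the commutator, paraproduct, and para-Leibniz errors collected above, together with the quadratic bounds on $K_2^{\mathrm{old}}, G_2^{\mathrm{old}}$. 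I expect the main obstacle to be the $\gamma$-bookkeeping: the $T_wX$, $T_wR$ and $T_wY$ pieces, in which $w$ sits in the low-frequency slot of the paraproduct, must be estimated via the $\dot W^{\cdot,4}$-based control norms $\uAS, \uAStar$ through the Sobolev embedding $\dot H^{1/4}\hookrightarrow L^4$, and each commutator produces a derivative that has to be matched with a compatible $\gamma$ weight in order to land in the asymmetric combination $\doth{1/4} + \gamma^2\doth{-3/4}$ on the right.
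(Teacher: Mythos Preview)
Your proposal is correct and follows essentially the same approach as the paper: apply $\partial_\alpha^{-1}$ to the formulas of Lemma~\ref{t:wrParaMaterialD}, absorb the commutators $[\partial_\alpha^{-1},T_f]=\partial_\alpha^{-1}T_{f_\alpha}\partial_\alpha^{-1}$ into the quadratic error, use \eqref{XAlphaY} to replace $\partial_\alpha^{-1}Y$ by $X$, and for $u$ apply the para-Leibniz rule exactly as you describe. The only cosmetic difference is that the paper writes $\partial_\alpha^{-1}Y = \partial_\alpha^{-1}(T_{1-Y}X_\alpha)+K_3 = X - \partial_\alpha^{-1}T_Y X_\alpha + K_3$ using the relation $X_\alpha = T_{1+\W}Y+E_4$ from \eqref{XAlphaY}, whereas you use the equivalent relation $Y=T_{(1-Y)^2}\W+E_4$; both routes land at $T_wX$ with the same error structure.
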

\begin{proof}
\begin{enumerate}
\item  We apply the anti-derivative to $T_{D_t}r$ in Lemma \ref{t:wrParaMaterialD} and use \eqref{ParaCommutator}, \eqref{ParaProducts} to write:
\begin{align*}
    T_{D_t}\partial^{-1}_\alpha r =& -i\gamma \partial^{-1}_\alpha r + iT_{g+\ua}(T_{1-Y}\partial^{-1}_\alpha w -T_w \partial^{-1}_\alpha Y)+[T_{D_t}, \partial^{-1}_\alpha]r\\
    & -i[T_{(g+\ua)(1-Y)},\partial^{-1}_\alpha]w +i[T_{(g+\ua)w}, \partial^{-1}_\alpha]Y + K_2.
\end{align*}
For the three commutator terms, we write
\begin{align*}
 [T_{D_t}, \partial^{-1}_\alpha]r & = T_\ub r - \partial^{-1}_\alpha(T_\ub r_\alpha)= \partial^{-1}_\alpha(T_{\ub_\alpha}r), \\
 [T_{(g+\ua)(1-Y)},\partial^{-1}_\alpha]w & = T_{(g+\ua)(1-Y)}\partial^{-1}_\alpha w - \partial^{-1}_\alpha(T_{(g+\ua)(1-Y)}w) = \partial^{-1}_\alpha(T_{[(g+\ua)(1-Y)]_\alpha}\partial^{-1}_\alpha w),\\
[T_{(g+\ua)w}, \partial^{-1}_\alpha]Y  & = T_{(g+\ua)w}\partial^{-1}_\alpha Y - \partial^{-1}_\alpha(T_{(g+\ua)w} Y)= \partial^{-1}_\alpha(T_{[(g+\ua)w]_\alpha}\partial^{-1}_\alpha Y).
\end{align*}
Using \eqref{LpBalance} and \eqref{LpLowhigh}, these commutator terms may be absorbed into the error $K_2$.
By $(4)$ of Lemma \ref{t:XParaMaterial}, one can write
\begin{equation*}
    \partial^{-1}_\alpha Y = \partial^{-1}_\alpha(T_{1-Y}X_\alpha) + K_3 = X - \partial^{-1}_\alpha T_Y X_\alpha + K_3,
\end{equation*}
where the error $K_3$ satisfies
\begin{equation*}
     \gamma^2\|K_3\|_{\dot{W}^{\frac{1}{2},4}}\lesssim_{A^\sharp} \Astar.
\end{equation*}
We can bound
\begin{align*}
   &\gamma^2 \|T_{g+\ua}T_w  \partial^{-1}_\alpha T_Y X_\alpha\|_{\dot{H}^{\frac{1}{2}}} \lesssim \gamma^2 \| g+\ua\|_{L^\infty}\|w\|_{\dot{H}^{-\frac{3}{4}}}\|T_Y X_\alpha\|_{BMO^{\frac{1}{4}}}\lesssim_\uA \gamma^2\Astar\|w\|_{\dot{H}^{-\frac{3}{4}}},\\
   &\gamma^2 \|T_{g+\ua}T_w  K_3\|_{\dot{H}^{\frac{1}{2}}} \lesssim \gamma^2\| g+\ua\|_{L^\infty} \|w\|_{L^4}\|K_3\|_{\dot{W}^{\frac{1}{2},4}}\lesssim_{A^\sharp} \Astar\|w\|_{\dot{H}^{\frac{1}{4}}},
\end{align*}
so that  $\partial^{-1}_\alpha Y$  can be replaced by $X$.

For the difference between $T_{D_t}\partial^{-1}_\alpha r$ and $D_t \partial^{-1}_\alpha r$,
\begin{equation*}
    \gamma^2\|T_{D_t}\partial^{-1}_\alpha r- D_t \partial^{-1}_\alpha  r\|_{\dot{H}^{\frac{1}{2}}} \leq \gamma^2\|T_{r}\ub\|_{\dot{H}^{\frac{1}{2}}} + \gamma^2\|\Pi(r, \ub) \|_{\dot{H}^{\frac{1}{2}}} \lesssim \gamma^2 \|r\|_{\dot{H}^{-\frac{1}{4}}}\| |D|^{\frac{3}{4}}\ub\|_{BMO},
\end{equation*}
and the difference may be absorbed into $K_2$.
For the estimate of $K_1$, we have
\begin{align*}
    &\gamma^2\|\partial^{-1}_\alpha r \|_{\dot{H}^{\frac{3}{4}}}+ \gamma\|\partial^{-1}_\alpha r \|_{\dot{H}^{\frac{5}{4}}} \lesssim \|r\|_{\dot{H}^{\frac{3}{4}}} + \gamma^2\|r\|_{\dot{H}^{-\frac{1}{4}}}, \\
    &\gamma\|T_{g+\ua}u\|_{\dot{H}^{\frac{3}{4}}}+\|T_{g+\ua}u\|_{\dot{H}^{\frac{5}{4}}}\lesssim \|g+\ua\|_{L^\infty}\|1-Y\|_{L^\infty}\left(\|w\|_{\dot{H}^{\frac{1}{4}}}+ \gamma^2\|w\|_{\dot{H}^{\frac{3}{4}}}\right)\\
    &\gamma\|T_{g+\ua}T_w X\|_{\dot{H}^{\frac{3}{4}}} + \|T_{g+\ua}T_w X\|_{\dot{H}^{\frac{5}{4}}}\lesssim_\uAS \|g+\ua\|_{L^\infty}\|w\|_{L^4}\left(\|X\|_{\dot{W}^{\frac{5}{4},4}}+ \gamma^2\|X\|_{\dot{W}^{\frac{1}{4},4}}\right),
\end{align*}
and they satisfy the linear bound.

\item  We apply the para-Leibniz rule Lemma \ref{t:Leibniz} to $u$:
\begin{equation*}
    T_{D_t}u = - T_{T_{D_t}Y}\partial^{-1}_\alpha w + T_{1-Y}T_{D_t}\partial^{-1}_\alpha w + G_2,
\end{equation*}
where the para-Leibniz error can be placed into $G_2$.
For $T_{D_t}Y$, we  use \eqref{YParaDerivative}, which shows that the $T_{T_{D_t}Y}\partial^{-1}_\alpha w$ term on the right belongs to $G_2$.
For the $T_{1-Y}T_{D_t}\partial^{-1}_\alpha w$ term, we apply the anti-derivative to $T_{D_t}w$ in Lemma \ref{t:wrParaMaterialD}, and para-products \eqref{ParaProducts} to write
\begin{equation*}
   T_{D_t}\partial^{-1}_\alpha w = -T_{1-\bar{Y}}(r+T_w R)+[T_{D_t}, \partial^{-1}_\alpha]w+ [T_{\bar{Y}}, \partial^{-1}_\alpha]r_\alpha - [T_{(1-\bar{Y})w}, \partial^{-1}_\alpha ]R_\alpha + G_2.
\end{equation*}
The three commutator terms can be rewritten as,
\begin{align*}
  &[T_{D_t}, \partial^{-1}_\alpha ]w  = T_\ub w - \partial^{-1}_\alpha (T_\ub w_\alpha)= \partial^{-1}_\alpha(T_{\ub_\alpha}w),   \\
  &[T_{\bar{Y}}, \partial^{-1}_\alpha]r_\alpha = T_{\bar{Y}}r -\partial^{-1}_\alpha(T_{\bar{Y}}r_\alpha) = \partial^{-1}_\alpha(T_{\bar{Y}_\alpha} r),\\
&[T_{(1-\bar{Y})w}, \partial^{-1}_\alpha]R_\alpha = T_{(1-\bar{Y})w}R -\partial^{-1}_\alpha(T_{(1-\bar{Y})w}R_\alpha) = \partial^{-1}_\alpha(T_{[(1-\bar{Y})w]_\alpha}R).
\end{align*}
Using \eqref{LpBalance} and \eqref{LpLowhigh}, these commutator terms may be absorbed into the error $G_2$.
Applying $T_{1-Y}$ to $T_{D_t}\partial^{-1}_\alpha w$ and using \eqref{ParaProducts}, we obtain the leading term of $T_{D_t}u$.

For the difference between $T_{D_t}u$ and $D_t u$,
we have
\begin{equation*}
\gamma^2\|T_{D_t}u-D_t u\|_{L^2}\leq \gamma^2\|T_{u_\alpha}\ub\|_{L^2}+\gamma^2\|\Pi(u_\alpha, \ub)\|_{L^2}\lesssim \gamma^2\|w\|_{\dot{H}^{-\frac{3}{4}}}\||D|^{\frac{3}{4}}\ub\|_{BMO},
\end{equation*}
so that it may be absorbed into $G_2$.
Finally, to estimate $G_1$, we have
\begin{align*}
&\gamma^2\|T_{1-\bar{Y}}T_{1-Y}r\|_{\dot{H}^{-\frac{1}{4}}}+ \|T_{1-\bar{Y}}T_{1-Y}r\|_{\dot{H}^{\frac{3}{4}}} \lesssim_\uA \|r\|_{\dot{H}^{\frac{3}{4}}} + \gamma^2\|r\|_{\dot{H}^{-\frac{1}{4}}}, \\
&\gamma^2\| T_{1-\bar{Y}}T_x R\|_{\dot{H}^{-\frac{1}{4}}}+ \| T_{1-\bar{Y}}T_x R\|_{\dot{H}^{\frac{3}{4}}} \lesssim \|x\|_{L^4}\left(\gamma^2\|R\|_{\dot{W}^{-\frac{1}{4},4}}+\|R\|_{\dot{W}^{\frac{3}{4},4}}\right)\lesssim \uAS \|w\|_{\dot{H}^{\frac{1}{4}}},
\end{align*}
and they satisfy the linear bound.
\end{enumerate}
\end{proof}

\subsection{Normal form analysis for $(\underline{\mathcal{G}}_1, \underline{\mathcal{K}}_1)$} \label{s:GKOne}
In this subsection, we use the paradifferential normal form corrections to remove the source terms $(\nP\underline{\mathcal{G}}_1, \nP\underline{\mathcal{K}}_1)$ modulo balanced cubic terms in the linearized equations.
Recall that we have
\begin{align*}
  \nP\underline{\mathcal{G}}_1 = & \Pi(r_\alpha, \bar{Y}) -\nP T_{w_\alpha}\ub - \Pi(w_\alpha, \ub) - T_w \nP((1-\bar{Y})R_\alpha - i\frac{\gamma}{2}\W) - \Pi(w, (1-\bar{Y})R_\alpha +\gamma \Im \W),\\
  \nP\underline{\mathcal{K}}_1 = & -\nP T_{r_\alpha}\ub - \Pi(r_\alpha, \ub)+ i(\nP T_{1-Y}T_w\ua+T_{1-Y}\Pi(w,\ua)-T_{(g+\ua)w}Y-\Pi((g+\ua)w, Y)).
\end{align*}
For simplicity regarding the balanced cases, we let $\Pi = \nP\Pi$ to include the Littlewood-Paley projection $\nP$.
We will  then prove the following result:
\begin{proposition}
Assume that $(w,r)$ solve the paradifferential equation \eqref{ParadifferentialLinearEqn}, then there exists a linear paradifferential normal form correction
\begin{equation*}
    (\tilde{w}, \tilde{r}) = NF(w,r)
\end{equation*}
that solves the paradifferential equations
\begin{equation}
\left\{
             \begin{array}{lr}
             T_{D_t}\tilde{w}+T_{1-\bar{Y}}\partial_\alpha \tilde{r} + T_{(1-\bar{Y})R_\alpha }\tilde{w} +\gamma T_{\Im \W}\tilde{w} = -\nP \underline{\mathcal{G}}_1(w,r) +G &\\
           T_{D_t}\tilde{r} +i\gamma \tilde{r} -iT_{1-Y}T_{g+\ua}\tilde{w}=-\nP \underline{\mathcal{K}}_1(w,r) +K,&  \label{NormalFormGKOne}
             \end{array}
\right.
\end{equation}
with the following properties:
\begin{enumerate}
\item Quadratic correction bound:
\begin{equation}
    \|NF(w,r)\|_{\doth{\frac{1}{4}}} \lesssim_{\uA} \uAS\left(\| (w,r)\|_{\doth{\frac{1}{4}}}+ \gamma^2 \|(w,r) \|_{\doth{-\frac{3}{4}}}\right). \label{NFOneBound}
\end{equation}
\item Secondary correction bound:
\begin{equation}
\|NF(w,r)\|_{\doth{\frac{1}{4}}}\lesssim_\uA \Astar\left(\|(w,r)\|_{\mathcal{H}^0}+ \gamma^2 \|(w,r) \|_{\doth{-1}}\right). \label{NFTwoBound}
\end{equation}
\item Cubic error bound:
\begin{equation}
    \|(G, K)\|_{\doth{\frac{1}{4}}} \lesssim_{\uAS} \underline{A}_{\frac{1}{4}}\uAStar\left(\| (w,r)\|_{\doth{\frac{1}{4}}}+ \gamma^2 \|(w,r) \|_{\doth{-\frac{3}{4}}}\right). \label{NFCubicError}
\end{equation}
\end{enumerate}
\end{proposition}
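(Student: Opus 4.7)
The plan is to construct $NF(w,r)$ as an explicit finite sum of bilinear expressions, each one designed to cancel a specific quadratic piece of $\nP\underline{\mathcal{G}}_1$ or $\nP\underline{\mathcal{K}}_1$ when one applies the linear paradifferential operators on the left side of \eqref{NormalFormGKOne}. The building blocks will be paraproducts $T_f g$ and balanced products $\nP\Pi(f,g)$ in which one factor is a linearized variable from the family $\{w, r, x, u, \partial_\alpha^{-1} w, \partial_\alpha^{-1} r\}$ and the other is a water wave quantity from $\{W, \W, R, Y, X, U, Z, \ub, \ua\}$. The size bounds \eqref{NFOneBound} and \eqref{NFTwoBound} will follow once the corrections are written down, by applying \eqref{LpBalance}--\eqref{LpLowhigh}, \eqref{BMOBalance}--\eqref{BMOSigma}, and the Sobolev embedding $\dot W^{1/4,4}\hookrightarrow BMO$, together with the $\uAS, \uAStar$ estimates for $\ua$, $\ub$, $Y$, $M$ from Section \ref{s:WaterBound}.

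First I would dispose of the $\ub$-containing contributions $\nP T_{w_\alpha}\ub + \Pi(w_\alpha,\ub)$ in $\underline{\mathcal{G}}_1$ and $\nP T_{r_\alpha}\ub + \Pi(r_\alpha,\ub)$ in $\underline{\mathcal{K}}_1$. These are exactly the difference between $T_{D_t}$ and $D_t$ applied to $w$ and $r$, so they are already built in by writing $NF$ in terms of $D_t$-corrections rather than $T_{D_t}$-corrections; concretely, one absorbs them using $[T_\ub,\nP]$ commutators via Lemma \ref{t:CommutatorL2} and para-commutator \eqref{ParaCommutator}. Next, I would handle the $\mathcal{G}_1$ terms of the form $\Pi(r_\alpha,\bar Y)$, $T_w((1-\bar Y)R_\alpha - i\tfrac{\gamma}{2}\W)$, and $\nP\Pi(w,(1-\bar Y)R_\alpha + \gamma \Im \W)$: since $T_{D_t}\W + T_{1+\W}T_{1-\bar Y}R_\alpha$ and $T_{D_t}\bar X_\alpha + T_{1-Y}\bar R_\alpha$ are each balanced cubic by \eqref{ParaWalphaR} and Lemma \ref{t:XParaMaterial}, the required corrections are paraproducts and $\Pi$-products pairing $w$ with $W$ (respectively $r$ with $\bar X$). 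Similarly, for the $\mathcal{K}_1$ pieces $iT_{1-Y}T_w \ua$, $iT_{1-Y}\Pi(w,\ua)$, $-iT_{(g+\ua)w}Y$, $-i\Pi((g+\ua)w,Y)$, one uses that $T_{D_t}\ua \approx -T_{g+\ua}M + \gamma^2 \Im R - g\gamma\Im\W$ and $T_{D_t}Y \approx -T_{|1-Y|^2}R_\alpha$, so that corrections pairing $w$ with $\partial_\alpha^{-1}\ua$, and $r$ with $Y$, produce the desired source upon applying the left paradifferential operator and distributing with the para-Leibniz rule Lemma \ref{t:Leibniz}.

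Once the corrections are in place, I would verify \eqref{NormalFormGKOne} by a direct substitution, expanding $T_{D_t}$ across each bilinear correction with the para-Leibniz rule and substituting the leading-order para-material derivatives from Sections \ref{s:TDtBound} and \ref{s:SourceBound}. Each use of the para-Leibniz rule \eqref{UnbalancedParaLeibnizOne}--\eqref{BalancedLeibniz} contributes one factor of $\uA_{\frac{1}{4}} = \Astar$; the other factor of $\uAStar$ comes from the $L^4$-based bound for the remaining coefficient, which is always of the form $|D|^\sigma(R, \W, \ua, \ub, Y, M)$ controlled by $\uAStar$ via \eqref{uaAsharp}, \eqref{ubBMO}, Lemma \ref{t:XParaMaterial}, and the Sobolev embedding into $\dot W^{\sigma,4}$. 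This yields the cubic error bound \eqref{NFCubicError}.

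The main obstacle is the combinatorial bookkeeping: each quadratic term in the source requires its own correction, and at the critical regularity $\doth{1/4}$ one must place derivatives on the correct factor to avoid losses. Two further subtleties arise: (i) the normal form must be accurate in both $\doth{1/4}$ and $\gamma^2\doth{-3/4}$ simultaneously, so the corrections involving $\partial_\alpha^{-1}(w,r)$ and the vorticity terms must be balanced against the vorticity weight, which forces the use of the auxiliary variables $X, U, Z$ from Lemma \ref{t:XParaMaterial} rather than direct antiderivatives; and (ii) several source terms in $\underline{\mathcal{G}}_1, \underline{\mathcal{K}}_1$ are unbalanced paraproducts $T_f g$ with $f$ of low frequency, so the corresponding normal form correction cannot use $\Pi$ alone — one must split each unbalanced $T_f$ contribution from the balanced $\Pi$ contribution and match them separately against different pieces produced by the para-Leibniz expansion of a single correction.
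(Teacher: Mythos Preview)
Your overall framework—bilinear corrections built from $T_f g$ and $\nP\Pi(f,g)$, evaluated via the para-Leibniz rule and the para-material derivative formulas of Sections \ref{s:TDtBound} and \ref{s:SourceBound}—is the same as the paper's. However, three of your specific claims do not hold up, and the third one is the real gap.

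First, your treatment of the $\ub$ terms is incorrect. You say that $\nP T_{w_\alpha}\ub + \Pi(w_\alpha,\ub)$ is ``already built in by writing $NF$ in terms of $D_t$-corrections rather than $T_{D_t}$-corrections''. But the left side of \eqref{NormalFormGKOne} contains $T_{D_t}\tilde w$, and the difference $D_t\tilde w - T_{D_t}\tilde w$ involves $\tilde w_\alpha$, not $w_\alpha$; switching conventions on the correction does not produce the term you need. In the paper these $\ub$ contributions arise from the explicit computation of $-T_{D_t}\partial_\alpha T_w X$ using the \emph{full} expression \eqref{ParaW} for $T_{D_t}W$ (not just its leading part), via the identity $T_{w_\alpha}\nP[(1-\bar Y)R - i\tfrac{\gamma}{2}(1-\bar Y)W] = \nP T_{w_\alpha}\ub$.

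Second, your proposal to handle $iT_{1-Y}T_w\ua$ by a correction pairing $w$ with $\partial_\alpha^{-1}\ua$ is not workable: $\partial_\alpha^{-1}\ua$ has no usable bound here, and $T_{D_t}\ua$ in \eqref{ParaMaterialA} contains $-T_{g+\ua}M$, whose antiderivative you cannot control. The paper never introduces such an object; the $\ua$ pieces of $\underline{\mathcal K}_1$ are recovered at the end of the computation by reassembling $T_{1-Y}\bar R + i\tfrac{\gamma}{2}T_{1-Y}\bar X$ into $\bar\nP\ub$ and $i\tfrac{\gamma}{2}\bar R$ into $\bar\nP\ua$.

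Third—and this is the substantive omission—your proposal only names leading corrections (pair $w$ with $W$, $r$ with $\bar X$, $r$ with $Y$) and ignores the vorticity cascade. In the paper the correction is the paralinearization of the full normal form \eqref{e:NormalForm}: roughly thirty distinct bilinear terms in each of $\tilde w,\tilde r$, with coefficients ranging over $\gamma^0,\gamma,\gamma^2,\gamma^3,\gamma^4$, built from $X,Z,U$ and $x,u,\partial_\alpha^{-1}r$. This hierarchy is forced: when you apply $T_{D_t}$ to, say, $T_wX$ or $\Pi(r_\alpha,\bar X)$, the formulas for $T_{D_t}X$, $T_{D_t}Z$, $T_{D_t}U$ in Lemma \ref{t:XParaMaterial} and its companions each throw off a residual term at the next power of $\gamma$ (e.g.\ $-i\tfrac{\gamma}{2}X$, $+i\gamma Z$), which is \emph{not} a balanced cubic and must be cancelled by a further correction. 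Without writing down the full $\gamma$-tower and verifying that it closes, you cannot claim the cubic error bound \eqref{NFCubicError}.
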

Here, the secondary correction bound will be needed in order to estimate the effect of source terms for the inhomogeneous problem \eqref{ParadifferentialLinearEqn}.

As a general guideline, our paradifferential normal form corrections will be either of the two cases:
\begin{enumerate}
\item High-low interactions.
\item Balanced interactions with low frequency output.
\end{enumerate}

Motivated by the normal form transformation \eqref{e:NormalForm}, we define the corrections by
\begin{align*}
\tilde{w} : =& -\partial_\alpha(T_w X +\Pi(w, X))-\Pi(w_\alpha, \bar{X})\\
&-\frac{\gamma}{2g}\left(T_{w_\alpha}Z + \Pi(w_\alpha, Z) + T_{r}X_\alpha +\Pi(r, X_\alpha)+ \Pi(w_\alpha, \bar{Z})\right)\\
&-\frac{\gamma}{2g}\left(T_{r_\alpha}X +\Pi(r_\alpha, X) +T_w R+ \Pi(w, R)+\Pi(r_\alpha, \bar{X}) \right) \\
&+i\frac{\gamma^2}{2g}\left(T_{w_\alpha}U +\Pi(w_\alpha, U) + T_{\partial^{-1}_\alpha w}X_\alpha + \Pi(\partial_\alpha^{-1}w, X_\alpha)+\Pi(w_\alpha, \bar{U})\right)\\
&+i\frac{\gamma^2}{2g}\left(2T_w X + 2\Pi(w,X)+\frac{1}{2}\Pi(w, \bar{X})\right) \\
&-\frac{\gamma^2}{4g^2}\left(\partial_\alpha(T_r Z +\Pi(r,Z))+\Pi(r_\alpha, \bar{Z})\right)\\
&+i\frac{\gamma^3}{4g^2}\left(T_r X +\Pi(r,X) +T_w Z + \Pi(w, Z)+ \Pi(w, \bar{Z})\right)\\
&+i\frac{\gamma^3}{4g^2}\left(T_{r_\alpha}U + \Pi(r_\alpha, U)+T_{\partial^{-1}_\alpha w}R + \Pi(\partial^{-1}_\alpha w, R)-\Pi(r_\alpha, \bar{U})\right)\\
&+\frac{\gamma^4}{4g^2} \left(T_w U+T_{\partial_\alpha^{-1}w}X + \partial_\alpha\Pi(\partial_\alpha^{-1}w, U)-\Pi( w,\bar{U})\right),\\
\tilde{r} :  =& -T_{r_\alpha}X - \Pi(r_\alpha, X)-\Pi(r_\alpha, \bar{X})\\
&-\frac{\gamma}{2g}\left(T_{r_\alpha} Z +T_r R +\partial_\alpha\Pi(r,Z)+ \Pi(r_\alpha, \bar{Z})\right)\\
&+i\frac{\gamma}{2}\left(T_w X +\Pi(w, X)+\Pi(w, \bar{X})\right) \\
&+ i\frac{\gamma^2}{2g}\left(T_{r_\alpha}U +\Pi(r_\alpha, U) + T_{\partial_\alpha^{-1}w}R +\Pi(\partial_\alpha^{-1}w, R)-\Pi(r_\alpha, \bar{U})\right) \\
&+i\frac{\gamma^2}{4g}\left(T_w Z +\Pi(w, Z)+T_r X +\Pi(r,X)+\Pi(w, \bar{Z})\right) \\
&+\frac{\gamma^3}{4g}\left(T_w U +\Pi(w,U)+T_{\partial^{-1}_\alpha w}X +\Pi(\partial^{-1}_\alpha w, X)-\Pi(w, \bar{U})\right).
\end{align*}

We remark that this paradifferential correction  is  roughly the holomorphic part of the paralinearization of the normal form transformation \eqref{e:NormalForm} at the quadratic level.

We first verify that $(\tilde{w}, \tilde{r})$ satisfy the quadratic correction bound \eqref{NFOneBound}.
The balanced terms  are straightforward; for instance,
\begin{equation*}
    \gamma^2\| \Pi(w,X)\|_{\dot{H}^{\frac{1}{4}}}\lesssim_\uA \|w\|_{\dot{H}^{\frac{1}{4}}}\gamma^2\|X\|_{L^\infty} \lesssim \uA \|w\|_{\dot{H}^{\frac{1}{4}}}.
\end{equation*}
For low-high terms with $T_w$ or $\gamma^2 T_{\partial^{-1}_\alpha w} $, we use
\begin{equation*}
    \|T_f G\|_{\dot{H}^{\frac{1}{4}}} \lesssim \|f\|_{L^4}\||D|^{\frac{1}{4}}G\|_{L^4}\lesssim \|f\|_{\dot{H}^{\frac{1}{4}}}\|G\|_{\dot{W}^{\frac{1}{4},4}}
\end{equation*}
type of estimates with Sobolev embedding, which yields an $\uAS$ coefficient.
For other low high terms, we use \eqref{LpBalance}.

As for the secondary correction bound \eqref{NFTwoBound}, these estimates are also straightforward; for instance,
\begin{equation*}
    \|T_{r_\alpha}X \|_{\dot{H}^{\frac{3}{4}}}\lesssim \|r\|_{\dot{H}^{\frac{1}{2}}}\|D|^{\frac{1}{4}}\W\|_{BMO}\lesssim \Astar \|r\|_{\dot{H}^{\frac{1}{2}}}.
\end{equation*}

Next, we plug in the corrections to the system and compute the source terms.
In order to simplify the computation below, we consider the following cases where terms may be absorbed into $(G,K)$:
\begin{enumerate}
\item Cubic and higher order terms such that the lowest frequency variable is ``fully differentiated" or ``over differentiated" when combined with the vorticity $\gamma$.
The ``fully differentiated" cases include for instance
\begin{equation*}
    (|D|^{\frac{5}{4}}W, |D|^{\frac{3}{4}}R)\in BMO, \quad (|D|^{\frac{1}{4}}w, |D|^{\frac{3}{4}}r)\in L^2, \quad \gamma^2( |D|^{-\frac{3}{4}}w, |D|^{-\frac{1}{4}}r)\in L^2.
\end{equation*}
When there are even more derivatives on these terms, they are ``over differentiated".
\item  Cubic and higher order terms such that the lowest frequency variable is $\gamma \ua$.
For instance, we can estimate a typical term $\gamma^3 T_{T_\ua x}X$ in $\dot{H}^{\frac{1}{4}}$ using Sobolev embedding by
\begin{equation*}
    \gamma^3\| T_{T_\ua x}X \|_{\dot{H}^{\frac{1}{4}}}\lesssim \gamma\|\ua \|_{L^4}\|w\|_{L^4} \gamma^2\||D|^{\frac{1}{4}}X\|_{BMO} \lesssim_{\uAS} \uA^2_{\frac{1}{4}}\|w\|_{\dot{H}^{\frac{1}{4}}}.
\end{equation*}
\item  Cubic and higher order terms such that the lowest frequency variable is either $w$, $\gamma r$ or $\gamma^2 \partial_\alpha^{-1} w$.
Then we need to bound $w$, $\gamma r$ or $\gamma^2 \partial_\alpha^{-1} w$ in $L^4$ norm, and they are further bounded by
$\|w\|_{\dot{H}^{\frac{1}{4}}}$, $\gamma \|r\|_{\dot{H}^{\frac{1}{4}}}$ or $\gamma^2\|w\|_{\dot{H}^{-\frac{3}{4}}}$ using Sobolev embedding.
This will also bring an $\uAStar$ constant.
For instance, we can estimate a typical term $-\frac{\gamma}{2g}T_{R_\alpha}T_r X_\alpha$ in $\dot{H}^{\frac{1}{4}}$ by
\begin{equation*}
   -\frac{\gamma}{2g}\|T_{R_\alpha}T_r X_\alpha\|_{\dot{H}^{\frac{1}{4}}}\lesssim \|R_\alpha\|_{L^4}\gamma\|r\|_{L^4} \||D|^{\frac{1}{4}}X_\alpha\|_{BMO}\lesssim \gamma\Astar \uAStar \|r\|_{\dot{H}^{\frac{1}{4}}}
   \end{equation*}
   using Sobolev embedding.
\end{enumerate}
\begin{proof}
We insert $(\tilde{w}, \tilde{r})$ into the left-hand side of \eqref{NormalFormGKOne}.
We consider separately the high-low corrections, frequency-balanced corrections with holomorphic variables, and the frequency-balanced corrections with anti-holomorphic variables for both equations.
For each case, we will compute the contributions of the corrections according to the power of the vorticity $\gamma$.

When computing the para-material derivatives of paraproducts, we apply the para-Leibniz rule Lemma \ref{t:Leibniz} to distribute para-material derivatives to each variable.
In particular, when $w$ is at low frequency in the first equation or $\gamma r$ is at low frequency in the second equation, we apply Lemma \ref{t:Leibniz} in the case $\sigma = 0.$
For the expressions of material derivatives and para-material derivatives of each variable, we use the leading terms plus errors in $BMO$ or $L^2$ based spaces in corresponding lemmas in Section \ref{s:TDtBound} and Section \ref{s:SourceBound}.
The only exceptions are $-\partial_\alpha T_w T_{D_t}X$, $-\frac{\gamma}{2g}T_r T_{D_t}X_\alpha$,  $i\frac{\gamma^2}{2g}T_{\partial_\alpha^{-1} w}T_{D_t}X_\alpha$, $-\frac{\gamma}{2g}T_r T_{D_t}R$ and $i\frac{\gamma^2}{2g}T_{\partial_\alpha^{-1}w}T_{D_t}R$.
For these terms, we need either full expressions of para-material derivatives instead of just the leading terms, or use the leading terms but applying $L^4$ based estimates for the errors.
We also use the para-products rule
\eqref{ParaProducts} to simplify some terms.

$(1)$ We begin with the high-low corrections in $(\tilde{w}, \tilde{r})$ for the first equation.
Consider first  the contributions of non-vorticity terms.
\begin{align*}
 &- T_{D_t}\partial_\alpha T_w X = -\partial_\alpha T_{D_t}T_w X + T_{\ub_\alpha}\partial_\alpha T_w X \\
 = &-\partial_\alpha T_{D_t w} X - \partial_\alpha T_w T_{D_t}X + T_{\ub_\alpha} T_w X_\alpha +G \\
 = &\partial_\alpha T_{T_{1-\bar{Y}}(r_\alpha + T_w R_\alpha )}X - \partial_\alpha T_w(T_{T_{1-\bar{Y}}R_\alpha}X
 -\nP[(1-\bar{Y})R]-\Pi(X_\alpha, \ub)) \\
 &-i\frac{\gamma}{2}\partial_\alpha T_w(\nP[(1-\bar{Y})W] - X) + T_w T_{T_{1-\bar{Y}}R_\alpha + T_{1-Y}\bar{R}_\alpha}X_\alpha -i\frac{\gamma}{2}T_wT_{T_{1-\bar{Y}}\W - T_{1-Y}\bar{\W}}X_\alpha+G\\
 =& \partial_\alpha T_{T_{1-\bar{Y}}r_\alpha}X +\partial_\alpha T_w \nP\left[(1-\bar{Y})R-i\frac{\gamma}{2}(1-\bar{Y})W\right] +T_w\partial_\alpha\Pi(X_\alpha, \ub)+i\frac{\gamma}{2}\partial_\alpha T_w X\\
 & + T_w T_{T_{1-\bar{Y}}R_\alpha + T_{1-Y}\bar{R}_\alpha}X_\alpha -i\frac{\gamma}{2}T_wT_{T_{1-\bar{Y}}\W - T_{1-Y}\bar{\W}}X_\alpha+G\\
 =& \partial_\alpha T_{T_{1-\bar{Y}}r_\alpha}X + \nP T_{w_\alpha} \ub+ T_w \nP\left[(1-\bar{Y})R_\alpha-i\frac{\gamma}{2}\W\right] +i\frac{\gamma}{2}T_{w}\nP\partial_\alpha(\bar{Y}W) -T_w \nP(\bar{Y}_\alpha R)\\
 & +T_w\partial_\alpha\Pi(X_\alpha, \ub)+i\frac{\gamma}{2}\partial_\alpha T_w X + T_w T_{T_{1-\bar{Y}}R_\alpha + T_{1-Y}\bar{R}_\alpha}X_\alpha -i\frac{\gamma}{2}T_wT_{T_{1-\bar{Y}}\W - T_{1-Y}\bar{\W}}X_\alpha+G,\\
 =& \partial_\alpha T_{T_{1-\bar{Y}}r_\alpha}X + \nP T_{w_\alpha} \ub+ T_w \nP\left[(1-\bar{Y})R_\alpha-i\frac{\gamma}{2}\W\right] +i\frac{\gamma}{2}T_{w}\nP\partial_\alpha(\bar{Y}W) \\
 & +i\frac{\gamma}{2}\partial_\alpha T_w X + T_w T_{T_{1-\bar{Y}}R_\alpha }X_\alpha -i\frac{\gamma}{2}T_wT_{T_{1-\bar{Y}}\W - T_{1-Y}\bar{\W}}X_\alpha+G.
\end{align*}
Here for the last four lines, we use the fact that
\begin{equation*}
  T_{w_\alpha}\nP[(1-\bar{Y})R-i\frac{\gamma}{2}(1-\bar{Y})W] = \nP T_{w_\alpha}\ub.
\end{equation*}
In a similar manner, for other non-vorticity terms, we compute
\begin{align*}
    -T_{1-\bar{Y}}\partial_\alpha T_{r_\alpha}X &= -\partial_\alpha T_{T_{1-\bar{Y}}r_\alpha}X +G,\\
    -T_{(1-\bar{Y})R_\alpha}\partial_\alpha T_w X &= -T_w T_{T_{1-\bar{Y}}R_\alpha}X_\alpha +G.
\end{align*}

For the contributions of corrections with $\gamma$ coefficient, we have
\begin{align*}
-\gamma T_{\Im \W}\partial_\alpha T_w X =& i\frac{\gamma}{2}T_wT_{T_{1-\bar{Y}}\W - T_{1-Y}\bar{\W}}X_\alpha +G, \\
-\frac{\gamma}{2g}T_{D_t}T_{w_\alpha}Z =& - \frac{\gamma}{2g}T_{\partial_\alpha T_{D_t}w}Z - \frac{\gamma}{2g}T_{w_\alpha}T_{D_t}Z + G\\
=&\frac{\gamma}{2g}T_{T_{1-\bar{Y}}(r_{\alpha \alpha}+ T_w R_{\alpha\alpha} )}Z -i\frac{\gamma}{2}T_{w_\alpha}X +i\frac{\gamma^2}{2g}T_{w_\alpha} Z +G\\
 =& \frac{\gamma}{2g}T_{1-\bar{Y}}T_{r_{\alpha\alpha}}Z  -i\frac{\gamma}{2}T_{w_\alpha}X +i\frac{\gamma^2}{2g}T_{w_\alpha} Z +G,\\
 -\frac{\gamma}{2g}T_{D_t}T_r X_\alpha =& -\frac{\gamma}{2g}T_{T_{D_t}r}X_\alpha-\frac{\gamma}{2g}T_r T_{D_t}X_\alpha +G\\
 =& i\frac{\gamma^2}{2g}T_r X_\alpha-i\frac{\gamma}{2}T_{ w}X_\alpha + \frac{\gamma}{2g}T_rT_{1-\bar{Y}}R_\alpha +G,\\
 -\frac{\gamma}{2g}T_{D_t}T_{r_\alpha}X =& -\frac{\gamma}{2g}T_{\partial_\alpha T_{D_t}r}X-\frac{\gamma}{2g}T_{r_\alpha}T_{D_t}X +G \\
=& i\frac{\gamma^2}{2g}T_{r_\alpha}X-i\frac{\gamma}{2g}T_{T_{g+\ua} (x_\alpha -T_w Y_\alpha)}X +\frac{\gamma}{2g}T_{r_\alpha}T_{1-\bar{Y}}R +G\\
 =& i\frac{\gamma^2}{2g}T_{r_\alpha}X-i\frac{\gamma}{2} T_{w_\alpha} X +\frac{\gamma}{2g}T_{r_\alpha}T_{1-\bar{Y}}R +G,\\
-\frac{\gamma}{2g}T_{D_t}T_w R =& -\frac{\gamma}{2g}T_{D_t w}R -\frac{\gamma}{2g}T_w T_{D_t}R \\
=& \frac{\gamma}{2g}T_{T_{1-\bar{Y}}r_\alpha + T_w R_\alpha}R -\frac{\gamma}{2g}T_{w}(iT_{g+\ua}Y-i\gamma R)+G \\
=& \frac{\gamma}{2g}T_{1-\bar{Y}}T_{r_\alpha}R -i\frac{\gamma}{2g}T_w T_{g+\ua}Y +i\frac{\gamma^2}{2g}T_w R +G,\\
-\frac{\gamma}{2g}T_{1-\bar{Y}}\partial_\alpha (T_{r_\alpha} Z+T_r R) =& -\frac{\gamma}{2g} T_{1-\bar{Y}}T_{r_{\alpha\alpha}}Z - \frac{\gamma}{g}T_{1-\bar{Y}}T_{r_\alpha}R -\frac{\gamma}{2g}T_{r}T_{|1-Y|^2}R_\alpha +G,\\
 i\frac{\gamma}{2}T_{1-\bar{Y}}\partial_\alpha T_w X=& i\frac{\gamma}{2}T_{w_\alpha}X + i\frac{\gamma}{2}T_{w}T_{|1-Y|^2}\W +G.
\end{align*}
The other $\gamma$  contributions  can be absorbed into $G$.

As for the contributions of corrections with $\gamma^2$ coefficient, we compute
\begin{align*}
    i\frac{\gamma^2}{2g}T_{D_t}T_{w_\alpha}U =& i\frac{\gamma^2}{2g}T_{\partial_\alpha T_{D_t}w}U +i\frac{\gamma^2}{2g}T_{w_\alpha}T_{D_t}U +G\\
    =& -i\frac{\gamma^2}{2g}T_{T_{1-\bar{Y}}r_{\alpha\alpha}}U - i\frac{\gamma^2}{2g}T_{w_\alpha}T_{1-\bar{Y}}Z +G, \\
    i\frac{\gamma^2}{2g}T_{D_t}T_{\partial^{-1}_\alpha w}X_\alpha =& i\frac{\gamma^2}{2g}T_{T_{D_t}\partial^{-1}_\alpha w}X_\alpha + i\frac{\gamma^2}{2g}T_{\partial^{-1}_\alpha w}T_{D_t}X_\alpha +G\\
    =& -i\frac{\gamma^2}{2g}T_{T_{1-\bar{Y}}r}X_\alpha- i\frac{\gamma^2}{2g}T_{\partial^{-1}_\alpha w}T_{1-\bar{Y}}R_\alpha +G,\\
    i\frac{\gamma^2}{g}T_{D_t}T_w X =& i\frac{\gamma^2}{g}T_{D_t w} X+ i\frac{\gamma^2}{g}T_{w} T_{D_t}X + G\\
    =& -i\frac{\gamma^2}{g}T_{T_{1-\bar{Y}}r_\alpha } X- i\frac{\gamma^2}{g}T_{w} T_{1-\bar{Y}}R + G, \\
  -\frac{\gamma^2}{4g^2}T_{D_t}T_{r_\alpha}Z =&  -\frac{\gamma^2}{4g^2}T_{\partial_\alpha T_{D_t}r}Z-\frac{\gamma^2}{4g^2}T_{r_\alpha}T_{D_t}Z +G\\
    =& -\frac{\gamma^2}{4g^2}T_{-i\gamma r_\alpha +iT_{g+\ua} (x_\alpha -T_w Y_\alpha)}Z -\frac{\gamma^2}{4g^2}T_{r_\alpha}(igX-i\gamma Z)+G\\
    =& i\frac{\gamma^3}{2g^2}T_{r_\alpha}Z -i\frac{\gamma^2}{4g}T_{w_\alpha}Z -i\frac{\gamma^2}{4g}T_{r_\alpha}X+G,\\
 -\frac{\gamma^2}{4g^2}T_{D_t}T_{r}Z_\alpha =& -\frac{\gamma^2}{4g^2}T_{T_{D_t} r}Z_\alpha  -\frac{\gamma^2}{4g^2}T_{r}T_{D_t}Z_\alpha +G\\
=& -\frac{\gamma^2}{4g^2}T_{-i\gamma r +iT_{g+\ua} (x -T_w Y)}R-\frac{\gamma^2}{4g^2}T_{r}(igT_{1-Y}\W -i\gamma R)+G\\
=& i\frac{\gamma^3}{2g^2}T_r R -i\frac{\gamma^2}{4g}T_w R -i\frac{\gamma^2}{4g}T_r T_{1-Y}\W +G.
\end{align*}
Here we use \eqref{ZAlphaR} to change $Z_\alpha$ into $R$.
\begin{align*}
i\frac{\gamma^2}{2g}T_{1-\bar{Y}}\partial_\alpha T_{r_\alpha}U =& i\frac{\gamma^2}{2g}T_{T_{1-\bar{Y}}r_{\alpha\alpha}}U +i\frac{\gamma^2}{2g}T_{r_\alpha} X +G, \\
i\frac{\gamma^2}{2g}T_{1-\bar{Y}}\partial_\alpha T_{\partial_\alpha^{-1} w}R =& i\frac{\gamma^2}{2g}T_{1-\bar{Y}} T_{w}R + i\frac{\gamma^2}{2g} T_{\partial_\alpha^{-1} w}T_{1-\bar{Y}}R_\alpha + G,\\
 i\frac{\gamma^2}{4g}T_{1-\bar{Y}}\partial_\alpha T_{w}Z =& i\frac{\gamma^2}{4g} T_{w_\alpha} T_{1-\bar{Y}}Z + i\frac{\gamma^2}{4g} T_{w} R +G, \\
 i\frac{\gamma^2}{4g}T_{1-\bar{Y}}\partial_\alpha T_{r}X =& i\frac{\gamma^2}{4g}T_{1-\bar{Y}} T_{r_\alpha}X + i\frac{\gamma^2}{4g} T_{r}T_{|1-Y|^2}\W +G.
\end{align*}
The other $\gamma^2$ terms have ``over differentiated'' lowest frequency variables, and may be absorbed into $G$.

For the contributions of corrections with $\gamma^3$ coefficient,
\begin{align*}
i\frac{\gamma^3}{4g^2}T_{D_t}T_r X =&   i\frac{\gamma^3}{4g^2}T_{T_{D_t}r} X + i\frac{\gamma^3}{4g^2}T_r T_{D_t}X + G \\
=& i\frac{\gamma^3}{4g^2}T_{-i\gamma r +iT_{g+\ua} (x -T_w Y)}X - i\frac{\gamma^3}{4g^2}T_rT_{1-\bar{Y}}R +G \\
= & \frac{\gamma^4}{4g^2}T_r X - \frac{\gamma^3}{4g}T_w X- i\frac{\gamma^3}{4g^2}T_rT_{1-\bar{Y}}R +G,\\
i\frac{\gamma^3}{4g^2}T_{D_t}T_w Z =&   i\frac{\gamma^3}{4g^2}T_{D_t w} Z + i\frac{\gamma^3}{4g^2}T_w T_{D_t}Z + G \\
=& -i\frac{\gamma^3}{4g^2}T_{T_{1-\bar{Y}}(r_\alpha + T_w R_\alpha )} Z + i\frac{\gamma^3}{4g^2}T_w (igX-i\gamma Z
) + G\\
=& -i\frac{\gamma^3}{4g^2} T_{r_\alpha}Z -\frac{\gamma^3}{4g}T_w X +\frac{\gamma^4}{4g^2}T_w Z +G,\\
i\frac{\gamma^3}{4g^2}T_{D_t}T_{r_\alpha} U =&   i\frac{\gamma^3}{4g^2}T_{T_{D_t}r_\alpha} U + i\frac{\gamma^3}{4g^2}T_{r_\alpha} T_{D_t}U + G\\
=& i\frac{\gamma^3}{4g^2}T_{-i\gamma r_\alpha +iT_{g+\ua} (x_\alpha -T_w Y_\alpha)}U - i\frac{\gamma^3}{4g^2}T_{r_\alpha}T_{1-\bar{Y}}Z \\
=& \frac{\gamma^4}{4g^2}T_{r_\alpha}U -\frac{\gamma^3}{4g}T_{w_\alpha}U - i\frac{\gamma^3}{4g^2}T_{r_\alpha}T_{1-\bar{Y}}Z,\\
i\frac{\gamma^3}{4g^2}T_{D_t}T_{\partial_\alpha^{-1}w} R =&   i\frac{\gamma^3}{4g^2}T_{T_{D_t}\partial_\alpha^{-1}w} R + i\frac{\gamma^3}{4g^2}T_{\partial_\alpha^{-1}w} T_{D_t}R + G\\
=& -i\frac{\gamma^3}{4g^2}T_{T_{1-\bar{Y}}(r+ T_w R)} R + i\frac{\gamma^3}{4g^2}T_{\partial_\alpha^{-1}w}(iT_{g+\ua}Y-i\gamma R)  +G\\
=& -i\frac{\gamma^3}{4g^2}T_{T_{1-\bar{Y}}r} R-\frac{\gamma^3}{4g}T_{\partial_\alpha^{-1}w}Y +\frac{\gamma^4}{4g^2}T_{\partial_\alpha^{-1}w}R +G, \\
\frac{\gamma^3}{4g}T_{1-\bar{Y}}\partial_\alpha T_w U =& \frac{\gamma^3}{4g}T_{1-\bar{Y}} T_{w_\alpha} U + \frac{\gamma^3}{4g}T_{1-\bar{Y}} T_w X +G.
\end{align*}
Here we changed $U_\alpha$ to $X$ according to \eqref{UAlphaX}.
The other $\gamma^3$ terms have ``over differentiated'' lowest frequency variables, and we put them into $G$.

For the contributions of corrections with $\gamma^4$ coefficient,
\begin{align*}
\frac{\gamma^4}{4g^2}T_{D_t}T_w U =& \frac{\gamma^4}{4g^2}T_{D_t w} U+ \frac{\gamma^4}{4g^2}T_w T_{D_t}U+G\\
&= -\frac{\gamma^4}{4g^2}T_{ T_{1-\bar{Y}}(r_\alpha + T_w R_\alpha )} U- \frac{\gamma^4}{4g^2}T_w T_{1-\bar{Y}}Z+G\\
&= -\frac{\gamma^4}{4g^2}T_{ T_{1-\bar{Y}}r_\alpha } U- \frac{\gamma^4}{4g^2}T_w T_{1-\bar{Y}}Z+G,\\
\frac{\gamma^4}{4g^2}T_{D_t}T_{\partial_\alpha^{-1}w} X &= \frac{\gamma^4}{4g^2}T_{T_{D_t}\partial_\alpha^{-1}w} X + \frac{\gamma^4}{4g^2}T_{\partial_\alpha^{-1}w} T_{D_t}X+G\\
&= -\frac{\gamma^4}{4g^2}T_{T_{1-\bar{Y}}r} X - \frac{\gamma^4}{4g^2}T_{\partial_\alpha^{-1}w} T_{1-\bar{Y}}R+G.
\end{align*}
The rest of correction terms with  $\gamma^4$ or $\gamma^5$ coefficient are perturbative.

Summing all of above contributions, we obtain that the total correction is
\begin{equation*}
  \nP T_{w_\alpha}\ub  + T_w \nP\left((1-\bar{Y})R_\alpha - i\frac{\gamma}{2}\W\right) +G.
\end{equation*}

$(2)$  Continuing with the contributions for the frequency balanced corrections with holomorphic variables in the first equation.
The computation is similar to case $(1)$ in the sense that we simply replace the unbalanced par-Leibniz error estimate by the corresponding balanced para-Leibniz estimate in Lemma \ref{t:Leibniz}.
As a result, we get the total correction is given by
\begin{equation*}
    \Pi\left(w_\alpha, \nP\ub\right)+ \Pi\left(w, (1-\bar{Y})R_\alpha -i\frac{\gamma}{2} \W\right)+G.
\end{equation*}

$(3)$ Continuing with the frequency balanced corrections with one anti-holomorphic variable.
The computation is similar to case $(2)$.
\begin{align*}
-T_{D_t}\Pi(w_\alpha , \bar{X}) &= -\Pi(\partial_\alpha T_{D_t}w, \bar{X} )-\Pi(w_\alpha, T_{D_t}\bar{X}) +G\\
&=\Pi(r_{\alpha \alpha}, \bar{X})+ \Pi(w_\alpha ,T_{1-Y} \bar{R})+G,\\
-T_{1-\bar{Y}}\partial_\alpha \Pi(r_\alpha, \bar{X}) &= -T_{1-\bar{Y}}\Pi(r_{\alpha \alpha}, \bar{X}) -\Pi(r_{\alpha}, T_{1-\bar{Y}}\bar{X}_\alpha)+G\\
&= -T_{1-\bar{Y}}\Pi(r_{\alpha \alpha}, \bar{X}) -\Pi(r_{\alpha}, \bar{Y})+G.
\end{align*}
For the last term we use \eqref{XAlphaY} to replace $T_{1-\bar{Y}}\bar{X}_\alpha$ by $\bar{Y}$.
Then we have
\begin{align*}
-\frac{\gamma}{2g}T_{D_t}\Pi(w_\alpha, \bar{Z}) &= -\frac{\gamma}{2g}\Pi(\partial_\alpha T_{D_t}w, \bar{Z}) -\frac{\gamma}{2g}\Pi(w_\alpha, T_{D_t}\bar{Z})+ G\\
&= \frac{\gamma}{2g}\Pi(r_{\alpha \alpha} + T_w R_{\alpha\alpha}, \bar{Z}) -\frac{\gamma}{2g}\Pi(w_\alpha, -ig\bar{X}+i\gamma \bar{Z})+ G\\
&= \frac{\gamma}{2g}\Pi(r_{\alpha \alpha} , \bar{Z}) +i\frac{\gamma}{2}\Pi(w_\alpha, \bar{X})-i\frac{\gamma^2}{2g}\Pi(w_\alpha, \bar{Z})+ G,\\
-\frac{\gamma}{2g}T_{D_t}\Pi(r_\alpha, \bar{X}) &= -\frac{\gamma}{2g}\Pi(\partial_\alpha T_{D_t}r, \bar{X}) -\frac{\gamma}{2g}\Pi(r_\alpha, T_{D_t} \bar{X}) + G\\
&= -\frac{\gamma}{2g}\Pi(-i\gamma r_\alpha +iT_{g+\ua} (x_\alpha -T_w Y_\alpha), \bar{X}) +\frac{\gamma}{2g}\Pi(r_\alpha, T_{1-Y}\bar{R}) + G\\
&= i\frac{\gamma^2}{2g}\Pi(r_\alpha,\bar{X})-i\frac{\gamma}{2}\Pi(w_\alpha, \bar{X})+\frac{\gamma}{2g}\Pi(r_\alpha, T_{1-Y}\bar{R}) + G,\\
-\frac{\gamma}{2g}T_{1-\bar{Y}}\partial_\alpha \Pi(r_\alpha, \bar{Z})&= -\frac{\gamma}{2g}T_{1-\bar{Y}}\Pi(r_{\alpha \alpha}, \bar{Z})-\frac{\gamma}{2g}\Pi(r_\alpha, T_{1-\bar{Y}}\bar{R})+G,\\
i\frac{\gamma}{2}T_{1-\bar{Y}}\partial_\alpha \Pi(w, \bar{X})&= i\frac{\gamma}{2}\Pi(w_\alpha, T_{1-Y}\bar{X})+i\frac{\gamma}{2}\Pi(w, T_{1-\bar{Y}}\bar{X}_\alpha) +G.
\end{align*}
For the first term of the last contribution, and the second term of the first contribution, we can combine them to get
\begin{equation*}
 \Pi\left(w_\alpha, T_{1-Y}\bar{R}+i\frac{\gamma}{2}T_{1-Y}\bar{X}\right) = \Pi\left(w_\alpha, \bar{\nP}\left((1-Y)\bar{R}+ i\frac{\gamma}{2}(1-Y)\bar{W}\right)\right) +G = \Pi(w_\alpha, \bar{\nP}\ub) +G,
\end{equation*}
which is the other part of the desired balanced truncation $\Pi(w_\alpha, \ub)$ in $\nP\underline{\mathcal{G}}_1$.

For the contributions of $\gamma^2$ term, they are
\begin{align*}
-i\frac{\gamma^2}{2g}T_{D_t}\Pi(w_\alpha, \bar{U}) & = -i\frac{\gamma^2}{2g}\Pi( \partial_\alpha T_{D_t}w, \bar{U}) -i\frac{\gamma^2}{2g}\Pi(w_\alpha, T_{D_t}\bar{U}) +G \\
 & = i\frac{\gamma^2}{2g}\Pi( r_{\alpha \alpha}+ T_w R_{\alpha\alpha}, \bar{U}) +i\frac{\gamma^2}{2g}\Pi(w_\alpha, T_{1-Y}\bar{Z}) +G \\
 & = i\frac{\gamma^2}{2g}\Pi( r_{\alpha \alpha} , \bar{U}) +i\frac{\gamma^2}{2g}\Pi(w_\alpha, T_{1-Y}\bar{Z}) +G, \\
i\frac{\gamma^2}{4g}T_{D_t}\Pi(w, \bar{X}) &= i\frac{\gamma^2}{4g}\Pi(T_{D_t}w, \bar{X}) +i\frac{\gamma^2}{4g}\Pi(w, T_{D_t}\bar{X}) +G \\
&= -i\frac{\gamma^2}{4g}\Pi(r_\alpha + T_w R_{\alpha}, \bar{X}) -i\frac{\gamma^2}{4g}\Pi(w, T_{1-Y}\bar{R}) +G \\
&= -i\frac{\gamma^2}{4g}\Pi(r_\alpha, \bar{X}) -i\frac{\gamma^2}{4g}\Pi(w, T_{1-Y}\bar{R}) +G, \\
-\frac{\gamma^2}{4g^2}T_{D_t}\Pi(r_\alpha, \bar{Z})& = -\frac{\gamma^2}{4g^2}\Pi(\partial_\alpha T_{D_t}r, \bar{Z}) -\frac{\gamma^2}{4g^2}\Pi(r_\alpha, T_{D_t}\bar{Z}) +G \\
& = -\frac{\gamma^2}{4g^2}\Pi(-i\gamma r_\alpha +iT_{g+\ua} (x_\alpha -T_w Y_\alpha), \bar{Z}) -\frac{\gamma^2}{4g^2}\Pi(r_\alpha, -ig\bar{X}+i\gamma \bar{Z}) +G\\
& =-i\frac{\gamma^2}{4g}\Pi(w_\alpha, \bar{Z}) +i\frac{\gamma^2}{4g}\Pi(r_\alpha, \bar{X})+G,\\
-i\frac{\gamma^2}{2g}T_{1-\bar{Y}}\partial_\alpha \Pi(r_\alpha, \bar{U} ) &= -i\frac{\gamma^2}{2g}T_{1-\bar{Y}}\Pi(r_{\alpha \alpha}, \bar{U}) -i\frac{\gamma^2}{2g} \Pi(r_\alpha, T_{1-\bar{Y}}\bar{X} ) +G,\\
i\frac{\gamma^2}{4g}T_{1-\bar{Y}}\partial_\alpha\Pi(w, \bar{Z} ) &= i\frac{\gamma^2}{4g}T_{1-\bar{Y}}\Pi(w_\alpha, \bar{Z} ) + i\frac{\gamma^2}{4g}\Pi(w, T_{1-\bar{Y}}\bar{R} ) +G.
\end{align*}
The rest of the contributions are given by
\begin{align*}
i\frac{\gamma^3}{4g^2}T_{D_t}\Pi(w,\bar{Z}) &= i\frac{\gamma^3}{4g^2}\Pi( T_{D_t}w,\bar{Z}) + i\frac{\gamma^3}{4g^2}\Pi(w, T_{D_t}\bar{Z}) +G \\
&= -i\frac{\gamma^3}{4g^2}\Pi( r_\alpha + T_w R_{\alpha},\bar{Z}) + i\frac{\gamma^3}{4g^2}\Pi(w, -ig\bar{X}+i\gamma \bar{Z}) +G \\
& = -i\frac{\gamma^3}{4g^2}\Pi(r_\alpha, \bar{Z}) + \frac{\gamma^3}{4g}\Pi(w, \bar{X})-\frac{\gamma^4}{4g^2}\Pi(w, \bar{Z})+G,\\
-i\frac{\gamma^3}{4g^2}T_{D_t}\Pi(r_\alpha ,\bar{U}) &= -i\frac{\gamma^3}{4g^2}\Pi( \partial_\alpha T_{D_t}r,\bar{U}) - i\frac{\gamma^3}{4g^2}\Pi(r_\alpha, T_{D_t}\bar{U}) +G\\
&= -i\frac{\gamma^3}{4g^2}\Pi( -i\gamma r_\alpha +iT_{g+\ua} (x_\alpha -T_w Y_\alpha) ,\bar{U}) + i\frac{\gamma^3}{4g^2}\Pi(r_\alpha, T_{1-Y}\bar{Z}) +G\\
&= -\frac{\gamma^4}{4g^2}\Pi(r_\alpha, \bar{U})+ \frac{\gamma^3}{4g}\Pi(w_\alpha, \bar{U})+ i\frac{\gamma^3}{4g^2}\Pi(r_\alpha, T_{1-Y}\bar{Z}) +G,\\
-\frac{\gamma^4}{4g^2}T_{D_t}\Pi(w, \bar{U}) &= -\frac{\gamma^4}{4g^2}\Pi(T_{D_t} w, \bar{U}) -\frac{\gamma^4}{4g^2}\Pi(w, T_{D_t}\bar{U}) +G \\
&= \frac{\gamma^4}{4g^2}\Pi(r_\alpha, \bar{U}) + \frac{\gamma^4}{4g^2}\Pi(w, T_{1-Y}\bar{Z}) +G, \\
-\frac{\gamma^3}{4g}T_{1-\bar{Y}}\partial_\alpha\Pi(w, \bar{U})& =- \frac{\gamma^3}{4g}T_{1-\bar{Y}}\Pi(w_\alpha, \bar{U}) - \frac{\gamma^3}{4g}\Pi(w, T_{1-\bar{Y}}\bar{X}) +G.
\end{align*}
The other contributions have full or over differentiated variables at low frequency, and can be put into $G$.
Collecting all the corrections above, the total  frequency balanced corrections with one anti-holomorphic variable are given by
\begin{equation*}
    -\Pi(r_\alpha, \bar{Y})+ \Pi(w_\alpha, \bar{\nP}\ub)+ \Pi\left(w, i\frac{\gamma}{2}\bar{W}\right) +G.
\end{equation*}
Adding the contributions of corrections in cases $(1)$, $(2)$, $(3)$, the sum is exactly $-\nP\underline{\mathcal{G}}_1 +G$.

$(4)$  Next, we compute the second equation in \eqref{NormalFormGKOne}.
We begin with the high-low corrections.
Consider first  the contributions of non-vorticity terms,
\begin{align*}
-T_{D_t}T_{r_\alpha}X =& -T_{\partial_\alpha T_{D_t}r }X -T_{r_\alpha}T_{D_t}X +K\\
=&iT_{\gamma r_\alpha -T_{g+\ua} (x_\alpha -T_w Y_\alpha)}X +T_{r_\alpha}T_{1-\bar{Y}}R +K\\
=& i\gamma T_{r_\alpha}X -iT_{g+\ua}T_{((1-Y)w)_\alpha}X+T_{r_\alpha}T_{1-\bar{Y}}R +K,\\
iT_{1-Y}T_{g+\ua}\partial_\alpha T_w X =& iT_{1-Y}T_{g+\ua}T_{((1-Y)w)_\alpha}W + iT_{1-Y}T_{g+\ua}T_w T_{1-Y}\W +K \\
=& iT_{g+\ua}T_{((1-Y)w)_\alpha}X +K.
\end{align*}
Then for the contributions of the $\gamma$ terms, they are $-i\gamma T_{r_\alpha}X$ and
\begin{align*}
-\frac{\gamma}{2g}T_{D_t}T_{r_\alpha}Z&= -\frac{\gamma}{2g}T_{\partial_\alpha T_{D_t}r}Z -\frac{\gamma}{2g}T_{r_\alpha}T_{D_t}Z +K\\
&= i\frac{\gamma}{2g}T_{\gamma r_\alpha -T_{g+\ua} (x_\alpha -T_w Y_\alpha)}Z -\frac{\gamma}{2g}T_{r_\alpha}(igX-i\gamma Z) +K\\
&= i\frac{\gamma^2}{g}T_{r_\alpha}Z -i\frac{\gamma}{2}T_{w_\alpha}Z -i\frac{\gamma}{2}T_{r_\alpha}X+K,\\
-\frac{\gamma}{2g}T_{D_t}T_r R &= -\frac{\gamma}{2g}T_{D_t r}R -\frac{\gamma}{2g}T_r T_{D_t} R +K\\
&= i\frac{\gamma}{2g}T_{\gamma r -T_{g+\ua} (x -T_w Y)}R -\frac{\gamma}{2g}T_r (igT_{1-Y}\W -i\gamma R) +K\\
&=i\frac{\gamma^2}{g}T_r R-i\frac{\gamma}{2}T_w R -i\frac{\gamma}{2}T_r T_{1-Y}\W +K,\\
i\frac{\gamma}{2}T_{D_t}T_w X & = i\frac{\gamma}{2}T_{D_t w}X+ i\frac{\gamma}{2}T_w T_{D_t}X +K\\
&= -i\frac{\gamma}{2}T_{T_{1-\bar{Y}}(r_\alpha + T_w R_\alpha )}X- i\frac{\gamma}{2}T_w T_{1-\bar{Y}}R +K\\
&= -i\frac{\gamma}{2}T_{T_{1-\bar{Y}}r_\alpha }X- i\frac{\gamma}{2}T_w T_{1-\bar{Y}}R +K,\\
i\frac{\gamma}{2g}T_{1-Y}T_{g+\ua}T_{w_\alpha}Z = i\frac{\gamma}{2}T_{w_\alpha}Z& +K,\quad
i\frac{\gamma}{2g}T_{1-Y}T_{g+\ua}T_{r}X_\alpha = i\frac{\gamma}{2}T_r T_{1-Y}\W +K,\\
i\frac{\gamma}{2g}T_{1-Y}T_{g+\ua}T_{r_\alpha}X = i\frac{\gamma}{2}T_{r_\alpha} X& +K,\quad
i\frac{\gamma}{2g}T_{1-Y}T_{g+\ua}T_{w}R = i\frac{\gamma}{2}T_{w} R +K.
\end{align*}
Next, for the contributions of the $\gamma^2$ terms, they are given by
\begin{align*}
-i\frac{\gamma^2}{2g} T_{r_\alpha}Z &-i\frac{\gamma^2}{2g} T_{r}R -\frac{\gamma^2}{2}T_w X, \\
i\frac{\gamma^2}{2g}T_{D_t}T_{r_\alpha}U &=  i\frac{\gamma^2}{2g}T_{T_{D_t} r_\alpha}U + i\frac{\gamma^2}{2g}T_{r_\alpha}T_{D_t}U +K\\
&= i\frac{\gamma^2}{2g}T_{-i\gamma r_\alpha +iT_{g+\ua} (x_\alpha -T_w Y_\alpha)}U - i\frac{\gamma^2}{2g}T_{r_\alpha}T_{1-\bar{Y}}Z +K\\
&= \frac{\gamma^3}{2g}T_{r_\alpha}U -\frac{\gamma^2}{2}T_{T_{1-Y}w_\alpha} U - i\frac{\gamma^2}{2g}T_{r_\alpha}T_{1-\bar{Y}}Z +K,\\
i\frac{\gamma^2}{2g}T_{D_t}T_{\partial_\alpha^{-1}w}R &= i\frac{\gamma^2}{2g}T_{T_{D_t}\partial_\alpha^{-1}w}R+i\frac{\gamma^2}{2g}T_{\partial_\alpha^{-1}w}T_{D_t}R+K\\
&= -i\frac{\gamma^2}{2g}T_{T_{1-\bar{Y}}(r+ T_w R)}R+i\frac{\gamma^2}{2g}T_{\partial_\alpha^{-1}w}(iT_{g+\ua}Y-i\gamma R)+K\\
& = -i\frac{\gamma^2}{2g}T_{T_{1-\bar{Y}}r}R -\frac{\gamma^2}{2g}T_{\partial_\alpha^{-1}w}T_{g+\ua}Y+\frac{\gamma^3}{2g}T_{\partial_\alpha^{-1}w}R +K,\\
i\frac{\gamma^2}{4g}T_{D_t}T_w Z & = i\frac{\gamma^2}{4g}T_{D_t w} Z + i\frac{\gamma^2}{4g}T_w T_{D_t}Z +K\\
& = -i\frac{\gamma^2}{4g}T_{T_{1-\bar{Y}}(r_\alpha + T_w R_\alpha )} Z + i\frac{\gamma^2}{4g}T_w (igX-i\gamma Z) +K\\
& = -i\frac{\gamma^2}{4g}T_{T_{1-\bar{Y}}r_\alpha} Z- \frac{\gamma^2}{4}T_w X +\frac{\gamma^3}{4g}T_w Z +K,\\
i\frac{\gamma^2}{4g}T_{D_t}T_r X &= i\frac{\gamma^2}{4g}T_{D_t r} X + i\frac{\gamma^2}{4g}T_r T_{D_t}X +K \\
&= i\frac{\gamma^2}{4g}T_{-i\gamma r +iT_{g+\ua} (x -T_w Y)} X - i\frac{\gamma^2}{4g}T_r T_{1-\bar{Y}}R +K\\
&= \frac{\gamma^3}{4g}T_r X -\frac{\gamma^2}{4}T_w X - i\frac{\gamma^2}{4g}T_r T_{1-\bar{Y}}R +K,\\
\frac{\gamma^2}{2g}T_{1-Y}T_{g+\ua}T_{w_\alpha}U=& \frac{\gamma^2}{2}T_{1-Y}T_{w_\alpha}U+K, \quad \frac{\gamma^2}{2g}T_{1-Y}T_{g+\ua}T_{\partial_\alpha^{-1}w}X_\alpha =  \frac{\gamma^2}{2g}T_{g+\ua}T_{\partial_\alpha^{-1}w}Y +K, \\
\frac{\gamma^2}{g}T_{1-Y}T_{g+\ua}T_{w}X=& \gamma^2T_{1-Y}T_{w}X+K, \quad i\frac{\gamma^2}{4g^2}T_{1-Y}T_{g+\ua}T_{r_\alpha}Z = i\frac{\gamma^2}{4g}T_{1-Y}T_{r_\alpha}Z+K,\\
i\frac{\gamma^2}{4g^2}T_{1-Y}T_{g+\ua}T_{r}Z_\alpha =& i\frac{\gamma^2}{4g}T_{1-Y}T_{r}R+K.
\end{align*}
Continuing with the contributions of the $\gamma^3$ corrections, the first few terms are
\begin{equation*}
    -\frac{\gamma^3}{2g}T_{r_\alpha}U -\frac{\gamma^3}{2g}T_{\partial_\alpha^{-1}w}R-\frac{\gamma^3}{4g}T_w Z -\frac{\gamma^3}{4g}T_r X.
\end{equation*}
The contributions of other $\gamma^3$ terms are given by
\begin{align*}
\frac{\gamma^3}{4g}T_{D_t}T_w U &= \frac{\gamma^3}{4g}T_{D_t w} U + \frac{\gamma^3}{4g}T_w T_{D_t} U +K\\
& =  -\frac{\gamma^3}{4g}T_{T_{1-\bar{Y}}(r_\alpha + T_w R_\alpha )} U - \frac{\gamma^3}{4g}T_w T_{1-\bar{Y}}Z +K\\
& =  -\frac{\gamma^3}{4g}T_{T_{1-\bar{Y}}r_\alpha} U - \frac{\gamma^3}{4g}T_w T_{1-\bar{Y}}Z +K,\\
\frac{\gamma^3}{4g}T_{D_t}T_{\partial_\alpha^{-1}w}X &= \frac{\gamma^3}{4g}T_{T_{D_t}\partial_\alpha^{-1}w}X + \frac{\gamma^3}{4g}T_{\partial_\alpha^{-1}w}T_{D_t}X +K\\
&= -\frac{\gamma^3}{4g}T_{T_{1-\bar{Y}}(r+ T_w R)}X - \frac{\gamma^3}{4g}T_{\partial_\alpha^{-1}w}T_{1-\bar{Y}}R +K\\
&= -\frac{\gamma^3}{4g}T_{T_{1-\bar{Y}}r}X - \frac{\gamma^3}{4g}T_{\partial_\alpha^{-1}w}T_{1-\bar{Y}}R +K,\\
\frac{\gamma^3}{4g^2}T_{1-Y}T_{g+\ua}T_r X = & \frac{\gamma^3}{4g}T_{1-Y}T_r X +K, \quad \frac{\gamma^3}{4g^2}T_{1-Y}T_{g+\ua}T_{w}Z = \frac{\gamma^3}{4g}T_{1-Y}T_{w}Z+ K, \\
\frac{\gamma^3}{4g^2}T_{1-Y}T_{g+\ua}T_{r_\alpha} U = & \frac{\gamma^3}{4g}T_{1-Y}T_{r_\alpha} U +K, \quad \frac{\gamma^3}{4g^2}T_{1-Y}T_{g+\ua}T_{\partial_\alpha^{-1}w}R = \frac{\gamma^3}{4g}T_{1-Y}T_{\partial_\alpha^{-1}w}R+ K.
\end{align*}
For the contributions of corrections with $\gamma^4$ coefficient, they are given by
\begin{align*}
    i\frac{\gamma^4}{4g}T_w U + i\frac{\gamma^4}{4g}T_{\partial_\alpha^{-1}w} X,& \\
    -i\frac{\gamma^4}{4g^2}T_{1-Y}T_{g+\ua}(T_w U +\partial_\alpha^{-1}X) &= -i\frac{\gamma^4}{4g}T_w U -i\frac{\gamma^4}{4g}T_{\partial_\alpha^{-1}w} X+K.
\end{align*}
Adding all the contributions above, we get the total correction
\begin{equation*}
    \nP T_{r_\alpha}\ub-  i\frac{\gamma}{2}T_{1-Y}T_w R + T_{(g+\ua)w}Y+K.
\end{equation*}

$(5)$  Continuing with the contributions for the frequency balanced corrections with holomorphic variables for the second equation of \eqref{NormalFormGKOne}.
The computation is similar to case $(4)$ in the sense that we simply replace the unbalanced par-Leibniz error estimates by the corresponding balanced para-Leibniz estimates in Lemma \ref{t:Leibniz}.
Consequently, the total correction is given by
\begin{equation*}
    \Pi\left(r_\alpha, \nP(R(1-\bar{Y}))-i\frac{\gamma}{2}\nP(W(1-\bar{Y}))\right)-i\frac{\gamma}{2}T_{1-Y} \Pi(w, R) + \Pi((g+\ua)w, Y)+K.
\end{equation*}

$(6)$ Finally, we  compute the frequency balanced corrections with one anti-holomorphic variable for the second equation of \eqref{NormalFormGKOne}.
\begin{align*}
-T_{D_t}\Pi(r_\alpha, \bar{X}) &= -\Pi(\partial_\alpha T_{D_t}r, \bar{X})-\Pi(r_\alpha, T_{D_t}\bar{X}) +K\\
&= -\Pi(-i\gamma r_\alpha +iT_{g+\ua} (x_\alpha -T_w Y_\alpha), \bar{X})+\Pi(r_\alpha, T_{1-Y}\bar{R}) +K\\
&= i\gamma \Pi(r_\alpha , \bar{X})- ig\Pi(T_{1-Y}w_\alpha, \bar{X})+\Pi(r_\alpha, T_{1-Y}\bar{R}) +K,\\
iT_{1-Y}T_{g+\ua}\Pi(w_\alpha, \bar{X})&= ig \Pi(T_{1-Y}w_\alpha, \bar{X}) +K,\quad -i\gamma\Pi(r_\alpha, \bar{X}),\\
-\frac{\gamma}{2g}T_{D_t}\Pi(r_\alpha, \bar{Z})& = -\frac{\gamma}{2g}\Pi(\partial_\alpha T_{D_t} r, \bar{Z}) -\frac{\gamma}{2g}\Pi(r_\alpha, T_{D_t}\bar{Z}) +K\\
& = -\frac{\gamma}{2g}\Pi(-i\gamma r_\alpha +iT_{g+\ua} (x_\alpha -T_w Y_\alpha), \bar{Z}) -\frac{\gamma}{2g}\Pi(r_\alpha, -ig\bar{X}+i\gamma \bar{Z}) +K\\
& = -i\frac{\gamma}{2}\Pi(T_{1-Y}w_\alpha, \bar{Z})+ i\frac{\gamma}{2}\Pi(r_\alpha, \bar{X})+K,\\
i\frac{\gamma}{2}T_{D_t}\Pi(w, \bar{X}) &= i\frac{\gamma}{2}\Pi(T_{D_t}w, \bar{X}) + i\frac{\gamma}{2}\Pi(w, T_{D_t}\bar{X}) +K \\
&= -i\frac{\gamma}{2}\Pi(T_{1-\bar{Y}}(r_\alpha + T_w R_\alpha ), \bar{X}) - i\frac{\gamma}{2}\Pi(w, T_{1-Y}\bar{R}) +K\\
&= -i\frac{\gamma}{2}\Pi(T_{1-\bar{Y}}r_\alpha, \bar{X}) - i\frac{\gamma}{2}T_{1-Y}\Pi(w, \bar{R}) +K,\\
i\frac{\gamma}{2g}T_{1-Y}T_{g+\ua}\Pi(w_\alpha, \bar{Z}) & = i\frac{\gamma}{2}\Pi(T_{1-Y}w_\alpha, \bar{Z})+K,\\
i\frac{\gamma}{2g}T_{1-Y}T_{g+\ua}\Pi(r_\alpha, \bar{X})  &= i\frac{\gamma}{2}\Pi(r_\alpha, T_{1-Y}\bar{X})+K.
\end{align*}
The contributions of $\gamma^2$ correction terms are given by
\begin{align*}
-i\frac{\gamma^2}{2g}\Pi(r_\alpha, \bar{Z})-&i\frac{\gamma^2}{2}\Pi(w, \bar{X}),\\
-i\frac{\gamma^2}{2g}T_{D_t}\Pi(r_\alpha, \bar{U})&= -i\frac{\gamma^2}{2g}\Pi(\partial_\alpha T_{D_t} r, \bar{U}) -\frac{\gamma^2}{2g}\Pi(r_\alpha, T_{D_t}\bar{U})+K\\
&= -i\frac{\gamma^2}{2g}\Pi(-i\gamma r_\alpha +iT_{g+\ua} (x_\alpha -T_w Y_\alpha), \bar{U}) +i\frac{\gamma^2}{2g}\Pi(r_\alpha, T_{1-Y}\bar{Z})+K\\
&= -\frac{\gamma^3}{2g}\Pi(r_\alpha, \bar{U})+ \frac{\gamma^2}{2}\Pi(T_{1-Y}w_\alpha , \bar{U}) +i\frac{\gamma^2}{2g}\Pi(r_\alpha, T_{1-Y}\bar{Z})+K,\\
i\frac{\gamma^2}{4g}T_{D_t}\Pi(w, \bar{Z})&= i\frac{\gamma^2}{4g}\Pi(T_{D_t} w, \bar{Z}) + i\frac{\gamma^2}{4g}\Pi(w, T_{D_t}\bar{Z}) +K\\
&= -i\frac{\gamma^2}{4g}\Pi(T_{1-\bar{Y}}(r_\alpha + T_w R_\alpha ), \bar{Z}) + i\frac{\gamma^2}{4g}\Pi(w, -ig\bar{X}+i\gamma \bar{Z}) +K\\
&= -i\frac{\gamma^2}{4g}\Pi(T_{1-\bar{Y}}r_\alpha , \bar{Z}) +\frac{\gamma^2}{4}\Pi(w, \bar{X})-\frac{\gamma^3}{4g}\Pi(w,\bar{Z})+K,\\
-\frac{\gamma^2}{2g}T_{1-Y}T_{g+\ua}\Pi(w_\alpha, \bar{U}) & = -\frac{\gamma^2}{2}\Pi(T_{1-Y} w_\alpha, \bar{U}) +K,\\
\frac{\gamma^2}{4g}T_{1-Y}T_{g+\ua}\Pi(w,\bar{X}) & = \frac{\gamma^2}{4}\Pi(T_{1-Y}w,\bar{X})+K,\\
i\frac{\gamma^2}{4g^2}T_{1-Y}T_{g+\ua}\Pi(r_\alpha, \bar{Z}) &= i\frac{\gamma^2}{4g}\Pi(T_{1-Y}r_\alpha, \bar{Z})+K.
\end{align*}
Continuing with the contributions of $\gamma^3$ and $\gamma^4$ terms, they are
\begin{align*}
\frac{\gamma^3}{2g}\Pi(r_\alpha, \bar{U})-&\frac{\gamma^3}{4g}\Pi(w,\bar{Z}) -i\frac{\gamma^4}{4g}\Pi(w,\bar{U}),\\
-\frac{\gamma^3}{4g}T_{D_t}\Pi(w, \bar{U}) &= -\frac{\gamma^3}{4g}\Pi(T_{D_t}w, \bar{U})-\frac{\gamma^3}{4g}\Pi(w, T_{D_t}\bar{U})+K\\
&= \frac{\gamma^3}{4g}\Pi(T_{1-\bar{Y}}(r_\alpha + T_w R_\alpha ), \bar{U})+\frac{\gamma^3}{4g}\Pi(w, T_{1-Y}\bar{Z})+K\\
&= \frac{\gamma^3}{4g}\Pi(T_{1-\bar{Y}}r_\alpha , \bar{U})+\frac{\gamma^3}{4g}\Pi(w, T_{1-Y}\bar{Z})+K,\\
\frac{\gamma^3}{4g^2}T_{1-Y}T_{g+\ua}\Pi(w, \bar{Z})&= \frac{\gamma^3}{4g}\Pi(T_{1-Y}w, \bar{Z})+K, \\
-\frac{\gamma^3}{4g^2}T_{1-Y}T_{g+\ua}\Pi(r_\alpha, \bar{U})&= -\frac{\gamma^3}{4g}\Pi(T_{1-Y}r_\alpha, \bar{U})+K,\\
i\frac{\gamma^4}{4g^2}T_{1-Y}T_{g+\ua}\Pi(w, \bar{U})&= i\frac{\gamma^4}{4g}\Pi(T_{1-Y}w, \bar{U})+K.
\end{align*}
Summing all the contributions above and using the definitions of $\ua$ and $\ub$, we get that the total correction in this case is given by
\begin{equation*}
\Pi\left(r_\alpha, T_{1-Y}\bar{R}+i\frac{\gamma}{2}T_{1-Y}\bar{X}\right)-i\frac{\gamma}{2}T_{1-Y}\Pi(w, \bar{R})+K = \Pi(r_\alpha, \bar{\nP}\ub)-iT_{1-Y}\Pi(w, \bar{\nP}\ua)+K.
\end{equation*}

Collecting the contributions of corrections in cases $(4)$, $(5)$, $(6)$ together, the sum is exactly $-\nP\underline{\mathcal{K}}_1 +K$.
\end{proof}

\subsection{Normal form analysis for $(\underline{\mathcal{G}}_0, \underline{\mathcal{K}}_0)$} \label{s:GKZero}
In this subsection, we use the paradifferential normal form corrections to remove the main  part of the source terms $(\nP\underline{\mathcal{G}}_0, \nP\underline{\mathcal{K}}_0)$ in the linearized equations.

For simplicity regarding the balanced cases, we let $\Pi = \nP\Pi$ to include the Littlewood-Paley projection $\nP$.
We will  then prove the following result:
\begin{proposition}
Assume that $(w,r)$ solve the paradifferential equations \eqref{ParadifferentialLinearEqn}, then there exists a linear paradifferential normal form correction
\begin{equation*}
    (\tilde{w}, \tilde{r}) = NF(w,r)
\end{equation*}
that solves the paradifferential equations
\begin{equation}
\left\{
             \begin{array}{lr}
             T_{D_t}\tilde{w}+T_{1-\bar{Y}}\partial_\alpha \tilde{r} + T_{(1-\bar{Y})R_\alpha }\tilde{w} +\gamma T_{\Im \W}\tilde{w} = -\nP \underline{\mathcal{G}}_0(w,r) +G &\\
           T_{D_t}\tilde{r} +i\gamma \tilde{r} -iT_{1-Y}T_{g+\ua}\tilde{w}=-\nP \underline{\mathcal{K}}_0(w,r) +K,&  \label{NormalFormGKZero}
             \end{array}
\right.
\end{equation}
with the following properties:
\begin{enumerate}
\item Quadratic correction bound:
\begin{equation*}
    \|NF(w,r)\|_{\doth{\frac{1}{4}}} \lesssim_{\uA} \uAS\left(\| (w,r)\|_{\doth{\frac{1}{4}}}+ \gamma^2 \|(w,r) \|_{\doth{-\frac{3}{4}}}\right).
\end{equation*}
\item Secondary correction bound:
\begin{equation*}
\|NF(w,r)\|_{\doth{\frac{1}{4}}}\lesssim_\uA \Astar\left(\|(w,r)\|_{\mathcal{H}^0}+ \gamma^2 \|(w,r) \|_{\doth{-1}}\right).
\end{equation*}
\item Cubic error bound:
\begin{equation*}
    \|(G, K)\|_{\doth{\frac{1}{4}}} \lesssim_{\uAS} \underline{A}_{\frac{1}{4}}\uAStar\left(\| (w,r)\|_{\doth{\frac{1}{4}}}+ \gamma^2 \|(w,r) \|_{\doth{-\frac{3}{4}}}\right).
\end{equation*}
\end{enumerate}
\end{proposition}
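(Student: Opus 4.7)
The plan is to follow the same template as the preceding subsection, but now tailored to cancel the main source terms $\mathcal{G}_{0,0}$ and $\mathcal{K}_{0,0}$ identified in Lemma \ref{t:G0K0Estimate}. Guided by the anti-holomorphic portion of the full normal form \eqref{e:NormalForm}, I would write down an explicit bilinear ansatz $(\tilde w, \tilde r)$ built from paraproducts and balanced $\nP$-products of the linearized variables $w,r,x,u,\partial_\alpha^{-1}w,\partial_\alpha^{-1}r$ and their conjugates $\bar x, \bar w, \bar r$, paired against the auxiliary variables $W,R,X,Y,Z,U$ and their $\gamma$-weighted combinations. Each bilinear summand in $\mathcal{G}_{0,0},\mathcal{K}_{0,0}$ dictates one or more matching terms in the ansatz via the standard normal-form recipe: an anti-derivative times a conjugate bilinear product, chosen so that applying $T_{D_t}$ reproduces the target term after using the linearized equations for $T_{D_t}\bar w, T_{D_t}\bar r$ together with the leading-order expressions for $T_{D_t}\bar X,T_{D_t}\bar Z,T_{D_t}\bar U$ (the complex conjugates of the formulas in Lemma \ref{t:XParaMaterial} and its $Z,U$ analogue).

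To verify the quadratic correction bound I would estimate each summand separately. Balanced $\nP\Pi$ terms are handled by \eqref{BMOBalance} together with the $BMO^{1/4}$ bounds on $Y,R$, while low-high $T$ terms carrying $w$, $\gamma r$, or $\gamma^2\partial_\alpha^{-1}w$ at low frequency are bounded with the Sobolev embedding $\dot H^{1/4}\hookrightarrow L^4$ and the $L^4$-type components of $\uAS$ and $\uAStar$. The secondary bound is obtained in parallel by moving one factor into $L^\infty$ and replacing the $L^4$-type estimates by plain $BMO$ ones, which trades the $\uAS$ factor for $\Astar$.

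The bulk of the proof then consists in computing the left-hand side of \eqref{NormalFormGKZero} one summand at a time and showing that the output equals $-\nP\underline{\mathcal{G}}_0(w,r)$ plus errors in the cubic class $\uA_{1/4}\uAStar(\|(w,r)\|_{\doth{1/4}}+\gamma^2\|(w,r)\|_{\doth{-3/4}})$. I would distribute $T_{D_t}$ via the para-Leibniz rule of Lemma \ref{t:Leibniz}, using $\sigma=0$ whenever the low-frequency factor is $w,\gamma r$, or $\gamma^2\partial_\alpha^{-1}w$; substitute the leading expressions for $T_{D_t}$ of linearized and auxiliary variables from Lemmas \ref{t:XParaMaterial} and \ref{t:wrParaMaterialD}; rearrange paraproducts by \eqref{ParaCommutator}--\eqref{ParaProducts}; and absorb into $(G,K)$ any remainder which is fully or over-differentiated on its lowest-frequency factor, or which carries $\gamma\ua$ as a low-frequency coefficient. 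These are precisely the three bookkeeping rules already spelled out in the previous subsection. Contributions are grouped by powers of $\gamma$, producing a $\gamma$-cascade that terminates at $\gamma^4$.

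The main obstacle is organizing the cross-type interactions between holomorphic and anti-holomorphic factors under the global $\nP$ projector. Every time a conjugate variable is introduced inside a paraproduct, one must insert a commutator $[\nP,\cdot]$ and control it via Lemma \ref{t:CommutatorL2}; the delicate point is verifying that each such commutator costs at most one power of $\uA_{\frac{1}{4}}$ and is paid for by the $\uAStar$ factor, rather than producing an unbalanced $\uB$-type norm. A secondary obstacle is that the vorticity contributions from $T_{D_t}R$ and $T_{D_t}Y$ generate $i\gamma$-type feedback terms which must line up exactly with the explicit $-i\frac{\gamma}{2}$-coefficients in $\mathcal{G}_{0,0}$ and $\mathcal{K}_{0,0}$; this forces a careful choice of the relative coefficients of the $\gamma,\gamma^2,\gamma^3,\gamma^4$ summands in the ansatz, though the scheme is dictated by the same $\gamma$-cascade already worked out in the preceding subsection.
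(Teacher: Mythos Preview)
Your proposal is essentially correct and follows the paper's approach: the correction is dictated by the anti-holomorphic portion of the normal form \eqref{e:NormalForm}, one distributes $T_{D_t}$ via Lemma \ref{t:Leibniz}, substitutes the leading-term formulas from Section~\ref{s:TDtBound} and Section~\ref{s:SourceBound}, and absorbs remainders into $(G,K)$ using exactly the three bookkeeping rules from Section~\ref{s:GKOne}.

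Two minor points where the paper is more specific than your description. First, the ansatz is structurally simpler than you suggest: every summand is of the form $\nP(\bar f\, G)$ with $\bar f\in\{\bar x,\bar r,\bar u\}$ an anti-holomorphic linearized factor and $G\in\{\W,\,W,\,T_{1+\W}R\}$; there are no unconjugated linearized factors and no $\partial_\alpha^{-1}r$. This matches the fact that $\mathcal{G}_{0,0}$ and $\mathcal{K}_{0,0}$ already carry a single anti-holomorphic linearized factor. Second, the ``main obstacle'' you flag about $[\nP,\cdot]$ commutators is not really the issue here; since each correction term is already a global $\nP$ applied to a product with one anti-holomorphic factor, the relevant tool is para-associativity \eqref{ParaAssociateTwo} to shuffle $T_{1-Y}$, $T_{1-\bar Y}$, $T_{1+\W}$ across the $\nP$-product, together with the identities \eqref{XAlphaY}, \eqref{ZAlphaR}, \eqref{UAlphaX}. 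The genuine labor is purely the $\gamma$-cascade bookkeeping, which you correctly anticipate.
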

The idea to construct the paradifferential corrections here is similar to that for $(\nP\underline{\mathcal{G}}_1, \nP\underline{\mathcal{K}}_1)$ corrections in Section \ref{s:GKOne}.
Inspired by the normal form transformation \eqref{e:NormalForm}, we define the  corrections $(\tilde{w}, \tilde{r})$ by
\begin{align*}
\tilde{w} =& -\nP(\bar{x}\W)-\frac{\gamma}{2g}\nP(\bar{r}\W)-\frac{\gamma}{2g}\nP(\bar{x}T_{1+\W}R)-i\frac{\gamma^2}{2g}\nP(\bar{u}\W)
+i\frac{\gamma^2}{4g}\nP(\bar{x}W)\\
&-\frac{\gamma^2}{4g^2}\nP(\bar{r}T_{1+\W}R)+i\frac{\gamma^3}{4g^2}\nP(\bar{r} W)-i\frac{\gamma^3}{4g^2}\nP(\bar{u}T_{1+\W}R)-\frac{\gamma^4}{4g^2}\nP(\bar{u}W),\\
\tilde{r} =&-\nP(\bar{x}T_{1+\W}R)-\frac{\gamma}{2g}\nP(\bar{r}T_{1+\W}R)+i\frac{\gamma}{2}\nP(\bar{x}W)-i\frac{\gamma^2}{2g}\nP(\bar{u}T_{1+\W}R)\\
&+i\frac{\gamma^2}{4g}\nP(\bar{r}W)-\frac{\gamma^3}{4g}\nP(\bar{u}W).
\end{align*}
We remark that this paradifferential correction  is roughly the anti-holomorphic part of the linearization of the normal form transformation \eqref{e:NormalForm} at the quadratic level.

These correction bounds are easily verified and are similar  to the estimates in Section \ref{s:GKOne} using Sobolev embeddings.
Lemma \ref{t:G0K0Estimate} shows that the main part of $(\nP\underline{\mathcal{G}}_0, \nP\underline{\mathcal{K}}_0)$ is $(\mathcal{G}_{0,0}, \mathcal{K}_{0,0})$.
Therefore, it remains to show that these corrections  eliminate  $(\mathcal{G}_{0,0}, \mathcal{K}_{0,0})$ modulo perturbative terms.
\begin{proof}
We follow the same strategy in Section \ref{s:GKOne}, and put all the balanced cubic and higher order terms into $(G,K)$.
We will consider first the contributions of corrections for the first equation of \eqref{NormalFormGKZero}, then for the second equation of \eqref{NormalFormGKZero}.
For each case, we again compute according to the power of the vorticity $\gamma$.
When computing the para-material derivatives of paraproducts, we apply the para-Leibniz rule Lemma \ref{t:Leibniz} to distribute para-material derivatives to each variable.
For the expressions of material derivatives and para-material derivatives of each variable, we use the leading terms plus errors measured in either $BMO$ or $L^4$ based spaces in corresponding Lemmas in Section \ref{s:TDtBound} and Section \ref{s:SourceBound}.

$(1)$ We start with the first equation of \eqref{NormalFormGKZero}.
We begin by computing the contributions of corrections of non-vorticity and $\gamma$ terms.
\begin{align*}
-T_{D_t}\nP(\bar{x}\W) &= -\nP(D_t\bar{x}\W) -\nP(\bar{x}T_{D_t}\W)+G\\
&= \nP(T_{1-Y}(T_{1-\bar{Y}}\bar{r}_\alpha + T_{\bar{x}} \bar{R}_\alpha)\W) +\nP(\bar{x}T_{1+\W}T_{1-\bar{Y}}R_\alpha)+G\\
 &= \nP(T_{1-\bar{Y}}\bar{r}_\alpha T_{1-Y}\W) +\nP(\bar{x}T_{1+\W}T_{1-\bar{Y}}R_\alpha)+G\\
&= \nP(T_{1-\bar{Y}}\bar{r}_\alpha T_{1+\W}Y) +T_{1-\bar{Y}}\nP(\bar{x}T_{1+\W}R_\alpha)+G.
\end{align*}
Here we use \eqref{ParaAssociateTwo} to move para-coefficients $T_{1-Y}$, $T_{1-\bar{Y}}$.
We also use \eqref{XAlphaY} to change $T_{1-Y}\W$ to $T_{1+\W}Y$.
\begin{align*}
-T_{1-\bar{Y}}\partial_\alpha \nP(\bar{x}T_{1+\W}R) =& - T_{1-\bar{Y}}\nP(\bar{x}_\alpha T_{1+\W}R)  - T_{1-\bar{Y}}\nP(\bar{x}T_{1+\W}R_\alpha) +G, \\
-\frac{\gamma}{2g}T_{D_t}\nP(\bar{r}\W) =& -\frac{\gamma}{2g}\nP(D_t\bar{r}\W) -\frac{\gamma}{2g}\nP(\bar{r}T_{D_t}\W)+G\\
 =& -\frac{\gamma}{2g}\nP((i\gamma  \bar{r} - iT_{g+\ua}(\bar{x} -T_{\bar{w}} \bar{Y}))\W) \\
&+\frac{\gamma}{2g}\nP(\bar{r}T_{1+\W}T_{1-\bar{Y}}R_\alpha )+G \\
=& -i\frac{\gamma^2}{2g}\nP(\bar{r} \W)+i\frac{\gamma}{2}\nP(\bar{x}\W) +\frac{\gamma}{2g}T_{1-\bar{Y}}\nP(\bar{r}T_{1+\W}R_\alpha)+G,   \\
-\frac{\gamma}{2g}T_{D_t}\nP(\bar{x}T_{1+\W}R) =& -\frac{\gamma}{2g}\nP(D_t\bar{x}T_{1+\W}R) -\frac{\gamma}{2g}\nP(\bar{x}T_{1+\W}T_{D_t}R) -\frac{\gamma}{2g}\nP(\bar{x}T_{D_t\W}R)+G\\
=&  \frac{\gamma}{2g}\nP(T_{1-Y}(T_{1-\bar{Y}}\bar{r}_\alpha + T_{\bar{x}} \bar{R}_\alpha)T_{1+\W}R) \\
&-i\frac{\gamma}{2g}\nP(\bar{x}T_{1+\W}(T_{g+\ua}Y-\gamma R))+ \frac{\gamma}{2g}\nP(\bar{x}T_{T_{1+\W}T_{1-\bar{Y}}R_\alpha}R)+G\\
=& \frac{\gamma}{2g}T_{1-\bar{Y}}\nP(\bar{r}_\alpha R) -i\frac{\gamma}{2}\nP(\bar{x}T_{1+\W}Y)+i\frac{\gamma^2}{2g}\nP(\bar{x}T_{1+\W}R)+G,\\
-\frac{\gamma}{2g}T_{1-\bar{Y}}\partial_\alpha \nP(\bar{r}T_{1+\W}R)=& -\frac{\gamma}{2g} T_{1-\bar{Y}}\nP(\bar{r}_\alpha R) -\frac{\gamma}{2g}T_{1-\bar{Y}} \nP(\bar{r}T_{1+\W}R_\alpha) +G,\\
i\frac{\gamma}{2}T_{1-\bar{Y}}\partial_\alpha \nP(\bar{x}W) =& i\frac{\gamma}{2}T_{1-\bar{Y}}\nP(\bar{x}_\alpha W) + i\frac{\gamma}{2} T_{1-\bar{Y}}\nP(\bar{x}\W) +G.
\end{align*}

Continuing with the contributions of corrections of $\gamma^2$ terms, we compute
\begin{align*}
-i\frac{\gamma^2}{2g}T_{D_t}\nP(\bar{u}\W)=& -i\frac{\gamma^2}{2g}\nP(T_{D_t}\bar{u}\W)-i\frac{\gamma^2}{2g}\nP(\bar{u}T_{D_t}\W)+G\\
=& i\frac{\gamma^2}{2g}\nP(T_{1-Y}(T_{1-\bar{Y}}\bar{r}+ T_{\bar{x}} \bar{R})\W)+i\frac{\gamma^2}{2g}\nP(\bar{u}T_{1+\W}T_{1-\bar{Y}}R_\alpha)+G\\
=& i\frac{\gamma^2}{2g}\nP(\bar{r}\W)+i\frac{\gamma^2}{2g}T_{1-\bar{Y}}\nP(\bar{u}T_{1+\W}R_\alpha)+G,\\
i\frac{\gamma^2}{4g}T_{D_t}\nP(\bar{x}W)=& i\frac{\gamma^2}{4g}\nP(D_t \bar{x}W)+i\frac{\gamma^2}{4g}\nP(\bar{x}T_{D_t}W)+G\\
=& -i\frac{\gamma^2}{4g}\nP(T_{1-Y}(T_{1-\bar{Y}}\bar{r}_\alpha + T_{\bar{x}} \bar{R}_\alpha)W)-i\frac{\gamma^2}{4g}\nP(\bar{x}T_{1+\W}T_{1-\bar{Y}}R)+G\\
=& -i\frac{\gamma^2}{4g}\nP(T_{1-\bar{Y}}\bar{r}_\alpha T_{1-Y}W)-i\frac{\gamma^2}{4g}T_{1-\bar{Y}}\nP(\bar{x}T_{1+\W}R)+G,\\
-\frac{\gamma^2}{4g^2}T_{D_t}\nP(\bar{r}T_{1+\W}R) =&-\frac{\gamma^2}{4g^2}\nP(D_t\bar{r}T_{1+\W}R)-\frac{\gamma^2}{4g^2}\nP(\bar{r}T_{1+\W}T_{D_t}R)\\
&-\frac{\gamma^2}{4g^2}\nP(\bar{r}T_{D_t\W}R) +G\\
=&-\frac{\gamma^2}{4g^2}\nP((i\gamma \bar{r}-iT_{g+\ua}(\bar{x}-T_{\bar{w}}\bar{Y}))T_{1+\W}R)\\
&-\frac{\gamma^2}{4g^2}\nP(\bar{r}T_{1+\W}(iT_{g+\ua}Y -i\gamma R)) +\frac{\gamma^2}{4g^2}\nP(\bar{r}T_{T_{1+\W}T_{1-\bar{Y}}R_\alpha}R) +G\\
=& i\frac{\gamma^2}{4g}\nP(\bar{x}T_{1+\W}R)-\frac{\gamma^2}{4g}i\nP(\bar{r}T_{1+\W}Y)+G,\\
-i\frac{\gamma^2}{2g}T_{1-\bar{Y}}\partial_\alpha\nP(\bar{u}T_{1+\W}R)=& -i\frac{\gamma^2}{2g}T_{1-\bar{Y}}\nP(\bar{x}T_{1+\W}R)-i\frac{\gamma^2}{2g}T_{1-\bar{Y}}\nP(\bar{u}T_{1+\W}R_\alpha)+G, \\
i\frac{\gamma^2}{4g}T_{1-\bar{Y}}\partial_\alpha\nP(\bar{r}W)=& i\frac{\gamma^2}{4g}T_{1-\bar{Y}}\nP(\bar{r}_\alpha W) +i\frac{\gamma^2}{4g}T_{1-\bar{Y}}\nP(\bar{r}\W)+G.
\end{align*}
For the contributions of corrections of $\gamma^3$ and $\gamma^4$ terms, we compute
\begin{align*}
i\frac{\gamma^3}{4g^2}T_{D_t}\nP(\bar{r} W) =& i\frac{\gamma^3}{4g^2}\nP(D_t\bar{r} W) +i\frac{\gamma^3}{4g^2}\nP(\bar{r} T_{D_t}W) +G\\
=& i\frac{\gamma^3}{4g^2}\nP((i\gamma \bar{r}-iT_{g+\ua}(\bar{x}-T_{\bar{w}}\bar{Y})) W) -i\frac{\gamma^3}{4g^2}\nP(\bar{r} T_{1+\W}T_{1-\bar{Y}}R) +G\\
=& -\frac{\gamma^4}{4g^2}\nP(\bar{r}W)+\frac{\gamma^3}{4g}\nP(\bar{x}W)-i\frac{\gamma^3}{4g^2}T_{1-\bar{Y}}\nP(\bar{r} T_{1+\W}R) +G,\\
-i\frac{\gamma^3}{4g^2}T_{D_t}\nP(\bar{u}T_{1+\W}R) =& -i\frac{\gamma^3}{4g^2}\nP(T_{D_t}\bar{u}T_{1+\W}R) -i\frac{\gamma^3}{4g^2}\nP(\bar{u}T_{1+\W}T_{D_t}R)\\
&-i\frac{\gamma^3}{4g^2}\nP(\bar{u}T_{D_t\W}R)+G\\
=& i\frac{\gamma^3}{4g^2}\nP(T_{1-Y}(T_{1-\bar{Y}}\bar{r}+ T_{\bar{x}} \bar{R})T_{1+\W}R)\\
&-i\frac{\gamma^3}{4g^2}\nP(\bar{u}T_{1+\W}(iT_{g+\ua}Y-i\gamma R))+i\frac{\gamma^3}{4g^2}\nP(\bar{u}T_{T_{1+\W}T_{1-\bar{Y}}R_\alpha}R)+G\\
=& i\frac{\gamma^3}{4g^2}\nP(T_{1-\bar{Y}}\bar{r}R)+\frac{\gamma^3}{4g}\nP(\bar{u}T_{1+\W}Y)-\frac{\gamma^4}{4g^2}\nP(\bar{u}T_{1+\W}R)+G,\\
-\frac{\gamma^3}{4g}T_{1-\bar{Y}}\partial_\alpha\nP(\bar{u}W) =& -\frac{\gamma^3}{4g}\nP(T_{1-\bar{Y}}\bar{x}W) -\frac{\gamma^3}{4g}\nP(T_{1-\bar{Y}}\bar{u}\W) +G,\\
-\frac{\gamma^4}{4g^2}T_{D_t}\nP(\bar{u}W) =& -\frac{\gamma^4}{4g^2}\nP(T_{D_t}\bar{u}W) -\frac{\gamma^4}{4g^2}\nP(\bar{u}T_{D_t}W)+G  \\
=& \frac{\gamma^4}{4g^2}\nP(T_{1-Y}(T_{1-\bar{Y}}\bar{r}+ T_{\bar{x}} \bar{R})W) +\frac{\gamma^4}{4g^2}\nP(\bar{u}T_{1+\W}T_{1-\bar{Y}}R)+G\\
=& \frac{\gamma^4}{4g^2}\nP(\bar{r}W) +\frac{\gamma^4}{4g^2}T_{1-\bar{Y}}\nP(\bar{u}T_{1+\W}R)+G.
\end{align*}
The contributions of other corrections are perturbative and can be absorbed into $G$.
Collecting all the contributions of the corrections together, we find  that the total contributions for the first equation is exactly $-\mathcal{G}_{0,0} +G$.

$(2)$ Then we consider the second equation of \eqref{NormalFormGKZero}.
We begin by computing the contributions
of corrections of non-vorticity and $\gamma$ terms.
\begin{align*}
-T_{D_t}\nP(\bar{x}T_{1+\W}R) =& -\nP(D_t\bar{x}T_{1+\W}R) -\nP(\bar{x}T_{D_t\W}R) -\nP(\bar{x}T_{1+\W}T_{D_t}R) +K\\
=& \nP(T_{1-Y}(T_{1-\bar{Y}}\bar{r}_\alpha + T_{\bar{x}}\bar{R}_\alpha)T_{1+\W}R) +\nP(\bar{x}T_{T_{1+\W}T_{1-\bar{Y}}R_\alpha}R)\\
&-i\nP(\bar{x}T_{1+\W}(T_{g+\ua}Y -\gamma R)) +K\\
=& \nP(T_{1-\bar{Y}}\bar{r}_\alpha R)-iT_{g+\ua}\nP(\bar{x}T_{1+\W}Y)+i\gamma \nP(\bar{x}T_{1+\W}R)+K,\\
iT_{1-Y}T_{g+\ua}\nP(\bar{x}\W)=& iT_{g+\ua}\nP(\bar{x}T_{1+\W}Y) +K,\\
-\frac{\gamma}{2g}T_{D_t}\nP(\bar{r}T_{1+\W}R)=& -\frac{\gamma}{2g}\nP(D_t\bar{r}T_{1+\W}R) -\frac{\gamma}{2g}\nP(\bar{r}T_{D_t\W}R) -\frac{\gamma}{2g}\nP(\bar{r}T_{1+\W}T_{D_t}R)+K\\
=&-i\frac{\gamma}{2g}\nP((\gamma \bar{r}-T_{g+\ua}(\bar{x}-T_{\bar{w}}\bar{Y}))T_{1+\W}R) +\frac{\gamma}{2g}\nP(\bar{r}T_{T_{1+\W}T_{1-\bar{Y}}R_\alpha}R)\\
&-i\frac{\gamma}{2g}\nP(\bar{r}T_{1+\W}(T_{g+\ua}Y-\gamma R))+K\\
=& i\frac{\gamma}{2}\nP(\bar{x}T_{1+\W}R) -i\frac{\gamma}{2g}T_{g+\ua}\nP(\bar{r}T_{1+\W}Y)+K,\\
i\frac{\gamma}{2}T_{D_t}\nP(\bar{x}W)=& i\frac{\gamma}{2}\nP(D_t\bar{x}W)+i\frac{\gamma}{2}\nP(\bar{x}T_{D_t}W) +K\\
=& -i\frac{\gamma}{2}\nP(T_{1-Y}(T_{1-\bar{Y}}\bar{r}_\alpha + T_{\bar{x}}\bar{R}_\alpha)W)-i\frac{\gamma}{2}\nP(\bar{x}T_{1+\W}T_{1-\bar{Y}}R) +K\\
=& -i\frac{\gamma}{2}\nP(T_{1-\bar{Y}}\bar{r}_\alpha W)-i\frac{\gamma}{2}\nP(\bar{x}T_{1+\W}R) +K,\\
i\frac{\gamma}{2g}T_{1-Y}T_{g+\ua}\nP(\bar{r}\W)=& i\frac{\gamma}{2g}T_{g+\ua}\nP(\bar{r}T_{1-Y}\W)+K = i\frac{\gamma}{2g}T_{g+\ua}\nP(\bar{r}T_{1+\W}Y)+K,\\
i\frac{\gamma}{2g}T_{1-Y}T_{g+\ua}\nP(\bar{x}T_{1+\W}R)=& i\frac{\gamma}{2}\nP(\bar{x}T_{1+\W}R)+K, \quad \text{ and }
-i\gamma\nP(\bar{x}T_{1+\W}R).
\end{align*}
Continuing with the contributions of corrections of $\gamma^2$  terms, we compute
\begin{align*}
-i\frac{\gamma^2}{2g}T_{D_t}\nP(\bar{u}T_{1+\W}R) =& -i\frac{\gamma^2}{2g}\nP(T_{D_t}\bar{u}T_{1+\W}R) -i\frac{\gamma^2}{2g}\nP(\bar{u}T_{D_t\W}R) \\
&-i\frac{\gamma^2}{2g}\nP(\bar{u}T_{1+\W}T_{D_t}R)+K\\
=& i\frac{\gamma^2}{2g}\nP(T_{1-Y}(T_{1-\bar{Y}}\bar{r}+T_{\bar{x}}\bar{R})T_{1+\W}R) +i\frac{\gamma^2}{2g}\nP(\bar{u}T_{T_{1+\W}T_{1-\bar{Y}}R_\alpha}R) \\
&+\frac{\gamma^2}{2g}\nP(\bar{u}T_{1+\W}(T_{g+\ua}Y-\gamma R))+K\\
=& i\frac{\gamma^2}{2g}\nP(T_{1-\bar{Y}}\bar{r}R)+\frac{\gamma^2}{2g}T_{g+\ua}\nP(\bar{u}T_{1+\W}Y)-\frac{\gamma^3}{2g}\nP(\bar{u}T_{1+\W}R)+K,\\
i\frac{\gamma^2}{4g}T_{D_t}\nP(\bar{r}W) =& i\frac{\gamma^2}{4g}\nP(D_t\bar{r}W) + i\frac{\gamma^2}{4g}\nP(\bar{r}T_{D_t}W) +K\\
=& -\frac{\gamma^2}{4g}\nP((\gamma \bar{r}-T_{g+\ua}(\bar{x}-T_{\bar{w}}\bar{Y}))W) - i\frac{\gamma^2}{4g}\nP(\bar{r}T_{1+\W}T_{1-\bar{Y}}R) +K\\
=& -\frac{\gamma^3}{4g}\nP(\bar{r}W)+\frac{\gamma^2}{4}\nP(\bar{x}W)- i\frac{\gamma^2}{4g}\nP(\bar{r}T_{1+\W}R) +K,\\
-\frac{\gamma^2}{2g}T_{1-Y}T_{g+\ua}\nP(\bar{u}\W)=& -\frac{\gamma^2}{2g}T_{g+\ua}\nP(\bar{u}T_{1+\W}Y)+K,\\
\frac{\gamma^2}{4g}T_{1-Y}T_{g+\ua}\nP(\bar{x}W)=&\frac{\gamma^2}{4}\nP(\bar{x}W)+K, \\
i\frac{\gamma^2}{4g^2}T_{1-Y}T_{g+\ua}\nP(\bar{r}T_{1+\W}R) =& i\frac{\gamma^2}{4g}\nP(\bar{r}T_{1+\W}R)+K,\\
-i\frac{\gamma^2}{2g}\nP(\bar{r}T_{1+\W}R)-&\frac{\gamma^2}{2}\nP(\bar{x}W).
\end{align*}
Finally, for the contributions of corrections of $\gamma^3$ and $\gamma^4$ terms, we get
\begin{align*}
-\frac{\gamma^3}{4g}T_{D_t}\nP(\bar{u}W)=& -\frac{\gamma^3}{4g}\nP(T_{D_t}\bar{u}W) -\frac{\gamma^3}{4g}\nP(\bar{u}T_{D_t}W) +K \\
=& \frac{\gamma^3}{4g}\nP(T_{1-Y}(T_{1-\bar{Y}}\bar{r}+T_{\bar{x}}\bar{R})W) + \frac{\gamma^3}{4g}\nP(\bar{u}T_{1+\W}T_{1-\bar{Y}}R) +K \\
=& \frac{\gamma^3}{4g}\nP(\bar{r}W) + \frac{\gamma^3}{4g}\nP(\bar{u}T_{1+\W}R) +K, \\
\frac{\gamma^3}{4g^2}T_{1-Y}T_{g+\ua}\nP(\bar{r} W) =& \frac{\gamma^3}{4g}\nP(\bar{r} W)+K,\\
-\frac{\gamma^3}{4g^2}T_{1-Y}T_{g+\ua}\nP(\bar{u}T_{1+\W}R)=& -\frac{\gamma^3}{4g}\nP(\bar{u}T_{1+\W}R)+K,\\
i\frac{\gamma^4}{4g^2}T_{1-Y}T_{g+\ua}\nP(\bar{u}W)=& i\frac{\gamma^4}{4g}\nP(\bar{u}W)+K,\\
\frac{\gamma^3}{2g}\nP(\bar{u}T_{1+\W}R)-&\frac{\gamma^3}{4g}\nP(\bar{r}W)-i\frac{\gamma^4}{4g}\nP(\bar{u}W).
\end{align*}
Putting together all the contributions of the corrections, we get that the total contributions for the second equation is exactly $-\mathcal{K}_{0,0} +K$.
\end{proof}

\section{Energy estimates for the full system} \label{s:FullEqn}
In this section, we prove energy estimates for the water wave system \eqref{differentiatedEqn}, as in the statement of Theorem \ref{t:MainEnergyEstimate}.

As $(\W, R)$ solve the linearized system \eqref{linearizedeqn}, the results in the previous section show that the full system is well-posed for initial data in $\doth{\frac{1}{4}}$.
However, the estimates for the linearized system  no longer hold for other Sobolev index $s\neq \frac{1}{4}$.
We consider in this section for general $s\geq 0$.

\subsection{Reduction to paradifferential linearized $(\W,R)$ system}
The system \eqref{linearizedeqn} follows from linearizing \eqref{e:CVWW1} and diagonalizing to switch to good variables.
In this section, we instead consider direct linearization from \eqref{differentiatedEqn}.
Let  the linearized variables  around a solution $(\W,R)$ of the water waves system \eqref{differentiatedEqn} be $(\hat{w},\hat{r})$.
We begin by computing the linearizations of the auxiliary functions:
\begin{align*}
&\delta a = i[\bar{\nP}(\bar{\hr}R_\alpha)+\bar{\nP}(\bar{R}\hr_\alpha)-\nP(\hr \bar{R}_\alpha)-\nP(R\bar{\hr}_\alpha)], \\
&\delta \ub = \delta b -i\frac{\gamma}{2}\delta b_1, \quad \delta b = 2\Re \nP[(1-\bar{Y})\hr - (1-\bar{Y})^2 R \bar{\hw}], \\
&\delta b_1 = 2i\Im \nP[(1-\bar{Y})\partial^{-1}_\alpha\hw-(1-\bar{Y})^2W\bar{\hw}], \\
&\delta N = 2\Re \nP[\partial^{-1}_\alpha\hw \bar{R}_\alpha + W\bar{\hr}_\alpha - \bar{\hw}R-\bar{W}_\alpha \hr], \\
&\delta M = 2\Re \nP[\hr \bar{Y}_\alpha -2R(1-\bar{Y})\bar{\hw}\bar{Y}_\alpha + R(1-\bar{Y})^2\bar{\hw}_\alpha -\bar{\hr}_\alpha Y - \bar{R}_\alpha(1-Y)^2\hw], \\
&\delta M_1 = 2i\Im\nP\partial_\alpha[\partial^{-1}_\alpha\hw \bar{Y}+W(1-\bar{Y})^2\bar{\hw}], \quad \delta \underline{M} = \delta M - i\frac{\gamma}{2}\delta M_1.
\end{align*}
Then the linearized system can be written as
\begin{equation*}
\left\{
 \begin{aligned}
    D_t \hw &+ (1-\bar{Y})(1+W_\alpha)\hr_\alpha + (1-\bar{Y})R_\alpha\hw = (1+W_\alpha)\delta\underline{M} + \underline{M}\hw - W_{\alpha\alpha}\delta \ub  \\
    &+(1-\bar{Y})^2(1+W_\alpha)R_\alpha \bar{\hw}+i\gamma W_\alpha \hw - i\frac{\gamma}{2}\bar{W}_\alpha \hw -i\frac{\gamma}{2}W_\alpha \bar{\hw},\\
    D_t \hr &+i\gamma \hr - i(g+a)(1-Y)^2\hw = -R_\alpha\delta \ub-i(1-Y)\delta a + i\frac{\gamma}{2}(1-Y)^2\hw(R+\bar{R}) \\
    &+i\frac{\gamma}{2}W_\alpha (1-Y)(\hr + \bar{\hr})+i\frac{\gamma}{2}(1-Y)\delta N- i\frac{\gamma}{2}N(1-Y)^2 \hw.
\end{aligned}
\right.
\end{equation*}
Next, we rewrite the linearized system in the paradifferential framework.
We consider the cases where $\hw$ and $\hr$ terms are the highest frequencies for the paraproduct types contributions.
After applying the Littlewood-Paley projection $\nP$, we can eliminate all $(\bar{\hw}, \bar{\hr})$ terms, as well as all $(\hw, \hr)$ terms inside the anti-holomorphic projection $\bar{\nP}$.
Using \eqref{ubalpha} to simplify, we obtain the following linear paradifferential equations:
 \begin{equation*}
\left\{
    \begin{array}{lr}
    T_{D_t} \hw + T_{\ub_\alpha} \hw +\partial_\alpha T_{1-\bar{Y}}T_{1+W_\alpha}\hr -i\frac{\gamma}{2}\partial_\alpha T_{1+W_\alpha}T_{1-\bar{Y}}\partial^{-1}_\alpha\hw + i\frac{\gamma}{2}\hw = 0  &\\
    T_{D_t} \hr + T_{\ub_\alpha} \hr + i\gamma \hr -iT_{(1-Y)^2}T_{g+\ua}\hw +T_{\underline{M}}\hr +i\frac{\gamma}{2}T_{b_\alpha + M} \partial^{-1}_\alpha\hw + \gamma T_{\Im W_\alpha Y}\hr =0 .&
             \end{array}
\right.
\end{equation*}
We claim that  the differentiated system \eqref{differentiatedEqn} can be recast in the above paradifferential type equations with source terms.
We let $(G, K)$  be the favorable balanced cubic or higher source terms that satisfy
\begin{equation}
    \|(G,K)\|_{\doth{s}}\lesssim_\uA \underline{A}^2_{\frac{1}{4}}\left(\|(\W, R)\|_{\doth{s}}+\gamma \|(\W, R)\|_{\doth{s-\frac{1}{2}}}\right). \label{GoodBalanceCubic}
\end{equation}
The rest of the source terms are either quadratic in $(\W,R)$ or unbalanced cubic terms.
These unfavorable source terms will be later removed by paradifferential normal form corrections.
This reduction result is stated as following:
\begin{lemma}
 The differentiated system \eqref{differentiatedEqn} can be rewritten as  paradifferential equations for the variable $(\hw, \hr) = (\W, R)$ of the form
 \begin{equation}
 \left\{
    \begin{array}{lr}
    T_{D_t} \hw + T_{\ub_\alpha} \hw +\partial_\alpha T_{1-\bar{Y}}T_{1+W_\alpha}\hr-i\frac{\gamma}{2}\partial_\alpha T_{1+W_\alpha}T_{1-\bar{Y}}\partial^{-1}_\alpha\hw + i\frac{\gamma}{2}\hw = \mathcal{G}(\W,R)+G  &\\
    T_{D_t} \hr +i\gamma\hr+ T_{\ub_\alpha} \hr -iT_{(1-Y)^2}T_{g+\ua}\hw -i\frac{\gamma}{2}T_{b_\alpha} \partial^{-1}_\alpha\hw = \mathcal{K}(\W, R)+K ,&
             \end{array}
\right.  \label{WRSystemParaLin}
\end{equation}
where $(G,K)$ are balanced cubic source terms that satisfy \eqref{GoodBalanceCubic}, and non-perturbative source terms are given by
\begin{equation*}
\left\{
\begin{aligned}
 \mathcal{G}(\W, R) =& -\partial_\alpha[\Pi(\W,  T_{1-\bar{Y}}R)+ \Pi(\W,  T_{1-Y}\bar{R})-\Pi(\bar{Y},T_{1+\W}R)] \\
 &+ i\frac{\gamma}{2}\partial_\alpha[\Pi(\W, T_{1-\bar{Y}}W)-\Pi(\W, T_{1-Y}\bar{W})-\Pi(X,\bar{Y})], \\
 \mathcal{K}(\W, R) =& -T_{1-\bar{Y}}\Pi(R_\alpha, R) -T_{1-Y}\Pi(R_\alpha, \bar{R})-T_{1-Y}\Pi(\bar{R}_\alpha, R)\\
 &+i\frac{\gamma}{2}\Pi(R_\alpha, T_{1-\bar{Y}}W-T_{1-Y}\bar{W})-iT_{1-Y}T_{g+\ua}\Pi(Y,\W)\\
 &+i\frac{\gamma}{2}T_{1-Y}\Pi(W,\bar{R}_\alpha)- i\frac{\gamma}{2}T_{1-Y}\Pi(\bar{\W},R)+i\frac{\gamma}{2}\Pi(R+\bar{R},Y).
\end{aligned}
\right.
\end{equation*}
\end{lemma}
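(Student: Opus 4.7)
\medskip

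\noindent\textbf{Proof plan.} The strategy is a direct self-linearization: take the displayed linearized system (around a solution $(\W,R)$) and substitute $(\hw,\hr)=(\W,R)$, then apply Bony's paraproduct decomposition $fg = T_f g + T_g f + \Pi(f,g)$ to every quadratic product. First I would rewrite each linearization $\delta a,\delta b,\delta b_1,\delta N,\delta M,\delta M_1$ evaluated at $(\W,R)$ in terms of $(\W,R)$ themselves; using the relations $1-\bar Y=(1+\bar\W)^{-1}$ and the identities $(1-\bar Y)R=b/2+\bar\nP$-terms, these combinations reproduce $a,\ub_\alpha,M,N$ up to expressions that are already cubic and balanced, and hence absorbable into $(G,K)$. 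The terms $D_t\hw$ and $D_t\hr$ on the left are rewritten as $T_{D_t}\hw+(T_\ub-\ub\partial_\alpha)\hw$ and similarly for $\hr$, with the commutator absorbed by \eqref{ParaCommutator} into $G,K$ via the $BMO$ bound \eqref{ubBMO} on $\ub$.

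Next I would paralinearize systematically. For each product in the linearized equations (e.g.\ $(1-\bar Y)(1+W_\alpha)\hr_\alpha$, $(1-\bar Y)R_\alpha\hw$, $(g+a)(1-Y)^2\hw$, and analogous vorticity-coupled terms), Bony gives three pieces: the low-high piece $T_{\text{coef}}(\text{high})$ which is merged into the paradifferential left-hand side of \eqref{WRSystemParaLin} after reordering $T_fT_g\to T_{fg}$ by \eqref{ParaProducts}; the high-low piece $T_{\text{high}}(\text{coef})$ which, on projection by $\nP$, either vanishes (anti-holomorphic output) or becomes a commutator $[\nP,\cdot]$ controllable by Lemma \ref{t:CommutatorL2}; and the balanced piece $\Pi(\cdot,\cdot)$ which either carries a genuine high-frequency derivative on the lowest-frequency factor (perturbative, goes into $G,K$) or remains as one of the explicit terms in $\mathcal G(\W,R),\mathcal K(\W,R)$. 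The displayed $\mathcal G$ and $\mathcal K$ catalogue exactly the surviving balanced $\Pi$ interactions that couple $(\W,R)$ or $R_\alpha$ to an anti-holomorphic factor $\bar Y$, $\bar W$, $\bar R$ through one of the anti-holomorphic projections that is forced to remain by the Littlewood--Paley geometry.

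The perturbative source bound \eqref{GoodBalanceCubic} for $(G,K)$ then follows by combining: (i) the para-commutator/product errors from \eqref{ParaCommutator}--\eqref{ParaAssociateTwo}, which are at worst cubic of order $|D|^{1/4}\cdot|D|^{1/4}$ on the two lowest-frequency factors, hence absorbable using the $\uA_{1/4}$ bounds \eqref{YBMO}, \eqref{uaBMO}, \eqref{ubBMO} and \eqref{uMBound}; (ii) the contribution of the quadratic auxiliary linearizations $\delta\ua,\delta\underline M,\delta N,\delta\ub$, each of which is already cubic when evaluated at $(\hw,\hr)=(\W,R)$ and which inherit the two-derivative-balanced $\uA_{1/4}^2$ factor via the Section~\ref{s:WaterBound} estimates; and (iii) the high-high cubic interactions where the lowest-frequency variable is genuinely differentiated, handled by the $L^p$ balanced paraproduct bound \eqref{LpBalance}. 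The main obstacle is purely organizational bookkeeping: there are of order a dozen quadratic nonlinearities in the linearized system and, for each, one must track where the conjugated anti-holomorphic factor is routed by $\nP$, distinguish the contributions that regroup into $\mathcal G,\mathcal K$ from those that are balanced cubic, and repeatedly exploit the identities \eqref{ubalpha} and $\ua=a+\tfrac{\gamma}{2}a_1$ to cancel the $\gamma$-coupled lower-order mismatches in the second equation (in particular, the $i\tfrac{\gamma}{2}T_{M}\partial_\alpha^{-1}\hw$ contribution cancels against the $\delta N$ contribution up to balanced cubic errors, which is what allows the second equation of \eqref{WRSystemParaLin} to feature only $T_{b_\alpha}$ rather than $T_{b_\alpha+M}$).
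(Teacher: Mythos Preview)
Your proposal has a genuine conceptual gap: self-linearization does not reproduce the original equation. If a nonlinear map has quadratic part $B(u,u)$, then its linearization is $2B(u,\delta u)$, and substituting $\delta u=u$ gives $2B(u,u)$, not $B(u,u)$. Concretely, your claim that the linearizations $\delta a,\delta b,\delta\underline M,\delta N$ evaluated at $(\hw,\hr)=(\W,R)$ ``reproduce $a,\ub_\alpha,M,N$'' is off by factors: for instance
\[
\delta a\big|_{(\hw,\hr)=(\W,R)} = i\bigl[2\bar{\nP}(\bar R R_\alpha)-2\nP(R\bar R_\alpha)\bigr]=2a,
\]
not $a$. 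Since the differentiated system contains linear, quadratic, and higher-order terms, substituting $(\W,R)$ into the linearized system produces neither the original system nor a simple rescaling of it; the mismatch cannot be absorbed into balanced cubic errors $(G,K)$.

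The paper's proof avoids this entirely by never touching the linearized system. It starts directly from the differentiated equations in the algebraic form
\[
D_t\W+(1+\W)(1-\bar Y)R_\alpha=(1+\W)\underline M+i\tfrac{\gamma}{2}\W(\W-\bar\W),\qquad D_tR+i\gamma R=i[g-(g+\ua)(1-Y)]+i\tfrac{\gamma}{2}(R+\bar R),
\]
applies $\nP$, and then Bony-decomposes each product term by term. The low-high pieces are regrouped using \eqref{ubalpha} and \eqref{XAlphaY} to form the paradifferential operators on the left of \eqref{WRSystemParaLin}; the balanced $\Pi$ pieces that are not cubic are collected as $\mathcal G,\mathcal K$; everything else is shown to satisfy \eqref{GoodBalanceCubic}. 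Your second and third paragraphs (the paraproduct bookkeeping and the error estimates) are essentially the right mechanics, but they should be applied to the differentiated system itself, not to the linearized one.
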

Again we include  the Littlewood-Paley projection $\nP$ in $\Pi$ implicitly as in the previous section.

\begin{proof}
Recall that the differentiated system \eqref{differentiatedEqn} can be rewritten as
\begin{equation*}
\left\{
             \begin{array}{lr}
                D_t \W + (1+\mathbf{W})(1-\bar{Y})R_\alpha = (1+\mathbf{W})\underline{M}+i\dfrac{\gamma}{2}\mathbf{W}(\mathbf{W}-\Bar{\mathbf{W}})  &\\
             D_t R +i\gamma R  = i[g-(g+\ua)(1-Y)] + i\frac{\gamma}{2}(R+\bar{R}).&
             \end{array}
\right.
\end{equation*}
Here the system is written in an algebraic way for convenience.
We  apply the Littlewood-Paley projection $\nP$ to above system to eliminate the anti-holomorphic parts.
For our computation below, the balanced cubic terms are put in $(G,K)$ without further specification.
\begin{enumerate}
\item For the first term in the first equation, we expand $\ub$ to write
\begin{align*}
    \nP D_t\W &= T_{D_t}\W + T_{\W_\alpha} \nP \ub + \Pi(\W_\alpha, \ub) \\
    & = T_{D_t}\W + T_{\W_\alpha} T_{1-\bar{Y}}R-i\frac{\gamma}{2}T_{\W_\alpha} T_{1-\bar{Y}}W \\
    &+ \Pi(\W_\alpha, T_{1-\bar{Y}}R + T_{1-Y}\bar{R}) - i\frac{\gamma}{2}\Pi(\W_\alpha, T_{1-\bar{Y}}W - T_{1-Y}\bar{W}) +G.
\end{align*}
For the second term in the first equation, we expand using paraproducts,
\begin{align*}
    \nP[(1+\W)(1-\bar{Y})R_\alpha] &= T_{1+\W}T_{1-\bar{Y}}R_\alpha + T_{1-\bar{Y}}T_{R_\alpha}\W \\
    &- T_{1+\W}\Pi(\bar{Y}, R_\alpha)+ T_{1-\bar{Y}}\Pi(\W, R_\alpha)+G.
\end{align*}
For the first source term in the first equation, we use \eqref{ubalpha} and \eqref{uMBound} to write
\begin{align*}
    (1+\W)\underline{M} &= T_{1+\W}\underline{M} + G = T_{1+\W}\nP\left[R\bar{Y}_\alpha - \bar{R}_\alpha Y - i\frac{\gamma}{2}(W\bar{Y})_\alpha\right] + G \\
    &=T_{1+\W}\left(T_{\bar{Y}_\alpha}R - T_{\bar{R}_\alpha}Y - i\frac{\gamma}{2}T_{\bar{Y}_\alpha}W -i\frac{\gamma}{2}T_{\bar{Y}}\W\right)\\
    &+T_{1+\W}\left[\Pi(\bar{Y}_\alpha, R)-\Pi(\bar{R}_\alpha,Y)-i\frac{\gamma}{2}\partial_\alpha\Pi(W, \bar{Y})\right]+ G.
\end{align*}
For the last term in the first equation,
\begin{equation*}
i\dfrac{\gamma}{2}\mathbf{W}(\mathbf{W}-\Bar{\mathbf{W}}) = iT_{\frac{\gamma}{2}(\W -\bar{\W})}\W + i\frac{\gamma}{2}T_\W \W + i\frac{\gamma}{2}\Pi(\W, \W-\bar{\W}).
\end{equation*}
Combining all these terms, we use \eqref{XAlphaY} to get
\begin{align*}
 &T_{1-\bar{Y}}T_{R_\alpha}\W + T_{1+\W}T_{\bar{R}_\alpha}Y-iT_{\frac{\gamma}{2}(\W -\bar{\W})}\W  = T_{\ub_\alpha} +G, \\
 &T_{1+\W}T_{1-\bar{Y}}R_\alpha -T_{1+\W}T_{\bar{Y}_\alpha}R+T_{\W_\alpha}T_{1-\bar{Y}}R = \partial_\alpha T_{1+\W}T_{1-\bar{Y}}R, \\
 &i\frac{\gamma}{2}(-T_{\W_\alpha}T_{1-\bar{Y}}W +T_{1+\W_\alpha}T_{\bar{Y}_\alpha}W + T_{\bar{Y}}\W -T_{\W}\W) = -i\frac{\gamma}{2}\partial_\alpha T_{1+\W}T_{1-\bar{Y}}W + i\frac{\gamma}{2}W.
\end{align*}
Hence, we obtain the paradifferential $\W$ equation.
\item For the second $R$ equation, we expand in the similar way to write
\begin{align*}
    \nP D_tR &= T_{D_t}R + T_{R_\alpha} \nP \ub + \Pi(R_\alpha, \ub) \\
    & = T_{D_t}R + T_{R_\alpha} T_{1-\bar{Y}}R-i\frac{\gamma}{2}T_{R_\alpha} T_{1-\bar{Y}}W \\
    &+ \Pi(R_\alpha, T_{1-\bar{Y}}R + T_{1-Y}\bar{R}) - i\frac{\gamma}{2}\Pi(R_\alpha, T_{1-\bar{Y}}W - T_{1-Y}\bar{W}) +K.
\end{align*}
For the first term on the right-hand side of the second equation,
\begin{align*}
     i\nP[g-(g+\ua)(1-Y)] &= iT_{g+\ua}Y-iT_{1-Y}\nP \ua+ i\Pi(\ua, Y)\\
     &= iT_{g+\ua}Y -T_{1-Y}T_{\bar{R}_\alpha}R -T_{1-Y}\Pi(\bar{R}_\alpha, R)\\
      &-i\frac{\gamma}{2}T_{1-Y}R+ i\frac{\gamma}{2}T_{1-Y}T_{\bar{R}_\alpha}W -i\frac{\gamma}{2}T_{1-Y}T_{\bar{\W}}R \\
      &+i\frac{\gamma}{2}T_{1-Y}\Pi(W,\bar{R}_\alpha)-i\frac{\gamma}{2}T_{1-Y}\Pi(\bar{\W},R)+ i\frac{\gamma}{2}\Pi(R+\bar{R},Y)+K,
\end{align*}
where again using  $(4)$ of Lemma \ref{t:XParaMaterial}, we get
\begin{equation*}
    iT_{g+\ua}Y = iT_{(1-Y)^2}T_{g+\ua}\W -iT_{1-Y}T_{g+\ua}\Pi(Y,\W)+K.
\end{equation*}
Similar to the case of the first equation, we combine using \eqref{ubalpha} and \eqref{uMBound},
\begin{align*}
&-T_{R_\alpha} T_{1-\bar{Y}}R - T_{1-Y}T_{\bar{R}_\alpha}R -i\frac{\gamma}{2}T_{1-Y}R -i\frac{\gamma}{2}T_{1-Y}T_{\bar{\W}}R + i\frac{\gamma}{2}R \\
=& -T_{R_\alpha(1-\bar{Y})+\bar{R}_\alpha(1-Y)}R -i\frac{\gamma}{2}T_{1-Y}R + i\frac{\gamma}{2}T_{1-Y}T_{1+\W}R-i\frac{\gamma}{2}T_{1-Y}T_{\bar{\W}}R + K\\
=& -T_{R_\alpha(1-\bar{Y})+\bar{R}_\alpha(1-Y)}R +i\frac{\gamma}{2}T_{\W-\bar{\W}}R +K = -T_{\ub_\alpha}R +K,\\
&-i\frac{\gamma}{2}T_{R_\alpha} T_{1-\bar{Y}}W -i\frac{\gamma}{2}T_{1-Y}T_{\bar{R}_\alpha}W =-i\frac{\gamma}{2}T_{b_\alpha}W +K.
\end{align*}
Hence, we obtain the paradifferential $R$ equation.
\end{enumerate}
\end{proof}
The rest of Section \ref{s:FullEqn} is devoted into the study of paradifferential system \eqref{WRSystemParaLin}.

\subsection{Well-posedness for the paradifferential flow}
In this subsection, we first consider the linear part of the paradifferential equations \eqref{WRSystemParaLin}, namely
\begin{equation}
 \left\{
    \begin{array}{lr}
    T_{D_t} \hw + T_{\ub_\alpha} \hw +\partial_\alpha T_{1-\bar{Y}}T_{1+W_\alpha}\hr-i\frac{\gamma}{2}\partial_\alpha T_{1+W_\alpha}T_{1-\bar{Y}}\partial^{-1}_\alpha\hw + i\frac{\gamma}{2}\hw = 0  &\\
    T_{D_t} \hr +i\gamma\hr+ T_{\ub_\alpha} \hr -iT_{(1-Y)^2}T_{g+\ua}\hw -i\frac{\gamma}{2}T_{b_\alpha} \partial^{-1}_\alpha\hw = 0. &
             \end{array}
\right.  \label{WRSystemParaLinLeft}
\end{equation}
Although it is possible to perform the analysis similar as previouly done in Section \ref{s:LinearEstimate}, a more convenient way is to reduce \eqref{WRSystemParaLinLeft} to the paradifferential equations \eqref{SourceParadifferential}.
The result is as follows:
\begin{proposition}
 Suppose that $(\hw,\hr)$ solve \eqref{WRSystemParaLinLeft}, then there exists a bounded linear transformation which is independent of the Sobolev index $s$, such that it turns $(\hw,\hr)$ into $(w,r)$ that solve \eqref{SourceParadifferential}.
 In addition, for any $s\in \mathbb{R}$, we have
 \begin{enumerate}
\item Invertibility:
\begin{equation*}
    \| (w_\alpha, r_\alpha)-(\hw, \hr)\|_{\doth{s}}\lesssim_\uA \uA\|(w,r)\|_{\doth{s}}.
\end{equation*}
\item Perturbative source term:
\begin{equation*}
    \|(G,K)\|_{\doth{s}}\lesssim_A \underline{A}^2_{\frac{1}{4}}\left(\|(w, r)\|_{\doth{s}}+\gamma \|(w, r)\|_{\doth{s-\frac{1}{2}}}\right).
\end{equation*}
 \end{enumerate}
As a consequence, Proposition \ref{t:wellposedflow} holds with \eqref{WRSystemParaLinLeft} in place of  \eqref{ParadifferentialFlow}. \label{t:hwhrWellPosed}
\end{proposition}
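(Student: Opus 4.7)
The plan is to reduce \eqref{WRSystemParaLinLeft} to the already-treated system \eqref{SourceParadifferential} via a bounded linear change of unknowns $(\hw, \hr)\mapsto(w,r)$, and then invoke Proposition~\ref{t:wellposedflow}. Since $\hw$ and $\hr$ correspond at the nonlinear level to $\W = W_\alpha$ and $R = Q_\alpha/(1+W_\alpha)$, the natural ansatz is to set, essentially,
\[
w := \partial_\alpha^{-1}\hw, \qquad r := \partial_\alpha^{-1}\bigl(T_{1+\W}\hr\bigr),
\]
both antiderivatives being well defined by the negative-frequency support of $\hw, \hr$. By construction $w_\alpha-\hw=0$ while $r_\alpha - \hr = T_\W\hr$, and the $\doth{s}$ bound on the latter follows from the $L^\infty$ smallness of $\W$ (implicit in $\uA\ll 1$), combined with a paradifferential redistribution of regularity via \eqref{ParaProducts}. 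This immediately gives the invertibility statement, uniformly in $s\in\mathbb{R}$.

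Next, I substitute the ansatz into \eqref{WRSystemParaLinLeft} and translate each term. Using $\partial_\alpha T_\ub = T_\ub\partial_\alpha + T_{\ub_\alpha}$ modulo perturbative para-products, the pair $T_{D_t}\hw + T_{\ub_\alpha}\hw$ collapses to $\partial_\alpha T_{D_t}w$. The para-product identity $T_{1-\bar Y}T_{1+\W} = T_{(1-\bar Y)(1+\W)}$ from \eqref{ParaProducts}, together with $(1-\bar Y)(1+\W) = (1+\W)/(1+\bar\W)$, allows me to recast $\partial_\alpha T_{1-\bar Y}T_{1+\W}\hr$ as $\partial_\alpha T_{1-\bar Y}r_\alpha$ modulo balanced cubic errors. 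Stripping the outer $\partial_\alpha$ yields the first equation of \eqref{SourceParadifferential}. The second equation is handled analogously: the formula \eqref{ubalpha} for $\ub_\alpha$, combined with \eqref{ubBMO} and \eqref{uMBound}, absorbs the unbalanced cross-term $-i\tfrac{\gamma}{2}T_{b_\alpha}\partial_\alpha^{-1}\hw$ into $(G,K)$.

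The main technical obstacle, as anticipated, is the vorticity block $-i\tfrac{\gamma}{2}\partial_\alpha T_{1+\W}T_{1-\bar Y}\partial_\alpha^{-1}\hw + i\tfrac{\gamma}{2}\hw$, which must produce the $\gamma T_{\Im\W}w$ source in \eqref{SourceParadifferential}. The key algebraic identity is
\[
(1+\W)(1-\bar Y) - 1 = \frac{\W-\bar\W}{1+\bar\W} = 2i\,\Im\W\cdot(1-\bar Y),
\]
so after collapsing the para-products and applying the Moser bound \eqref{YBMO} on $Y$ together with \eqref{ParaCommutator}, this block reduces to $\gamma\partial_\alpha T_{\Im\W}w$ plus unbalanced cubic errors. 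Stripping the $\partial_\alpha$ via $[\partial_\alpha, T_{\Im\W}] = T_{\Im\W_\alpha}$ yields the desired $\gamma T_{\Im\W}w$, the commutator itself contributing a balanced cubic controlled by $\uA^2_{\frac{1}{4}}$. Collecting all reductions gives \eqref{SourceParadifferential} for $(w,r)$ with source $(G,K)$ satisfying \eqref{GoodBalanceCubic}; applying Proposition~\ref{t:wellposedflow} to $(w,r)$, together with the invertibility of the change of variables, then transfers the $\doth{s}$ well-posedness and the energy estimate to \eqref{WRSystemParaLinLeft}.
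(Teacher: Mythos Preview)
Your overall strategy is the paper's, but your ansatz for $r$ is incomplete and this creates a genuine gap. The target system \eqref{SourceParadifferential} contains, in its first equation, the term $T_{(1-\bar Y)R_\alpha}w$, which you never account for. With your choice $r=\partial_\alpha^{-1}T_{1+\W}\hr$ one has $T_{1-\bar Y}\partial_\alpha r = T_{1-\bar Y}T_{1+\W}\hr$, so after matching with \eqref{WRSystemParaLinLeft} the term $T_{(1-\bar Y)R_\alpha}w$ is left over and must be placed in $G$. But this term is only \emph{linearly} small in the control norms (its leading part is $T_{R_\alpha}w$), so it violates the required $\uA_{\frac14}^2$ bound on $(G,K)$. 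The same issue appears in the second equation: $T_{D_t}r$ with your $r$ produces (via para-Leibniz) the contribution $T_{D_t\W}\hr\approx -T_{(1+\W)(1-\bar Y)R_\alpha}\hr$, which again is only linearly small and does not cancel.

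The fix, which the paper uses, is suggested by the nonlinear identity $\delta R = (r_\alpha + R_\alpha w)/(1+\W)$, i.e.\ $r_\alpha = (1+\W)\hr - R_\alpha w$ at the paradifferential level. One should therefore set
\[
(w,r)\;:=\;\bigl(\partial_\alpha^{-1}\hw,\ \partial_\alpha^{-1}T_{1+\W}\hr - \partial_\alpha^{-1}T_{R_\alpha}\partial_\alpha^{-1}\hw\bigr).
\]
With this correction, $T_{1-\bar Y}\partial_\alpha r = T_{1-\bar Y}T_{1+\W}\hr - T_{1-\bar Y}T_{R_\alpha}w$, and the second piece combines with $T_{(1-\bar Y)R_\alpha}w$ via \eqref{ParaProducts} into an acceptable $O(\uA_{\frac14}^2)$ error. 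Likewise in the second equation, the extra piece in $r$ supplies a $-T_{\partial_\alpha D_t R}w$ contribution that is exactly what is needed to cancel the commutator terms $iT_{Y_\alpha}T_{g+\ua}w - iT_{1-Y}T_{\ua_\alpha}w$ and the $T_{D_t\W}\hr$ term. Your handling of the vorticity block is fine, but you must revisit both equations with the corrected $r$.
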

\begin{proof}
When  computing the linearized system, the linearization of $R$ is given by
\begin{equation*}
    \delta R = \frac{r_\alpha +R_\alpha w}{1+\W}.
\end{equation*}
This suggests that the connection between $(w,r)$ and $(\hw, \hr)$ is given by the relation
\begin{equation*}
    (\hw, \hr) = \left(w_\alpha, \frac{r_\alpha +R_\alpha w}{1+\W}\right).
\end{equation*}
Given a solution $(\hw, \hr)$ to \eqref{WRSystemParaLinLeft}, we therefore define $(w,r)$
at the paradifferential level:
\begin{equation}
    (w,r) := (\partial^{-1}_\alpha \hw, \partial^{-1}_\alpha T_{1+\W}\hr - \partial^{-1}_\alpha T_{R_\alpha}\partial^{-1}_\alpha \hw). \label{whatwhatr}
\end{equation}
Clearly, this definition of $(w,r)$ satisfies the invertibility property.
It remains to show that $(w,r)$ defined in \eqref{whatwhatr} satisfy the paradifferential system \eqref{SourceParadifferential} with perturbative source terms.

We plug in the relation \eqref{whatwhatr} into \eqref{ParadifferentialFlow} and compute the corresponding source terms.
Here, the source terms are  acceptable if they satisfy
\begin{equation*}
    \|G\|_{\dot{H}^s}+\|K\|_{\dot{H}^{s+\frac{1}{2}}} \lesssim_\uA \uA^2_{\frac{1}{4}}\|(w,r)\|_{\doth{s+1}}.
\end{equation*}
For the first equation,
\begin{align*}
    &\partial_\alpha(T_{D_t}w+T_{1-\bar{Y}}\partial_\alpha r + T_{(1-\bar{Y})R_\alpha }w +\gamma T_{\Im \W}w) \\
    = &T_{D_t}\hw + T_{\ub_\alpha}\hw + \partial_\alpha T_{1-\bar{Y}}T_{1+W_\alpha}\hr +\gamma \partial_\alpha T_{\Im W_\alpha}\partial_\alpha^{-1}\hw\\
 =  &T_{D_t} \hw + T_{\ub_\alpha} \hw +\partial_\alpha T_{1-\bar{Y}}T_{1+W_\alpha}\hr-i\frac{\gamma}{2}\partial_\alpha T_{1+W_\alpha}T_{1-\bar{Y}}\partial^{-1}\hw + i\frac{\gamma}{2}\hw \\
 &+ \left(\gamma \partial_\alpha T_{\Im W_\alpha}\partial_\alpha^{-1}\hw +i\frac{\gamma}{2}\partial_\alpha T_{1+W_\alpha}T_{1-\bar{Y}}\partial^{-1}\hw - i\frac{\gamma}{2}\hw\right)\\
 = & \left(T_{D_t} \hw + T_{\ub_\alpha} \hw +\partial_\alpha T_{1-\bar{Y}}T_{1+W_\alpha}\hr-i\frac{\gamma}{2}\partial_\alpha T_{1+W_\alpha}T_{1-\bar{Y}}\partial^{-1}\hw + i\frac{\gamma}{2}\hw \right) +G\\
 = & G.
\end{align*}
For the second equation, we use Lemma \ref{t:Leibniz} to distribute para-material derivatives, and use para-commutators \eqref{ParaCommutator}, para-products \eqref{ParaProducts} to write
\begin{align*}
    &\partial_\alpha(T_{D_t}r +i\gamma r -iT_{1-Y}T_{g+\ua}w)\\
    =& T_{D_t}(T_{1+\W}\hr - T_{R_\alpha}\partial_\alpha^{-1}\hw) + T_{\ub_\alpha }(T_{1+\W}\hr - T_{R_\alpha}\partial_\alpha^{-1}\hw)\\
    &+ i\gamma (T_{1+\W}\hr - T_{R_\alpha}\partial_\alpha^{-1}\hw)- i\partial_\alpha T_{1-Y}T_{g+\ua}w\\
     =& T_{D_t \W}\hr + T_{1+\W}T_{D_t}\hr - T_{R_\alpha}T_{D_t}\partial_\alpha^{-1}\hw -T_{\partial_\alpha D_t R}w + T_{1+\W}T_{\ub_\alpha}\hr\\
     &+  i\gamma T_{1+\W}\hr -i\gamma T_{R_\alpha}\partial_\alpha^{-1}\hw +iT_{Y_\alpha}T_{g+\ua}w -iT_{1-Y}T_{g+\ua}w_\alpha-iT_{1-Y}T_{\ua_\alpha}w +K\\
     =& T_{1+\W}(T_{D_t}+T_{\ub_\alpha})\hr - T_{R_\alpha}\partial^{-1}_\alpha(T_{D_t}+T_{\ub_\alpha})\hw + T_{D_t \W}\hr - T_{\partial_\alpha D_t R}w \\
     &+  i\gamma T_{1+\W}\hr -i\gamma T_{R_\alpha}\partial_\alpha^{-1}\hw +iT_{Y_\alpha}T_{g+\ua}w -iT_{1-Y}T_{g+\ua}w_\alpha-iT_{1-Y}T_{\ua_\alpha}w+K \\
     =& iT_{1+\W}T_{(g+\ua)(1-Y)^2}\hw -i\gamma T_{1+\W}\hr + i\frac{\gamma}{2}T_{1+\W}T_{b_\alpha}\partial_\alpha^{-1}\hw +T_{R_\alpha}T_{1-\bar{Y}}T_{1+\W}\hr\\
     &-i\frac{\gamma}{2}T_{R_\alpha}T_{1-\bar{Y}}T_{1+\W} \partial_\alpha^{-1}\hw + i\frac{\gamma}{2}T_{R_\alpha}\partial_\alpha^{-1}\hw -T_{(1+\W)(1-\bar{Y})R_\alpha}\hr +i\gamma T_{R_\alpha}w +iT_{\partial_\alpha(g+\ua)(1-Y)}w \\
     &-i\frac{\gamma}{2} T_{R_\alpha +\bar{R}_\alpha}w +i\gamma T_{1+\W}\hr -i\gamma T_{R_\alpha}\partial_\alpha^{-1}\hw -iT_{\partial_\alpha(g+\ua)(1-Y)}w -iT_{1-Y}T_{g+\ua}w_\alpha +K\\
     =& i\frac{\gamma}{2}T_{1+\W}T_{b_\alpha}\partial_\alpha^{-1}\hw -i\frac{\gamma}{2}T_{R_\alpha}T_{1-\bar{Y}}T_{1+\W} \partial_\alpha^{-1}\hw-i\frac{\gamma}{2}T_{\bar{R}_\alpha}w + K\\
     = & K.
\end{align*}
Here we have harmlessly replaced $T_{D_t}\W$ and $T_{D_t}R$ by $D_t \W$ and $D_t R$.

Finally, the $\doth{s+1}$ well-posedness of \eqref{ParadifferentialFlow} proved in Proposition \ref{t:wellposedflow} implies the $\doth{s}$ well-posedness of
\eqref{WRSystemParaLinLeft}.
\end{proof}

\subsection{The paradifferential normal form transformation}
Having settled the local well-posedness of the linear system \eqref{WRSystemParaLinLeft}, we now consider the right-hand side source terms of \eqref{WRSystemParaLin}.
Although the source terms $(\mathcal{G}, \mathcal{K})$ are not directly perturbative, we are able to use a paradifferential normal form transformation to eliminate them modulo perturbative terms.
Precisely, we can construct paradifferential normal form variables $(\W_{NF},R_{NF})$ that satisfy the following result:
\begin{proposition}
 Suppose that $(\W,R)$ solve the system \eqref{differentiatedEqn}, then there exist paradifferential normal form variables $(\W_{NF},R_{NF})$ that satisfy the following system
 \begin{equation*}
 \left\{
    \begin{array}{lr}
    T_{D_t} \W_{NF} + T_{\ub_\alpha} \W_{NF} +\partial_\alpha T_{1-\bar{Y}}T_{1+W_\alpha}R_{NF}-i\frac{\gamma}{2}\partial_\alpha T_{1+W_\alpha}T_{1-\bar{Y}}\partial^{-1}_\alpha\W_{NF} + i\frac{\gamma}{2}\W_{NF} = \tilde{\mathcal{G}}  &\\
    T_{D_t} R_{NF} +i\gamma R_{NF}+ T_{\ub_\alpha} R_{NF} -iT_{(1-Y)^2}T_{g+\ua}\W_{NF} -i\frac{\gamma}{2}T_{b_\alpha} \partial^{-1}_\alpha\W_{NF} = \tilde{\mathcal{K}} ,&
             \end{array}
\right.
\end{equation*}
such that for any $s\geq 0$, we have
\begin{enumerate}
\item Invertibility:
\begin{equation*}
    \|(\W_{NF},R_{NF})-(\W,R)\|_{\doth{s}}\lesssim_\uA \uA\left(\|(\W,R)\|_{\doth{s}}+\gamma^2\|(\W,R)\|_{\doth{s-1}}\right).
\end{equation*}
\item Perturbative source terms:
\begin{equation*}
    \|(\tilde{\mathcal{G}},\tilde{\mathcal{K}})\|_{\doth{s}}\lesssim_\uA \uA^2_{\frac{1}{4}} \left(\|(\W,R)\|_{\doth{s}}+\gamma^2\|(\W,R)\|_{\doth{s-1}}\right).
\end{equation*}
\end{enumerate}
\end{proposition}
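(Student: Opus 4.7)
The plan is to implement a paradifferential normal form transformation parallel to the anti-holomorphic correction constructed in Section \ref{s:GKZero}, applied now to the full system at the level of $(\W, R)$ rather than to the linearized equations. Indeed, the source terms $\mathcal{G}(\W, R)$ and $\mathcal{K}(\W, R)$ in \eqref{WRSystemParaLin} are of exactly the same balanced frequency paraproduct form (with one anti-holomorphic factor) as the main parts $(\mathcal{G}_{0,0}, \mathcal{K}_{0,0})$ appearing in Lemma \ref{t:G0K0Estimate}, under the formal substitution $(w, r, x, u) \leftrightarrow (W, Q, X, U)$. This parallel is natural: the system \eqref{WRSystemParaLin} is effectively the linearization of the full system around itself.

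Concretely, I would define
\begin{equation*}
  \W_{NF} = \W + \partial_\alpha \tilde{w}^{(2)}, \qquad R_{NF} = R + \partial_\alpha \tilde{r}^{(2)},
\end{equation*}
where $(\tilde{w}^{(2)}, \tilde{r}^{(2)})$ is the anti-holomorphic frequency-balanced correction produced in Section \ref{s:GKZero}, but with the linearized variables $(w, r, x, u)$ replaced throughout by the water-wave variables $(W, Q, X, U)$. The extra $\partial_\alpha$ passes from the $(W, Q)$-level corrections to the differentiated $(\W, R)$-level, consistent with the reduction in Proposition \ref{t:hwhrWellPosed}. The invertibility bound is then a direct application of the paraproduct estimates: each summand in $\tilde{w}^{(2)}, \tilde{r}^{(2)}$ is a product of a $BMO^{1/4}$-bounded water-wave coefficient (via $\uA_{1/4}$ using \eqref{YBMO}, \eqref{uaBMO}, \eqref{ubBMO}) with a high-frequency factor estimated in the appropriate $\doth{s}$ space; the vorticity-weighted pieces contribute to the $\gamma^2 \|(\W, R)\|_{\doth{s-1}}$ part of the bound.

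The heart of the proof is the cancellation computation. One applies the operator on the left-hand side of \eqref{WRSystemParaLin} to $\partial_\alpha(\tilde{w}^{(2)}, \tilde{r}^{(2)})$ and distributes the para-material derivative using the Leibniz rule Lemma \ref{t:Leibniz}, together with the leading-order expressions for $T_{D_t}\W$, $T_{D_t}R$, $T_{D_t}X$, $T_{D_t}U$, $T_{D_t}Z$ established in Section \ref{s:TDtBound}, and the para-commutator, para-product and para-associativity lemmas \eqref{ParaCommutator}--\eqref{ParaAssociateTwo}. The computation is structurally identical to the one carried out in Section \ref{s:GKZero} for $(\mathcal{G}_{0,0}, \mathcal{K}_{0,0})$: organizing by powers of $\gamma$ from $\gamma^0$ up to $\gamma^4$ and collecting leading terms, the resulting contributions exactly reproduce $-(\mathcal{G}, \mathcal{K})$ modulo balanced cubic errors satisfying \eqref{GoodBalanceCubic}.

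The main obstacle is not any single conceptual step but the bookkeeping of the cancellation across all vorticity orders, and verifying that every residual cubic term is perturbative in $\doth{s}$ for all $s \geq 0$. Unlike in Section \ref{s:GKZero}, one cannot use the Sobolev embedding $\dot{H}^{1/4} \hookrightarrow L^4$ to place the lowest-frequency factor in $L^4$; instead one must arrange for the lowest-frequency factor in every cubic remainder to be a water-wave coefficient bounded in $BMO^{1/4}$ by $\uA_{1/4}$ (or a $\gamma$-weighted primitive thereof), leaving two remaining factors belonging to the $(\W, R)$-family to be paired in $\doth{s}$ and $BMO$. This arrangement is compatible with the algebraic structure of the normal form \eqref{e:NormalForm}, from which our corrections are derived, and produces precisely the $\uA^2_{1/4}$ coefficient claimed in the source term bound.
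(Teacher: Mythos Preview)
Your overall strategy---borrowing the balanced normal form corrections from the linearized analysis and substituting the water-wave variables---is indeed exactly what the paper does. However, there is a genuine gap in your choice of which correction to transplant.

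You propose to use only the correction from Section~\ref{s:GKZero}, which was designed to remove $(\mathcal{G}_{0,0},\mathcal{K}_{0,0})$. As the paper notes there, that correction is ``roughly the \emph{anti-holomorphic} part of the linearization of the normal form transformation \eqref{e:NormalForm}''. Concretely, every term in that correction is of the type $\nP(\bar{x}\,\cdot)$, $\nP(\bar{r}\,\cdot)$, $\nP(\bar{u}\,\cdot)$, i.e.\ a balanced interaction between an anti-holomorphic linearized factor and a holomorphic water-wave factor. But the source terms $(\mathcal{G}(\W,R),\mathcal{K}(\W,R))$ in \eqref{WRSystemParaLin} are \emph{not} exclusively of this form: they contain holomorphic--holomorphic balanced pieces such as $\partial_\alpha\Pi(\W,T_{1-\bar Y}R)$, $T_{1-\bar Y}\Pi(R_\alpha,R)$, and $iT_{1-Y}T_{g+\ua}\Pi(Y,\W)$. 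Your substituted Section~\ref{s:GKZero} correction produces no holomorphic--holomorphic cancellations, so these pieces would survive untouched and violate the claimed $\uA_{\frac14}^2$ source bound.

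The fix is that you need the balanced parts of \emph{both} Section~\ref{s:GKOne} and Section~\ref{s:GKZero}; when the linearized variables coincide with the solution itself, the two together reconstitute the full balanced normal form. This is precisely what the paper does: its corrections $(\tilde\W,\tilde R)$ are written with factors $2\Re X = X+\bar X$, $2\Re Z$, $2\Im U$, etc., simultaneously encoding the holomorphic--holomorphic pieces (from the $\Pi$-parts of the Section~\ref{s:GKOne} correction) and the holomorphic--antiholomorphic pieces (from Section~\ref{s:GKZero}). The paper remarks accordingly that the correction is ``roughly the balanced part of the derivative of the normal form transformation \eqref{e:NormalForm}''---the whole thing, not just its anti-holomorphic half. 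Once you enlarge your ansatz in this way, the rest of your plan (Leibniz rule, the $T_{D_t}$-formulas of Section~\ref{s:TDtBound}, bookkeeping by powers of $\gamma$, and the $BMO^{1/4}$ placement of the lowest-frequency factor) goes through as you describe.
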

\begin{proof}
We choose the paradifferential normal form variables $(\W_{NF},R_{NF})$ to be $(\W_{NF},R_{NF})$ $:= (\W +\tilde{\W}, R +\tilde{R})$, where the paradifferential corrections $(\tilde{\W}, \tilde{R})$ are given by
\begin{align*}
  \tilde{\W} =& -\partial_\alpha\Pi(\W, 2\Re X)- \frac{\gamma}{2g}\partial_\alpha\Pi(\W, 2\Re Z) -\frac{\gamma}{2g}\partial_\alpha\Pi(R, 2\Re X) - \frac{\gamma^2}{2g}\partial_\alpha\Pi(\W, 2\Im U)\\
  & +i\frac{\gamma^2}{4g}\partial_\alpha \Pi(X,X) + i\frac{\gamma^2}{4g}\partial_\alpha\Pi(X, 2\Re X) - \frac{\gamma^2}{4g^2}\partial_\alpha \Pi(R, 2\Re Z)+i\frac{\gamma^3}{4g^2}\partial_\alpha\Pi(X, 2\Re Z)\\
  &  -\frac{\gamma^3}{4g^2}\partial_\alpha\Pi(R, 2\Im U)+i\frac{\gamma^4}{2g^2}\partial_\alpha \Pi(X, 2\Im U),\\
  \tilde{R} =& -\Pi(R_\alpha, 2\Re X) -\Pi(T_{1-\bar{Y}}\bar{\W}, R)- \frac{\gamma}{2g}\partial_\alpha\Pi(R, 2\Re Z) + i\frac{\gamma}{2}\partial_\alpha\Pi(X, 2\Re X)\\
  &-i\frac{\gamma}{4}\partial_\alpha \Pi(X,X) - \frac{\gamma^2}{2g}\partial_\alpha \Pi(R, 2\Im U) +i\frac{\gamma^2}{4g}\partial_\alpha \Pi(X, 2\Re Z)+i\frac{\gamma^3}{4g}\partial_\alpha \Pi(X, 2\Im U).
\end{align*}
We remark that the quadratic part of this paradifferential correction  is nothing but roughly the balanced part of the derivative of the normal form transformation \eqref{e:NormalForm}, after switching to the good variables $(\W, R)$.

Clearly, $(\tilde{\W},\tilde{R})$ satisfy the invertibility property using direct computation, it only suffices to check that the source terms $(\tilde{\mathcal{G}},\tilde{\mathcal{K}})$ are perturbative.

We  insert these corrections into the system \eqref{WRSystemParaLinLeft} and compute the corresponding source terms.
For both equations, the terms having $T_{\ub_\alpha}$ are perturbative.
For convenience of the computation below, we recall in Section \ref{s:TDtBound}, the leading terms of para-material derivatives.
\begin{alignat*}{2}
&T_{D_t} \W +T_{1+\W}T_{1-\bar{Y}}R_\alpha = E_1, \quad &&\|E_1\|_{BMO}\lesssim_\uA \uA^2_{\frac{1}{4}}, \\
&T_{D_t} R -iT_{g+\ua}Y +i\gamma R = E_2,  \quad &&\|E_2\|_{BMO^{\frac{1}{2}}}\lesssim_\uA \uA^2_{\frac{1}{4}},  \\
&T_{D_t}X +T_{1-\bar{Y}}R  = E_3,  \quad &&\||D|E_3\|_{BMO}+\gamma^2 \|E_3\|_{BMO}\lesssim_{\uAS} \underline{A}^2_{\frac{1}{4}}, \\
&T_{D_t}Z - igX +i\gamma Z = E_4,  \quad &&\gamma^2\||D|^{\frac{1}{2}}E_4\|_{BMO}+\gamma^3\|E_4\|_{BMO}\lesssim_{\uAS} \underline{A}^2_{\frac{1}{4}}, \\
&  T_{D_t}U +T_{1-\bar{Y}}Z= E_5,  \quad &&\gamma^2\||D|E_5\|_{BMO}\lesssim_{\uAS} \underline{A}^2_{\frac{1}{4}}.
\end{alignat*}
We will use Lemma \ref{t:Leibniz} to distribute para-material derivatives.
Let $(G,K)$ be the good source terms that satisfy the perturbative source terms bound in the proposition.
In the following, we will put the perturbative source terms into $(G,K)$ for simplicity.

We first compute the source term of the first equation of \eqref{WRSystemParaLinLeft}.
For the first term of the first equation,
\begin{align*}
-T_{D_t}\partial_\alpha \Pi(\W, 2\Re X) &= \partial_\alpha\left(\Pi(T_{(1+\W)(1-\bar{Y})}R_\alpha, 2\Re X) + \Pi(\W, T_{1-\bar{Y}}R+T_{1-Y}\bar{R})\right)+G, \\
-\frac{\gamma}{2g}T_{D_t}\partial_\alpha \Pi(\W, 2\Re Z)&= \frac{\gamma}{2g}\partial_\alpha \left(\Pi(T_{(1+\W)(1-\bar{Y})}R_\alpha, 2\Re Z) + \Pi(\W, 2g\Im X -2\gamma \Im Z)\right)+ G, \\
-\frac{\gamma}{2g}T_{D_t}\partial_\alpha \Pi(R, 2\Re X) &= \frac{\gamma}{2g}\partial_\alpha\left(\Pi(-iT_{g+\ua}Y+i\gamma R, 2\Re X)+\Pi(R, T_{1-\bar{Y}}R+T_{1-Y}\bar{R}) \right)+G, \\
-\frac{\gamma^2}{2g}T_{D_t}\partial_\alpha \Pi(\W, 2\Im U) &= \frac{\gamma^2}{2g}\partial_\alpha \left(\Pi(T_{(1+\W)(1-\bar{Y})}R_\alpha, 2\Im U) + \Pi(\W, 2\Im T_{1-\bar{Y}}Z)\right)+G,\\
i\frac{\gamma^2}{4g}T_{D_t}\partial_\alpha\Pi(X,X) &= -i\frac{\gamma^2}{2g}\partial_\alpha\Pi(T_{1-\bar{Y}}R,X)+G, \\
i\frac{\gamma^2}{4g}T_{D_t}\partial_\alpha\Pi(X,2\Re X) &= -i\frac{\gamma^2}{4g}\partial_\alpha \left(\Pi(T_{1-\bar{Y}}R,2\Re X)+\Pi(X, T_{1-\bar{Y}}R + T_{1-Y}\bar{R})\right)+G,\\
-\frac{\gamma^2}{4g^2}T_{D_t}\partial_\alpha \Pi(R, 2\Re Z) &= \frac{\gamma^2}{4g^2}\partial_\alpha \left(\Pi(-iT_{g+\ua}Y +i\gamma R, 2\Re Z)+ \Pi(R, 2g\Im X -2\gamma \Im Z)\right) +G,\\
i\frac{\gamma^3}{4g^2}T_{D_t}\partial_\alpha\Pi(X, 2\Re Z) &= -i\frac{\gamma^3}{4g^2}\partial_\alpha \left(\Pi(T_{1-\bar{Y}}R, 2\Re Z)+\Pi(X, 2g\Im X -2\gamma \Im Z)\right) +G,\\
-\frac{\gamma^3}{4g^2}T_{D_t}\partial_\alpha\Pi(R, 2\Im U) & = \frac{\gamma^3}{4g^2}\partial_\alpha \left(\Pi(-iT_{g+\ua}Y+i\gamma R, 2\Im U)+\Pi(R, 2\Im T_{1-\bar{Y}}Z)\right)+G,\\
i\frac{\gamma^4}{2g^2}T_{D_t}\partial_\alpha \Pi(X, 2\Im U) & = -i\frac{\gamma^4}{2g^2}\partial_\alpha \left(\Pi(T_{1-\bar{Y}}R, 2\Im U)+\Pi(X, 2\Im T_{1-\bar{Y}}Z)\right)+G.
\end{align*}
For the third term of the first equation, we apply the para-associativity \eqref{ParaAssociateOne} and the relations \eqref{XAlphaY}, \eqref{ZAlphaR}, \eqref{UAlphaX},
\begin{align*}
-\partial_\alpha T_{1-\bar{Y}}T_{1+\W}\Pi(R_\alpha, 2\Re X) &=-\partial_\alpha \Pi(T_{(1-\bar{Y})(1+\W)}R_\alpha, 2\Re X) +G,\\
-\partial_\alpha T_{1-\bar{Y}}T_{1+\W}\Pi(T_{1-\bar{Y}}\bar{\W}, R) &= -\partial_\alpha \Pi(\bar{Y}, T_{1+\W}R)+G,\\
-\frac{\gamma}{2g}\partial_\alpha T_{1-\bar{Y}}T_{1+\W}\partial_\alpha\Pi(R, 2\Re Z) &= -\frac{\gamma}{2g}\partial_\alpha \left(\Pi(T_{(1-\bar{Y})(1+\W)}R_\alpha, 2\Re Z)+ \Pi(R, 2\Re T_{1-\bar{Y}}R)\right)+G,\\
i\frac{\gamma}{2}\partial_\alpha T_{1-\bar{Y}}T_{1+\W}\partial_\alpha\Pi(X, 2\Re X) &= i\frac{\gamma}{2}\partial_\alpha \left(\Pi(Y, 2\Re X) + \Pi(X, 2\Re \W)\right)+G,\\
-i\frac{\gamma}{4}\partial_\alpha T_{1-\bar{Y}}T_{1+\W}\partial_\alpha \Pi(X,X) &= -i\frac{\gamma}{2}\partial_\alpha \Pi(\W, T_{1-\bar{Y}}W)+G,\\
-\frac{\gamma^2}{2g}\partial_\alpha T_{1-\bar{Y}}T_{1+\W}\partial_\alpha \Pi(R, 2\Im U) &= - \frac{\gamma^2}{2g}\partial_\alpha \left(\Pi(T_{(1+\W)(1-\bar{Y})}R_\alpha, 2\Im U) + \Pi(T_{1-\bar{Y}}R, 2\Im X)\right)+G,\\
i\frac{\gamma^2}{4g}\partial_\alpha T_{1-\bar{Y}}T_{1+\W}\partial_\alpha \Pi(X, 2\Re Z) &= i\frac{\gamma^2}{4g}\partial_\alpha \left(\Pi(\W, 2\Re Z)+ \Pi(X,  T_{1-\bar{Y}}R + T_{1-Y}\bar{R})\right)+G,\\
i\frac{\gamma^3}{4g}\partial_\alpha T_{1-\bar{Y}}T_{1+\W}\partial_\alpha \Pi(X, 2\Im U) &= i\frac{\gamma^3}{4g}\partial_\alpha\left(\Pi(\W, 2\Im U)+ \Pi(X, 2\Im X)\right)+G.
\end{align*}
Here, when the derivatives  fall on the paradifferential coefficients $T_{1+\W}$ or $T_{1-\bar{Y}}$, these terms are perturbative and may go to $G$.

For the last two terms of the first equation, they are
\begin{equation*}
-i\frac{\gamma}{2}\partial_\alpha T_{1+\W}T_{1-\bar{Y}}\partial^{-1}_\alpha \tilde{\W} + i\frac{\gamma}{2}\tilde{\W} =   -i\frac{\gamma}{2}T_{\W-\bar{Y}-\W\bar{Y}} \tilde{\W} -i\frac{\gamma}{2}T_{\W_\alpha}T_{1-\bar{Y}}\tilde{\W} + i\frac{\gamma}{2}T_{1+\W}T_{\bar{Y}_\alpha}\tilde{\W}.
\end{equation*}
Since the paradifferential correction $\tilde{\W}$ satisfies
\begin{equation*}
    \|\tilde{\W}\|_{\doth{s}}\lesssim_\uA \uA\left(\|(\W,R)\|_{\doth{s}}+\gamma^2\|(\W,R)\|_{\doth{s-1}}\right),
\end{equation*}
the sum of the last two terms can be absorbed into $G$.

Gathering all terms for the first equation, we see that the contribution of paradifferential corrections cancel the source term $\tilde{\mathcal{G}}$ modulo perturbative terms.

Next, we perform the computation for the contribution of the paradifferential corrections in the second equation of \eqref{WRSystemParaLinLeft}.
For the first term of the second equation,
\begin{align*}
-T_{D_t}\Pi(R_\alpha, 2\Re X) &= -i\Pi(T_{g+\ua}Y_\alpha - \gamma R_\alpha, 2\Re X) +\Pi(R_\alpha, T_{1-\bar{Y}}R +T_{1-Y}\bar{R}) +K,\\
-T_{D_t}\Pi(T_{1-\bar{Y}}\bar{\W},R) &= T_{1-Y}\Pi(\bar{R}_\alpha, R)-i \Pi(T_{1-\bar{Y}}\bar{\W},T_{g+\ua}Y-\gamma R) +K,\\
-\frac{\gamma}{2g}T_{D_t}\partial_\alpha\Pi(R,2\Re Z) &= -\frac{\gamma}{2g}\partial_\alpha \left(\Pi(iT_{g+\ua}Y-i\gamma R, 2\Re Z)+\Pi(R, -2g\Im X+2\gamma \Im Z)\right) +K,\\
i\frac{\gamma}{2}T_{D_t}\partial_\alpha\Pi(X,\bar{X}) &= -i\frac{\gamma}{2}\partial_\alpha \left(\Pi(T_{1-\bar{Y}}R, \bar{X})+ \Pi (T_{1-Y}\bar{R},X)\right) +K,\\
-i\frac{\gamma}{4}T_{D_t}\partial_\alpha\Pi(X,X) &= i\frac{\gamma}{2}\partial_\alpha \Pi(T_{1-\bar{Y}}R, X) +K,\\
-\frac{\gamma^2}{2g}T_{D_t}\partial_\alpha\Pi(R,2\Im U) &=
 \frac{\gamma^2}{2g}\partial_\alpha\left(\Pi(i\gamma R-iT_{g+\ua}Y, 2\Im U)+\Pi(R,2\Im T_{1-\bar{Y}}Z)\right)+K,\\
i\frac{\gamma^2}{4g}T_{D_t}\partial_\alpha\Pi(X,2\Re Z) &= -i\frac{\gamma^2}{4g}\partial_\alpha \left(\Pi(T_{1-\bar{Y}}R,2\Re Z) +\Pi(X,2g\Im X -2\gamma \Im Z)\right)+K,\\
i\frac{\gamma^3}{4g}T_{D_t}\partial_\alpha\Pi(X, 2\Im U) &= -i\frac{\gamma^3}{4g}\partial_\alpha\left(\Pi(T_{1-\bar{Y}}R, 2\Im U)+ \Pi(X, 2\Im T_{1-\bar{Y}}Z)\right)+K.
\end{align*}
The second term of the second equation is given by
\begin{align*}
&-i\gamma\Pi(R_\alpha, 2\Re X) -i\gamma\Pi(T_{1-\bar{Y}}\bar{\W}, R)- i\frac{\gamma^2}{2g}\partial_\alpha\Pi(R, 2\Re Z) -\frac{\gamma^2}{2}\partial_\alpha\Pi(X, 2\Re X)\\
  &+\frac{\gamma^2}{4}\partial_\alpha \Pi(X,X) - i\frac{\gamma^3}{2g}\partial_\alpha \Pi(R, 2\Im U) -\frac{\gamma^3}{4g}\partial_\alpha \Pi(X, 2\Re Z)-\frac{\gamma^4}{4g}\partial_\alpha \Pi(X, 2\Im U).
\end{align*}
For the fourth term of the second equation, the contribution of $T_\ua$ is perturbative, and may be absorbed into $K$.
We apply the para-associativity \eqref{ParaAssociateOne} to write,
\begin{align*}
  iT_{(1-Y)^2}T_{g+\ua}\partial_\alpha\Pi(\W, 2\Re X) &= i\Pi(T_{g+\ua}Y_\alpha, 2\Re X)\\
  &+i T_{g+\ua}\left(T_{1-Y}\Pi(\W, Y)+\Pi(Y, \bar{X}_\alpha)\right) +K ,\\
  i\frac{\gamma}{2g}T_{(1-Y)^2}T_{g+\ua}\partial_\alpha\Pi(\W,2\Re Z) &= \frac{\gamma}{2g}\partial_\alpha \Pi(iT_{g+\ua}Y, 2\Re Z)+K ,\\
  i\frac{\gamma}{2g}T_{(1-Y)^2}T_{g+\ua}\partial_\alpha\Pi(R,2\Re X) &= i\frac{\gamma}{2}\partial_\alpha\Pi(T_{1-\bar{Y}}R, 2\Re X)+K ,\\
  i\frac{\gamma^2}{2g}T_{(1-Y)^2}T_{g+\ua}\partial_\alpha\Pi(\W, 2\Im U) &= \frac{\gamma^2}{2g}\partial_\alpha\Pi(iT_{g+\ua}Y, 2\Im U)+K ,\\
  \frac{\gamma^2}{4g}T_{(1-Y)^2}T_{g+\ua}\partial_\alpha\Pi(X,X) &= \frac{\gamma^2}{4}\partial_\alpha\Pi(X,X)+K ,\\
  \frac{\gamma^2}{4g}T_{(1-Y)^2}T_{g+\ua}\partial_\alpha\Pi(X,2\Re X) &= \frac{\gamma^2}{4}\partial_\alpha(X,2\Re X)+K ,\\
  i\frac{\gamma^2}{4g^2}T_{(1-Y)^2}T_{g+\ua}\partial_\alpha\Pi(R, 2\Re Z) &= i\frac{\gamma^2}{4g}\partial_\alpha\Pi(R, 2\Re Z)+K ,\\
  \frac{\gamma^3}{4g^2}T_{(1-Y)^2}T_{g+\ua}\partial_\alpha\Pi(X,2\Re Z) &= \frac{\gamma^3}{4g}\partial_\alpha\Pi(X,2\Re Z)+K ,\\
   i\frac{\gamma^3}{4g^2}T_{(1-Y)^2}T_{g+\ua}\partial_\alpha\Pi(R, 2\Im U) &= i \frac{\gamma^3}{4g}\partial_\alpha\Pi(R, 2\Im U) +K ,\\
  \frac{\gamma^4}{2g^2}T_{(1-Y)^2}T_{g+\ua}\partial_\alpha\Pi(X, 2\Im U) &= \frac{\gamma^4}{2g}\partial_\alpha \Pi(X, 2\Im U) +K.
\end{align*}
The last term of the left-hand side of the second equation $-i\frac{\gamma}{2}T_{b_\alpha} \partial^{-1}_\alpha \tilde{\W}$ is perturbative and may absorbed into $K$.
After cancellations, the source term of the paradifferential corrections of the second equation equals $-\tilde{\mathcal{K}}$ modulo perturbative $K$ terms.
Therefore, we construct the paradifferential normal form variables $(\W_{NF},R_{NF})$ that satisfy the desired properties.
\end{proof}
Having switched into the paradifferential normal form variables $(\W_{NF},R_{NF})$, the differentiated system \eqref{differentiatedEqn} is reduced to the paradifferential equations \eqref{WRSystemParaLinLeft} with perturbative source terms.
Theorem \ref{t:MainEnergyEstimate} then follows directly from the last part of Proposition \ref{t:hwhrWellPosed}.

\section{The proof of local well-posedness} \label{s:Proof}
In this section, we prove the main result of this article, namely the low regularity well-posedness Theorem \ref{t:MainWellPosedness}.
The result from \cite{MR3869381} asserts that the local well-posedness holds for more regular data.
We will first establish the $\mathcal{H}^n$ bounds for  regular solutions.
Then, we use those  regular solutions as  starting points to construct rough solutions.
Precisely, we obtain regular solutions by truncating the rough initial data in frequency,  so that we get a continuous family of solutions, thereby estimating only a solution for  linearized equations at each step.
Finally, we prove the continuous dependence on the initial data in $\mathcal{H}^{\frac{3}{4}}$.

Note that we can make  use of the space-time scaling \eqref{SpacetimeScaling}.
By choosing the scaling parameter $\lambda$ small enough, we can make $\uAS \ll 1$, at the price to turn the vorticity $\gamma$ into $\lambda\gamma$.
The argument for the proof below is similar to the proof in \cite{ai2023dimensional}.
For simplicity, we only give an outline of the approach here.
Please check Section 7 of \cite{ai2023dimensional} for detailed proof such as the use of \textit{frequency envelopes}.

\begin{proof}[Outline of proof of Theorem \ref{t:MainWellPosedness}]
 The  proof of the theorem is divided into the following four steps:

\textbf{$(1)$ $\mathcal{H}^s$ bounds for regular solutions.}
For simplicity, here we only outline the case for integer $n\geq 1$.
Suppose we have an $\mathcal{H}^n$ solution $(\W, R)$ that satisfies  the initial condition
\begin{equation*}
\|(\W_0, R_0)\|_{\doth{\frac{3}{4}}}+ \gamma^2 \|(\W_0, R_0)\|_{\doth{-\frac{1}{4}}}\leq \mathcal{M}_0 \ll 1.
\end{equation*}
In order to show that there exists a time $T = T(\mathcal{M}_0)$ such that the solution exists in $C([0,T]; \mathcal{H}^n)$ and satisfies the bounds
\begin{align}
&\|(\W, R)\|_{L^\infty(0,T;\doth{\frac{3}{4}})} + \gamma^2 \|(\W, R)\|_{L^\infty(0,T;\doth{-\frac{1}{4}})}<  \mathcal{M}(\mathcal{M}_0), \label{ThreeQuarter} \\
&\|(\W, R)\|_{L^\infty(0,T;\mathcal{H}^n)}\leq  C(\mathcal{M}_0) \|(\W_0,R_0
)\|_{\mathcal{H}^n}, \label{nEstimate}
\end{align}
we make the bootstrap assumption
\begin{equation*}
    \|(\W, R)\|_{L^\infty(0,T;\doth{\frac{3}{4}})} + \gamma^2 \|(\W, R)\|_{L^\infty(0,T;\doth{-\frac{1}{4}})}<  2\mathcal{M}.
\end{equation*}
Using the bootstrap assumption and Sobolev embedding, we are able to bound the control parameters
\begin{equation*}
    \Astar\lesssim \mathcal{M}, \quad  \uAStar\lesssim \mathcal{M}.
\end{equation*}
Applying the energy estimate with $s= \frac{3}{4}$  in Theorem \ref{t:MainEnergyEstimate}, we have
\begin{equation*}
 \|(\W, R)(t)\|_{\doth{\frac{3}{4}}} + \gamma^2 \|(\W, R)(t)\|_{\doth{-\frac{1}{4}}} \lesssim e^{Ct} \left( \|(\W_0, R_0)\|_{\doth{\frac{3}{4}}} + \gamma^2 \|(\W_0, R_0)\|_{\doth{-\frac{1}{4}}}\right) \lesssim e^{Ct}\mathcal{M}_0.
\end{equation*}
By choosing large $\mathcal{M}$ and suitable $T$, the bound \eqref{ThreeQuarter} holds for $t\in [0,T]$.

Applying Theorem \ref{t:MainEnergyEstimate} and Gronwall’s inequality for each integer $k$ between $1$ and $n$, we have
\begin{equation*}
\| (\W ,R)(t)\|_{\doth{k}} + \gamma^2 \| (\W ,R)(t)\|_{\doth{k-1}} \lesssim e^{Ct} \left(\| (\W_0 ,R_0)\|_{\doth{k}} + \gamma^2 \| (\W_0 ,R_0)\|_{\doth{k-1}}\right).
\end{equation*}
Summing above inequalities for $k$ from $1$ to $n$, we get \eqref{nEstimate}.
Therefore we have the $\mathcal{H}^n$ bounds for regular solutions up to time $T$.

\textbf{$(2)$  Construction of rough solutions, $(\W, R)\in \mathcal{H}^{\frac{3}{4}}$.}
We regularize the initial data, $(W_{<k}(0), Q_{<k}(0))$ and $(\W_{<k}(0), R_{<k}(0))$ by truncating at frequency $2^{k}$.
We are able to establish the bound of regularized initial data using frequency envelopes.
The corresponding solutions will be regular, with a uniform lifespan bound.
Here $k$ can be viewed as a continuous parameter rather than a
discrete parameter.
Then
\begin{equation*}
    (w^k, r^k) = (\partial_k W_{<k}, \partial_k Q_{<k} - R_{<k} \partial_k W_{<k} )
\end{equation*}
solve the corresponding linearized equations around $(\W_{<k}, R_{<k})$.
For the high-frequency part of the regularized solutions, we apply the energy estimates for the full equation Theorem \ref{t:MainEnergyEstimate}.
Next, using Theorem \ref{t:LinearizedWellposed} for the linearized variables $(w^k, r^k)$, one can establish the difference bound $(\W_{<k+1}-\W_{<k}, R_{<k+1}-R_{<k})$ in $\mathcal{H}^{\frac{3}{4}}$.
Summing up with respect to $k$, it follows that the sequence $(\W_{<k}, R_{<k})$ converges to a solution $(\W, R)$ with uniform $\mathcal{H}^{\frac{3}{4}}$ bound in time interval $[0,T]$.

\textbf{$(3)$  Continuous dependence on the data for rough solutions.}
Consider an arbitrary sequence $(\W_j, R_j)(0)$ that converges to $(\W_0, R_0)$ in $\mathcal{H}^{\frac{3}{4}}$ topology.
Using again the frequency truncation, we get the approximate solutions $(\W_j^k, R_j^k)$, respectively $(\W^k, R^k)$.
Due to the continuous dependence for the regular solutions which is proved in Theorem \ref{t:AiResult} of \cite{MR4462478}, we have for each $k$
\begin{equation*}
(\W^k_j, R^k_j)- (\W^k, R^k) \rightarrow 0 \quad \text{in } \mathcal{H}^n,\quad  n\geq 1.
\end{equation*}
On the other hand, letting $k$ go to infinity, we have for the initial data
\begin{equation*}
(\W^k_j, R^k_j)(0)- (\W_j, R_j)(0) \rightarrow 0 \quad \text{in } \mathcal{H}^\frac{3}{4}, \quad \text{uniformly in } j.
\end{equation*}
Using the frequency envelope analysis, we further get the uniform convergence for the solution:
\begin{equation*}
(\W^k_j, R^k_j)- (\W_j, R_j) \rightarrow 0 \quad \text{in } \mathcal{H}^\frac{3}{4}, \quad \text{uniformly in } j.
\end{equation*}
 We can again let $k$ go to infinity to conclude that
 \begin{equation*}
  (\W_j, R_j)-(\W, R)\rightarrow 0 \quad \text{in } \mathcal{H}^{\frac{3}{4}},
 \end{equation*}
 which shows the continuous dependence on the data in $\mathcal{H}^{\frac{3}{4}}$.

\textbf{$(4)$   Continuation of solutions.}
Here we show that the solution can be continued for as long as $\max \{\uAS, \uA\}$ remains small and $\Astar(\gamma^{\frac{1}{4}}+\Astar) \in L^1_t$.
This is a direct consequence of the energy estimates Theorem \ref{t:MainEnergyEstimate}.
\end{proof}

\bibliographystyle{plain}
\bibliography{refs}
\end{document}